
\documentclass[aihp]{imsart}

\usepackage{amsthm,amsmath,amssymb,amsfonts}
\usepackage{enumerate}
\usepackage{graphicx}
\usepackage{float}
\usepackage{color,graphicx}
\RequirePackage[numbers]{natbib}
\RequirePackage[colorlinks,citecolor=blue,urlcolor=blue]{hyperref}

\startlocaldefs

\theoremstyle{plain}
\newtheorem{theorem}{Theorem}[section]
\newtheorem{corollary}[theorem]{Corollary}
\newtheorem{lemma}[theorem]{Lemma}
\newtheorem{proposition}[theorem]{Proposition}

\theoremstyle{remark}
\newtheorem{definition}[theorem]{Definition}
\newtheorem{remark}[theorem]{Remark}


\DeclareMathOperator{\supp}{supp}
\DeclareMathOperator{\law}{Law}
\DeclareMathOperator{\spann}{span}
\DeclareMathOperator{\divv}{div}
\DeclareMathOperator{\pr}{pr}
\DeclareMathOperator{\leb}{Leb}

\newcommand{\Q}{\mathbb{Q}}
\newcommand{\R}{\mathbb{R}}
\newcommand{\I}{\mathbb{I}}

\newcommand{\N}{\mathbb{N}}
\newcommand{\F}{\mathcal{F}}
\newcommand{\p}{\mathbb{P}}
\newcommand{\B}{\mathcal{B}}

\newcommand{\Pp}{\mathcal{P}}

\newcommand{\s}{\mathcal{S}}

\newcommand{\RR}{\mathcal{R}}
\newcommand{\E}{\mathbb{E}}
\newcommand{\h}{\mathcal{H}}

\newcommand{\eps}{\varepsilon}

\newcommand{\St}{\mathcal{S}^{\uparrow}}

\newcommand{\Li}{L_2^{\uparrow}}
\newcommand{\Cfb}{C^{\infty}_b}
\newcommand{\Cfo}{C^{\infty}_0}
\newcommand{\FC}{\mathcal{FC}}
\newcommand{\LL}{L_2(\Li(\xi),\Xi^{\xi})}

\newcommand{\D}{\mathrm{D}}

\newcommand{\e}{\mathcal{E}}
\newcommand{\Dom}{\mathbb{D}}

\newcommand{\K}{\mathcal{K}}
\newcommand{\HS}{\mathcal{L}_2}
\newcommand{\W}{\mathcal{W}}
\newcommand{\LHS}{L_2([0,T],\mathcal{L}_2(L_2))}
\newcommand{\Dr}{D^{\uparrow}}
\newcommand{\id}{\mathrm{id}}
\newcommand{\vect}{\mathbf}

\endlocaldefs

\begin{document}

\begin{frontmatter}

\title{Coalescing-fragmentating Wasserstein dynamics:\\ particle approach}
\runtitle{CFWD: particle approach}

\begin{aug}
  \author{\inits{V. Konarovskyi}\fnms{Vitalii} \snm{Konarovskyi}\ead[label=e1]{vitalii.konarovskyi@math.uni-bielefeld.de}}
\address{Fakult\"{a}t f\"{u}r Mathematik, Universit\"{a}t Bielefeld, Germany \printead{e1}}
\address{Fakult\"{a}t f\"{u}r Mathematik und Informatik, Universit\"{a}t Leipzig, Germany}
\address{Institute of Mathematics of NAS of Ukraine, Kiev, Ukraine}

\end{aug}

\begin{abstract}
We construct a family of semimartingales that describes the behavior of a particle system with sticky-reflecting interaction. The model is a physical improvement of the Howitt-Warren flow~\cite{Howitt2009}, an infinite system of diffusion particles on the real line that sticky-reflect from each other. But now particles have masses obeying the conservation law and the diffusion rate of each particle depends on its mass. The equation which describes the evolution of the particle system is a new type of equations in infinite-dimensional space and can be interpreted as an infinite-dimensional analog of the equation for sticky-reflected Brownian motion. The particle model appears as a particular solution to the corrected version of the Dean-Kawasaki equation. 
\end{abstract}

\begin{abstract}[language=french]
Nous construisons une famille de semimartingales décrivant le comportement d'un système de particules avec interactions à effet réflectif et adhésif. Le modèle est un perfectionnement plus physique du flot de Howitt-Warren~\cite{Howitt2009}, un système infini de particules diffusives sur la droite réelle interagissant avec effet réflectif et adhésif. Dans ce papier, les particules ont désormais des masses satisfaisant les lois de conservation et le coefficient de diffusion de chaque particule dépend de sa masse. L'équation décrivant l'évolution du système de particules est un nouveau type d'équation sur un espace de dimension infinie et peut être interprétée comme un analogue infini-dimensionnel de l'équation satisfaite par le mouvement brownien à comportement réflectif et adhésif. Le modèle particulaire apparaît comme une solution particulière d'une version corrigée de l'équation de Dean-Kawasaki.
\end{abstract}

\begin{keyword}[class=MSC]
\kwd[Primary ]{60K35}
\kwd{60B12}
\kwd[; secondary ]{60J60}
\kwd{60G44}
\kwd{82B21}
\end{keyword}

\begin{keyword}
\kwd{Wasserstein diffusion}
\kwd{modified massive Arratia flow}
\kwd{Howitt-Warren flow}
\kwd{sticky-reflected Brownian motion}
\kwd{infinite-dimensional SDE with discontinuous coefficients}
\end{keyword}

\end{frontmatter}


\section{Introduction}
In~\cite{Konarovskyi_SR:2017}, the author together with von Renesse proposed a class of measure-valued processes, so-called \textit{reversible coalescing-fragmentating Wasserstein dynamics} or shortly \textit{reversible CFWD}, which describes the evolution of mass of particles that interact via some sticky-reflecting mechanism. The construction was aimed at the generalization of a Brownian motion of a single point (atom) to the case of infinite points (measures) on the real line. The main requirement of such a construction was that the process $\mu_t$ had to be reversible in time and its short time asymptotics had to be covered by the Varadhan formula of the form
\[
  \p\left\{ \mu_{t+\eps}=\nu \right\}\sim e^{ -\frac{ d^2_{\mathcal{W}}(\mu_t,\nu) }{ 2 \eps } }, \quad \eps\ll 1,
\]
where $d_{\mathcal{W}}$ denotes the usual Wasserstein distance on the space of probability measures $\Pp_2(\R)$ on the real line with a finite second moment. This led to a new family of measure-valued processes which are naturally connected with the Riemannian structure of the Wasserstein space of probabilities measures and also to a new class of associated invariant measures for those processes.

The reversible CFWD also solves the corrected Dean-Kawasaki equation\footnote{The Dean-Kawasaki equation is a prototype of equations appearing in fluctuating hydrodynamic theory and has a broad application in physics (see, e.g.,~\cite{MR2095422,doi:10.1063/1.4913746,Dean:1996,MR3744636,1742-5468-2014-4-P04004,doi:10.1063/1.4883520,0305-4470-33-15-101,Kawasaki199435,PhysRevE.89.012150,doi:10.1063/1.478705,0953-8984-12-8A-356,MR0462381,doi:10.1143/JPSJ.59.1299}). In~\cite{Konarovskyi:DK:2018,Konarovskyi:DKII:2018}, we showed that the original Dean-Kawasaki equation has either trivial solutions or is ill-posed.} 
\begin{equation} 
  \label{equ_dean-kawasaki_equation}
  d\mu_t=\Delta\mu_t^*dt+\divv(\sqrt{ \mu_t} dW_t )
\end{equation}
on $\Pp_2(\R)$, where $\mu_t^*=\sum_{ x \in \supp\mu_t }   \delta_{ x }$ and $dW$ is a space-time white noise. We remark that the measure $\mu_t$ is purely atomic with a finite number of atoms for almost all $t\geq 0$~\cite{Konarovskyi:TVP:2021}. Therefore, $\mu^*_t$ is well defined for almost all $t$. It is known that the modified massive Arratia flow satisfies the same equation (see~\cite{Konarovskyi_LDP:2015}). This in particular implies the non-uniqueness of solutions to~\eqref{equ_dean-kawasaki_equation}.

The construction in~\cite{Konarovskyi_SR:2017} was based on the Dirichlet form approach. There we proposed a new family of measures on the space $\Pp_2(\R )$ which depend on the interaction potential between particles and then proved an integration by parts formula. This allowed to introduce the naturally associated Dirichlet form $\e$ and construct the corresponding measure-valued process $\mu_t$, $t \in [0,\tau)$, (a family of processes that depend on the interacting potential between particles). In spite of the power of the Dirichlet form method, such a description has many shortcomings which make the model very complicated for further investigation. In particular,   
\begin{itemize}
  \item the process $\mu_t$, $t \in [0,\tau)$, was defined up to the life time $\tau$ and it is unclear in general if the process globally exists, i.e., if $\tau$ is infinite a.s.;

  \item $\mu_t$, $t \in [0,\tau)$, was defined only for initial distributions $\mu_0$ outside an unknown $\e$-exceptional set;

  \item although the process describes the evolution of the mass of interacting particles, one can say nothing about the behavior of individual particles;

  \item the construction does not cover the coalescent interaction between particles that can be considered as a critical case of sticky-reflecting behavior.
\end{itemize}

The present paper is aimed at the elimination of those defects. For this, we choose a completely different construction. We will approximate an infinite particle system by a finite number of particles. This allows us to construct a continuum collection of ordered continuous semimartingales on the real line which satisfy some natural properties. We also note that the obtained system can be considered as a physical improvement of the Howitt-Warren flow~\cite{Howitt2009,MR31557821} which describes the family of Brownian motions with sticky-reflected interaction. The inclusion of the particle mass into the system which influences their motion makes our model more interesting and natural from the physical point of view.

Although the particle model constructed here satisfies the same stochastic differential equation (see equation~\eqref{f_the_main_equation} below) as one constructed in~\cite{Konarovskyi_SR:2017} and has the same intuitive description, discussed in Section~\ref{sub:description_of_the_model_and_the_main_result}, it remains unclear if these two models coincide. The reason is that the uniqueness of solutions to the equation describing the particle systems remains a complicated open problem. However, we conjecture that this equation admits a unique weak solution.

\subsection{Description of the model and formulation of the main results}%
\label{sub:description_of_the_model_and_the_main_result}

We consider a family of diffusion particles on the real line which intuitively can be described as follows. Particles start from a set of points and move keeping their order. When particles collide, they coalesce and form clusters (sets of particles occupying the same positions). We assume that each particle has an ``infinitesimal'' mass and the mass of every cluster equals the total mass of its particles. All clusters fluctuate independently of each other until the moment of collision as Brownian motions with diffusion rates inversely proportional to their masses.  Forming a cluster, particles immediately experience a drift force defined by an interaction potential which makes particles leave the cluster.

Let us assume that the total mass of the system is finite. This assumption is needed to overcome some additional difficulties which can occur considering systems of infinite total muss. Moreover, we will assume that the total mass equals one for simplicity. The case of any finite total muss of the system can be obtained by the rescaling of the considered model. Next, we describe the dynamics more precisely. Let every particle in the system be labeled by a point $u$ from $[0,1]$ and its position at time $t\geq 0$ be denoted by $X(u,t)$. Since particles keep their order, we assume that $X(u,t)\leq X(v,t)$ for all $u<v$ and $t$. Denote the cluster containing particle $u$ by 
$$
\pi(u,t)=\{ v \in (0,1):\ \ X(u,t)=X(v,t) \}.
$$ 
We define the mass $m(u,t)$ of the cluster $\pi(u,t)$ at time $t$ as its length. For convenience, we will also call $m(u,t)$ the mass of particle $u$ at time $t$. According to our requirements, for every $u$ the process $X(u,\cdot )$ has to be a continuous semimartingale with quadratic variation whose derivative equals $ \frac{1}{ m(u,t) }$ at time $t$, that is,
\[
  d\left[ X(u,\cdot ) \right]_t= \frac{dt}{ m(u,t) }.
\]
Since we have assumed that particles move independently up to their collision, it would be reasonable to require that $X(u,t)$ and $X(v,t)$ are independent up to the meeting. The problem is that the processes always depend on each other via the mass of their clusters. Therefore, we replace the condition of independence by zero covariance\footnote{If particles would not change their diffusion rate then this condition would be equivalent to the independent motion of particles at the time when they occupy distinct positions.}
\[
  d\left[ X(u,\cdot ),X(v,\cdot ) \right]_t=0 \quad \mbox{provided} \quad X(u,t)\not= X(v,t).
\]
In order to define the splitting between particles, we prescribe a number $\xi(u)$ to each particle $u$, where $\xi$ is non-decreasing function. This number is called an \textit{interaction potential}  of particle $u$. Then particle $u$, which belongs to the cluster $\pi(u,t)$ at time $t$, has the drift force
$$
\xi(u)-\frac{1}{m(u,t)}\int_{\pi(u,t)}\xi(v)dv
$$
that is the difference between own potential and the average potential over the cluster. Summarizing the assumptions above, the family of process $X(u,\cdot )$, $u \in [0,1]$, formally has to solve the following system of equations
\begin{equation} 
  \label{equ_system_of_equation_for_particles}
    dX(u,t)= \frac{1}{ m(u,t) }\int_{ \pi(u,t) }    W(dv,dt) +\left( \xi(u)-\frac{1}{m(u,t)}\int_{\pi(u,t)}\xi(v)dv \right)dt,
\end{equation}
$u \in [0,1]$, under the restriction $X(u,t)\leq X(v,t)$, $u<v$, $t\geq 0$, where $W$ is a Brownian sheet. We also provide~\eqref{equ_system_of_equation_for_particles} with the initial condition $X(u,0)=g(u)$.

Let $D([a,b],E)$ denote the Skorohod space of c\`{a}dl\'{a}g functions from $[a,b]$ to a Polish space $E$ with the usual Skorohod topology. We say that a function $f:[0,1]\to \R  $ is piecewise $\gamma$-H\"older continuous if there exists an ordered partition $U=\{u_i,\ i=1,\ldots,l\}$ of $[0,1]$ such that $f$ is $\gamma$-H\"older continuous on each interval $(u_{i-1},u_i)$, $i\in[l]:=\{1,\dots,l\}$. The first main result of the present paper reads as follows.

\begin{theorem}\label{theorem_existence_in_general_case}
  Let $g,\xi \in D([0,1],\R)$ be non-decreasing piecewise $\frac{1}{2}+$-H\"older continuous\footnote{Hereafter we mean that there exists $\eps>0$ such that the function is $ (\frac{1}{ 2 }+\eps)$-H\"older continuous.} functions on $[0,1]$. Then  there exists a random element $X=\{X(u,t),\  t\geq 0,\ u\in[0,1]\}$ in $D([0,1],C[0,\infty))$ such that 
\begin{enumerate}
  \item[$(R1)$] for each $u \in [0,1]$, $X(u,0)=g(u)$;
 
 \item[$(R2)$] for each $u<v$ from $[0,1]$ and $t\geq 0$, $X(u,t)\leq X(v,t)$;
 
 \item[$(R3)$] the process 
   \begin{align*}
     M^X(u,t):=  X(u,t)-g(u) -\int_0^t\left( \xi(u)-\frac{1}{m_{X}(u,s)}\int_{\pi_{X}(u,s)}\xi(v)dv \right)ds, \quad t\geq 0,
   \end{align*}
   is a continuous square-integrable $(\F_t^X)$-martingale for each $u\in(0,1)$ and a continuous local $(\F_t^X)$-martingale for each $u \in \{ 0,1 \}$, where $(\F_t^X)_{t\geq 0}$ is the natural filtration generated\footnote{see Section~\ref{section_preliminaries} and Remark~\ref{remark_filtration} for the precise definition} by $X$, $\pi_{X}(u,t):=\{v:\ X(u,t)=X(v,t)\}$ and $m_{X}(u,t)=\leb\pi_{X}(u,t)$;
 
 \item[$(R4)$] for each $u,v \in [0,1]$ the joint quadratic variation of $M^X(u,\cdot)$ and $M^X(v,\cdot)$ equals
 $$
 [M^X(u,\cdot),M^X(v,\cdot)]_t=\int_0^t\frac{\I_{\{X(u,s)=X(v,s)\}}}{m_{X}(u,s)}ds, \quad t\geq 0.
 $$
 
\end{enumerate}
\end{theorem}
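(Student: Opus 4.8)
The plan is to obtain $X$ as a weak subsequential limit of finite particle systems, as announced in the introduction. For each $n$ I would replace $g,\xi$ by step functions $g_n,\xi_n$ that are constant on the intervals $I_k:=((k-1)/n,k/n)$, $k\in[n]$, so that the dynamics collapse to $n$ ordered particles of initial mass $1/n$. Setting $\beta_k(t):=\int_{I_k}W(\dm v,\dm t)$ (independent Brownian motions of variance $t/n$), the finite configuration evolves by the sticky-reflecting rule in which every cluster $C$ of mass $m_C=|C|/n$ moves as $m_C^{-1}\sum_{k\in C}\beta_k$ plus the fragmenting drift $\xi_n(u)-m_C^{-1}\int_C\xi_n$, the constraint $X_n(u,t)\le X_n(v,t)$ being enforced by sticky reflection on the collision set. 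The first ingredient is existence for this finite system — a reflected SDE with sticky boundary behaviour on the collision hyperplanes and a piecewise-constant but discontinuous diffusion rate $1/m_{X_n}$ — producing processes $X_n$ that obey the obvious discrete versions of $(R1)$–$(R4)$, with $m_{X_n}$ piecewise constant and $\pi_{X_n}$ a union of intervals $I_k$.

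Second, I would derive bounds on $X_n$ uniform in $n$. The cleanest controllable functionals arise from the bracket formula $(R4)$: the centre of mass $\int_0^1 X_n(u,t)\,\dm u$ is a Brownian motion, its drift vanishing and its bracket reducing to $\int_0^t\int_0^1\int_0^1 m_{X_n}(u,s)^{-1}\I_{\{X_n(u,s)=X_n(v,s)\}}\,\dm u\,\dm v\,\dm s=t$; and the energy satisfies the exact identity $\E\int_0^1 X_n(u,t)^2\,\dm u=\int_0^1 g_n^2\,\dm u+\E\int_0^t N_n(s)\,\dm s$, where the drift contribution again cancels and $N_n(s):=\int_0^1 m_{X_n}(u,s)^{-1}\,\dm u$ is the number of clusters. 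Thus the whole estimate rests on bounding $\E\int_0^t N_n(s)\,\dm s$ uniformly in $n$, i.e.\ on showing that neighbouring particles coalesce fast enough to compensate the $n$ clusters present at time $0$; this is precisely the point at which the better-than-$\tfrac12$ Hölder regularity of $g$ enters. I would combine it with the control of the (bounded) drift provided by the Hölder norm of $\xi$ to bound the pairwise increments $\E(X_n(v,t)-X_n(u,t))^2$ and the temporal modulus $\E(X_n(u,t)-X_n(u,s))^{2p}$.

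Third, these moment bounds together with the monotonicity of $u\mapsto X_n(u,t)$ — which reduces the Skorohod modulus in the label variable to the increase of $X_n$ over short label intervals — yield tightness of the laws of $X_n$ in $D([0,1],C([0,\infty)))$. Passing to a weakly convergent subsequence and invoking Skorohod's representation theorem, I may assume $X_n\to X$ almost surely in this space. Properties $(R1)$ and $(R2)$ then follow at once from $g_n\to g$ and from the stability of the order constraint under this convergence.

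The main obstacle is the identification of the limit in $(R3)$ and $(R4)$: one must pass to the limit in the singular nonlinear functionals $m_X(u,s)^{-1}\int_{\pi_X(u,s)}\xi$ and $\int_0^t m_X(u,s)^{-1}\I_{\{X(u,s)=X(v,s)\}}\,\dm s$, whose dependence on the cluster mass $m_X(u,s)=\leb\{v:X(u,s)=X(v,s)\}$ and on the coincidence sets is genuinely discontinuous, since coincidences may in principle be created or destroyed in the limit and $1/m_X$ blows up on small clusters. I would resolve this by using the $(1/2+)$-Hölder estimates to show that, almost surely, the limiting configuration has no degenerate contacts and that $m_{X_n}\to m_X$ and $\pi_{X_n}\to\pi_X$ in the sense needed for the discrete drift and brackets to converge to their continuum expressions. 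Granting this, the martingale property of $M^X(u,\cdot)$ follows by transferring the uniformly integrable martingale property of $M^{X_n}(u,\cdot)$ to the limit, and $(R4)$ follows by identifying the limit of the martingales $M^{X_n}(u,\cdot)M^{X_n}(v,\cdot)-[M^{X_n}(u,\cdot),M^{X_n}(v,\cdot)]$, which completes the verification.
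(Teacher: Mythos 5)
Your overall architecture (finite step-function systems, uniform estimates, tightness in $D([0,1],C([0,\infty)))$, Skorohod representation, identification of the limit) is the same as the paper's, and several of your observations are correct and are used there: the centre of mass $(X^n_t,1)_{L_2}$ is a Brownian motion, your energy identity holds because the fragmentation drift $\xi_n-\pr_{X^n_t}\xi_n$ is orthogonal to $L_2(X^n_t)$, and the key quantitative input is indeed a bound on $\E\int_0^t N_n(s)\,ds$ with $N_n(s)=\int_0^1 m_{X_n}(u,s)^{-1}du$, into which the $(\tfrac12+)$-H\"older regularity of $g$ and $\xi$ enters. However, the mechanism you invoke for that bound is wrong for this model: particles with distinct interaction potentials never coalesce --- the drift immediately fragments every cluster, and at typical times the limiting system may even consist of infinitely many distinct clusters. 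The bound is not a coalescence estimate but an occupation-time (stickiness) estimate: one controls $\int_0^t\p\{m_n(u,s)<r\}\,ds$ by applying It\^o's formula to the product $\bigl(X^n(u+r,\cdot)-X^n(u,\cdot)\bigr)\bigl(X^n(u,\cdot)-X^n(u-r,\cdot)\bigr)$, and near the endpoints by a Girsanov change of measure reducing to a sitting-time bound for sticky semimartingales (lemmas~\ref{lemma_estim_of_mass}, \ref{lemma_estim_of_mass_near_0}, \ref{lemma_estim_of_mass_near_1} and Proposition~\ref{proposition_estim_of_dif_rate}). A secondary gap of the same kind: existence of the finite system itself is nontrivial (sticky reflection with mass-dependent rates); the paper gets it from the Dirichlet form of~\cite{Konarovskyi_SR:2017}, and only for initial data outside an exceptional set, which is why the approximating $g_n$ must be chosen in the special sets $\Theta_{\xi_n}$.

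The fatal gap is your identification step. You propose to show that a.s.\ the limit has ``no degenerate contacts'' and that $m_{X_n}\to m_X$ and $\pi_{X_n}\to\pi_X$ strongly enough to pass to the limit in the drift and the brackets. This is precisely what cannot be done here: under a.s.\ convergence $X^n\to X$ new coincidences can be created in the limit, so $g\mapsto\pr_g$ (equivalently $m_\cdot$, $\pi_\cdot$) is only semicontinuous, and the limit of the quadratic-variation densities is a priori a projection onto a \emph{larger} subspace than $L_2(X_t)$. The direct route you describe works for the coalescing (modified Arratia) flow, where the number of distinct particles is finite and decreasing in time; the paper states explicitly that this argument is unavailable in the sticky-reflected case, and its main technical contribution is the replacement: recast $(R1)$--$(R4)$ as the SDE $dX_t=\pr_{X_t}dW_t+(\xi-\pr_{X_t}\xi)\,dt$ in $\Li$ (Theorem~\ref{theorem_existence_in_general_case1}(ii)) and identify the limit at the operator level --- lower semicontinuity of $g\mapsto\|\pr_g h\|_{L_2}$ (Lemma~\ref{lemma_lower_semicontinuity_of_pr}) shows the limiting density $P_t$ dominates $\pr_{X_t}$, and the structural fact that the quadratic variation density of \emph{any} continuous $\Li$-valued semimartingale satisfies $L_t\circ\pr_{X_t}=L_t$ for a.e.\ $t$ (Proposition~\ref{proposition_prop_of_quad_var_of_mart}) then forces $P_t=P_t\circ\pr_{X_t}=\pr_{X_t}$. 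Without this detour, or a genuine substitute for it, your proof does not close; the convergence of cluster structures you assume is a consequence of the paper's argument (Theorem~\ref{theorem_identification_of_limit}(d)), not an input one can obtain from H\"older estimates.
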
 

\begin{remark} %
The processes $M^X(0,\cdot )$ and $M^X(1,\cdot )$ are also square-integrable martingales under the assumptions of Theorem~\ref{theorem_existence_in_general_case}. This directly follows from the fact that they are continuous local martingales and the estimate of their quadratic variation in Corollary~\ref{corollary_estim_of_diff_rate_at_0_and_1}. We do not include this result into condition $(R3)$ to preserve the equivalence between $(R1)-(R4)$ and SDE~\eqref{f_the_main_equation} below (see Theorem~\ref{theorem_existence_in_general_case1} for more details).
\end{remark}

We note that the piecewise H\"older continuity of functions $g$ and $\xi$ is a technical assumption required by our argument (see Remark~\ref{rem_on_holder_continuity_of_initial_condition} below for more details), and we believe it can be removed.

The random element $X$ from Theorem~\ref{theorem_existence_in_general_case} can be interpreted as a weak solution to the system of equations~\eqref{equ_system_of_equation_for_particles}. In particular, for the coalescing particle system (if $\xi=0$), Marx in~\cite{Marx2017} showed that for any family of processes $X$ which satisfies $(R1)-(R4)$ there exists a Brownian sheet $W$ (possibly on an extended probability space) such that $X$ solves system of equations~\eqref{equ_system_of_equation_for_particles}. We believe that the same result can be obtained for any interaction potential $\xi$, using the same argument.

We would like to compare the model with the modified massive Arratia flow described by a system of continuous martingales on the real line which satisfies the same conditions with $\xi=0$~\cite{Konarovskyi:2014:arx,Konarovskyi:2017:EJP,Konarovskyi_LDP:2015}, see also~\cite{Arratia:1979,MR3433579,MR2329772,MR2671379,MR2094432,Le_Jan:2004,Riabov:2017,MR31557821} for the classical Arratia flow and the Brownian web, where particles coalesce and do not change their diffusion rate, and~\cite{MR31557821,Schertzer:2017,Sun:2008} for the Brownian net, which is the massless version of the flow constructed here. The main difference between the constructed particle system and the modified massive Arratia flow is an additional drift potential which leads to the dispersion of particles and makes the model very complicated for construction. Moreover, methods proposed there cannot be applied to the sticky-reflected particle system. In the pictures, computer simulations of both systems are given.

  \begin{figure}[H]
      \centering
      \includegraphics[width=8.2cm,height=4.8cm]{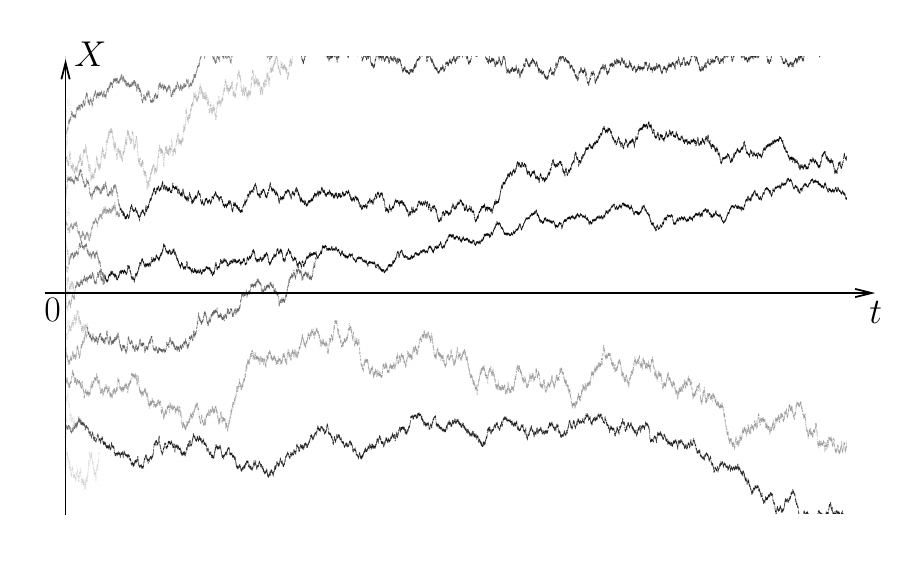}
      \includegraphics[width=8.2cm,height=4.8cm]{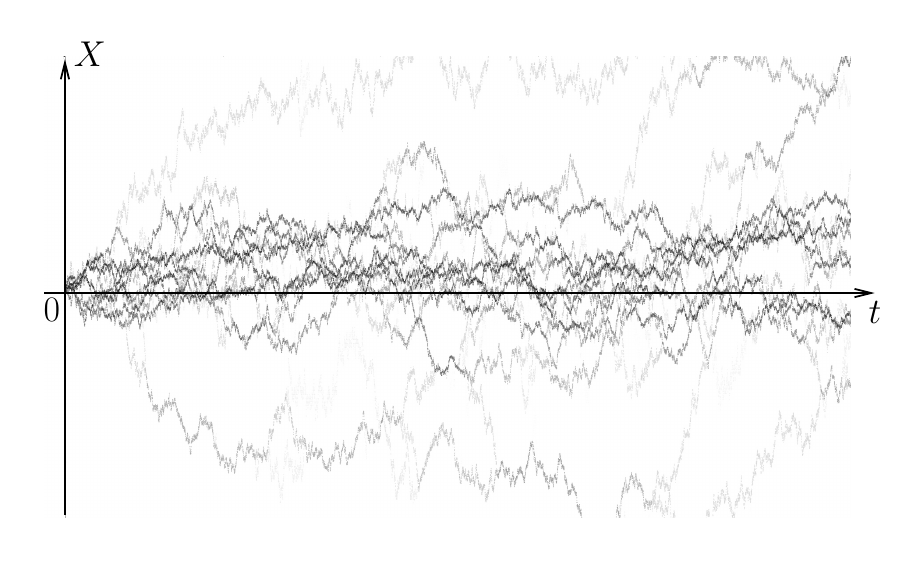}
      \caption {The modified massive Arratia flow (left), where particles started from every point of an interval, and the sticky-reflected particle system (right) with interacting potential $\xi$ which equals the identity function, where all particles started at 0. Grayscale colour coding is illustrating the atom sizes.}
  \end{figure}

  \begin{figure}[H]
      \centering
      \includegraphics[width=8.2cm,height=4.8cm]{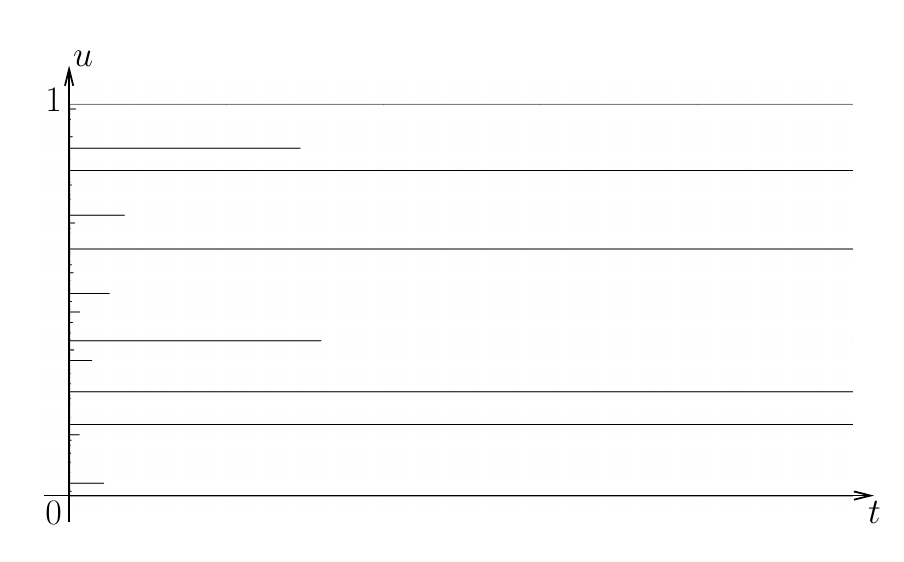}
      \includegraphics[width=8.2cm,height=4.8cm]{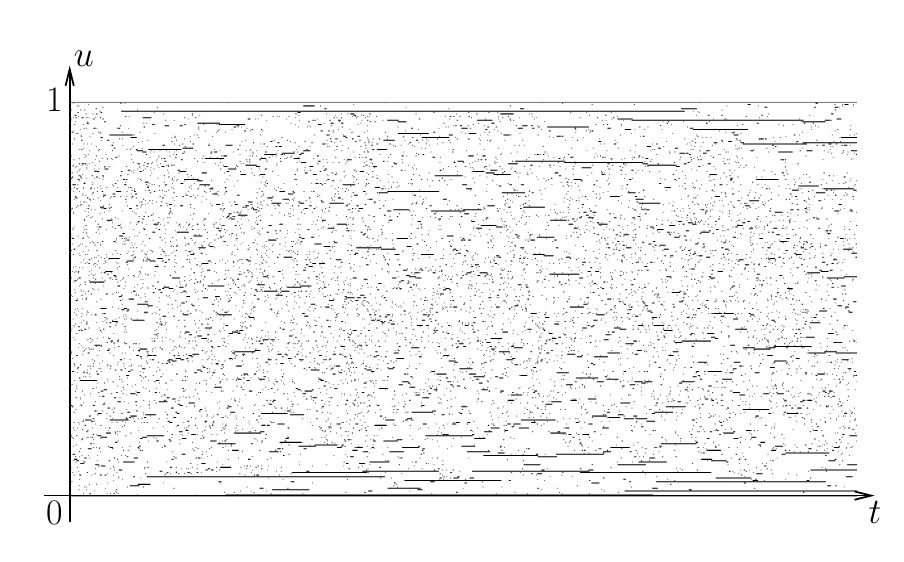}
      \caption {The clusters behaviour of the modified massive Arratia flow (left) and the sticky-reflected particle system (right),  where at every time $t$ dots represent the ends of clusters, i.e., the ends of the intervals $\{ v:\ X(u,t)=X(v,t) \}$, $u \in [0,1]$.}
  \end{figure}

In order to construct the family of processes $X$, we use the approximation of the model by finite particle systems. We first state some estimates for the evolution of particle masses in Section~\ref{sec:a_priory_estimates}. It allows proving the tightness. The main problem is to check that the limiting system of processes satisfies properties $(R1)-(R4)$. To show this, we replace system of equations~\eqref{equ_system_of_equation_for_particles} with an equation in some Hilbert space that has discontinuous coefficients and prove that the new equation has solutions. After that, we show the connection between solutions to the new equation and system~\eqref{equ_system_of_equation_for_particles}.  

For $p \in [1,\infty]$ let $L_p^{\uparrow}$ denote the space of non-decreasing $p$-integrable (with respect to the Lebesgue measure on $[0,1]$ denoted by $\leb$) functions from $[0,1]$ to $\R$, and $\pr_f$ be the projection in $L_2:=L_2([0,1],\leb)$ on the linear subspace $L_2(f)$ of $\sigma(f)$-measurable functions. Let also $W_t$, $t \geq 0$, be a cylindrical Wiener process on $L_2$. System of equations~\eqref{equ_system_of_equation_for_particles} can be rewritten as one SDE
\begin{equation}\label{f_the_main_equation}
dX_t=\pr_{X_t}dW_t+(\xi-\pr_{X_t}\xi)dt,\quad X_0=g
\end{equation} 
in the space $\Li$ due to the form of the projection operator, where $X_t=X(\cdot ,t)\in \Li$. The second contribution of the present paper is the development of new methods for solving equation~\eqref{f_the_main_equation}, and the establishing of a connection between solutions to such an equation and families of semimartingales satisfying $(R1)-(R4)$. We remark that equation~\eqref{f_the_main_equation} can be interpreted as an infinite-dimensional analog of the equation for a sticky-reflected Brownian motion on the half-line
\[
  dx(t)=\I_{\left\{ x(t)>0 \right\}}dw(t)+\lambda\I_{\left\{ x(t)=0 \right\}}dt
\]
for which the question of the existence and uniqueness of solutions is non-trivial (see, e.g.,~\cite{Engelbert:2014}). In our case, the uniqueness of solutions to~\eqref{f_the_main_equation} remains an open problem.

\begin{theorem}\label{theorem_existence_in_general_case1}
\begin{enumerate}
\item[(i)] For each $\delta>0$, $g\in L^{\uparrow}_{2+\delta}$ and $\xi\in  L^{\uparrow}_{\infty}$ there exists a weak solution\footnote{see Definition~\ref{definition_solution}} to SDE \eqref{f_the_main_equation}.

\item[(ii)] Let $Y=\{Y(u,t),\ u\in[0,1],\ t\geq 0\}$ be a random element in the Skorohod space $D([0,1],C[0,\infty))$ and $X_t$, $t \geq 0$, be a continuous process in $\Li$ such that for every $t\geq 0$ almost surely $X_t=Y(\cdot ,t)$ in $L_2$ and $\E \|X_t\|_{L_2}^2<\infty$. Then the random element $Y$ satisfies $(R1)-(R4)$ if and only if the process $X_t$, $t \geq 0$, is a weak solution to~\eqref{f_the_main_equation}.
\end{enumerate}
\end{theorem}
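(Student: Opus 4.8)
The plan is to prove the equivalence (ii) first, since it is essentially a dictionary between the componentwise description $(R1)$--$(R4)$ and the Hilbert-space equation, and then to obtain the existence (i) by a finite-particle approximation in which (ii) supplies both the object to be verified in the limit and the driving noise. Throughout I would use the explicit form of the projection: for $f\in\Li$ the operator $\pr_f$ is conditional expectation with respect to the level sets of $f$, so that $(\pr_{X_s}\xi)(u)=\frac{1}{m_X(u,s)}\int_{\pi_X(u,s)}\xi(v)\,dv$ and $\pr_{X_s}$ is the integral operator with kernel $\frac{\I_{\{X(u,s)=X(v,s)\}}}{m_X(u,s)}$.

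For (ii), the implication $(R1)$--$(R4)\Rightarrow$ solution starts from the remark that, with the above identities, $(R3)$ states exactly that $M^X_t:=X_t-g-\int_0^t(\xi-\pr_{X_s}\xi)\,ds$ is componentwise a continuous square-integrable martingale, while $(R4)$ states that its operator quadratic variation is $\int_0^t\pr_{X_s}\,ds$. Using the square-integrability in $(R3)$, the continuity of $X$ in $\Li$ and Fubini I would first check that $M^X$ is a genuine $\Li$-valued martingale with $\langle\!\langle M^X\rangle\!\rangle_t=\int_0^t\pr_{X_s}\,ds$. The decisive step is a martingale representation adapted to the degeneracy of the projection: on a possibly enlarged space I would take an independent cylindrical Wiener process $B$ on $L_2$ and set $dW_s=dM^X_s+(I-\pr_{X_s})\,dB_s$. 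Since $\pr_{X_s}$ is an orthogonal projection, a covariation computation gives $\langle\!\langle W\rangle\!\rangle_t=tI$, so $W$ is cylindrical; moreover $(I-\pr_{X_s})\,dM^X_s=0$, as the identity $(I-\pr_{X_s})\pr_{X_s}(I-\pr_{X_s})=0$ forces this integral to have vanishing quadratic variation, whence $\pr_{X_s}\,dW_s=\pr_{X_s}\,dM^X_s=dM^X_s$ and $X_t$ solves \eqref{f_the_main_equation}. The converse reads the same identities backwards: membership $X_t\in\Li$ yields the monotone version in $(R2)$, $X_0=g$ gives $(R1)$, and evaluating the stochastic integral $\int_0^t\pr_{X_s}\,dW_s$ componentwise produces the martingale and drift of $(R3)$ and, by the It\^o isometry applied to the kernel of $\pr_{X_s}$, the joint quadratic variation of $(R4)$.

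For (i) I would construct the solution by approximation. Given $g\in L^{\uparrow}_{2+\delta}$ and $\xi\in L^{\uparrow}_\infty$, I would choose non-decreasing step functions $g_n\to g$ in $L_{2+\delta}$ and $\xi_n\to\xi$ subordinate to a common partition of $[0,1]$ into $n$ intervals; this turns the model into the finite system of $n$ particles carrying the interval masses and potentials, whose existence and mass estimates are provided in Section~\ref{section_finite_system}. Each such system is a weak solution of the finite-dimensional version of \eqref{f_the_main_equation} (equivalently, satisfies a finite analogue of $(R1)$--$(R4)$). From the mass and modulus bounds I would deduce tightness of $\{X^n\}$ in $D([0,1],C([0,\infty)))$, pass to a convergent subsequence via Skorokhod representation, and identify the limit $X$; verifying that $X$ obeys the martingale problem of $(R3)$--$(R4)$ and then applying the representation from (ii) exhibits the limiting driving noise and yields a weak solution for the original data $(g,\xi)$.

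The main obstacle in both parts is the discontinuity of the coefficient $f\mapsto\pr_f$. The cluster partition $\pi_X(u,s)$, and hence both the drift $\xi-\pr_{X_s}\xi$ and the covariance kernel $\frac{\I_{\{X(u,s)=X(v,s)\}}}{m_X(u,s)}$, jump precisely when particles coalesce or split, so $\pr_{X^n_s}$ cannot be expected to converge to $\pr_{X_s}$ pathwise. I would therefore avoid any pointwise continuity requirement and instead pass to the limit in the integrated, martingale-problem form, proving convergence of $\int_0^t(\xi-\pr_{X^n_s}\xi)\,ds$ and of $\int_0^t\pr_{X^n_s}\,ds$; the monotonicity of each $X^n_s$ together with uniform control on the number and the sizes of clusters should confine the times at which the partition changes to a set of vanishing Lebesgue measure, so that these time integrals are stable under the limit and the limiting process inherits exactly the drift of $(R3)$ and the quadratic variation of $(R4)$.
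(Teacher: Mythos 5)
The central gap is the identification of the limit in part (i). You propose to pass to the limit in $\int_0^t(\xi-\pr_{X^n_s}\xi)\,ds$ and $\int_0^t\pr_{X^n_s}\,ds$ by arguing that monotonicity plus ``uniform control on the number and the sizes of clusters'' confines the times at which the partition changes to a Lebesgue-null set. That is precisely the argument that works for the coalescing (modified Arratia) flow, where the number of distinct particles is finite and non-increasing; in the sticky-reflected case it fails, since particles split and the system contains infinitely many distinct particles at a tight set of times, so there is no uniform control on cluster counts along the approximating sequence. More fundamentally, $X^n_s\to X_s$ in $L_2$ gives no pathwise convergence of $\pr_{X^n_s}$: the map $g\mapsto\pr_g$ is only lower semi-continuous (strictly increasing $g_n$ converging to a constant have $\pr_{g_n}=\id$ while the limit projects onto constants), so the weak limit $P^\infty$ of $\pr_{X^n_\cdot}$ could a priori be strictly \emph{larger} than $\pr_{X_\cdot}$, and nothing in your argument excludes this. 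The paper closes exactly this gap with two ingredients absent from your proposal: Lemma~\ref{lemma_lower_semicontinuity_of_pr}, which bounds $\pr_{X_t}$ from above by any limit, and Proposition~\ref{proposition_prop_of_quad_var_of_mart}, the structural fact that the quadratic-variation density $L_t$ of any $\Li$-valued continuous semimartingale satisfies $L_t\circ\pr_{X_t}=L_t$ for a.e.\ $t$; combining the two via the deterministic Proposition~\ref{proposition_conv_of_proj} forces $P^\infty_t=P^\infty_t\circ\pr_{X_t}=\pr_{X_t}$. Without this (or a substitute) the limit cannot be identified, and this is the paper's main new idea.

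There are two further problems. First, for (i) you propose tightness of $\{X^n\}$ in $D([0,1],C([0,\infty)))$, but under the hypotheses of Theorem~\ref{theorem_existence_in_general_case1}~(i) ($g\in L^{\uparrow}_{2+\delta}$, $\xi\in L^{\uparrow}_{\infty}$ only) the Skorohod-space criterion needs the H\"older-type increment bounds $(c1)$--$(c2)$ of Proposition~\ref{proposition_tightness_in_D_pi}, which are unavailable; indeed the limit need not admit any modification in $D([0,1],C([0,\infty)))$. This is why the paper proves (i) by tightness of $\overline{X}^n=(M^n,A^n,([M^n(e_i),M^n(e_j)]),\langle M^n\rangle)$ in $\W$ (Proposition~\ref{proposition_tightness_in_W}, via Lemma~\ref{lemma_compact_set_in_L_2_uparrow} and the Jakubowski and Aldous criteria), and reserves the Skorohod-space tightness for Theorem~\ref{theorem_existence_in_general_case} under the piecewise H\"older assumptions. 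Second, in (ii) the direction ``solution $\Rightarrow(R1)$--$(R4)$'' cannot be done by ``evaluating the stochastic integral componentwise'': $L_2$-elements have no pointwise values, and $\pr_{X_s}$ is not an integral operator for general $X_s\in\Li$ — your kernel formula makes sense only when $X_s\in\St$, which holds for a.e.\ $s$ only after invoking $\E\int_0^t\|\pr_{X_s}\|_{HS}^2ds<\infty$ and Lemma~\ref{lemma_connection_HS_with_int}. The paper's proof instead approximates $Y(u,\cdot)$ by $(X_\cdot,h^u_\eps)_{L_2}$ (Lemma~\ref{lemma_molification}), uses domination to preserve the martingale property in the limit, and computes the quadratic variations through Lemma~\ref{lemma_prog_estim}; some argument of this kind is needed to make your ``reading the identities backwards'' rigorous. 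Your enlarged-space construction of the driving Wiener process in the other direction of (ii) is fine, though under Definition~\ref{definition_solution} it is not required.
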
 

We call a weak solution to equation~\eqref{f_the_main_equation} a {\it coalescing-fragmentating Wasserstein dynamics} or shortly a {\it CFWD}. According to Theorem~\ref{theorem_existence_in_general_case1}~(ii), a random element in the Skorohod space $D([0,1],C[0,\infty))$ satisfying $(R1)-(R4)$ is also called a CFWD.

Theorem~\ref{theorem_existence_in_general_case} will immediately follow from Theorem~\ref{theorem_existence_in_general_case1} and the existence of a solution to~\eqref{f_the_main_equation} with a modification from the Skorohod space $D([0,1],C[0,\infty))$ (see Section~\ref{sub:cfwd_as_a_family_of_semimartingales_proof_of_theorem_theorem_existence_in_general_case_}).

Next we briefly describe the main idea of proof of Theorem~\ref{theorem_existence_in_general_case1}. The first part of the theorem is proved using a finite particle approximation. We first construct a solution to equation~\eqref{f_the_main_equation} if $\xi$ and $g$ are step functions, using the Dirichlet form approach. This corresponds to the case of a finite particle system. Then we approximate any $\xi$ and $g$ by step functions and show that solutions to~\eqref{f_the_main_equation} are tight and every limiting process solves equation~\eqref{f_the_main_equation}. The tightness argument is based on the control of the particle mass, and is rather standard. We recall that, in the case of the modified massive Arratia flow (if $\xi=0$), the tightness followed from the estimate $$\p\{m(u,t)<r\}\leq \frac{C\sqrt{r}}{\sqrt{t}}(g(u+r)-g(u))$$ \cite[Lemma~4.1]{Konarovskyi:2017:EJP}, which can be proved using the coalescing of particles. Now, particles do not coalesce.  But we can control the integral $\int_0^t\p\{m(u,s)<r\}ds$ (see lemmas~\ref{lemma_estim_of_mass},~\ref{lemma_estim_of_mass_near_0} and~\ref{lemma_estim_of_mass_near_1}). This is enough for the tightness in Section~\ref{section_tightness_results}.

A very complicated problem is to check that a limiting process satisfies SDE~\eqref{f_the_main_equation}. For the modified massive Arratia flow, we showed this, using the fact that the number of distinct particles at each positive time is finite and decreases as time increases because particles coalesce (see Theorem~5.5~\cite{Konarovskyi:2017:EJP}). In the sticky-reflected case of interaction, the model also consists of a finite number of distinct particles (clusters) for almost all times. However, an infinite number of distinct particles can appear.  More precisely, with probability 1 the system admits an infinite number of distinct particles on a dense subset of the time interval if and only if $\xi$ takes an infinite number of values~\cite[Theorem~1.1]{Konarovskyi:TVP:2021}. Therefore, we cannot use the methods which work for the modified massive Arratia flow.

 Let us roughly explain a new approach which we propose in order to show that a limiting process solves~\eqref{f_the_main_equation}. Let for every $n\geq 1$, $X^n$ be a solution to~\eqref{f_the_main_equation} with initial condition $g_n$ and interacting potential $\xi_n$. Then the $L_2$-valued martingale part $M^{X_n}$ of $X_n$ has the quadratic variation process $\langle\langle M^{X_n}\rangle\rangle_t=\int_{ 0 }^{ t } \pr_{X^n_s}ds $, $s\geq 0$, which is a continuous operator-valued process (for more details see Definition~\ref{definition_solution}). We assume that $\{X^n,\ n\geq 1\}$ converges to $X$ and the quadratic variations $\left\{\langle\langle M^{X_n}\rangle\rangle,\ n\geq 1\right\}$ to $\int_0^{\cdot}P_sds$, where $P_s$, $s\geq 0$, is an oparator-valued process. For the identification of the limit, it is needed to prove that $P_s=\pr_{X_s}$ for almost all $s$. Since $X^n$, $n\geq 1$, are continuous semimartingales, $X$ also is a continuous semimartingale with quadratic variation $\int_0^{\cdot}P_sds$. In order to show that $P_s=\pr_{X_s}$, we use the following trick. By the lower semi-continuity of the map $g\mapsto\|\pr_gh\|_{L_2}$ (see Lemma~\ref{lemma_lower_semicontinuity_of_pr}) and the fact that $\pr_{X^n_t}$ is a projection, it is possible to show that $P_t$ is also a projection but maybe on a larger space than $L_2(X_t)$. Next, let $Z_t$, $t\geq 0$, be a continuous $\Li$-valued martingale with quadratic variation $\int_0^{t}L_sL^{*}_sds$, $t\geq 0$, where $L_t$, $t\geq 0$, is an adapted operator-valued process. We prove in Proposition~\ref{proposition_prop_of_quad_var_of_mart} that $L_t\circ\pr_{Z_t}=L_t$ for almost all $t$. This immediately implies $P_t=P_t\circ\pr_{X_t}=\pr_{X_t}$. The proposed method will also work for a wider class of SDEs on $\Li$ with discontinuous coefficients. Recently, in~\cite{Konarovskyi:SPDE:2021}, we also adapted this approach to another SPDE with discontinuous coefficients, which we call a sticky-reflected stochastic heat equation.  Note that Proposition~\ref{proposition_prop_of_quad_var_of_mart} seems to be of independent interest.

\subsection{Preliminaries and notation}\label{section_preliminaries}

We will denote the set of non-decreasing c\`{a}dl\'{a}g functions from $(0,1)$ to $\R$ by $D^{\uparrow}$. The set of all step functions from $D^{\uparrow}$ with a finite number of jumps is denoted by $\St$. If $g\in D^{\uparrow}$ is bounded, then we set
$$
g(0)=\lim_{u\downarrow 0}g(u)\quad\mbox{and}\quad g(1)=\lim_{u\uparrow 1}g(u).
$$

Let $(E,\F,P)$ be a complete probability space and $\h\subset\F$. Then $\sigma^{*}(\h)$ denotes the $P$-completion of $\sigma(\h)$. If $g:E\to\R$ is an $\F$-measurable function, then $\sigma^{*}(g):=\sigma^{*}(\{g^{-1}(A):\ A\in\B(\R)\})$, where $\B(F)$ denotes the Borel $\sigma$-field on a topological space $F$. 

\begin{remark}\label{remark_def_of_sigma_g}
We note that $g_1=g_2$ $P$-a.e. implies $\sigma^{*}(g_1)=\sigma^{*}(g_2)$.
\end{remark}

For $p\in[1,+\infty]$ we denote the space of $p$-integrable (essential bounded, if $p=+\infty$) functions (more precisely equivalence classes) from $[0,1]$ to $\R$ by $L_p$. The usual norm in $L_p$ is denoted by $\|\cdot\|_{L_p}$ and the usual inner product in $L_2$ by $(\cdot,\cdot)_{L_2}$. 

For a Borel measurable function $g:(0,1)\to\R$ the space of all $\sigma^{*}(g)$-measurable functions from $L_2$ is denoted by $L_2(g)$. By Remark~\ref{remark_def_of_sigma_g}, $L_2(g)$ is well-defined for every equivalence class $g$ from $L_p$.

Let $\HS(L_2)$ denote the space of Hilbert-Schmidt operators on $L_2$ with the inner product given by
\begin{equation}\label{f_HS_inner_product}
(A,B)_{HS}=\sum_{i=1}^{\infty}(Ae_i,Be_i)_{L_2},\quad A,B\in\HS(L_2),
\end{equation}
where $\{e_i,\ i\in\N\}$ is an orthonormal basis of $L_2$. We note that the inner product does not depend on the choice of basis $\{e_i,\ i\in\N\}$. The corresponding norm in $\HS(L_2)$ is denoted by $\|\cdot\|_{HS}$.

If $H$ is a Hilbert space with the inner product $(\cdot,\cdot)_H$, then $L_2([0,T],H)$ will denote the Hilbert space of 2-integrable $H$-valued functions on $[0,T]$ endowed with the inner product 
$$
(f,g)_{T,H}=\int_0^T(f_t,g_t)_Hdt,\quad f,g\in L_2([0,T],H).
$$
The corresponding norm is denoted by $\|\cdot\|_{T,H}$. If $H=\HS(L_2)$, then the inner product and the norm will be denoted by $(\cdot,\cdot)_{T,HS}$ and $\|\cdot\|_{T,HS}$, respectively.

Let $C(I,E)$ denote the space of continuous functions from $I\subset\R$ to a Banach space $E$ equipped with the topology of uniform convergence on compacts. For simplicity we also write $C[0,T]$ (resp. $C[0,\infty)$) instead of $C([0,T],\R)$ (resp. $C([0,\infty),\R )$). The uniform norm in $C[0,T]$ is denoted by $\|\cdot\|_{C[0,T]}$.

The set of all infinitely differentiable real-valued functions on $\R^m$ with all partial derivatives bounded is denoted by $\Cfb(\R^m)$ and $\Cfo(\R^m)$ is the set of functions from $\Cfb(\R^m)$ with compact support.

Let $D([a,b],E)$ denote the space of c\`{a}dl\'{a}g functions from $[a,b]$ to a Polish space $E$ with the usual Skorohod distance (see, e.g., Section~3~\cite{Billingsley:1999}\footnote{In contrast to the definition of the Skorohod space in~\cite{Billingsley:1999}, we additionally assume that each function from $D([a,b],E)$ is continuous at $b$.} and Section~\ref{section_compact_sers_in_D}). 

The Lebesgue measure on $\R$ will be denoted by $\leb$.
 
The set of functions from $L_p$ which have a non-decreasing modification is denoted by $L_p^{\uparrow}$. By Proposition~A.1~\cite{Konarovskyi:2017:EJP}, $\Li$ is a closed set in $L_2$ and each $f\in\Li$ has a unique modification from $D^{\uparrow}$. Therefore, considering an element from $\Li$ as a function, we will always take its modification from $D^{\uparrow}$. We also set $L_p^{\uparrow}(g):=L_p^{\uparrow}\cap L_p(g)$ for every Borel measurable function $g:(0,1) \to \R $ and $p \in [1,+\infty]$.
 
For $g\in L_2^{\uparrow}$ we denote the projection operator in $L_2$ on the closed linear subspace $L_2(g)$ by $\pr_g$. Let $\#g$ denote the number of distinct points of the set $\{g(u),\ u\in(0,1)\}$, where the modification $g$ is taken from $D^{\uparrow}$. We will prove in Section~\ref{section_properties_of_pr} (see Lemma~\ref{lemma_connection_HS_with_int} there) that $\# g=\|\pr_g\|_{HS}^2$.

\begin{remark}\label{remark_cadlag_modification_of_pr}
  Since $\pr_g$ maps $L_2^{\uparrow}$ into $L_2^{\uparrow}$ (see, e.g., Lemma~\ref{lemma_monotonisity_of_pr} below), for every $\xi\in L_2^{\uparrow}$ and $u\in(0,1)$ we will understand $\left(\pr_g\xi\right)(u)$ as a value of the function $f\in D^{\uparrow}$ at $u$, where $\pr_g\xi=f$ a.e., and 
$$
\left(\pr_g\xi\right)(0)=\lim_{u\downarrow 0}f(u)\quad\mbox{and}\quad\left(\pr_g\xi\right)(1)=\lim_{u\uparrow 1}f(u),
$$
if the limits exist.
\end{remark}

We denote the filtration generated by a process $X_t$, $t\geq 0$, by $(\F^{\circ,X}_t)_{t\geq 0}$, that is, $\F^{\circ,X}_t=\sigma(X_t,\ s\leq t)$, $t\geq 0$. The smallest right-continuous and complete extension of $(\F^{\circ,X}_t)_{t\geq 0}$ is denoted by $(\F^X_t)_{t\geq 0}$ (see, e.g., Lemma~7.8~\cite{Kallenberg:2002} for existence). The filtration $(\F^X_t)_{t\geq 0}$ is called the {\it natural filtration} generated by $X$. 

\begin{remark}\label{remark_filtration}
If $X_t$, $t\geq 0$, is an $L_2$-valued process and $\{Y(u,t),\ u\in[0,1],\ t\geq 0\}$ is a random element in $D([0,1],C[0,\infty))$ such that $X_t=Y(\cdot,t)$ in $L_2$ a.s. for all $t\geq 0$, then $(\F_t^X)_{t \geq 0}$ coincides with the smallest right-continuous and complete extension of the filtration
$$
\big(\sigma(Y(u,s),\ u\in[0,1],\ s\leq t)\big)_{t \geq 0}.
$$
This can be proved using, e.g., Lemma~\ref{lemma_molification} below.
\end{remark}

Now we give a definition of weak solution to equation~\eqref{f_the_main_equation}.

\begin{definition}\label{definition_solution}
An $\Li$-valued random process $X_t$, $t\geq 0$, is called a {\it weak solution to SDE}~\eqref{f_the_main_equation}
if
\begin{enumerate}
\item[$(E1)$] $X_0=g$;

\item[$(E2)$] $X\in C([0,\infty),\Li)$;

\item[$(E3)$] $\E\|X_t\|_{L_2}^2<\infty$ for all $t\geq 0$;

\item[$(E4)$] the process 
$$
M^X_t:=X_t-g-\int_0^t(\xi-\pr_{X_s}\xi)ds,\quad t\geq 0,
$$
is a continuous square-integrable $(\F^X)$-martingale\footnote{see Section~2.1.3~\cite{Gawarecki:2011} for the introduction to martingales in a Hilbert space} in $L_2$ with quadratic variation process 
$$
\langle\langle M^X \rangle\rangle_t=\int_0^t\pr_{X_s}ds, \quad t\geq 0.
$$
\end{enumerate}
\end{definition}

\begin{remark}\label{remark_def_of_sol}
\begin{enumerate}
\item[(i)] The process 
$$
A^X_t:=\int_0^t(\xi-\pr_{X_s}\xi)ds,\quad t\geq 0,
$$
is continuous in $L_2$.

\item[(ii)] Condition $(E4)$ is equivalent to
\begin{enumerate}
\item[$(E'4)$] For each $t\geq 0$ $\E\|M^X_t\|_{L_2}^2<\infty$ and for each $h\in L_2$ the process
$$
(M^X_t,h)_{L_2}=(X_t,h)_{L_2}-(g,h)_{L_2}-\int_0^t(\xi-\pr_{X_s}\xi,h)_{L_2}ds,\quad t\geq 0,
$$
is a continuous square-integrable $(\F^X)$-martingale with quadratic variation  
$$
[(M^X_{\cdot},h)_{L_2}]_t=\int_0^t\|\pr_{X_s}h\|_{L_2}^2ds, \quad t\geq 0.
$$
\end{enumerate}
\item[(iii)] For each $t\geq 0$ $\E\|X_t\|_{L_2}^2<\infty$ provided $\E\|M^X_t\|_{L_2}^2<\infty$, since $\|A_t^X\|_{L_2}\leq 2\|\xi\|_{L_2}t$.

\item[(iv)] Using the same argument as in the proof of Lemma~2.1~\cite{Gawarecki:2011}, one can show that the increasing process of $M^X$ is given by 
$$
\langle M^X \rangle_t=\int_0^t\|\pr_{X_s}\|_{HS}^2ds,\quad t\geq 0,
$$
that is, 
$$
\|M^X_t\|_{L_2}^2-\int_0^t\|\pr_{X_s}\|_{HS}^2ds,\quad t\geq 0,
$$
is an $(\F^X)$-martingale. In particular, $\E\|M^X_t\|_{L_2}^2=\E\int_0^t\|\pr_{X_s}\|_{HS}^2ds<\infty$ for all $t\geq 0$.

\item[(v)] If $X$ is a weak solution to SDE~\eqref{f_the_main_equation}, then there exists a cylindrical Wiener process $W_t,\ t\geq 0$, in $L_2$ (maybe on an extended probability space) such that
$$
X_t=g+\int_0^t\pr_{X_s}dW_s+\int_0^t(\xi-\pr_{X_s}\xi)ds,\quad t\geq 0,
$$
by Corollary~2.2~\cite{Gawarecki:2011}.
\end{enumerate} 
\end{remark}

\subsection{Organisation of paper}

In Section~\ref{sec:a_priory_estimates}, a priori estimates for a CFWD as a family of semimartingales satisfying $(R1)-(R4)$ are obtained. We will use them in Section~\ref{section_tightness_results} in order to prove the tightness of a sequence of CFWDs in the space of continuous $L_2$-valued paths (Section~\ref{sub:tightness_of_weak_solutions}) and in the Skorohod space $D([0,1],C[0,\infty))$ (Section~\ref{subsection_tightness_in_Skorohod_space}). Using the tightness results, a CFWD for a general initial condition and a general interaction potential will be constructed as a limit of finite particle systems in Section~\ref{construction_of_cfwd}. We first recall the construction of a reversible CFWD via the Dirichlet form approach in Section~\ref{subsection_Dirichlet_form_approach}, which is needed for the description of a finite sticky-reflect particle system. Then we show the existence of a weak solution to SDE~\eqref{f_the_main_equation} in Section~\ref{sub:existence_of_solutions_to_sde_f_the_main_equation_proof_of_theorem_theorem_existence_in_general_case_i_}. This gives the proof of Theorem~\ref{theorem_existence_in_general_case1}~(i). The equivalence between two descriptions of the CFWD (proof of the second part of Theorem~\ref{theorem_existence_in_general_case1}) is stated in Section~\ref{sub:equivalence_between_two_formulations_of_solutions}. After this, we show that there exists a version of CFWD from the Skorohod space $D([0,1],C[0,\infty))$ that immediately implies the statement of Theorem~\ref{theorem_existence_in_general_case}. Auxiliary statements are given in the appendix. In particular, an estimate of the sitting time at zero of a positive continuous semimartingale is obtained in Section~\ref{sub:the_sitting_time_at_zero}. Properties of projection operators appearing as coefficients of SDE~\eqref{f_the_main_equation} are studied in Section~\ref{section_properties_of_pr}. In Section~\ref{subsection_limit_prop_of_proj_process}, we realize the idea described in the introduction which is used in Section~\ref{sub:identification_of_the_limit} in order to show that a limit of CFWDs is again a CFWD.  The property of the quadratic variation of continuous $\Li$-valued martingales mentioned in the introduction and also needed for the identification of the limit of CFWDs are proved in Section~\ref{subsection_prop_of_semimartingales}.

\section{A priori estimates for CFWD}\label{sec:a_priory_estimates}
The goal of this section is to get some a priori estimates of particle masses in a CFWD which we will use for the proof of the existence of CFWD. During this section we assume that $g,\xi \in D^{\uparrow}$ are fixed and $\{X(u,t),\ u\in[0,1],\ t\geq 0\}$ is a random element in $D([0,1],C[0,\infty))$ defined on a complete probability space $(\Omega,\F,\p)$ and satisfies $(R1)-(R4)$. We recall that $(\F_t^X)_{t\geq 0}$ coincides with the smallest right-continuous and complete extension of the filtration
$$
\big(\sigma(X(u,s),\ u\in[0,1],\ s\leq t)\big)_{t\geq 0},
$$
by Remark~\ref{remark_filtration}.

For notational convenience, we set $\F_t:=\F_t^X$, $m(u,t):=m_{X}(u,t)$ and $M(u,t):=M^X(u,t)$, $u\in[0,1]$, $t\geq 0$, where $m_X$ and $M^X$ were defined in Theorem~\ref{theorem_existence_in_general_case}.

\subsection{Coalescing properties}\label{sub:coalescing_properties}
In this section, we show that particles with the same interaction potential coalesce.

\begin{lemma}\label{lemma_coalescing}
If $\xi(u)=\xi(v)$ for some $u,v\in[0,1]$, then 
\begin{equation} 
  \label{equ_coalescing}
  \p\left\{X(u,t)=X(v,t)\ \ \mbox{implies}\ \ X(u,t+s)=X(v,t+s)\ \ \forall s\geq 0\right\}=1.
\end{equation}
\end{lemma}

\begin{proof}
  We assume that $u,v$ belong to $(0,1)$ and $u>v$. By $(R2)$, $(R3)$ and Lemma~\ref{lemma_monotonisity_of_pr}, 
\begin{align*}
X(u,t)-X(v,t)&=M(u,t)-M(v,t)+g(u)-g(v)
-\int_0^t\left[\left(\pr_{X_s}\xi\right)(u)-\left(\pr_{X_s}\xi\right)(v)\right]ds,\quad t\geq 0,
\end{align*}
is a continuous positive supermartingale, since $\left(\pr_{X_s}\xi\right)(u)-\left(\pr_{X_s}\xi\right)(v)\geq 0$, $s\geq 0$. Thus, coalescing property~\eqref{equ_coalescing} follows from Proposition~II.3.4~\cite{Revuz:1999}. If $u \in \{0,1\}$ or $v \in \{0,1\}$, then equality~\eqref{equ_coalescing} can be easily obtained from the continuity of the map $u\mapsto X(u,\cdot )$ at $0$ and $1$. Indeed, let for instance $u=1>v>0$. Then for an arbitrary sequence $\{ u_n,\ n\geq 1 \}$ from $(v,u)$ which increases to $u$ one has that $\xi(u_n)=\xi(v)$, by the monotonicity of $\xi$. Hence, almost surely $X(u_n,s)=X(v,s)$ for all $s\geq \tau_{v,u_n}=\inf\left\{ t:\ X(v,t)=X(u_n,t) \right\}$ and $n\geq 1$. Passing to the limit as $n\to\infty$ and using the fact that $\tau_{v,u}\geq \tau_{v,u_n}$ for all $n\geq 1$, we get~\eqref{equ_coalescing}. This completes the proof of the lemma.
\end{proof}

\begin{corollary}\label{corollary_coalescing_from_start}
  If $\xi(u)=\xi(v)$ and $g(u)=g(v)$ for some $u,v\in[0,1]$, then $X(u,\cdot)=X(v,\cdot)$ a.s. Moreover, if $g,\xi \in \St$, then there exists a partition $\{\pi_k,\ k\in[n]\}$ of $[0,1]$ and a system of continuous processes $\{x_k(t),\ t\geq 0,\ k\in[n]\}$ such that almost surely
  $$
    X(u,t)=\sum_{k=1}^nx_k(t)\I_{\pi_k}(u),\quad u\in[0,1],\ \ t\geq 0.
  $$
\end{corollary}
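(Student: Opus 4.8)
The plan is to read off the first assertion directly from Lemma~\ref{lemma_coalescing} and then bootstrap it, via the c\`adl\`ag structure in the label variable, to the global representation.

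For the first part, suppose $\xi(u)=\xi(v)$ and $g(u)=g(v)$. By $(R1)$ we have $X(u,0)=g(u)=g(v)=X(v,0)$, so the two particles start from the same point. Applying Lemma~\ref{lemma_coalescing} with $t=0$ then yields $\p\{X(u,s)=X(v,s)\ \forall s\geq 0\}=1$, that is, $X(u,\cdot)=X(v,\cdot)$ a.s.

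For the representation, recall that $\xi,g\in\St$, so both are non-decreasing step functions with finitely many jumps. I would collect all jump points of $\xi$ and of $g$ into $0=q_0<q_1<\dots<q_{n-1}<q_n=1$ and set $\pi_k:=[q_{k-1},q_k)$ for $k<n$ and $\pi_n:=[q_{n-1},1]$. By construction both $\xi$ and $g$ are constant on each $\pi_k$ (using right-continuity of elements of $\Dr$), so by the first part, for any $u,v\in\pi_k$ we have $X(u,\cdot)=X(v,\cdot)$ a.s.

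The remaining point is to promote this pairwise a.s.\ coincidence to a single a.s.\ event valid for all labels simultaneously; this is where the Skorohod structure enters and is the only delicate step. I would fix a countable dense set $D\subset[0,1]$. For every $k$ and every pair $u,v\in D\cap\pi_k$ the event $\{X(u,\cdot)=X(v,\cdot)\}$ has probability one, and since there are only countably many such pairs, their intersection $\Omega_0$ still has probability one. On $\Omega_0$, choosing some $u_k\in D\cap\pi_k$ and setting $x_k(t):=X(u_k,t)$, one has $X(u,\cdot)=x_k(\cdot)$ for all $u\in D\cap\pi_k$. Since $X$ is a random element of $D([0,1],C([0,\infty)))$, the map $u\mapsto X(u,\cdot)$ is right-continuous, and as every $u\in\pi_k$ can be approached from the right within $D\cap\pi_k$, right-continuity upgrades the identity to all $u\in\pi_k$; hence $X(u,t)=\sum_{k=1}^n x_k(t)\I_{\pi_k}(u)$ on $\Omega_0$. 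Finally, each $x_k(\cdot)=X(u_k,\cdot)$ is continuous because the target of the Skorohod space is $C([0,\infty))$, giving the claimed system of continuous processes; the boundary label $u=1$ is handled by the convention fixing one-sided limits. The main obstacle is precisely the passage from the pairwise statements to $\Omega_0$: the naive union over all $u$ is uncountable, and it is the c\`adl\`ag regularity in $u$ that makes the countable-dense reduction legitimate.
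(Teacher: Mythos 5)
Your proof is correct, and its skeleton matches the paper's: part one is obtained exactly as in the paper (apply Lemma~\ref{lemma_coalescing} at $t=0$, using $(R1)$), and the partition is the same common refinement of the jump points of $\xi$ and $g$. The one place where you diverge is the step you rightly single out as delicate — passing from pairwise a.s.\ equality to a single full-measure event for all labels at once. You do this via the c\`adl\`ag regularity in $u$: intersect over a countable dense set $D$ and use right-continuity of $u\mapsto X(u,\cdot)$ in $C([0,\infty))$ (plus continuity at $u=1$, which holds since the paper's $D([0,1],C([0,\infty)))$ consists of functions continuous at the right endpoint). The paper instead invokes the order property $(R2)$: fixing $u_k\in\pi_k$ and countably many $v_j\uparrow q_k$ with $X(u_k,\cdot)=X(v_j,\cdot)$ a.s., monotonicity sandwiches $X(u,\cdot)$ between equal processes for every $u\in\pi_k$ simultaneously. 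Both mechanisms are legitimate here, since membership in the Skorohod space is a standing assumption in this section; the monotonicity argument has the mild advantage of not using the topological structure in the label variable at all (it would apply to any ordered family satisfying $(R1)$--$(R4)$), while yours is the more generic "countable dense set plus one-sided continuity" device and would survive in settings without an order relation.
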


\begin{proof}
The first part of the corollary immediately follows from Lemma~\ref{lemma_coalescing}.  To prove the second part, we first note that the functions $\xi$ and $g$ (from $\St$) can be written as 
$$
\xi=\sum_{k=1}^n\varsigma_k\I_{\pi_k}\quad\mbox{and}\quad g=\sum_{k=1}^ny_k\I_{\pi_k},
$$
for some partition $\{\pi_k,\ k\in[n]\}$ of $[0,1]$, $\varsigma_k\leq\varsigma_{k+1}$ and $y_k\leq y_{k+1}$, $k\in[n-1]$. Hence, taking $x_k(\cdot):=X(u_k,\cdot)$, for some $u_k\in\pi_k$, the needed equality follows from the first part of the corollary and $(R2)$. The corollary is proved.
\end{proof}

\begin{remark} 
  \label{rem_finite_particle_system}
  If $g,\xi \in \St$, then the family of semimartingales $X(u,\cdot )$, $u \in [0,1]$, is described by a finite number of processes $x_k(t)$, $t\geq 0$, according to Corollary~\ref{corollary_coalescing_from_start}. In this case, we will talk about a finite number of particles whose evolution is described by $x_k(t)$, $t\geq 0$, $k \in [n]$.
\end{remark}

%

\subsection{Estimation of the mass of internal particles}%
\label{sub:mass_estimates_of_interior_particles}

In this section, we will obtain some estimates for masses of particles in CFWD. The key idea will be to replace the event $\left\{ m(u,t)<r \right\}$ that the mass of the particle labeled by $u$ is less than $r$ by the event that the particles with labels $u-r$, $u$ and $u+r$ belong to distinct clusters at time $t$. Therefore, the estimates obtained here will be better if $u$ is close to the middle of the interval. The case where $u$ is close to the ends of the interval will be considered in the next section. 

We introduce the following function
\begin{equation}\label{f_function_G}
 \begin{split}
 G(r_1,r_2,u,t)&:=2(g(u+r_2)-g(u))(g(u)-g(u-r_1))\\
 &+2(\xi(u)-\xi(u-r_1))\left[t(g(u+r_2)-g(u))+\frac{t^2}{2}(\xi(u+r_2)-\xi(u))\right]\\
 &+2(\xi(u+r_2)-\xi(u))\left[t(g(u)-g(u-r_1))+\frac{t^2}{2}(\xi(u)-\xi(u-r_1))\right].
 \end{split}
\end{equation}

\begin{lemma}\label{lemma_estim_of_mass}
 For each $u\in(0,1)$, $0<r< u\wedge(1-u)$ and $t\geq 0$ the inequality
 $$
 \int_0^t\p\{m(u,s)<r\}ds\leq rG(r,r,u,t)
 $$
  holds.
\end{lemma}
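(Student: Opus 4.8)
The plan is to control $\{m(u,s)<r\}$ through the two one-sided gaps around the label $u$. Since $0<r\le u\wedge(1-u)$, the labels $u-r,u,u+r$ all lie in $[0,1]$, and I set $D_1(s):=X(u,s)-X(u-r,s)$ and $D_2(s):=X(u+r,s)-X(u,s)$, both nonnegative by $(R2)$. The cluster $\pi(u,s)$ is an interval containing $u$ of length $m(u,s)$; if $m(u,s)<r$, then neither $u-r$ nor $u+r$ can belong to it, whence $X(u-r,s)<X(u,s)<X(u+r,s)$. Thus $\{m(u,s)<r\}\subseteq\{X(u-r,s)<X(u,s)<X(u+r,s)\}$, and on this event both gaps are strictly positive.

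The key step is the joint quadratic variation of the two gaps. Only the martingale parts contribute, so $[D_1,D_2]_t=[M(u,\cdot)-M(u-r,\cdot),\,M(u+r,\cdot)-M(u,\cdot)]_t$, and I expand this into the four pairwise brackets supplied by $(R4)$. Using that $m(v,s)=m(u,s)$ whenever $X(v,s)=X(u,s)$, a short case analysis (according to which of $u-r,u,u+r$ currently share a position) shows that $d[D_1,D_2]_s\le 0$ always, and that on $\{X(u-r,s)<X(u,s)<X(u+r,s)\}$ it equals $-\tfrac{1}{m(u,s)}ds$. Since $\tfrac{1}{m(u,s)}>\tfrac1r$ on $\{m(u,s)<r\}$, I obtain the pointwise estimate $d[D_1,D_2]_s\le -\tfrac1r\I_{\{m(u,s)<r\}}ds$, so that $\E[D_1,D_2]_t\le-\tfrac1r\int_0^t\p\{m(u,s)<r\}ds$.

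I would then apply the integration-by-parts formula to the nonnegative product, $D_1(t)D_2(t)=D_1(0)D_2(0)+\int_0^t D_1\,dD_2+\int_0^t D_2\,dD_1+[D_1,D_2]_t$. In this finite-particle setting the masses are bounded below (Corollary~\ref{corollary_estim_of_m_below}), so the stochastic integrals are true martingales and vanish under $\E$; since $\E[D_1(t)D_2(t)]\ge0$, rearranging gives $\tfrac1r\int_0^t\p\{m(u,s)<r\}ds\le D_1(0)D_2(0)+\E\int_0^t D_1\,dA_2+\E\int_0^t D_2\,dA_1$, where $A_1,A_2$ denote the drift parts. The drift of $X(v,\cdot)$ is $\xi(v)-(\pr_{X_s}\xi)(v)$, and since both $\xi$ and $\pr_{X_s}\xi$ are nondecreasing (Lemma~\ref{lemma_monotonisity_of_pr}), the drift of $D_2$ is at most $\xi(u+r)-\xi(u)$; together with $\E D_2(s)\le(g(u+r)-g(u))+s(\xi(u+r)-\xi(u))$ this bounds $\E\int_0^t D_2\,dA_1$ by $(\xi(u)-\xi(u-r))\big[t(g(u+r)-g(u))+\tfrac{t^2}{2}(\xi(u+r)-\xi(u))\big]$, and symmetrically for $\E\int_0^t D_1\,dA_2$. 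Since $D_1(0)D_2(0)=(g(u)-g(u-r))(g(u+r)-g(u))$, collecting the three contributions yields exactly $\tfrac12 G(r,r,u,t)$, whence $\int_0^t\p\{m(u,s)<r\}ds\le\tfrac r2 G(r,r,u,t)\le rG(r,r,u,t)$.

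The main obstacle is the covariation identity of the second paragraph: one must organize the four-term expansion and the clustering case analysis carefully — using that coalesced labels share both position and mass — to see both that the bracket is nonincreasing and that its rate is precisely $-1/m(u,s)$ when the three labels are separated; this negative covariation, forced against the nonnegativity of the product $D_1D_2$, is what drives the whole estimate. The remaining points (zero expectation of the stochastic integrals via the lower mass bound, and the monotone drift estimates together with Fubini) are routine.
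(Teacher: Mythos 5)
Your proof is correct and follows essentially the same route as the paper's: the same two one-sided gap processes, the same (R4)-based computation showing the covariation has density $-\I_{\{\text{gaps}>0\}}/m(u,s)$, the same It\^o product formula with drift bounds from the monotonicity of $\pr_{X_s}\xi$, and the same collection of terms into $\tfrac12 G(r,r,u,t)$. The only (harmless) deviations are cosmetic: you bound $1/m(u,s)>1/r$ on $\{m(u,s)<r\}$ directly instead of the paper's $1/m(u,s)\geq 1/(2r)$ on the larger separation event, which even yields the slightly sharper constant $\tfrac r2 G$, and you make explicit the martingale-integral justification via the mass lower bound that the paper leaves implicit.
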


\begin{proof}
We fix $u$, $r$ as in the assumption of the lemma and denote
\begin{align*}
 Z_+(t):=X(u+r,t)-X(u,t),\quad
 Z_-(t):=X(u,t)-X(u-r,t)
\end{align*}
for all $t\geq 0$.

Then $Z_+$ and $Z_-$ can be written as follows
\begin{align*}
 Z_+(t)&=z_++N_+(t)+\int_0^tb_+(s)ds,\\
 Z_-(t)&=z_-+N_-(t)+\int_0^tb_-(s)ds,
\end{align*}
for all $t\geq 0$, where
\begin{align*}
 z_+&:=g(u+r)-g(u),\quad z_-:=g(u)-g(u-r),\\
 b_+(t)&:=\xi(u+r)-\xi(u)-\left[\left(\pr_{X_t}\xi\right)(u+r)-\left(\pr_{X_t}\xi\right)(u)\right],\\
 b_-(t)&:=\xi(u)-\xi(u-r)-\left[\left(\pr_{X_t}\xi\right)(u)-\left(\pr_{X_t}\xi\right)(u-r)\right]
\end{align*}
and the square-integrable martingales $N_+$, $N_-$ are defined as $Z_+$ and $Z_-$ with $X$ replaced by $M$. 

Since the projection of a non-decreasing function is also non-decreasing \ (see Lemma~\ref{lemma_monotonisity_of_pr}), we have that
\begin{equation}\label{f_estim_of_b}
 b_+(t)\leq\xi(u+r)-\xi(u)\quad \mbox{and}\quad  b_-(t)\leq\xi(u)-\xi(u-r)
\end{equation}
for all $t\geq 0$.

Next, using $(R4)$, we evaluate the joint quadratic variation of $N_+$ and $N_-$. For $t\geq 0$ we have
\begin{align*}
 [N_+,N_-]_t&=[M(u+r,\cdot)-M(u,\cdot),M(u,\cdot)-M(u-r,\cdot)]_t\\
 &=\int_0^t\left[\frac{\I_{\{Z_+(s)=0\}}}{m(u,s)}+\frac{\I_{\{Z_-(s)=0\}}}{m(u,s)}-\frac{1}{m(u,s)}-\frac{\I_{\{X(u+r,s)=X(u-r,s)\}}}{m(u,s)}\right]ds\\
 &= -\int_0^t\frac{\I_{\{Z_+(s)>0,Z_-(s)>0\}}}{m(u,s)}ds.
\end{align*}
Thus, It\^o's formula implies
\begin{align*}
 Z_+(t)Z_-(t)&=z_+z_-+\int_0^tZ_+(s)dN_-(s)+\int_0^tZ_-(s)dN_+(s)\\
 &+\int_0^tZ_+(s)b_-(s)ds+\int_0^tZ_-(s)b_+(s)ds-\int_0^t\frac{\I_{\{Z_+(s)>0,Z_-(s)>0\}}}{m(u,s)}ds.
\end{align*}
Note that the stochastic integrals in the expression above are martingales. This directly follows from the fact that $Z_+,Z_-,N_+$ and $N_-$ are square-integrable martingales, the Burkholder-Davis-Gundy inequality (see, e.g., Theorem~26.12~\cite{Kallenberg:2002}) and the inequality 
\[
  \E \sqrt{ \int_{ 0 }^{ t } Z_{\pm}^2(s)d[ N_{\mp}]_s }\leq \E \sqrt{ Z_{\pm}^*(t)[N_{\mp}]_t  }\leq \sqrt{ \E Z_{\pm}^*(t) } \sqrt{ \E [N_{\mp}]_{t} }<\infty,
\]
where $Z_{\pm}^*(t):=\sup\limits_{ s \in [0,t] }Z_{\pm}^2(s)$.
Taking the expectation, we obtain
\begin{equation}\label{f_estim_of_m_and_product}
\begin{split}
 \E Z_+(t)Z_-(t)&+\E\int_0^t\frac{\I_{\{Z_+(s)>0,Z_-(s)>0\}}}{m(u,s)}ds\\
 &=z_+z_-+\E\int_0^tZ_+(s)b_-(s)ds+\E\int_0^tZ_-(s)b_+(s)ds.
\end{split}
\end{equation}

Next, we estimate the right hand side of the obtained equality, using estimates~\eqref{f_estim_of_b}. We get
\begin{align*}
 \E&\int_0^tZ_+(s)b_-(s)ds\leq (\xi(u)-\xi(u-r))\int_0^t\E Z_+(s)ds\\
 &\qquad\qquad=(\xi(u)-\xi(u-r))\int_0^t\left[z_++\E\int_0^sb_+(s_1)ds_1\right]ds\\
 &\qquad\qquad\leq(\xi(u)-\xi(u-r))\left[z_+t+\frac{t^2}{2}(\xi(u+r)-\xi(u))\right].
\end{align*}
Similarly,
\begin{align*}
 \E\int_0^tZ_-(s)b_+(s)ds\leq(\xi(u+r)-\xi(u))\left[z_-t+\frac{t^2}{2}(\xi(u)-\xi(u-r))\right].
\end{align*}
We also remark that
$$
\frac{1}{m(u,t)}\I_{\{Z_+(t)>0,Z_-(t)>0\}}\geq\frac{1}{2r}\I_{\{Z_+(t)>0,Z_-(t)>0\}},
$$
by the definition of $m(u,t)$.
Consequently, we obtain
\begin{align*}
\frac{1}{2r}\E\int_0^t\I_{\{Z_+(s)>0,Z_-(s)>0\}}ds\leq\E\int_0^t\frac{\I_{\{Z_+(s)>0,Z_-(s)>0\}}}{m(u,s)}ds\leq \frac{1}{2}G(r,r,u,t),
\end{align*}
due to~\eqref{f_estim_of_m_and_product} and the fact that $Z_+(t)Z_-(t)\geq 0$. Thus,
\begin{align*}
 \int_0^t\p\{m(u,s)<r\}ds&\leq\int_0^t\p\{Z_+(s)>0,Z_-(s)>0\}ds =\E\int_0^t\I_{\{Z_+(s)>0,Z_-(s)>0\}}ds\leq rG(r,r,u,t).
\end{align*}
The lemma is proved.
\end{proof}

\begin{corollary}\label{corollary_estim_of_diff_rate_at_u}
 For each $\beta>0$, $u\in(0,1)$ and $t>0$ the following estimate is true
 $$
 \E\int_0^t\frac{1}{m^{\beta}(u,s)}ds\leq \frac{t}{(u\wedge(1-u))^{\beta}}+\beta\int_0^{u\wedge(1-u)}\frac{1}{r^{\beta}}G(r,r,u,t)dr,
 $$
 where $G$ is defined by~\eqref{f_function_G}.
\end{corollary}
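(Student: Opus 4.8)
The plan is to combine the tail estimate of Lemma~\ref{lemma_estim_of_mass} with a layer-cake (Fubini) representation of the negative moment $m^{-\beta}$. Write $\rho:=u\wedge(1-u)$ for brevity. The point is that Lemma~\ref{lemma_estim_of_mass} controls $\int_0^t\p\{m(u,s)<r\}\,ds$ only for radii $r\leq\rho$, so I split the layer-cake integral at $\rho$, using a crude bound on the range $r>\rho$ and the sharp bound on $r\leq\rho$.

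First I would record the elementary identity: for every $y>0$,
\begin{equation*}
y^{-\beta}=\beta\int_0^{\infty}r^{-\beta-1}\I_{\{y<r\}}\,dr,
\end{equation*}
which holds since the integrand vanishes for $r\leq y$ and $\beta\int_y^\infty r^{-\beta-1}\,dr=y^{-\beta}$. Applying this with $y=m(u,s)$ (legitimate because $m(u,s)>0$ a.s., e.g. by Corollary~\ref{corollary_estim_of_m_below}, so there is no singularity at $r=0$) and splitting the radial integral at $\rho$ gives
\begin{equation*}
\frac{1}{m^{\beta}(u,s)}=\beta\int_0^{\rho}r^{-\beta-1}\I_{\{m(u,s)<r\}}\,dr+\beta\int_{\rho}^{\infty}r^{-\beta-1}\I_{\{m(u,s)<r\}}\,dr.
\end{equation*}
On the second term I bound $\I_{\{m(u,s)<r\}}\leq 1$ and integrate, obtaining $\beta\int_\rho^\infty r^{-\beta-1}\,dr=\rho^{-\beta}$.

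Then I would integrate both sides over $s\in[0,t]$ and take expectations. Since all integrands are nonnegative, Tonelli's theorem lets me interchange the expectation, the time integral, and the radial integral freely. The second term contributes $\rho^{-\beta}t=t/(u\wedge(1-u))^{\beta}$. For the first term the interchange turns the inner factor into $\E\int_0^t\I_{\{m(u,s)<r\}}\,ds=\int_0^t\p\{m(u,s)<r\}\,ds$, and for $r\leq\rho$ Lemma~\ref{lemma_estim_of_mass} bounds this by $rG(r,r,u,t)$. Combining the weight $r^{-\beta-1}$ with this extra factor $r$ produces $r^{-\beta}$, so
\begin{equation*}
\E\int_0^t\frac{1}{m^{\beta}(u,s)}\,ds\leq\frac{t}{(u\wedge(1-u))^{\beta}}+\beta\int_0^{\rho}r^{-\beta}G(r,r,u,t)\,dr,
\end{equation*}
which is exactly the claim.

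I do not expect a genuine obstacle here: the only points requiring care are the positivity of $m(u,s)$, which guarantees integrability of the layer-cake representation near $r=0$, and the justification of the interchange of integrals, both of which are immediate from nonnegativity together with Corollary~\ref{corollary_estim_of_m_below}. The essential structural input is simply matching the split radius $\rho=u\wedge(1-u)$ to the range of validity of Lemma~\ref{lemma_estim_of_mass}.
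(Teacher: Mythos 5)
Your proof is correct and is essentially the paper's own argument: the paper applies the layer-cake formula to $1/m(u,s)$ (via Lemma~3.4 of~\cite{Kallenberg:2002}), splits the radial integral at $1/(u\wedge(1-u))$, uses the trivial bound on one piece and Lemma~\ref{lemma_estim_of_mass} on the other, then substitutes $r\mapsto 1/r$ — which is exactly your computation with the change of variables performed at the outset rather than at the end.
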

 
\begin{proof}
 By Lemma~3.4~\cite{Kallenberg:2002} and Lemma~\ref{lemma_estim_of_mass}, we have
 \begin{align*}
  \E\int_0^t\frac{1}{m^{\beta}(u,s)}ds&=\int_0^t\E\frac{1}{m^{\beta}(u,s)}ds=\beta\int_0^t\left(\int_0^{\infty}r^{\beta-1}\p\left\{\frac{1}{m(u,s)}>r\right\}dr\right)ds\\
  &=\beta\int_0^t\left(\int_0^{\infty}r^{\beta-1}\p\left\{m(u,s)<\frac{1}{r}\right\}dr\right)ds\\
  &\leq\beta\int_0^{\frac{1}{u\wedge(1-u)}}\left(\int_0^t r^{\beta-1}ds\right)dr +\beta\int_{\frac{1}{u\wedge(1-u)}}^{\infty}r^{\beta-1}\left(\int_0^t\p\left\{m(u,s)<\frac{1}{r}\right\}ds\right)dr\\
  &\leq \frac{t}{(u\wedge(1-u))^{\beta}}+\beta\int_{\frac{1}{u\wedge(1-u)}}^{\infty}r^{\beta-1}\frac{1}{r}G\left(\frac{1}{r},\frac{1}{r},u,t\right)dr\\
  &=\frac{t}{(u\wedge(1-u))^{\beta}}+\beta\int_0^{u\wedge(1-u)}\frac{1}{r^{\beta}}G(r,r,u,t)dr.
 \end{align*}
 The lemma is proved.
\end{proof}

We will finish this section with the statement that is needed for the tightness argument in Section~\ref{section_tightness_results} and follows from equality~\eqref{f_estim_of_m_and_product} obtained in the proof of Lemma~\ref{lemma_estim_of_mass}.

\begin{lemma}\label{lemma_three_points}
For each $T>0$, $u\in(0,1)$, $r_1\in(0,u)$, $r_2\in(0,1-u)$ and $\lambda>0$
\begin{align*}
\p\left\{\|X(u+r_2,\cdot)-X(u,\cdot)\|_{C[0,T]}>\lambda,\ \|X(u,\cdot)-X(u-r_1,\cdot)\|_{C[0,T]}>\lambda\right\}\leq\frac{1}{2\lambda^2}G(r_1,r_2,u,T).
\end{align*}
\end{lemma}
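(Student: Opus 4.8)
\emph{Plan.} The idea is to reduce the two sup-norm events to the terminal value of a single product of two \emph{separately stopped} submartingales, exploiting that the upper and lower gaps around $u$ have negatively correlated martingale parts. I would first recycle the decomposition from the proof of Lemma~\ref{lemma_estim_of_mass}, now with the two different increments. Setting $Z_+(t):=X(u+r_2,t)-X(u,t)$ and $Z_-(t):=X(u,t)-X(u-r_1,t)$, I write $Z_\pm(t)=z_\pm+N_\pm(t)+\int_0^tb_\pm(s)\,ds$ with $z_+=g(u+r_2)-g(u)$, $z_-=g(u)-g(u-r_1)$ and $N_\pm$ the associated martingales. By $(R2)$ we have $Z_\pm\ge 0$, and by Lemma~\ref{lemma_monotonisity_of_pr} the drifts obey $b_+\le\xi_+:=\xi(u+r_2)-\xi(u)$ and $b_-\le\xi_-:=\xi(u)-\xi(u-r_1)$. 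I then introduce the dominating processes
$$
S_\pm(t):=z_\pm+N_\pm(t)+\xi_\pm t,
$$
so that $S_\pm(t)\ge Z_\pm(t)\ge 0$ and each $S_\pm$ is a continuous nonnegative square integrable submartingale with $\E S_\pm(t)=z_\pm+\xi_\pm t$. The algebraic fact that pins down the constant is
$$
\tfrac12 G(r_1,r_2,u,T)=(z_++\xi_+T)(z_-+\xi_-T)=\E S_+(T)\,\E S_-(T).
$$

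Next I would redo the covariation computation of Lemma~\ref{lemma_estim_of_mass} for these general increments, obtaining $[N_+,N_-]_t=-\int_0^t\I_{\{Z_+(s)>0,\,Z_-(s)>0\}}\,m^{-1}(u,s)\,ds\le 0$; this negativity is the crucial structural input. I then set $\sigma_\pm:=\inf\{t:S_\pm(t)\ge\lambda\}$ and pass to the stopped submartingales $U:=S_+(\cdot\wedge\sigma_+)$ and $V:=S_-(\cdot\wedge\sigma_-)$, which are bounded thanks to continuity of $S_\pm$. Since $\{\|Z_+\|_{C[0,T]}>\lambda\}\subseteq\{\sigma_+\le T\}$ and likewise for $Z_-$, on the event in question $U(T)=S_+(\sigma_+)\ge\lambda$ and $V(T)=S_-(\sigma_-)\ge\lambda$, while $U(T)V(T)\ge 0$ always; hence
$$
\p\{\|Z_+\|_{C[0,T]}>\lambda,\ \|Z_-\|_{C[0,T]}>\lambda\}\le\frac{1}{\lambda^2}\,\E[U(T)V(T)].
$$

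It then remains to estimate $\E[U(T)V(T)]$ by the product rule for continuous semimartingales. Writing $U=z_++M^U+A^U$ and $V=z_-+M^V+A^V$ with $A^U(t)=\xi_+(t\wedge\sigma_+)$ and $A^V(t)=\xi_-(t\wedge\sigma_-)$, the two stochastic integrals $\int U\,dM^V$ and $\int V\,dM^U$ have zero expectation (bounded integrands against square integrable martingales), and the covariation term is $[M^U,M^V]_T=[N_+,N_-]_{T\wedge\sigma_+\wedge\sigma_-}\le 0$, so it may be discarded. This yields
$$
\E[U(T)V(T)]\le z_+z_-+\xi_-\!\int_0^T\!\E U(t)\,dt+\xi_+\!\int_0^T\!\E V(t)\,dt,
$$
and since optional stopping gives $\E U(t)\le\E S_+(t)=z_++\xi_+t$ and $\E V(t)\le z_-+\xi_-t$, the right-hand side equals exactly $\tfrac12 G(r_1,r_2,u,T)$, which closes the argument.

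The main obstacle is precisely the decoupling of the two suprema: they may be attained at different times, so one cannot simply apply a maximal inequality to the product $Z_+Z_-$ or to a single submartingale, and a crude splitting over the two orderings of $\sigma_+,\sigma_-$ loses a factor of $2$. The device that circumvents this is to stop each dominating submartingale at its \emph{own} first-passage time and to control the resulting product through the product rule; dropping the nonpositive covariation $[N_+,N_-]$ is the only step that uses the interaction between the two gaps, and it is exactly what recovers the sharp constant $\tfrac12$.
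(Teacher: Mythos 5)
Your proof is correct and follows essentially the same route as the paper's: the paper likewise stops the two gap processes at their \emph{own} first-passage times of level $\lambda$ (there applied directly to $Z_\pm$ rather than to dominating processes), applies the stopped It\^o product formula, discards the nonpositive covariation $[N_+,N_-]_t=-\int_0^t\I_{\{Z_+(s)>0,\,Z_-(s)>0\}}m^{-1}(u,s)\,ds$, bounds the drifts by $\xi_\pm$, and concludes with the Chebyshev-type estimate on the product at time $T$. Your one deviation — replacing $Z_\pm$ by the dominating submartingales $S_\pm=z_\pm+N_\pm(t)+\xi_\pm t$ with deterministic drift — is cosmetic and only streamlines the drift bookkeeping.
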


\begin{proof}
Let $Z_{+}$ and $b_{+}$ be defined as in the proof of Lemma~\ref{lemma_estim_of_mass} with $r$ replaced by $r_2$, and $Z_{-}$ and $b_{-}$ with $r$ replaced by $r_1$. Let
$$
\sigma^{\pm}:=\inf\{t:\ Z_{\pm}(t)\geq \lambda\}\wedge T
$$
and
$$
Z_{\pm}^{\sigma^{\pm}}(t):=Z_{\pm}(\sigma^{\pm}\wedge t),\quad t\in[0,T].
$$
Then, by Theorem~17.5~\cite{Kallenberg:2002}, Proposition~17.15 ibid. and~\eqref{f_estim_of_m_and_product}, for each $t\geq 0$
\begin{align*}
 \E Z_+^{\sigma^+}(t)Z_-^{\sigma^-}(t)&+\E\int_0^{t\wedge\sigma^+\wedge\sigma^-}\frac{\I_{\{Z_+(s)>0,Z_-(s)>0\}}}{m(u,s)}ds\\
 &=z_+z_-+\E\int_0^{t\wedge\sigma^-}Z_+^{\sigma^+}(s)b_-(s)ds+\E\int_0^{t\wedge\sigma^+}Z_-^{\sigma^-}(s)b_+(s)ds.
\end{align*}
Similarly to the proof of Lemma~\ref{lemma_estim_of_mass}, we get
$$
\E Z_+^{\sigma^+}(T)Z_-^{\sigma^-}(T)\leq\frac{1}{2}G(r_1,r_2,u,T).
$$
Next, we note that $Z_+^{\sigma^+}(T)Z_-^{\sigma^-}(T)\geq \lambda^2\I_{\{\sigma^+\vee\sigma^-<T\}}$. Therefore,
\begin{align*}
&\p\left\{\|X(u+r_2,\cdot)-X(u,\cdot)\|_{C[0,T]}>\lambda,\ \|X(u,\cdot)-X(u-r_1,\cdot)\|_{C[0,T]}>\lambda\right\}\\
&\qquad\qquad\leq\p\{\sigma^+\vee\sigma^-<T\}\leq\frac{1}{\lambda^2}\E Z_+^{\sigma^+}(T)Z_-^{\sigma^-}(T)\leq\frac{1}{2\lambda^2}G(r_1,r_2,u,T).
\end{align*}
The lemma is proved.
\end{proof}

\subsection{Estimation of the mass of external particles}%
\label{sub:estimation_of_the_mass_of_external_particles}

In this section,  we will estimate particle masses whose labels are close to the ends of the interval $[0,1]$. The obtained inequalities will be weaker than ones from the previous section.

\begin{lemma}\label{lemma_estim_of_mass_near_0}
 For each $\alpha\in(0,1)$ and $t\geq 0$ there exists a constant $C=C(\alpha,t)$ such that for all $r\in(0,1)$ and $u\in[0,r)$ satisfying $r+u\leq 1$ we have
 $$
 \int_0^t\p\{m(u,s)<r\}ds\leq C e^{C(\xi(1)-\xi(0))^2}\left(\sqrt{u+r}\right)^{\alpha}G_0^{\alpha}(r,u,t),
 $$
 where 
 \begin{equation}\label{f_def_of_G_0}
  G_0(r,u,t)=(\xi(u+r)-\xi(u))t+g(u+r)-g(u).
 \end{equation}
 
\end{lemma}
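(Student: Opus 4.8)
The plan is to keep only the right neighbour of the particle $u$, since for $u\in[0,r)$ there is no left neighbour at distance $r$. Set $Z_+(t):=X(u+r,t)-X(u,t)\ge 0$ and decompose it exactly as in the proof of Lemma~\ref{lemma_estim_of_mass}, $Z_+(t)=z_++N_+(t)+\int_0^t b_+(s)\,ds$ with $z_+=g(u+r)-g(u)$, martingale part $N_+$ and drift $b_+$. Two facts carry the argument. First, because $\pr_{X_s}\xi$ is non-decreasing with values in $[\xi(0),\xi(1)]$ (Lemma~\ref{lemma_monotonisity_of_pr}), the drift is \emph{bounded}, $|b_+(s)|\le K:=\xi(1)-\xi(0)$. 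Second, $\{m(u,s)<r\}\subseteq\{Z_+(s)>0\}$, and on $\{Z_+(s)>0\}$ the cluster $\pi_X(u,s)$ is an interval inside $[0,u+r)$, so $m(u,s)<u+r$; from $(R4)$ one gets $d[N_+]_s=\I_{\{Z_+(s)>0\}}\big(\tfrac1{m(u+r,s)}+\tfrac1{m(u,s)}\big)\,ds\ge\tfrac{\I_{\{Z_+(s)>0\}}}{m(u,s)}\,ds$, hence a diffusion speed $\ge 1/(u+r)$ while $Z_+>0$.

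Unlike the interior case there is no left gap $Z_-$ with which to form the nonnegative product $Z_+Z_-$, and the nonnegative submartingale $Z_+$ has a well controlled mean ($\E Z_+(t)\le G_0(r,u,t)$) but no a priori second moment, so $\E[N_+]_t$ cannot be bounded directly. I would avoid the second moment altogether by a concave test function, carried out under a measure $\tilde{\p}$ under which $Z_+$ is driftless on $\{Z_+>0\}$ (constructed below). Fix $\lambda>0$ and let $\phi$ satisfy $\phi(z)=z-z^2/(2\lambda)$ on $[0,\lambda]$ and $\phi\equiv\lambda/2$ on $[\lambda,\infty)$; then $\phi\in C^1$, $0\le\phi'\le1$, $0\le\phi(z)\le z$ and $\phi''=-\tfrac1\lambda\I_{(0,\lambda)}\le0$. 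Applying It\^o to $\phi(Z_+)$, using $\phi''\le0$ with the lower bound on $d[N_+]$, then taking $\tilde{\p}$-expectations, dropping the martingale term, using $\E_{\tilde{\p}}\phi(Z_+(t))\ge0$, and bounding the nonnegative reflecting contribution at $\{Z_+=0\}$ by $(\xi(u+r)-\xi(u))\,t$, one obtains
\[
\frac{1}{2\lambda}\,\E_{\tilde{\p}}\int_0^t\frac{\I_{\{0<Z_+(s)<\lambda\}}}{m(u,s)}\,ds\le \phi(z_+)+(\xi(u+r)-\xi(u))\,t\le G_0(r,u,t),
\]
so only first moments enter. Splitting $\{m(u,s)<r\}$ into $\{Z_+<\lambda\}$ and $\{Z_+\ge\lambda\}$, bounding the first part by $r$ times the left-hand side above (since $\I_{\{m<r\}}\le\tfrac rm\I_{\{Z_+>0\}}$) and the second by $\int_0^t\p_{\tilde{\p}}\{Z_+(s)\ge\lambda\}\,ds\le tG_0/\lambda$ (Markov), and finally optimising over $\lambda\sim\sqrt{t/r}$, I obtain the clean \emph{linear} estimate $\E_{\tilde{\p}}\int_0^t\I_{\{m(u,s)<r\}}\,ds\le C\sqrt{t}\,\sqrt{u+r}\,G_0(r,u,t)$.

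It remains to supply $\tilde{\p}$ and to transfer the bound back to $\p$. I would remove the drift of $Z_+$ on $\{Z_+>0\}$ by the Girsanov density $\mathcal{E}_t=\exp\big(-\int_0^t\theta_s\,dN_+(s)-\tfrac12\int_0^t\theta_s^2\,d[N_+]_s\big)$ with $\theta_s=b_+(s)\sigma^{-2}(s)\I_{\{Z_+>0\}}$ and $\sigma^2=\tfrac1{m(u+r,\cdot)}+\tfrac1{m(u,\cdot)}$. The point is that its compensator is bounded: since $\sigma^{-2}\le u+r$ on $\{Z_+>0\}$ and $u+r\le1$, one has $\tfrac12\int_0^t\theta_s^2\,d[N_+]_s=\tfrac12\int_0^t b_+^2\,\sigma^{-2}\,\I_{\{Z_+>0\}}\,ds\le\tfrac12K^2(u+r)t\le\tfrac12K^2t$, so every $L^p(\tilde{\p})$-norm of $\mathrm{d}\p/\mathrm{d}\tilde{\p}=\mathcal{E}_t^{-1}$ is $\le e^{CK^2}$ with $C=C(p,t)$. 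Under $\tilde{\p}$ the process $Z_+$ is indeed driftless on $\{Z_+>0\}$, so the previous paragraph applies. Finally, by H\"older's inequality with $q=1/\alpha$, $p=1/(1-\alpha)$, and $\int_0^t\I_{\{m<r\}}\,ds\le t$,
\[
\E_{\p}\int_0^t\I_{\{m<r\}}\,ds=\E_{\tilde{\p}}\Big[\tfrac{\mathrm d\p}{\mathrm d\tilde{\p}}\int_0^t\I_{\{m<r\}}\,ds\Big]\le e^{CK^2}\,t^{1-\alpha}\Big(\E_{\tilde{\p}}\int_0^t\I_{\{m<r\}}\,ds\Big)^{\alpha},
\]
and inserting the linear estimate turns this into $C\,e^{CK^2}\big(\sqrt{u+r}\big)^{\alpha}G_0^{\alpha}(r,u,t)$ for every $\alpha\in(0,1)$, which is the claim. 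The main obstacle is exactly the absence of a left neighbour: it destroys the clean cross-variation identity of Lemma~\ref{lemma_estim_of_mass} and removes any a priori control of $\E Z_+(t)^2$; the concave-function computation (which uses only $\E Z_+(t)\le G_0$) is what replaces it, while keeping the Girsanov compensator bounded — relying on $\sigma^{-2}\le u+r$ and $u+r\le1$ — is what produces the Gaussian constant $e^{CK^2}$.
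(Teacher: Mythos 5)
Your proof is correct, and its overall architecture coincides with the paper's: both rest on (i) the observation that on $\{Z_+>0\}$ the cluster of $u$ is contained in $[0,u+r)$, so the inverse diffusion rate of $Z_+$ there is at least $1/(u+r)$; (ii) a Girsanov change of measure killing the drift off $\{Z_+=0\}$, whose compensator is bounded by $\tfrac12(\xi(1)-\xi(0))^2(u+r)t$ precisely because of (i); (iii) a bound \emph{linear} in $G_0$ under the new measure; and (iv) the H\"older/Jensen transfer back to $\p$, which produces the factor $e^{C(\xi(1)-\xi(0))^2}$ and the loss of the power $\alpha$. Where you genuinely depart from the paper is step (iii): the paper rescales $Y=\sqrt{u+r}\,Z$ so that the diffusion coefficient is $\geq 1$ and invokes Proposition~\ref{proposition_estom_of_sitting_time}, a standalone sitting-time estimate proved via a time change, Tanaka's formula, local time and a hitting-time comparison with Brownian motion, to bound the occupation of all of $\{Z>0\}$ by $\sqrt{2t/\pi}\,\sqrt{u+r}\,G_0$; you instead bound only the occupation of $\{m(u,\cdot)<r\}$ by an elementary computation (truncated concave test function plus It\^o's formula, Markov's inequality, optimization over $\lambda$) that needs only first moments and avoids local times and time changes altogether, and since you apply Girsanov directly to the martingale $N_+$ rather than to a representing Brownian motion, you also avoid the extension of the probability space that the paper performs via the martingale representation theorem. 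Your route is more self-contained; the paper's buys a reusable appendix proposition and control of the larger set $\{Z>0\}$. Two details worth making explicit in a full write-up: the various martingale/integrability claims are justified here because $g,\xi\in\St$ gives $m(u,\cdot)\wedge m(u+r,\cdot)\geq\delta_u\wedge\delta_{u+r}>0$ a.s.\ (Corollary~\ref{corollary_estim_of_m_below}), so $\sigma^2$ is bounded (otherwise one should localize); and your optimization actually yields $C\sqrt{t}\sqrt{r}\,G_0\leq C\sqrt{t}\sqrt{u+r}\,G_0$, which suffices for the stated bound.
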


\begin{proof}
 Let $r\in(0,1)$ and $u\in[0,r)$ be fixed. We set
 $$
 Z(t):=X(u+r,t)-X(u,t),\quad t\geq 0,
 $$
 and note that $m(u,t)<r$ implies $Z(t)>0$. In order to prove the lemma, we need to estimate the expectation $\E\int_0^t\I_{\{Z(s)>0\}}ds$.
 
 Let us rewrite $Z$ as follows
 \begin{equation}\label{f_equation_for_Z}
 Z(t)=z_0+N(t)+\int_0^tb(s)ds,\quad t\geq 0,
 \end{equation}
 where 
 \begin{align*}
   z_0:&=g(u+r)-g(r),\quad b(t):=\xi(u+r)-\xi(u)-\left[\left(\pr_{X_t}\xi\right)(u+r)-\left(\pr_{X_t}\xi\right)(u)\right]
 \end{align*}
 and $N$ is a continuous local $(\F_t)$-martingale with quadratic variation
 \begin{align*}
 [N]_t&=[M(u+r,\cdot)]_t+[M(u,\cdot)]_t-2[M(u+r,\cdot),M(u,\cdot)]_t\\
 &=\int_0^t\left(\frac{1}{m(u+r,s)}+\frac{1}{m(u,s)}-\frac{2\I_{\{Z(s)=0\}}}{m(u,s)}\right)ds, \quad t\geq 0.
 \end{align*}
 We note that $Z(t)>0$ implies $m(u,t)=\leb\{v:\ X(u,t)=X(v,t)\}<u+r$. Thus, 
 \begin{equation}\label{f_variation_of_N}
 [N]_t=\int_0^ta(s)^2\I_{\{Z(s)>0\}}ds, \quad t\geq 0,
 \end{equation}
 where $a(t):=\left(\frac{1}{m(u+r,t)}+\frac{1}{m(u,t)}\right)^{\frac{1}{2}}\vee\frac{1}{\sqrt{r+u}}\geq \frac{1}{\sqrt{r+u}}$ for any $t\geq 0$. 
 
 Next, we are going to use the Girsanov theorem in order to remove $\int_0^tb(s)\I_{\left\{ Z(s)>0 \right\}}ds$ from the drift $\int_{ 0 }^{ t } b(s)ds $ in~\eqref{f_equation_for_Z}. Since the processes $Z$, $a$, $N$, $b$ are functionals of $X(\cdot ,t)$, $t\geq 0$, which can be considered as a random element of $C([0,\infty),D^{\uparrow})$, without loss of generality, we may assume that $\Omega=C([0,\infty),D^{\uparrow})$, $\p=\law\{X\}$, $X(u ,t,\omega)=\omega(u,t)$, $t\geq 0$, $u \in [0,1]$, $\F$ is the completion of the Borel $\sigma$-field in $C([0,\infty),D^{\uparrow})$ and $(\F_t)_{t\geq 0}$ is the right-continuous and complete induced filtration. By Theorem~2.7.1'~\cite{Watanabe:1981:en} and~\eqref{f_variation_of_N}, there exists a Brownian motion $w(t)$, $t\geq 0$, on an extended probability space $(\widehat{\Omega},\widehat{\F},\widehat{\p})$ with respect to an extended filtration $(\widehat{\F}_t)_{t\geq 0}$  such that
 $$
 N(t)=\int_0^ta(s)\I_{\{Z(s)>0\}}dw(s),\quad t\geq 0.
 $$
 Moreover, we can take $\widehat{\Omega}=C([0,\infty),D^{\uparrow}\times\R)$ and $(\widehat{\F}_t)_{t\geq 0}$ to be the right-continuous and complete induced filtration on $\widehat{\Omega}$.  Let 
 $$
 U_t:=-\int_0^t\frac{b(s)}{a(s)}dw(s),\quad t\geq 0,
 $$
 and 
 $$
 B(t):=w(t)-[w,U]_t=w(t)+\int_0^t\frac{b(s)}{a(s)}ds,\quad t\geq 0.
 $$
 Then, by Novikov's theorem and Lemma~18.18~\cite{Kallenberg:2002}, there exists a probability measure $Q$ on $\widehat{\Omega}$ such that
 $$
 dQ=\exp\left\{U_t-\frac{1}{2}\int_0^t\frac{b(s)^2}{a(s)^2}ds\right\}d\widehat{\p}\quad \mbox{on}\ \ \widehat{\F}_t
 $$
 for all $t\geq 0$. Using the Girsanov theorem, we have that $B(t)$, $t\geq 0$, is a Brownian motion on the probability space $(\widehat{\Omega},\widehat{\F},Q)$ and
 \begin{align*}
  Z(t)&=z_0+\int_0^ta(s)\I_{\{Z(s)>0\}}dw(s)+\int_0^tb(s)ds\\
  &=z_0+\int_0^ta(s)\I_{\{Z(s)>0\}}dB(s)+\int_0^tb(s)\I_{\{Z(s)=0\}}ds\\
  &=z_0+\int_0^ta(s)\I_{\{Z(s)>0\}}dB(s)+(\xi(u+r)-\xi(u))\int_0^t\I_{\{Z(s)=0\}}ds.
 \end{align*}
 
 Next, we will consider the process $Z$ on the probability space $(\widehat{\Omega},\widehat{\F},Q)$ and estimate $\E^{Q}\int_0^t\I_{\{Z(s)>0\}}ds$, where the expectation $\E^{Q}$ is taken with respect to the measure $Q$. Set 
 $$
 Y(t):=\sqrt{u+r}Z(t),\quad t\geq 0.
 $$
 It is easily seen that
 $$
 Y(t)=y_0+\int_0^t\rho(s)\I_{\{Y(s)>0\}}dB(s)+\xi_0\int_0^t\I_{\{Y(s)=0\}}ds, \quad t\geq 0,
 $$
 where $y_0:=\sqrt{u+r}z_0=\sqrt{u+r}(g(u+r)-g(u))$, $\rho(t):=\sqrt{u+r}a(t)\geq 1$, $t\geq 0$, and  $\xi_0:=\sqrt{u+r}(\xi(u+r)-\xi(u))$. Using the inequality
 \begin{align*}
   \rho(t)&=\sqrt{u+r}a(t)\geq 1
 \end{align*}
 for all $t\geq 0$, and Proposition~\ref{proposition_estom_of_sitting_time}, we get for every $t\geq 0$ 
 $$
 \E^QR_t\leq\sqrt{\frac{2t}{\pi}}(\xi_0t+y_0),
 $$
 where
 $$
 R_t:=\int_0^t\I_{\{Y(s)>0\}}ds=\int_0^t\I_{\{Z(s)>0\}}ds,\quad t\geq 0.
 $$

 Now, we can estimate $\E R_t=\E\int_0^t\I_{\{Z(s)>0\}}ds$. Since $R_t$, $t\geq 0$, is an $(\widehat{\F}_t)$-measurable, for each $p,q>1$ satisfying $\frac{1}{p}+\frac{1}{q}=1$ we have 
 \begin{align*}
 \E R_t&=\E^Q\exp\left\{-U_t+\frac{1}{2}\int_0^t\frac{b(s)^2}{a(s)^2}ds\right\}R_t \leq\left(\E^Q\exp\left\{-pU_t+\frac{p}{2}\int_0^t\frac{b(s)^2}{a(s)^2}ds\right\}\right)^{\frac{1}{p}}\left(\E^QR_t^q\right)^{\frac{1}{q}}\\
 &\leq\left(\E\exp\left\{(1-p)U_t-\frac{1-p}{2}\int_0^t\frac{b(s)^2}{a(s)^2}ds\right\}\right)^{\frac{1}{p}}t^{\frac{q-1}{q}}\left(\E^QR_t\right)^{\frac{1}{q}}
 \end{align*}
 for all $t\geq 0$. In the inequality above, we have applied Jensen's inequality to  $R_t^q=\left(\int_0^t\I_{\{Z(s)>0\}}ds\right)^q$. Since $\frac{b(t)^2}{a(t)^2}\leq(\xi(1)-\xi(0))^2(u+r)\leq (\xi(1)-\xi(0))^2$ for all $t\geq 0$ and 
 $$
 \exp\left\{(1-p)U_t-\frac{(1-p)^2}{2}\int_0^t\frac{b(s)^2}{a(s)^2}ds\right\},\quad t\geq 0,
 $$ 
 is a positive martingale with expectation 1, we have
 \begin{align*}
  &\E\exp\left\{(1-p)U_t-\frac{1-p}{2}\int_0^t\frac{b(s)^2}{a(s)^2}ds\right\}\\
  &\qquad\qquad\leq e^{\frac{ tp(p-1) }{ 2 }(\xi(1)-\xi(0))^2}\E\exp\left\{(1-p)U_t-\frac{(1-p)^2}{2}\int_0^t\frac{b(s)^2}{a(s)^2}ds\right\}\\
  &\qquad\qquad=e^{\frac{ tp(p-1) }{ 2 }(\xi(1)-\xi(0))^2}.
 \end{align*}
 Thus,
 $$
 \E R_t\leq t^{\frac{q-1}{q}}e^{\frac{ t(p-1) }{ 2 }(\xi(1)-\xi(0))^2}\left(\sqrt{\frac{2t}{\pi}}(\xi_0t+y_0)\right)^{\frac{1}{q}}.
 $$
 Taking $q=\frac{1}{\alpha}$, we obtain for every $t\geq 0$ 
 $$
 \int_0^t\p\{m(u,s)<r\}\leq\E R_t\leq C e^{C(\xi(1)-\xi(0))^2}\left(\sqrt{u+r}\right)^{\alpha}G_0^{\alpha}(r,u,t),
 $$
 with some constant $C$ depending on $t$ and $\alpha$. The lemma is proved. 
\end{proof}

Applying Lemma~\ref{lemma_estim_of_mass_near_0} to the process $\{-X(1-u,t),\ u\in[0,1],\ t\geq 0\}$, we obtain a similar result for $u$ near 1.

\begin{lemma}\label{lemma_estim_of_mass_near_1}
  For each $\alpha\in(0,1)$ and $t\geq 0$ there exists a constant $C=C(\alpha,t)$ such that for all $r\in(0,1)$ and $u\in(1-r,1]$ satisfying $u-r\geq 0$ we have
 $$
 \int_0^t\p\{m(u,s)<r\}ds\leq C e^{C(\xi(1)-\xi(0))^2} (\sqrt{1-u+r})^{\alpha}G_1^{\alpha}(r,u,t),
 $$
 where 
\begin{equation}\label{f_def_of_G_1} 
 G_1(r,u,t):=(\xi(u)-\xi(u-r))t+g(u)-g(u-r).
\end{equation}
\end{lemma}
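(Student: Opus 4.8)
The plan is to derive Lemma~\ref{lemma_estim_of_mass_near_1} from Lemma~\ref{lemma_estim_of_mass_near_0} by the reflection trick announced in the one-line preamble, namely by applying the already-established result to the time-reversed spatial process. Concretely, I would introduce the reflected family
$$
\tilde{X}(u,t):=-X(1-u,t),\quad u\in[0,1],\ t\geq 0,
$$
and first check that $\tilde{X}$ again satisfies properties $(R1)-(R4)$, now with initial datum $\tilde{g}(u):=-g(1-u)$ and interaction potential $\tilde{\xi}(u):=-\xi(1-u)$. Both $\tilde{g}$ and $\tilde{\xi}$ are non-decreasing c\`{a}dl\'{a}g functions (the reflection $u\mapsto 1-u$ reverses monotonicity while the sign $-$ restores it), so they lie in $D^{\uparrow}$ and Lemma~\ref{lemma_estim_of_mass_near_0} applies to $\tilde{X}$. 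The order relation $(R2)$ is preserved because $u<v$ forces $1-v<1-u$, hence $X(1-v,t)\leq X(1-u,t)$ and thus $\tilde{X}(u,t)\leq\tilde{X}(v,t)$; the martingale property $(R3)$ and the covariance structure $(R4)$ are preserved since $M^{\tilde{X}}(u,\cdot)=-M^{X}(1-u,\cdot)$ and negation together with relabeling leaves quadratic and joint variations unchanged.

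Next I would track how the cluster sizes transform. Since $\tilde{X}(u,t)=\tilde{X}(v,t)$ if and only if $X(1-u,t)=X(1-v,t)$, the cluster of a label $u$ under $\tilde{X}$ is the reflection of the cluster of $1-u$ under $X$, so $m_{\tilde{X}}(u,t)=m_{X}(1-u,t)$. Therefore, writing $u':=1-u$, the event $\{m_{\tilde{X}}(u',s)<r\}$ coincides with $\{m_{X}(u,s)<r\}$, and the constraint $u'\in[0,r)$, $u'+r\leq 1$ from Lemma~\ref{lemma_estim_of_mass_near_0} translates exactly into $u\in(1-r,1]$, $u-r\geq 0$, which is the hypothesis of the present lemma.

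Finally I would match the quantities appearing in the bound. The function $G_0$ for $\tilde{X}$ at the point $u'=1-u$ reads
$$
G_0^{\tilde{X}}(r,u',t)=(\tilde{\xi}(u'+r)-\tilde{\xi}(u'))t+\tilde{g}(u'+r)-\tilde{g}(u'),
$$
and substituting $\tilde{\xi}(u'+r)=-\xi(1-u'-r)=-\xi(u-r)$, $\tilde{\xi}(u')=-\xi(u)$, and likewise for $\tilde{g}$, gives
$$
G_0^{\tilde{X}}(r,u',t)=(\xi(u)-\xi(u-r))t+g(u)-g(u-r)=G_1(r,u,t).
$$
The prefactor $\sqrt{u'+r}=\sqrt{1-u+r}$ and the constant $(\tilde{\xi}(1)-\tilde{\xi}(0))^2=(\xi(1)-\xi(0))^2$ also match, so Lemma~\ref{lemma_estim_of_mass_near_0} applied to $\tilde{X}$ yields precisely the asserted inequality. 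I expect no serious obstacle here; the only point requiring genuine care is the verification that $(R1)-(R4)$ transfer to the reflected process, particularly that the drift term in $(R3)$ behaves correctly under the reflection, i.e.\ that $\pr_{\tilde{X}_s}\tilde{\xi}$ at label $u$ equals $-\pr_{X_s}\xi$ at label $1-u$. This follows because the projection $\pr_f$ depends only on the level sets (clusters) of $f$, which are intertwined by the reflection, and conditional averages commute with the affine change $\xi\mapsto-\xi(1-\cdot)$.
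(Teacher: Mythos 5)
Your proposal is correct and is precisely the paper's own argument: the paper proves Lemma~\ref{lemma_estim_of_mass_near_1} by the single remark that Lemma~\ref{lemma_estim_of_mass_near_0} should be applied to the reflected process $\{-X(1-u,t),\ u\in[0,1],\ t\geq 0\}$, and your write-up merely fills in the routine verifications (transfer of $(R1)$--$(R4)$, identification of $m_{\tilde{X}}(1-u,\cdot)$ with $m_X(u,\cdot)$, and of $G_0$ for the reflected data with $G_1$) that the paper leaves implicit. The only point you state slightly inaccurately --- and which the paper also ignores --- is that $u\mapsto-g(1-u)$ is c\`{a}gl\`{a}d rather than c\`{a}dl\'{a}g, so one should strictly pass to its right-continuous modification, which alters nothing in the estimates since the data are step functions.
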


\begin{corollary}\label{corollary_estim_of_diff_rate_at_0_and_1}
 For every $\alpha\in(0,1)$, $\beta>0$, $u\in\{0,1\}$ and $t>0$ the estimate
 $$
 \E\int_0^t\frac{1}{m^{\beta}(u,s)}ds\leq t+\beta C e^{C(\xi(1)-\xi(0))^2}\int_0^1\frac{1}{r^{\beta+1-\frac{\alpha}{2}}}G_u(r,u,t)dr,
 $$
 holds, where $G_0$ and $G_1$ are defined in lemmas~\ref{lemma_estim_of_mass_near_0} and~\ref{lemma_estim_of_mass_near_1} and $C=C(\alpha,t)$ is the same constant as in Lemma~\ref{lemma_estim_of_mass_near_0}.
\end{corollary}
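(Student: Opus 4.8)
The plan is to run the same layer-cake scheme that proved Corollary~\ref{corollary_estim_of_diff_rate_at_u}, but to feed in the boundary mass estimates of Lemmas~\ref{lemma_estim_of_mass_near_0} and~\ref{lemma_estim_of_mass_near_1} in place of Lemma~\ref{lemma_estim_of_mass}, and to exploit the trivial a priori bound $m(u,s)\le 1$ coming from the normalization of the total mass. I would treat $u=0$ in detail; the case $u=1$ is identical after passing to the process $\{-X(1-u,t)\}$, which is exactly the transformation that produced Lemma~\ref{lemma_estim_of_mass_near_1} from Lemma~\ref{lemma_estim_of_mass_near_0} and that replaces $G_0$ by $G_1$.

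First I would use Tonelli together with the distribution-function identity (Lemma~3.4~\cite{Kallenberg:2002}) to write
\begin{align*}
\E\int_0^t\frac{1}{m^{\beta}(0,s)}ds=\beta\int_0^{\infty}r^{\beta-1}\left(\int_0^t\p\left\{m(0,s)<\tfrac{1}{r}\right\}ds\right)dr.
\end{align*}
The decisive simplification is that $m(0,s)\le 1$ almost surely, so for every $r\in(0,1)$ one has $m(0,s)\le 1<\tfrac1r$ and hence $\p\{m(0,s)<\tfrac1r\}=1$. Splitting the outer integral at $r=1$, the part over $(0,1)$ contributes exactly $\beta\int_0^1 r^{\beta-1}t\,dr=t$, which is the leading term of the claim.

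On the remaining range $r\in[1,\infty)$ the argument $\tfrac1r$ lies in $(0,1]$, so I would apply Lemma~\ref{lemma_estim_of_mass_near_0} with $u=0$ and radius $\tfrac1r$, obtaining
\begin{align*}
\int_0^t\p\left\{m(0,s)<\tfrac1r\right\}ds\le Ce^{C(\xi(1)-\xi(0))^2}\left(\tfrac1r\right)^{\alpha/2}G_0^{\alpha}\!\left(\tfrac1r,0,t\right),
\end{align*}
where the factor $(\sqrt{u+r})^{\alpha}=(\sqrt{1/r})^{\alpha}=r^{-\alpha/2}$ is precisely what turns the weight $r^{\beta-1}$ into $r^{\beta-1-\alpha/2}$. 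A substitution $r\mapsto\tfrac1r$ (so that $dr\mapsto r^{-2}\,dr$) then converts $\int_1^{\infty}r^{\beta-1-\alpha/2}G_0^{\alpha}(\tfrac1r,0,t)\,dr$ into $\int_0^1 r^{-(\beta+1-\alpha/2)}G_0^{\alpha}(r,0,t)\,dr$, which is the claimed integral, with the exponent $\alpha$ on $G_0$ inherited directly from Lemma~\ref{lemma_estim_of_mass_near_0}. Collecting the two pieces yields the asserted bound.

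There is essentially no analytic obstacle here: the corollary is a bookkeeping consequence of the two boundary mass estimates. The only points needing care are (i) verifying that the tail probability is genuinely equal to $1$ on the whole range $r<1$, which uses only $m\le1$; and (ii) checking that the constant $C=C(\alpha,t)$ and the exponential factor $e^{C(\xi(1)-\xi(0))^2}$ can be pulled outside the integral uniformly in $r$, which holds because in Lemma~\ref{lemma_estim_of_mass_near_0} these constants do not depend on the radius. The genuinely substantive input, namely the boundary estimate itself (obtained through the Girsanov reduction and the sitting-time estimate of Proposition~\ref{proposition_estom_of_sitting_time}), has already been established and may be invoked directly.
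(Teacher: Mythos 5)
Your proof is correct and is essentially the paper's own argument: the paper proves this corollary by repeating the layer-cake computation of Corollary~\ref{corollary_estim_of_diff_rate_at_u} with Lemma~\ref{lemma_estim_of_mass} replaced by lemmas~\ref{lemma_estim_of_mass_near_0} and~\ref{lemma_estim_of_mass_near_1}, splitting the $r$-integral at $1$ (using $m\leq 1$) and substituting $r\mapsto\frac1r$ exactly as you do. One remark: your computation (like the paper's intended one) yields $G_u^{\alpha}(r,u,t)$ in the integrand, so the first power $G_u(r,u,t)$ in the printed statement should be read as a typo for $G_u^{\alpha}(r,u,t)$; it is the $\alpha$-power version that the lemmas actually give and that suffices for the later application in Proposition~\ref{proposition_tightness_in_D_pi}.
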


\begin{proof}
 Using lemmas~\ref{lemma_estim_of_mass_near_0} and~\ref{lemma_estim_of_mass_near_1}, the corollary can be proved in the same way as Corollary~\ref{corollary_estim_of_diff_rate_at_u}.
\end{proof}

\subsection{\texorpdfstring{$L_p$}{Lp}-norm of CFWD}%
\label{sub:_l_p_norm_of_cfwd}

The aim of this section is to combine estimates from previous two sections in order to get a bound of $\E \sup\limits_{ s \in [0,t] }\|X(\cdot ,s)\|_{L_{2+\delta}}^{2+\delta}$ for some $\delta>0$. We will start from the following proposition.

\begin{proposition}\label{proposition_estim_of_dif_rate}
 For each $p>2$ and $0<\beta<\frac{3}{2}-\frac{1}{p}$ there exists a constant $C=C(p,\beta,t)$ such that
 $$
 \E\int_0^1\int_0^t\frac{duds}{m^{\beta}(u,s)}\leq C e^{C(\xi(1)-\xi(0))^2}(1+\|g\|_{L_p}^3+\|\xi\|_{L_p}).
 $$
\end{proposition}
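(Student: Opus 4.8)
The plan is to integrate the pointwise-in-$u$ bounds of Corollaries~\ref{corollary_estim_of_diff_rate_at_u} and~\ref{corollary_estim_of_diff_rate_at_0_and_1} over $u\in(0,1)$, treating the bulk and the two boundary layers separately. First, exactly as in the proof of Corollary~\ref{corollary_estim_of_diff_rate_at_u} (layer-cake via Lemma~3.4~\cite{Kallenberg:2002} and the substitution $r=1/\rho$, bounding the part $r\ge 1$ by $t$ since $m\le 1$), I would write for each fixed $u$
\[
\E\int_0^t\frac{ds}{m^{\beta}(u,s)}=t+\beta\int_0^1 r^{-\beta-1}\phi_u(r)\,dr,\qquad \phi_u(r):=\int_0^t\p\{m(u,s)<r\}\,ds .
\]
By Tonelli it then suffices to bound $\beta\int_0^1 r^{-\beta-1}\Phi(r)\,dr$ with $\Phi(r):=\int_0^1\phi_u(r)\,du$. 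The range $r\in(1/2,1)$ is harmless because there $\phi_u(r)\le t$, so only $r\in(0,1/2]$ matters, and for such $r$ I would split $\Phi(r)=\int_0^r+\int_r^{1-r}+\int_{1-r}^1 \phi_u(r)\,du$ into a bulk part and two boundary layers.

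On the bulk $u\in[r,1-r]$ one has $r\le u\wedge(1-u)$, so Lemma~\ref{lemma_estim_of_mass} yields $\phi_u(r)\le rG(r,r,u,t)$ and a contribution $\beta\int_0^{1/2}r^{-\beta}\big(\int_0^1 G(r,r,u,t)\,du\big)\,dr$. Expanding $G$ via~\eqref{f_function_G}, the inner integral is a combination of terms $\int_0^1(f_1(u+r)-f_1(u))(f_2(u)-f_2(u-r))\,du$ with $f_1,f_2\in\{g,\xi\}$, each bounded by Cauchy--Schwarz by a product of two $L_2$-moduli. Here I would use the elementary bound, valid for monotone $f$,
\[
\int_0^{1-r}\big(f(u+r)-f(u)\big)^2\,du\le 4\,r^{1-2/p}\|f\|_{L_p}^2 ,
\]
(proved for $f\ge 0$ from $(f(u+r)-f(u))^2\le f(u+r)^2-f(u)^2$ and telescoping to the endpoint, and in general by splitting $f=f^+-f^-$), applied to $g$; for $\xi$ I would instead use the sharper $\int_0^{1-r}(\xi(u+r)-\xi(u))^2\,du\le r(\xi(1)-\xi(0))^2$, which follows from $0\le \xi(u+r)-\xi(u)\le\xi(1)-\xi(0)$. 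The three resulting $r$-integrals converge for $\beta<2-2/p$, $\beta<2-1/p$ and $\beta<2$ respectively, all of which exceed $\tfrac32-\tfrac1p$; hence the bulk is controlled by $C(1+t^2)e^{C(\xi(1)-\xi(0))^2}(1+\|g\|_{L_p}^2)$, well inside the admissible range.

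The boundary layers are the crux. On $[0,r)$ one has $\sqrt{u+r}\le\sqrt{2r}$, and Lemma~\ref{lemma_estim_of_mass_near_0} together with $G_0(r,u,t)\le t(\xi(u+r)-\xi(u))+(g(u+r)-g(u))$ and $(a+b)^{\alpha}\le a^{\alpha}+b^{\alpha}$ reduces the estimate to $\int_0^r(g(u+r)-g(u))^{\alpha}\,du$ and its $\xi$-analogue. By Jensen's inequality and $\int_0^r(g(u+r)-g(u))\,du\le\int_0^{2r}|g|\le 2\|g\|_{L_p}r^{1-1/p}$, one gets $\int_0^r(g(u+r)-g(u))^{\alpha}\,du\le 2^{\alpha}\|g\|_{L_p}^{\alpha}r^{1-\alpha/p}$, and likewise with $\|\xi\|_{L_p}$. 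Collecting the powers of $r$, the near-$0$ contribution is bounded by
\[
Ce^{C(\xi(1)-\xi(0))^2}\big(\|g\|_{L_p}^{\alpha}+(t\|\xi\|_{L_p})^{\alpha}\big)\int_0^{1/2}r^{-\beta+\alpha(1/2-1/p)}\,dr ,
\]
whose $r$-integral is finite exactly when $\beta<1+\alpha\big(\tfrac12-\tfrac1p\big)$. Since $p>2$, letting $\alpha\uparrow 1$ pushes this threshold up to $\tfrac32-\tfrac1p$; thus for any admissible $\beta$ one fixes $\alpha\in(0,1)$ close enough to $1$ to make the integral converge. The layer $(1-r,1]$ is handled identically by Lemma~\ref{lemma_estim_of_mass_near_1}. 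This step is where the hypotheses genuinely bite: it is the only place requiring the refined boundary estimates (using Lemma~\ref{lemma_estim_of_mass} alone, the contribution of $u<r$ would reduce to the divergent $\int_0^1(u\wedge(1-u))^{-\beta}\,du$ and force $\beta<1$), and the endpoint $\tfrac32-\tfrac1p$ is dictated precisely by the convergence of this last integral.

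Finally I would add the bulk and boundary contributions, bound $\|g\|_{L_p}^{\alpha}$ and $\|g\|_{L_p}^2$ by $1+\|g\|_{L_p}^3$ and $\|\xi\|_{L_p}^{\alpha}$ by $1+\|\xi\|_{L_p}$, and absorb all remaining factors $t,t^2$ and $(\xi(1)-\xi(0))^2$ into a single constant $Ce^{C(\xi(1)-\xi(0))^2}$, which gives the asserted bound. The main obstacle is the boundary analysis of the previous paragraph: balancing the gain $(\sqrt{u+r})^{\alpha}$ from Lemmas~\ref{lemma_estim_of_mass_near_0}--\ref{lemma_estim_of_mass_near_1} against the singular weight $r^{-\beta-1}$ and optimizing over $\alpha$ to reach the sharp range $\beta<\tfrac32-\tfrac1p$; the auxiliary monotone-$L_p$ modulus estimate, although elementary, is equally essential, since it is what converts increments of $g$ and $\xi$ into the norms $\|g\|_{L_p}$ and $\|\xi\|_{L_p}$.
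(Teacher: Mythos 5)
Your proposal is correct, and structurally it is the same proof as the paper's: a layer-cake representation of the left-hand side, a split into the bulk $u\in[r,1-r]$ handled by Lemma~\ref{lemma_estim_of_mass} and two boundary layers handled by Lemmas~\ref{lemma_estim_of_mass_near_0} and~\ref{lemma_estim_of_mass_near_1}, and the optimization $\alpha\uparrow 1$ in the boundary layers, which both you and the paper identify as the step that dictates the range $\beta<\tfrac32-\tfrac1p$. The only genuine difference is how the bulk integrals $\int(f_1(u+r)-f_1(u))(f_2(u)-f_2(u-r))\,du$ are converted into norms: the paper bounds the $g$--$g$ term by $2r'\int(g(u+r')-g(u-r'))^2du$ and then uses the splitting $x^2\le x+x^2\I_{\{x>1\}}$, H\"older with exponent $p/2$, and the telescoping estimate~\eqref{f_int_of_diff_g}, producing the terms $\|g\|_{L_p}(r')^{2-\frac1p}$ and $\|g\|_{L_p}^{3-\frac2p}(r')^{2+\frac{2}{p^2}-\frac3p}$, whereas you use Cauchy--Schwarz plus the elementary monotone-modulus inequality $\int_0^{1-r}(f(u+r)-f(u))^2\,du\le 4r^{1-2/p}\|f\|_{L_p}^2$ (whose telescoping proof, including the reduction to $f\ge 0$, is valid), producing the single term $\|g\|_{L_p}^2r^{2-\frac2p}$. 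Your convergence thresholds $2-\tfrac2p$, $2-\tfrac1p$ and $2$ all exceed $\tfrac32-\tfrac1p$ for $p>2$, exactly as the paper's thresholds $2-\tfrac1p$ and $2-\tfrac3p+\tfrac{2}{p^2}$ do, and both intermediate bounds are absorbed into $1+\|g\|_{L_p}^3$ at the end; so the two treatments of the bulk are interchangeable, yours being marginally cleaner in the exponent bookkeeping, the paper's avoiding the need for a separate modulus lemma. A purely cosmetic difference: you perform the layer-cake computation per fixed $u$ and cut at $r=1$ (as in Corollary~\ref{corollary_estim_of_diff_rate_at_u}), while the paper writes one triple integral and cuts at $r=2$; nothing hinges on this.
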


\begin{proof}
 By Lemma~3.4~\cite{Kallenberg:2002}, we have
 \begin{align*}
  \E\int_0^1\int_0^t\frac{duds}{m^{\beta}(u,s)}&= \beta\int_0^1\int_0^t\int_0^{\infty}r^{\beta-1}\p\left\{\frac{1}{m(u,s)}>r\right\}dudsdr\\
  &=\beta\int_0^2r^{\beta-1}\left(\int_0^1\int_0^t\p\left\{m(u,s)<\frac{1}{r}\right\}duds\right)dr\\
  &+\beta\int_2^{\infty}r^{\beta-1}\left(\int_0^{\frac{1}{r}}\int_0^t\p\left\{m(u,s)<\frac{1}{r}\right\}duds\right)dr\\
  &+\beta\int_2^{\infty}r^{\beta-1}\left(\int_{\frac{1}{r}}^{1-\frac{1}{r}}\int_0^t\p\left\{m(u,s)<\frac{1}{r}\right\}duds\right)dr\\
  &+\beta\int_2^{\infty}r^{\beta-1}\left(\int_{1-\frac{1}{r}}^1\int_0^t\p\left\{m(u,s)<\frac{1}{r}\right\}duds\right)dr=:I_1+I_2+I_3+I_4.
 \end{align*}
 
 The first integral $I_1\leq 2^{\beta}t$, since trivially $\p\left\{m(u,s)<\frac{1}{r}\right\}\leq 1$.
 
 Next, using Lemma~\ref{lemma_estim_of_mass_near_0}, we estimate $I_2$. Let $\alpha\in(0\vee(2\beta-2),1)$ be fixed and satisfy $1+\frac{\alpha}{2}-\frac{\alpha}{p}>\beta$. Then for each $r':=\frac{1}{r}\leq\frac{1}{2}$
 \begin{align*}
  \int_0^{r'}\int_0^t\p\left\{m(u,s)<r'\right\}duds &\leq C_1\int_0^{r'}\left(\sqrt{u+r'}\right)^{\alpha}\left[(\xi(u+r')-\xi(u))t+g(u+r')-g(u)\right]^{\alpha}du\\
  &\leq C_1t^{\alpha}\int_0^{r'}\left(\sqrt{u+r'}\right)^{\alpha}(\xi(u+r')-\xi(u))^{\alpha}du\\
  &+C_1\int_0^{r'}\left(\sqrt{u+r'}\right)^{\alpha}(g(u+r')-g(u))^{\alpha}du,
 \end{align*}
 where $C_1=C_1(\alpha,t):=C(\alpha,t)e^{C(\alpha,t)(\xi(1)-\xi(0))^2}$. We can easily estimate the first term as follows
 $$
 C_1t^{\alpha}\int_0^{r'}\left(\sqrt{u+r'}\right)^{\alpha}(\xi(u+r')-\xi(u))^{\alpha}du\leq 2^{\frac{\alpha}{2}}C_1t^{\alpha}(r')^{1+\frac{\alpha}{2}}(\xi(1)-\xi(0))^{\alpha},
 $$
 using $\xi(u+r')-\xi(u)\leq\xi(1)-\xi(0)$ and $u+r'\leq 2r'$. The second term will be estimated using H\"older's inequality. For $q$ satisfying $\frac{1}{p}+\frac{1}{q}=1$ we have
 \begin{align*}
  &C_1\int_0^{r'}\left(\sqrt{u+r'}\right)^{\alpha}(g(u+r')-g(u))^{\alpha}du\leq C_1(2r')^{\frac{\alpha}{2}}\int_0^{r'}(g(u+r')-g(u))^{\alpha}du\\
  &\qquad\qquad\leq C_1(2r')^{\frac{\alpha}{2}}(r')^{1-\alpha}\left[\int_0^{r'}(g(u+r')-g(u))du\right]^{\alpha}\\
  &\qquad\qquad\leq 2^{\frac{\alpha}{2}}C_1(r')^{1-\frac{\alpha}{2}}\left[\int_0^1\left(\I_{[r',2r']}(u)-\I_{[0,r']}(u)\right)g(u)du\right]^{\alpha}\\
  &\qquad\qquad\leq 2^{\frac{\alpha}{2}}C_1(r')^{1-\frac{\alpha}{2}}\|g\|_{L_p}^{\alpha}\left[\int_0^1\left|\I_{[r',2r']}(u)-\I_{[0,r']}(u)\right|^qdu\right]^{\frac{\alpha}{q}}\\
  &\qquad\qquad= 2^{\frac{\alpha}{2}}C_1(r')^{1-\frac{\alpha}{2}}\|g\|_{L_p}^{\alpha}(2r')^{\frac{\alpha}{q}}=2^{\frac{\alpha}{2}+\frac{\alpha}{q}}C_1(r')^{1+\frac{\alpha}{q}-\frac{\alpha}{2}}\|g\|_{L_p}^{\alpha}.
 \end{align*}
 Thus,
 $$
 I_2\leq 2^{\frac{\alpha}{2}}C_1t^{\alpha}(\xi(1)-\xi(0))^{\alpha}\int_2^{\infty}r^{\beta-1-1-\frac{\alpha}{2}}dr+2^{\frac{\alpha}{2}+\frac{\alpha}{q}}C_1\|g\|_{L_p}^{\alpha}\int_2^{\infty}r^{\beta-1-1-\frac{\alpha}{q}+\frac{\alpha}{2}}dr,
 $$
 where $\int_2^{\infty}r^{\beta-1-1-\frac{\alpha}{2}}dr$ and $\int_2^{\infty}r^{\beta-1-1-\frac{\alpha}{q}+\frac{\alpha}{2}}dr$ are finite because $\beta-2-\frac{\alpha}{2}<-1$ and $\beta-2-\frac{\alpha}{q}+\frac{\alpha}{2}<-1$, by the choice of $\alpha$.
 
 Similarly, we obtain the same estimate for $I_4$, by Lemma~\ref{lemma_estim_of_mass_near_1}. 
 
 In order to estimate $I_3$, we use Lemma~\ref{lemma_estim_of_mass}. For $r'=\frac{1}{r}\leq\frac{1}{2}$ we get
 \begin{align*}
  \int_{r'}^{1-r'}\int_0^t\p\left\{m(u,s)<r'\right\}duds &\leq\int_{r'}^{1-r'}r'G(r',u,t)du =2r'\int_{r'}^{1-r'}(g(u+r')-g(u))(g(u)-g(u-r'))du\\
  &+2tr'\int_{r'}^{1-r'}(\xi(u)-\xi(u-r'))(g(u+r')-g(u))du\\
  &+2t^2r'\int_{r'}^{1-r'}(\xi(u)-\xi(u-r'))(\xi(u+r')-\xi(u))du\\
  &+2tr'\int_{r'}^{1-r'}(\xi(u+r')-\xi(u))(g(u)-g(u-r'))du\\ 
  &=:J_1+J_2+J_3+J_4,
 \end{align*}
where $G$ is defined by~\eqref{f_function_G}. First, we estimate $J_1$. Using the trivial inequality $x^2\leq x+x^2\I_{\{x>1\}}$, $x\geq 0$, and H\"older's inequality with $\frac{1}{l}+\frac{1}{l'}=1$ and $l=\frac{p}{2}$, we obtain 
\begin{align*}
 J_1&\leq 2r'\int_{r'}^{1-r'}(g(u+r')-g(u-r'))^2du \leq 2r'\int_{r'}^{1-r'}(g(u+r')-g(u-r'))du\\
 &+2r'\int_{r'}^{1-r'}(g(u+r')-g(u-r'))^2\I_{\{g(u+r')-g(u-r')> 1\}}du\\
 &\leq 2r'\int_{r'}^{1-r'}(g(u+r')-g(u-r'))du\\
 &+2r'\left[\int_{r'}^{1-r'}(g(u+r')-g(u-r'))^pdu\right]^{\frac{2}{p}}\left[\int_{r'}^{1-r'}\I_{\{g(u+r')-g(u-r')> 1\}}du\right]^{\frac{1}{l'}}\\
  &\leq 2r'\int_{r'}^{1-r'}(g(u+r')-g(u-r'))du +8r'\|g\|_{L_p}^2\left[\int_{r'}^{1-r'}(g(u+r')-g(u-r'))du\right]^{1-\frac{2}{p}}.
\end{align*}
Since
 \begin{equation}\label{f_int_of_diff_g}
 \begin{split}
  \int_{r'}^{1-r'}(g(u+r')-g(u-r'))du&= \int_0^1\left(\I_{[2r',1]}-\I_{[0,1-2r']}\right)g(u)du\\
  &\leq \|g\|_{L_p}\left[\int_0^1\left|\I_{[2r',1]}-\I_{[0,1-2r']}\right|du\right]^{1-\frac{1}{p}}=\|g\|_{L_p}(4r')^{1-\frac{1}{p}},
 \end{split}
 \end{equation}
$J_1$ can be estimated as follows
$$
J_1\leq c_1\|g\|_{L_p}(r')^{2-\frac{1}{p}}+c_2\|g\|_{L_p}^{3-\frac{2}{p}}(r')^{2+\frac{2}{p^2}-\frac{3}{p}},
$$
where $c_1$, $c_2$ are constants. Using~\eqref{f_int_of_diff_g}, we have
\begin{align*}
J_2+J_3+J_4&\leq c_3t(r')^{2-\frac{1}{p}}(\xi(1)-\xi(0))\|g\|_{L_p}\\
&+2t^2r'(\xi(1)-\xi(0))\int_{r'}^{1-r'}(\xi(u+r')-\xi(u-r'))du\\
&\leq c_3t(r')^{2-\frac{1}{p}}(\xi(1)-\xi(0))\|g\|_{L_p}
+c_4t^2(r')^{2-\frac{1}{p}}(\xi(1)-\xi(0))\|\xi\|_{L_p}.
\end{align*}

Thus,
\begin{align*}
 I_3&\leq \beta C_2(\xi(1)-\xi(0))\|g\|_{L_p}\int_2^{\infty}r^{\beta-1-2+\frac{1}{p}}dr +C_3(\xi(1)-\xi(0))\|g\|_{L_p}^{3-\frac{2}{p}}\int_2^{\infty}r^{\beta-1-2-\frac{2}{p^2}+\frac{3}{p}}dr\\
 &+C_4(\xi(1)-\xi(0))\|\xi\|_{L_p}\int_2^{\infty}r^{\beta-1-2+\frac{1}{p}}dr,
\end{align*}
where constants $C_i$, $i\in\{2,3,4\}$, only depend on $p$ and $t$ and the integrals are finite according to the choice of $\beta$. The lemma is proved.
\end{proof}

Since we will consider $X(\cdot ,t)$, $t\geq 0$, as a continuous process with values in $L_{2+\delta}$, for convenience of notation we will denote $X_t:=X(\cdot ,t)$, $t\geq 0$.

\begin{lemma}\label{lemma_estim_of_norm_of_X_g}
 For each $t>0$, $\delta\in[0,1)$ and $\eps>\frac{2\delta}{1-\delta}$ there exists a constant $C=C(t,\delta,\eps)$ such that
 $$
 \E\sup_{s\in[0,t]}\|X_s-g\|_{L_{2+\delta}}^{2+\delta}\leq Ce^{C(\xi(1)-\xi(0))^2}(1+\|g\|_{L_{2+\eps}}^3+\|\xi\|_{L_{2+\eps}}).
 $$
\end{lemma}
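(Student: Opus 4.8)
The plan is to split $X_t-g$ into its martingale and drift parts, estimate each contribution to the $L_{2+\delta}$-norm separately, and reduce everything to the mass bounds already established. Throughout I use the running assumption $\xi,g\in\St$, so that $\xi$ is bounded and $X$ is the associated finite particle system.

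First I would write, for each $u\in[0,1]$,
$$
X(u,s)-g(u)=M(u,s)+D(u,s),\qquad D(u,s):=\int_0^s\left(\xi(u)-(\pr_{X_r}\xi)(u)\right)dr,
$$
where by $(R3)$–$(R4)$ the process $M(u,\cdot)=M^X(u,\cdot)$ is a continuous square-integrable martingale with $[M(u,\cdot)]_t=\int_0^t\frac{dr}{m(u,r)}$ (take $u=v$ in $(R4)$). Since $\|X_s-g\|_{L_{2+\delta}}^{2+\delta}=\int_0^1|X(u,s)-g(u)|^{2+\delta}du$ has a nonnegative integrand, Tonelli's theorem and the elementary ``supremum inside the integral'' bound give
$$
\E\sup_{s\in[0,t]}\|X_s-g\|_{L_{2+\delta}}^{2+\delta}\le\int_0^1\E\sup_{s\in[0,t]}|X(u,s)-g(u)|^{2+\delta}\,du,
$$
after which I would apply $|a+b|^{2+\delta}\le 2^{1+\delta}(|a|^{2+\delta}+|b|^{2+\delta})$ and treat the contributions of $M(u,\cdot)$ and $D(u,\cdot)$ independently.

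For the drift term, since $\pr_{X_r}\xi$ is the conditional expectation (projection) of the monotone bounded function $\xi$ onto $L_2(X_r)$, its values lie between $\xi(0)$ and $\xi(1)$; hence $|\xi(u)-(\pr_{X_r}\xi)(u)|\le\xi(1)-\xi(0)$ and $\sup_{s\le t}|D(u,s)|\le t(\xi(1)-\xi(0))$. Thus $\int_0^1\E\sup_{s\le t}|D(u,s)|^{2+\delta}du\le t^{2+\delta}(\xi(1)-\xi(0))^{2+\delta}$, which is dominated by $Ce^{C(\xi(1)-\xi(0))^2}$ because any fixed power of $\xi(1)-\xi(0)$ is dominated by a constant multiple of $e^{(\xi(1)-\xi(0))^2}$. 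For the martingale term I would use the Burkholder–Davis–Gundy inequality with exponent $2+\delta\in[2,3)$,
$$
\E\sup_{s\le t}|M(u,s)|^{2+\delta}\le C_{2+\delta}\,\E\left(\int_0^t\frac{dr}{m(u,r)}\right)^{\frac{2+\delta}{2}},
$$
and then convert the power of a time-integral into a time-integral of a power via Jensen's inequality for the convex map $x\mapsto x^{1+\delta/2}$ against the normalized measure $\frac{dr}{t}$ on $[0,t]$:
$$
\left(\int_0^t\frac{dr}{m(u,r)}\right)^{1+\frac{\delta}{2}}\le t^{\frac{\delta}{2}}\int_0^t\frac{dr}{m(u,r)^{1+\frac{\delta}{2}}}.
$$
Integrating over $u$ and applying Tonelli reduces the martingale contribution to $C_{2+\delta}\,t^{\delta/2}\,\E\int_0^1\int_0^t\frac{dr\,du}{m(u,r)^{1+\delta/2}}$, which is precisely the quantity bounded in Proposition~\ref{proposition_estim_of_dif_rate} with $\beta=1+\frac{\delta}{2}$ and $p=2+\eps$.

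The main point to get right is the bookkeeping between the exponent $\beta$ forced by the $L_{2+\delta}$-norm and the admissible range in Proposition~\ref{proposition_estim_of_dif_rate}. With $\beta=1+\frac{\delta}{2}$ and $p=2+\eps$, the condition $0<\beta<\frac32-\frac1p$ reads $1+\frac{\delta}{2}<\frac32-\frac{1}{2+\eps}$, i.e. $\frac{1}{2+\eps}<\frac{1-\delta}{2}$, i.e. exactly $\eps>\frac{2\delta}{1-\delta}$, while $p>2$ is automatic. Substituting the bound $Ce^{C(\xi(1)-\xi(0))^2}(1+\|g\|_{L_{2+\eps}}^3+\|\xi\|_{L_{2+\eps}})$ from Proposition~\ref{proposition_estim_of_dif_rate} and combining with the drift estimate yields the claim with a constant $C=C(t,\delta,\eps)$. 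I expect the exponent matching just described to be the only delicate step; the BDG, Jensen and Tonelli manipulations, as well as the measurability of $u\mapsto\sup_{s\le t}|X(u,s)-g(u)|$ (which follows from $X\in D([0,1],C([0,\infty)))$), are routine.
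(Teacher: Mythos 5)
Your proposal is correct and follows essentially the same route as the paper's proof: the same martingale/drift decomposition, the sup-inside-the-integral bound, the crude bound $t(\xi(1)-\xi(0))$ on the drift, BDG plus Jensen on the martingale part, and the application of Proposition~\ref{proposition_estim_of_dif_rate} with $\beta=1+\frac{\delta}{2}$ and $p=2+\eps$, whose admissibility condition is exactly $\eps>\frac{2\delta}{1-\delta}$. Your explicit absorption of the term $(\xi(1)-\xi(0))^{2+\delta}$ into $Ce^{C(\xi(1)-\xi(0))^2}$ is a detail the paper leaves implicit, but there is no substantive difference.
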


\begin{proof}
 By the Burkholder-Davis-Gundy inequality, $(R3)$, $(R4)$ and Proposition~\ref{proposition_estim_of_dif_rate}, we have
 \begin{align*}
  \E\sup_{s\in[0,t]}\|X_s-g\|_{L_{2+\delta}}^{2+\delta}&= \E\sup_{s\in[0,t]}\int_0^1|X(u,s)-g(u)|^{2+\delta}du\\
  &\leq\int_0^1\E\sup_{s\in[0,t]}|X(u,s)-g(u)|^{2+\delta}du\leq 2^{1+\delta}\int_0^1\E\sup_{s\in[0,t]}|M(u,s)|^{2+\delta}du\\
  &+2^{1+\delta}\int_0^1\E\sup_{s\in[0,t]}\left|\int_0^s(\xi(u)-\left(\pr_{X_r}\xi\right)(u))dr\right|^{2+\delta}du\\
  &\leq 2^{1+\delta}C_1\int_0^1\E\left(\int_0^t\frac{ds}{m(u,s)}\right)^{1+\frac{\delta}{2}}du+2^{1+\delta}t^{2+\delta}(\xi(1)-\xi(0))^{2+\delta}\\
  &\leq 2^{1+\delta}C_1t^{\frac{\delta}{2}}\int_0^1\left(\E\int_0^t\frac{ds}{m(u,s)^{1+\frac{\delta}{2}}}\right)du+2^{1+\delta}t^{2+\delta}(\xi(1)-\xi(0))^{2+\delta}\\
  &\leq 2^{1+\delta}C_1t^{\frac{\delta}{2}}Ce^{C(\xi(1)-\xi(0))^2}(1+\|g\|_{L_{2+\eps}}^3+\|\xi\|_{L_{2+\eps}}) +2^{1+\delta}t^{2+\delta}(\xi(1)-\xi(0))^{2+\delta},
 \end{align*}
where $C_1$ depends on $\delta$. We could apply Proposition~\ref{proposition_estim_of_dif_rate} in the last step of the estimate above, since $1+\frac{\delta}{2}<\frac{3}{2}-\frac{1}{2+\eps}$. The lemma is proved.
\end{proof}

\begin{corollary}\label{corollary_estim_of_norm_of_X}
 Under the assumptions of Lemma~\ref{lemma_estim_of_norm_of_X_g},
 $$
 \E\sup_{s\in[0,t]}\|X_s\|_{L_{2+\delta}}^{2+\delta}\leq Ce^{C(\xi(1)-\xi(0))^2}(1+\|g\|_{L_{2+\delta}}^{2+\delta}+\|g\|_{L_{2+\eps}}^3+\|\xi\|_{L_{2+\eps}}),
 $$
 where $C$ depends on $t,\delta$ and $\eps$. 
\end{corollary}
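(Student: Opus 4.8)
The plan is to deduce this directly from Lemma~\ref{lemma_estim_of_norm_of_X_g} via the triangle inequality, since the difference $X_s-g$ is exactly the quantity already controlled there. First I would write, for each fixed $s$,
$$
\|X_s\|_{L_{2+\delta}}\leq\|X_s-g\|_{L_{2+\delta}}+\|g\|_{L_{2+\delta}},
$$
and then raise both sides to the power $2+\delta$. Using the elementary convexity inequality $(a+b)^{2+\delta}\leq 2^{1+\delta}(a^{2+\delta}+b^{2+\delta})$, valid since $2+\delta\geq 1$, this gives the pointwise (in $s$) bound
$$
\|X_s\|_{L_{2+\delta}}^{2+\delta}\leq 2^{1+\delta}\|X_s-g\|_{L_{2+\delta}}^{2+\delta}+2^{1+\delta}\|g\|_{L_{2+\delta}}^{2+\delta}.
$$
The second term is deterministic and does not depend on $s$.

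Next I would take the supremum over $s\in[0,t]$ and then the expectation. The supremum only affects the first term on the right, so that
$$
\E\sup_{s\in[0,t]}\|X_s\|_{L_{2+\delta}}^{2+\delta}\leq 2^{1+\delta}\,\E\sup_{s\in[0,t]}\|X_s-g\|_{L_{2+\delta}}^{2+\delta}+2^{1+\delta}\|g\|_{L_{2+\delta}}^{2+\delta}.
$$
Applying Lemma~\ref{lemma_estim_of_norm_of_X_g} to the first term bounds it by $2^{1+\delta}Ce^{C(\xi(1)-\xi(0))^2}(1+\|g\|_{L_{2+\eps}}^3+\|\xi\|_{L_{2+\eps}})$. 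Finally, since $e^{C(\xi(1)-\xi(0))^2}\geq 1$, the purely deterministic summand $2^{1+\delta}\|g\|_{L_{2+\delta}}^{2+\delta}$ can be absorbed into the prefactor $e^{C(\xi(1)-\xi(0))^2}$ at the cost of enlarging the constant, yielding the claimed estimate with a new constant $C=C(t,\delta,\eps)$. There is no genuine obstacle here: the statement is an immediate corollary, and the only point requiring a moment of care is verifying that the new $\|g\|_{L_{2+\delta}}^{2+\delta}$ contribution (absent from Lemma~\ref{lemma_estim_of_norm_of_X_g}) is precisely the extra term recorded on the right-hand side of the corollary.
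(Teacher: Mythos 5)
Your proof is correct and is essentially the argument the paper intends: the corollary is stated without proof precisely because it follows from Lemma~\ref{lemma_estim_of_norm_of_X_g} by the triangle inequality, the convexity bound $(a+b)^{2+\delta}\leq 2^{1+\delta}(a^{2+\delta}+b^{2+\delta})$, and absorbing the deterministic term $\|g\|_{L_{2+\delta}}^{2+\delta}$ into the constant using $e^{C(\xi(1)-\xi(0))^2}\geq 1$. Nothing is missing.
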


\section{Tightness results}\label{section_tightness_results}

As was discussed in the introduction, we will build a CFWD as a limit of finite sticky-reflected particle systems, where the existence of the limit will be based on the tightness argument.  In this section, we will obtain a sufficient conditions for the tightness of a family of CFWDs. Let $\{\xi_n,\ n\geq 1\}$ and $\{g_n,\ n\geq 1\}$ be arbitrary sequences in $D^{\uparrow}$ and let $\{X^n,\ n\geq 1\}$ be a sequence of random elements in $D([0,1],C[0,\infty))$ satisfying $(R1)-(R4)$ with $\xi_n$, $g_n$ instead of $\xi$, $g$, and $\E \|X^n(\cdot ,t)\|_{L_2}^2<\infty$ for all $t\geq 0$ and $n\geq 1$. As in the previous section, we assume that such random elements $\{X^n,\ n\geq 1\}$ exist. Let $M^n(u,\cdot)$ and $A^n(u,\cdot)$ denote the martingale part and the part of bounded variation of $X^n(u,\cdot)$ for every $u\in[0,1]$, that is,
$$
A^n(u,t)=\int_0^t\left(\xi_n(u)-\left(\pr_{X_s^n}\xi_n\right)(u)\right)ds
$$
and
$$
M^n(u,t)=X^n(u,t)-g_n(u)-A^n(u,t),
$$
for all $t\geq 0$, where $X^n_t:=X^n(\cdot,t)$, $t\geq 0$. 

\subsection{Tightness of weak solutions}
\label{sub:tightness_of_weak_solutions}

In this section, we check the tightness of the family $\{X^n_{\cdot},\ n\geq 1\}$, where we consider $X^n$ as random processes in $\Li$.
Let
$$
M^n_t:=M^n(\cdot,t)\quad\mbox{and}\quad A^n_t:=A^n(\cdot,t),\quad t\geq 0,\ \ n\geq 1.
$$
We will also consider $M^n_{\cdot}$, $n\geq 1$, and $A^n_{\cdot}$, $n\geq 1$, as stochastic processes taking values in $L_2$. Using $(R1)-(R4)$ and Remark~\ref{remark_def_of_sol}, one can show that the processes $X_t^n$, $t\geq 0$, are weak solutions to SDE~\eqref{f_the_main_equation} with $g$ and $\xi$ replaced by $g_n$ and $\xi_n$.

In the next section, we will show that each limit point of $\{X^n_{\cdot},\ n\geq 1\}$ (which will exist by the tightness and Prokhorov's theorem) is a weak solution to SDE~\eqref{f_the_main_equation}. For this, we will need the convergence of the martingale parts, the parts of bounded variation and the quadratic variation processes. Let $\{e_i,\ i\in\N\}$ be a fixed orthonormal basis of $L_2$ and $M^n(e_i):=(M^n_{\cdot},e_i)$, $i\in\N$. We are going to prove that the family 
\begin{equation}\label{f_tilde_X}
\overline{X}^n:=\left(M^n,A^n,([M^n(e_i),M^n(e_j)])_{(i,j)\in\N^2}, \langle M^n \rangle \right),\quad n\geq 1,
\end{equation}
is tight in 
$$
\W:=C([0,\infty),L_2)\times C([0,\infty),L_2)\times C[0,\infty)^{\N^2}\times C[0,\infty)
$$
under the assumptions that $\{g_n,\ \xi_n\ n\geq 1\}$ is bounded in $L_{2+\delta}$ for some $\delta>0$ and $\{\xi_n(1)-\xi_n(0),\ n\geq 1\}$ is bounded in $\R$. In order to prove the tightness, we need the following lemma.

\begin{lemma}\label{lemma_compact_set_in_L_2_uparrow}
For every $C>0$ and $\delta>0$ the set $K_C:=\{g:\in L_2^{\uparrow}:\ \|g\|_{L_{2+\delta}}\leq C\}$ is compact in $L_2^{\uparrow}$ and, consequently, in $L_2$.
\end{lemma}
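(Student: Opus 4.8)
The plan is to prove sequential compactness of $K_C$ in $L_2$; since $\Li$ is closed in $L_2$ and $K_C\subseteq\Li$, this coincides with compactness in $\Li$, and the ``consequently'' clause of the statement then follows at once.

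First I would extract a uniform pointwise bound from monotonicity together with the $L_{2+\delta}$-constraint. Fix $g\in K_C$ and $u\in(0,1)$, and set $a:=u\wedge(1-u)$. If $g(u)\geq 0$, then $g(v)\geq g(u)$ for $v\in[u,1]$, so $C^{2+\delta}\geq\int_u^1|g(v)|^{2+\delta}\,\dm v\geq(1-u)\,g(u)^{2+\delta}\geq a\,g(u)^{2+\delta}$; if $g(u)\leq 0$, then $g(v)\leq g(u)$ for $v\in[0,u]$, giving $C^{2+\delta}\geq u\,|g(u)|^{2+\delta}\geq a\,|g(u)|^{2+\delta}$. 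In either case
\[
|g(u)|\leq h(u):=\frac{C}{(u\wedge(1-u))^{1/(2+\delta)}},\qquad u\in(0,1),
\]
with $h$ independent of the choice of $g\in K_C$. The crucial observation is that $h\in L_2$ precisely because $\delta>0$: indeed $\int_0^{1/2}u^{-2/(2+\delta)}\,\dm u<\infty$ iff $2/(2+\delta)<1$. This is exactly the point at which the hypothesis $\delta>0$ (as opposed to mere $L_2$-boundedness) enters.

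Next I would invoke Helly's selection theorem. Given a sequence $(g_n)\subseteq K_C$, on each interval $[1/k,1-1/k]$ the functions are non-decreasing and uniformly bounded by $h(1/k)$, so Helly yields a subsequence converging pointwise on that interval; a diagonal argument over $k$ then produces a single subsequence $(g_{n_j})$ converging pointwise on $(0,1)$ to a limit $g$. Being a pointwise limit of non-decreasing functions, $g$ is non-decreasing, hence admits a modification in $\Dr$ and defines an element of $\Li$.

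Finally I would upgrade pointwise convergence to $L_2$-convergence and identify the limit. Since each $g_{n_j}$ satisfies $|g_{n_j}|\leq h$, the limit does too, so $|g_{n_j}-g|^2\leq 4h^2\in L_1$ while $|g_{n_j}-g|^2\to 0$ a.e.; dominated convergence gives $g_{n_j}\to g$ in $L_2$. Fatou's lemma applied to the pointwise convergence yields $\int_0^1|g|^{2+\delta}\,\dm u\leq\liminf_j\int_0^1|g_{n_j}|^{2+\delta}\,\dm u\leq C^{2+\delta}$, so $g\in K_C$. Thus every sequence in $K_C$ has a subsequence converging in $L_2$ to a point of $K_C$, i.e. $K_C$ is sequentially compact in $L_2$ and hence in $\Li$. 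The main—if minor—obstacle is the sign-dependent case analysis yielding the $L_2$-integrable envelope $h$, since it is the finiteness of $\|h\|_{L_2}$, equivalent to $\delta>0$, that makes the dominated convergence step legitimate.
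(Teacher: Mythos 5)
Your proof is correct, and it is worth noting that the paper itself gives no argument for this lemma: it simply cites Lemma~5.1 of \cite{Konarovskyi:2017:EJP}. Your write-up supplies exactly the standard argument one would expect behind that citation: the monotonicity-plus-$L_{2+\delta}$ envelope $|g(u)|\leq C(u\wedge(1-u))^{-1/(2+\delta)}$, which lies in $L_2$ precisely because $\delta>0$; Helly's selection theorem with a diagonal argument on the intervals $[1/k,1-1/k]$; and dominated convergence plus Fatou to get $L_2$-convergence to a limit that stays in $K_C$. All steps check out, including the sign-split derivation of the envelope and the identification of sequential compactness with compactness in the metric space $L_2$, so this is a complete self-contained proof of the statement the paper outsources.
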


For the proof of the lemma see, e.g., Lemma~5.1~\cite{Konarovskyi:2017:EJP}.

\begin{proposition}\label{proposition_tightness_in_W}
  If there exists $\delta>0$ such that $\{g_n,\ \xi_n,\ n\geq 1\} \subset D^{\uparrow}$ is bounded in $L_{2+\delta}$ and $\{\xi_n(1)-\xi_n(0),\ n\geq 1\}$ is bounded in $\R$, then $\{\overline{X}^n,\ n\geq 1\}$ is tight in $\W$.
\end{proposition}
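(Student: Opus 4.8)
The plan is to establish tightness of each of the four marginals of $\overline X^n$ separately and then invoke the elementary fact that, in a countable product of Polish spaces, tightness of every coordinate implies tightness of the joint law. Two of the marginals --- the drift part $A^n$ and the array of joint quadratic variations $[M^n(e_i),M^n(e_j)]$ --- turn out to sit in fixed compact sets in a purely deterministic way. The only genuinely stochastic work concerns the martingale part $M^n$ and the real increasing process $\langle M^n\rangle$, and both are reduced to compact containment, which is where the mass estimates of Subsection~\ref{subsection_estimates} are consumed.

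I would first dispose of the two deterministic coordinates. Since $\pr_{X^n_s}$ is the conditional expectation onto $\sigma(X^n_s)$, it is an $L_p$-contraction and maps non-decreasing functions to non-decreasing ones (Lemma~\ref{lemma_monotonisity_of_pr}); hence $|\xi_n(u)-(\pr_{X^n_s}\xi_n)(u)|\le\xi_n(1)-\xi_n(0)$, which makes $A^n$ Lipschitz in time in $L_\infty\subset L_2$ with a uniformly bounded constant, and exhibits $A^n_t$ as the difference of the two non-decreasing functions $t\xi_n$ and $\int_0^t\pr_{X^n_s}\xi_n\,ds$, each of $L_{2+\delta}$-norm at most $t\sup_n\|\xi_n\|_{L_{2+\delta}}$. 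By Lemma~\ref{lemma_compact_set_in_L_2_uparrow} both lie in one fixed $L_2$-compact set, so $A^n_t$ does too, and together with the deterministic equicontinuity this confines every path $A^n|_{[0,T]}$ to a single compact subset of $C([0,T],L_2)$; tightness follows. For the quadratic-variation array, $[M^n(e_i),M^n(e_j)]_t=\int_0^t(\pr_{X^n_s}e_i,\pr_{X^n_s}e_j)_{L_2}\,ds$ has time-derivative bounded in modulus by $\|\pr_{X^n_s}e_i\|_{L_2}\|\pr_{X^n_s}e_j\|_{L_2}\le 1$, so each coordinate is $1$-Lipschitz and bounded by $t$, hence deterministically confined to a compact subset of $C([0,\infty))$; tightness of the countable product is then coordinatewise.

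For $M^n$ and $\langle M^n\rangle$ the backbone is compact containment, obtained from Corollary~\ref{corollary_estim_of_norm_of_X}: $\E\sup_{s\le T}\|X^n_s\|_{L_{2+\delta}}^{2+\delta}$ is bounded uniformly in $n$, so by Lemma~\ref{lemma_compact_set_in_L_2_uparrow} the paths of $X^n$, hence of $M^n=X^n-g_n-A^n$, stay with high probability in a fixed $L_2$-compact set. For $M^n$ I would apply Jakubowski's tightness criterion: compact containment together with tightness in $C([0,\infty))$ of every scalar projection $(M^n_\cdot,e_i)_{L_2}$ yields tightness of $M^n$ in $C([0,\infty),L_2)$; each projection is a continuous martingale whose quadratic variation $\int_0^t\|\pr_{X^n_s}e_i\|_{L_2}^2\,ds$ is $1$-Lipschitz, so it is tight by a routine fourth-moment Kolmogorov bound. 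For the increasing process I would use the decomposition coming from Remark~\ref{remark_def_of_sol}(iv): $N^n_t:=\|M^n_t\|_{L_2}^2-\langle M^n\rangle_t=2\int_0^t(M^n_s,dM^n_s)_{L_2}$ is a continuous real martingale, so $\langle M^n\rangle_t=\|M^n_t\|_{L_2}^2-N^n_t$ is the difference of two $C([0,\infty))$-tight families --- the first because $g\mapsto\|g\|_{L_2}^2$ is continuous and $M^n$ is tight, the second because $[N^n]_t=4\int_0^t(\pr_{X^n_s}M^n_s,M^n_s)_{L_2}\,ds\le 4\int_0^t\|M^n_s\|_{L_2}^2\,ds$, whence $\E\big([N^n]_{\tau+\delta}-[N^n]_\tau\big)\le 4\delta\,\E\sup_{s\le T+1}\|M^n_s\|_{L_2}^2\le C\delta$ for every stopping time $\tau\le T$, giving the Aldous condition at once (Doob's inequality and Proposition~\ref{proposition_estim_of_dif_rate} with $\beta=1$ bound $\E\sup_{s\le T}\|M^n_s\|_{L_2}^2$ uniformly). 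Prokhorov's theorem then finishes the argument.

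The main obstacle is the increasing process $\langle M^n\rangle_t=\int_0^t\|\pr_{X^n_s}\|_{HS}^2\,ds=\int_0^t\#X^n_s\,ds$. Its time-derivative $\#X^n_s=\int_0^1 m_{X^n}(u,s)^{-1}\,du$ is not bounded pathwise --- near $s=0$ it is of order $\#g_n\to\infty$, and for strictly increasing $\xi$ there is even an exceptional set of times carrying infinitely many distinct particles --- so there is no uniform modulus of continuity, and a direct Aldous or Kolmogorov estimate would demand control of $\E\int_\tau^{\tau+\delta}\#X^n_s\,ds$ over random windows, which lemmas~\ref{lemma_estim_of_mass},~\ref{lemma_estim_of_mass_near_0} and~\ref{lemma_estim_of_mass_near_1} deliver only in a time-aggregated, not pathwise, form. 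The device that removes this difficulty is precisely the identity $\langle M^n\rangle=\|M^n\|_{L_2}^2-N^n$: it trades the badly behaved speed of $\langle M^n\rangle$ for the martingale $N^n$, whose quadratic variation is dominated by $\int_0^{\cdot}\|M^n_s\|_{L_2}^2\,ds$ and therefore enjoys the trivial $\delta$-modulus supplied by the uniform bound on $\E\sup_{s\le T}\|M^n_s\|_{L_2}^2$. The one remaining input --- uniform-in-$n$ compact containment --- rests entirely on the mass estimates of Subsection~\ref{subsection_estimates}, which guarantee that the particle masses cannot stay small long enough to blow up $\E\sup_{s\le T}\|X^n_s\|_{L_{2+\delta}}^{2+\delta}$.
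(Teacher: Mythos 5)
Your proof is correct, but it diverges from the paper's in the two places that matter, and the comparison is instructive. For the drift part $A^n$ and the array $[M^n(e_i),M^n(e_j)]$ you obtain \emph{deterministic} compact containment (uniform time-Lipschitz bounds from $\|\xi_n-\pr_{X^n_s}\xi_n\|_{L_\infty}\le\xi_n(1)-\xi_n(0)$ and $|(\pr_{X^n_t}e_i,e_j)_{L_2}|\le 1$, plus values in a fixed $L_2$-compact via Lemma~\ref{lemma_compact_set_in_L_2_uparrow} and Arzel\`a--Ascoli), whereas the paper runs Jakubowski's criterion and the Aldous condition on $\widehat{A}^n=\int_0^{\cdot}\pr_{X^n_s}\xi_n\,ds$; your version is more elementary and actually stronger. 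The genuine fork is $\langle M^n\rangle$. The paper attacks it directly with Aldous: for stopping times $\tau_n$ and $r_n\downarrow 0$ it bounds $\E\int_{\tau_n}^{\tau_n+r_n}\|\pr_{X^n_s}\|_{HS}^2ds$ by H\"older's inequality in time against the time-aggregated estimate of Proposition~\ref{proposition_estim_of_dif_rate} with an exponent $\beta\in\left(1,\tfrac32-\tfrac1{2+\delta}\right)$ --- this is precisely how the paper converts the aggregated mass control into control over random small windows, and it is the reason Proposition~\ref{proposition_estim_of_dif_rate} is proved for $\beta$ strictly larger than $1$. You instead trade $\langle M^n\rangle$ for the martingale $N^n=\|M^n\|_{L_2}^2-\langle M^n\rangle$ of Remark~\ref{remark_def_of_sol}~(iv), whose quadratic variation $4\int_0^{\cdot}\|\pr_{X^n_s}M^n_s\|_{L_2}^2ds\le 4\int_0^{\cdot}\|M^n_s\|_{L_2}^2ds$ has a trivial modulus once $\E\sup_{s\le T}\|M^n_s\|_{L_2}^2$ is bounded via Doob and the $\beta=1$ case of Proposition~\ref{proposition_estim_of_dif_rate}. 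So your route consumes only the first-moment mass estimate and would survive even if the sharper $\beta>1$ bound were unavailable, but it pays for this by invoking It\^o calculus for $L_2$-valued martingales (the representation $N^n=2\int_0^{\cdot}(M^n_s,dM^n_s)_{L_2}$ and the formula for $[N^n]$), which goes a step beyond what Remark~\ref{remark_def_of_sol} explicitly records, though it is standard (e.g.\ in~\cite{Gawarecki:2011}); the paper's route stays entirely inside its own estimates at the cost of needing the stronger exponent. Both arguments close the same way, via Prokhorov and coordinatewise tightness in the product space $\W$.
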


\begin{proof}
  In order to prove the proposition, it is enough to show that the coordinate processes of  $\{\overline{X}^n,\ n\geq 1\}$ are tight in the corresponding spaces, by Proposition~3.2.4~\cite{Ethier:1986}. We first prove that $\{A^n,\ n\geq 1\}$ is tight in $C([0,\infty),L_2)$. We remark that $A^n_t=t\xi_n-\int_0^t\pr_{X^n_s}\xi_nds$ and the sequence $\{t\xi_n,\ t\geq 0\}_{n\geq 1}$ is relatively compact in $C([0,\infty),L_2)$, by Lemma~\ref{lemma_compact_set_in_L_2_uparrow}. Thus, to prove the tightness of $\{A^n,\ n\geq 1\}$, it is enough to show that $\left\{\widehat{A}^n_{\cdot}:=\int_0^{\cdot}\pr_{X^n_s}\xi_nds,\ n\geq 1\right\}$ is tight in $C([0,\infty),L_2)$. Note that $\pr_{X^n_t}\xi_n$ belongs to $L_2^{\uparrow}$ for each $t\geq 0$, according to Lemma~\ref{lemma_monotonisity_of_pr}. Therefore,  the process  $\widehat{A}^n$ takes values in $L_2^{\uparrow}$ for all $n\geq 1$.  To prove the tightness, we will use Jakubowski's tightness criterion~\cite[Theorem~3.1]{Jakubowski:1986} (see also Theorem~3.6.4~\cite{Dawson:1993}), which claims that a family of continuous stochastic processes $Z^n_t$, $t\geq 0$, taking values in a Polish space $E$, is tight in $C([0,\infty),E)$\footnote{Originally Jakubowski's tightness criterion is formulated for the Skorohod space $D([0,\infty),E)$. However, the statement remains true for the space $C([0,\infty),E)$ as a closed subspace of $D([0,\infty),E)$.} if and only if the following two conditions are satisfied:
    \begin{enumerate}
      \item [(i)] for each $T>0$ and $\eps>0$ there exists a compact $K_{T,\eps} \subset E$ such that 
	\[
	  \p\left\{ Z^n_t \in K_{ T,\eps}, \ \ t \in [0,T] \right\}> 1-\eps,
	\]
	for every $n\geq 1$;

      \item [(ii)] for a family $F$ of real-valued continuous functions on $E$ that separates points in $E$ and is closed under addition, one has that $\{ f(Z^n_{\cdot }),\ \ n\geq 1 \}$ is tight in $C[0,\infty)$ for every $f \in F$.
    \end{enumerate}
  Using Lemma~\ref{lemma_compact_set_in_L_2_uparrow} and the estimate
\begin{align*}
\E\sup_{s\in[0,t]}\left\|\int_0^s\pr_{X^n_r}\xi_ndr\right\|_{L_{2+\delta}}^{2+\delta}&= \E\sup_{s\in[0,t]}\int_0^1\left|\int_0^s(\pr_{X^n_r}\xi_n)(u)dr\right|^{2+\delta}du\\ 
&\leq t^{1+\delta} \E\int_0^t\left(\int_0^1|\pr_{X^n_r}\xi_n|^{2+\delta}(u)du\right)dr\\
&\leq t^{1+\delta} \E\int_0^t\left(\int_0^1\left(\pr_{X^n_r}|\xi_n|^{\frac{2+\delta}{2}}\right)^2(u)du\right)dr\\
&= t^{1+\delta} \E\int_0^t\left\|\pr_{X^n_r}|\xi_n|^{\frac{2+\delta}{2}}\right\|^2_{L_2}dr\leq t^{2+\delta}\left\|\xi_n\right\|^{2+\delta}_{L_{2+\delta}},
\end{align*}
where the inequality $|\pr_{X^n_r}\xi_n|^{\frac{2+\delta}{2}}\leq\pr_{X^n_r}|\xi_n|^{\frac{2+\delta}{2}}$ follows from Remark~\ref{remark_notes_obout_proj}~(ii) and H\"older's inequality for conditional expectations, we can conclude that for each $\eps>0$ and $T>0$ there exists $C>0$ such that
$$
\p\left\{\exists t\in[0,T],\ \widehat{A}^n_t\not\in K_C\right\}<\eps
$$  
for all $n\geq 1$, where $K_C=\{g\in L_2^{\uparrow}:\ \|g\|_{L_{2+\delta}}\leq C\}$ is compact in $L_2$. Consequently, the family of processes $\{\widehat{A}^n,\ n\geq 1\}$ satisfies property $(i)$ of Jakubowski's tightness criterion. In order to check property $(ii)$, we choose $F$ to be the set of all linear functionals $g\mapsto(g,h)_{L_2}$, $h \in L_2$, on $L_2$. It trivially separates points on $L_2$ and is closed under addition. Thus, we only need to check the tightness of $\left\{(\widehat{A}^n_{\cdot},h)_{L_2},\ n\geq 1\right\}$ in $C[0,\infty)$. We will apply the Aldous tightness criterion (see, e.g., Theorem~16.11~\cite{Kallenberg:2002}). Let $T>0$, $\{ r_n,\ n\geq 1 \} \subset [0,T]$ be any sequence decreasing to 0 and $\{\tau_n,\ n\geq 1\}$ be any sequence of $(\F_t^{X^n})$-stopping times on $[0,T]$. Using Chebyshev's inequality and then the Cauchy–Schwarz inequality, we obtain for every $\eps>0$
\begin{align*}
  &\p\left\{ \left|(\widehat{A}^n_{\tau_n+r_n},h)_{L_2}-(\widehat{A}^n_{\tau_n},h)_{L_2}\right|>\eps \right\}\leq \frac{1}{ \eps^2 } \E\left((\widehat{A}^n_{\tau_n+r_n},h)_{L_2}-(\widehat{A}^n_{\tau_n},h)_{L_2}\right)^2\\
  &\qquad\qquad= \frac{1}{ \eps^2 }\E \left( \int_{ \tau_n }^{ \tau_n+r_n } (\pr_{X^n_s}\xi_n,h)_{L_2}ds  \right)^2\leq \frac{ r_n }{ \eps^2 }\E\int_{ \tau_n }^{ \tau_n+r_n } (\pr_{X_s^n}\xi_n,h)^2_{L_2}ds\\
  &\qquad\qquad \leq \frac{ r_n^2 }{ \eps^2 }\|\xi_n\|_{L_2}^2\|h\|_{L_2}^2 \to 0, \quad r_n \to 0.
\end{align*}
This implies the tightness of $\{\widehat{A}^n,\ n\geq 1\}$ in $C([0,\infty),L_2 )$.

Similarly, we can prove that $\{X^n,\ n\geq 1\}$ is tight in $C([0,\infty),L_2^{\uparrow})$, using Corollary~\ref{corollary_estim_of_norm_of_X}, the equality $X^n_t=g_n+M_t^n+t\xi_n- \widehat{A}^n_t$, $t\geq 0$,  and the estimate
\begin{align*}
  &\p\left\{ \left|(X^n_{\tau_n+r_n},h )_{L_2}-(X^n_{\tau_n},h)_{L_2}\right|>\eps \right\}\leq \frac{1}{ \eps^2 }\E\left( (X^n_{\tau_n+r_n},h )_{L_2}-(X^n_{\tau_n},h)_{L_2} \right)^2\\
  &\qquad\qquad\leq \frac{3}{ \eps^2 }\E\left( (M^n_{\tau_n+r_n},h)_{L_2}-(M^n_{\tau_n},h)_{L_2} \right)^2+ \frac{3}{ \eps^2 }\E\big( r_n (\xi_n,h)_{L_2} \big)^2\\
  &\qquad\qquad+ \frac{3}{ \eps^2 }\E\left( (\widehat{A}^n_{\tau_n+r_n},h)_{L_2}-(\widehat{A}^n_{\tau_n},h)_{L_2} \right)^2\\
  &\qquad\qquad\leq \frac{3}{ \eps^2 }\E \int_{ \tau_n }^{ \tau_n+r_n } \big\|\pr_{X_s^n}h\big\|_{L_2}^2 ds  + \frac{6 r_n^2}{ \eps^2 }\|\xi_n\|_{L_2}^2 \|h\|_{L_2}^2\\
  &\qquad\qquad\leq \frac{3 r_n}{ \eps^2 }\|h\|_{L_2}^2\left( 1+2r_n \|\xi_n\|_{L_2}^2 \right).
\end{align*}
Hence, by Proposition~3.2.4~\cite{Ethier:1986}, $\{(X^n,A^n),\ n\geq 1\}$ is tight in $C([0,\infty),L_2)^2$ and, consequently, the sequence of processes $\{M^n,\ n\geq 1\}$, which are defined by $M^n_t=X^n_t-g_n-A^n_t$, $t\geq 0$, is tight in $C([0,\infty),L_2)$.

For every $i,j \in \N$ the tightness of the family of processes
$$
[M^n(e_i),M^n(e_j)]_t=\int_{ 0 }^{ t } (\pr_{X^n_s}e_i,e_j)_{L_2}ds, \quad t\geq 0,\ \ n\geq 1, 
$$ 
in $C[0,\infty)$ can be proved in the same way as the tightness of $\{ (\widehat{A}^n_{\cdot },h)_{L_2},\ n\geq 1 \}$.

Next, we prove the tightness of $\{\langle M^n\rangle,\ n\geq 1\}$ in $C[0,\infty)$. We are going to use the Aldous tightness criterion again. By Lemma~\ref{lemma_connection_HS_with_int} and Proposition~\ref{proposition_estim_of_dif_rate},
\begin{align*}
\E\langle M^n\rangle_t&=\E\int_0^t\|\pr_{X^n_s}\|_{HS}^2ds=\E\int_0^t\left(\int_0^1\frac{du}{m_n(u,s)}\right)ds \leq Ce^{C(\xi_n(1)-\xi_n(0))^2}(1+\|g_n\|_{L_{2+\delta}}^3+\|\xi_n\|_{L_{2+\delta}}),
\end{align*}
where the constant $C$ depends on $t$ and $\delta$. Thus, \ the \ boundedness \ of \ 
$\{g_n,\ \xi_n,\ n\geq 1\}$ in $L_{2+\delta}$ and $\{\xi_n(1)-\xi_n(0),\ n\geq 1\}$ in $\R$ yields the tightness of $\{\langle M^n\rangle_t,\ n\geq 1\}$ in $\R$ for all $t\geq 0$. Next, let $T>0$, $\{r_n,\ n\geq 1\}\subset[0,T]$ be any sequence decreasing to 0 and $\{\tau_n,\ n\geq 1\}$ be any sequence of $(\F^{X^n}_t)$-stopping times on $[0,T]$. Then for each $\eps>0$ and $\beta\in\left(1,\frac{3}{2}-\frac{1}{2+\delta}\right)$
\begin{align*}
\p\big\{|\langle M^n\rangle_{\tau_n+r_n}-\langle M^n\rangle_{\tau_n}|>\eps\big\}&\leq\frac{1}{\eps}\E\left(\langle M^n\rangle_{\tau_n+r_n}-\langle M^n\rangle_{\tau_n}\right) =\frac{1}{\eps}\E\int_{\tau_n}^{\tau_n+r_n}\|\pr_{X^n_s}\|_{HS}^2ds\\
[\mbox{Lemma~\ref{lemma_connection_HS_with_int}}]&=\frac{1}{\eps}\E\int_{\tau_n}^{\tau_n+r_n}\left(\int_0^1\frac{du}{m_n(u,s)}\right)ds=\frac{1}{ \eps }\E \int_{ 0 }^{ 1 } \int_{ 0 }^{ 2T } \frac{\I_{[\tau_n,\tau_n+r_n]} }{ m_n(u,s) } duds  \\
[\mbox{H\"older in.}]&\leq \frac{1}{ \eps }\left(\E \int_{ 0 }^{ 1 } \int_{ 0 }^{ 2T } \I_{[\tau_n,\tau_n+r_n]}  duds\right)^{\frac{ \beta-1 }{ \beta }} \left(\E \int_{ 0 }^{ 1 } \int_{ 0 }^{ 2T } \frac{duds }{ m_n^{\beta}(u,s) }\right)^{ \frac{1}{ \beta }} \\
&=\frac{r_n^{\frac{\beta-1}{\beta}}}{\eps} \left(\E\int_0^1\int_0^{2T}\frac{duds}{m_n^{\beta}(u,s)}\right)^{ \frac{1}{ \beta }}.
\end{align*}
Consequently, $\p\{|\langle M^n\rangle_{\tau_n+r_n}-\langle M^n\rangle_{\tau_n}|>\eps\}\to 0$ as $n\to\infty$, by Proposition~\ref{proposition_estim_of_dif_rate}. Thus, the Aldous tightness criterion implies the compactness of $\{\langle M^n\rangle,\ n\geq 1\}$ in $C[0,\infty)$. This completes the proof of the proposition.
\end{proof}

\begin{corollary}\label{corollary_relatively_compactness_of_X}
Let $\delta>0$, $\{g_n,\ \xi_n,\ n\geq 1\}\subset \St$ and $\{\overline{X}^n,\ n\geq 1\}$ be defined by~\eqref{f_tilde_X}. If $g_n\to g$, $\xi_n\to\xi$ in $L_{2+\delta}$ and $\{\xi_n(1)-\xi_n(0),\ n\geq 1\}$ is bounded, then there exists a subsequence $N\subseteq\N$ and a random element $\overline{X}$ in $\W$ such that $\overline{X}^n\to\overline{X}$ in $\W$ in distribution along $N$.   
\end{corollary}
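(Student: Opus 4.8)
The plan is to recognize this statement as an essentially immediate consequence of Proposition~\ref{proposition_tightness_in_W} combined with Prokhorov's theorem. The only substance is to check that the hypotheses here imply those of the proposition, and then to extract a convergent subsequence from the resulting tight family.

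First I would observe that a sequence converging in a normed space is bounded. Since $g_n\to g$ and $\xi_n\to\xi$ in $L_{2+\delta}$, the families $\{g_n,\ n\geq 1\}$ and $\{\xi_n,\ n\geq 1\}$ are bounded in $L_{2+\delta}$, and hence $\{g_n,\ \xi_n,\ n\geq 1\}$ is bounded in $L_{2+\delta}$. Together with the assumed boundedness of $\{\xi_n(1)-\xi_n(0),\ n\geq 1\}$ in $\R$, this places us exactly in the setting of Proposition~\ref{proposition_tightness_in_W}, which then yields that $\{\overline{X}^n,\ n\geq 1\}$ is tight in $\W$.

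Second, I would invoke Prokhorov's theorem. For this it suffices to note that $\W$ is a Polish space: each factor $C([0,\infty),L_2)$ is Polish because $L_2$ is Polish and the topology of uniform convergence on compacts is induced by a complete separable metric; $C([0,\infty))$ is Polish for the same reason; the countable product $C([0,\infty))^{\N^2}$ of Polish spaces is again Polish; and a finite product of Polish spaces is Polish. Since $\{\overline{X}^n,\ n\geq 1\}$ is a tight family of random elements in the Polish space $\W$, Prokhorov's theorem guarantees relative compactness in distribution. Consequently there exist a subsequence $N\subseteq\N$ and a random element $\overline{X}$ in $\W$ with $\overline{X}^n\to\overline{X}$ in $\W$ in distribution along $N$, which is the assertion.

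There is no genuine obstacle at this level; the real work is already contained in Proposition~\ref{proposition_tightness_in_W}, and ultimately in the mass estimates of Section~\ref{section_finite_system}. The single point requiring a line of care is confirming that $\W$, being a finite-times-countable product of spaces of the form $C(I,E)$ with $E$ Polish, is itself Polish, so that Prokhorov's theorem is legitimately applicable.
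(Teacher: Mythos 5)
Your proposal is correct and follows exactly the paper's own route: the paper likewise deduces the corollary from Proposition~\ref{proposition_tightness_in_W} together with Prokhorov's theorem, and your additional remarks (convergent sequences are bounded in $L_{2+\delta}$; $\W$ is Polish) simply make explicit the details the paper leaves implicit.
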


\begin{proof}
The statement of the corollary follows from Prochorov's theorem and Proposition~\ref{proposition_tightness_in_W}.
\end{proof}

\subsection{Tightness in the Skorohod space}\label{subsection_tightness_in_Skorohod_space}

In this section, we will consider the processes $M^n$, $A^n$, $X^n$, which were defined at the beginning of Section~\ref{section_tightness_results}, as random elements in the Skorohod space $D([0,1],C[0,\infty))$. In order to prove the tightness of $X^n$, $n\geq 1$, in $D([0,1],C[0,\infty))$, we are going to apply the same approach as in the proof of Proposition~2.2~\cite{Konarovskyi:2014:arx}, based on estimates from Section~\ref{sec:a_priory_estimates} (see corollaries~\ref{corollary_estim_of_diff_rate_at_u},~\ref{corollary_estim_of_diff_rate_at_0_and_1} and Lemma~\ref{lemma_three_points}) which require a type of regularity of $g_n$ and $\xi_n$ on the whole interval $[0,1]$. However, restricting $X^n(u,\cdot )$, $u \in [0,1]$, to smaller intervals $[a,b] \subseteq [0,1]$ and proving the tightness in $D([a,b],C[0,\infty))$, the piecewise regularity of $g_n$ and $\xi_n$ will be enough. This will allow to get the existence of the CFWD as a random element in the Skorohod space for every {\it piecewise} H\"older continuous initial condition $g$ and interaction potential $\xi$ in the next section. Therefore, we will restrict $X^n(u,\cdot )$, $u \in [0,1]$, to an interval $[a,b] \subseteq [0,1]$. For $\pi=[a,b]\subseteq [0,1]$ we set
$$
X^{\pi,n}(u,\cdot)=\begin{cases}
                    X^n(u,\cdot),& u\in[a,b),\\
                    \lim_{u\uparrow b}X^n(u,\cdot),& u=b,
                   \end{cases}
$$
where $\lim_{u\uparrow b}X^n(u,\cdot)$ exists in $C[0,\infty)$ due to the fact that the map $u\mapsto X^n(u,\cdot )$ is c\`{a}dl\'{a}g. Let $G^n$ be defined by~\eqref{f_function_G} with $g$ and $\xi$ replaced by $g_n$ and $\xi_n$, respectively, and let
\begin{align*}
G^n_a(r,t)&:=(\xi_n(a+r)-\xi_n(a))t+g_n(a+r)-g_n(a),\quad r\in(0,1-a],\\
G^n_b(r,t)&:=(\xi_n(b-)-\xi_n(b-r))t+g_n(b-)-g_n(b-r),\quad r\in(0,b].
\end{align*}

\begin{proposition}\label{proposition_tightness_in_D_pi}
  Let $T>0$, $\pi:=[a,b]\subseteq[0,1]$, and $\{g_n,\ \xi_n,\ n\geq 1\} \subset D^{\uparrow}$ be uniformly bounded, i.e.,
  \[
    \sup\limits_{ n\geq 1 }\sup\limits_{ u \in [0,1] }(|g_n(u)|\vee |\xi_n(u)|)<\infty.
  \]
  If there exist $\beta>0$ and $C>0$ such that for each $n\geq 1$
\begin{enumerate}
\item[(c1)] $G^n(r\wedge(u-a),r\wedge(b-u),u,T)\leq Cr^{1+\beta}$ for all $u\in(a,b),\ r>0$;

\item[(c2)] $G^n_v(r,T)\leq Cr^{\frac{1}{2}+\beta}$ for all $v\in\{a,b\}$ and $r\in(0,b\wedge 1-a]$,
\end{enumerate}
then the family $\{X^{\pi,n}(u,t),\ u\in[a,b],\ t\in[0,T]\}_{n\geq 1}$ is tight in $D([a,b],$ $C[0,T])$.
\end{proposition}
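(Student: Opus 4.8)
The plan is to apply the compactness criterion for the Skorohod space $D([a,b],C([0,T]))$ recalled in Section~\ref{section_compact_sers_in_D}: it suffices to verify (I) that for each fixed $u\in[a,b]$ the laws of $X^{\pi,n}(u,\cdot)$ are tight in $C([0,T])$ (pointwise compact containment); (II) control of the interior Skorohod modulus $w''$, i.e. $\lim_{\delta\to 0}\limsup_{n}\p\{w''(X^{\pi,n},\delta)\geq\eps\}=0$ for every $\eps>0$; and (III) that the one–sided oscillations near the endpoints $a$ and $b$ vanish uniformly in $n$ as one approaches the boundary. Throughout, the monotonicity $(R2)$ of $u\mapsto X^{\pi,n}(u,t)$ is the decisive structural feature, since it renders every two–sided $C([0,T])$–increment nonnegative and lets us collapse suprema over $u$ onto the extreme labels.

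For (II) I would use the three–point estimate of Lemma~\ref{lemma_three_points}. Given $a\leq u_1\leq u\leq u_2\leq b$ with $u_2-u_1\leq\delta$, applying that lemma with $r_1=u-u_1$ and $r_2=u_2-u$, and using that $G^{n}$ is nondecreasing in its first two arguments together with hypothesis (c1), yields
\[
\p\big\{\|X^{n}(u_2,\cdot)-X^{n}(u,\cdot)\|_{C([0,T])}>\lambda,\ \|X^{n}(u,\cdot)-X^{n}(u_1,\cdot)\|_{C([0,T])}>\lambda\big\}\leq\frac{C\,\delta^{1+\beta}}{2\lambda^{2}}.
\]
Because the spacing exponent $1+\beta$ is strictly larger than $1$, a dyadic chaining argument of Billingsley type (cf. Theorem~13.5~\cite{Billingsley:1999}, in its form for Polish–space valued c\`adl\`ag paths) upgrades this per–triple bound to the required control of $w''$. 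This passage from a single fixed triple to a uniform bound over all admissible triples is the technical heart of the proof; the monotonicity $(R2)$ is exactly what keeps it manageable, since the two one–sided increments above are then nonnegative sandwiched differences rather than genuine oscillations.

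For (III) I would argue by a maximal inequality at the boundary. Fix the left endpoint and set $D(t):=X^{n}(a+\delta,t)-X^{\pi,n}(a,t)\geq 0$. By $(R2)$, $(R3)$ and the monotonicity of the projection (Lemma~\ref{lemma_monotonisity_of_pr}) one has $D(t)\leq D(0)+N(t)+ct$, where $c:=\xi_n(a+\delta)-\xi_n(a)$ and $N$ is the martingale part of $D$; moreover $S_t:=D(0)+N(t)+ct=D(t)+\int_0^t(c-b(s))\,ds\geq 0$ is a nonnegative submartingale with terminal mean $g_n(a+\delta)-g_n(a)+cT=G^{n}_{a}(\delta,T)$. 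Doob's $L^1$–inequality then gives $\p\{\|X^{n}(a+\delta,\cdot)-X^{\pi,n}(a,\cdot)\|_{C([0,T])}>\eps\}\leq G^{n}_{a}(\delta,T)/\eps$, and monotonicity in $u$ shows $\sup_{a\leq u\leq a+\delta}\|X^{n}(u,\cdot)-X^{\pi,n}(a,\cdot)\|_{C([0,T])}\leq\|X^{n}(a+\delta,\cdot)-X^{\pi,n}(a,\cdot)\|_{C([0,T])}$. With (c2) this is at most $C\delta^{1/2+\beta}/\eps\to 0$ uniformly in $n$; the same argument at $b$ (using $G^n_b$ and the left limit $X^{\pi,n}(b,\cdot)$) handles the right endpoint.

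Finally, for (I) I would use the semimartingale decomposition $X^{n}(u,t)=g_n(u)+M^{n}(u,t)+A^{n}(u,t)$. The initial values $g_n(u)$ are uniformly bounded and $A^{n}(u,\cdot)$ is uniformly Lipschitz in $t$ with constant at most $\xi_n(1)-\xi_n(0)$, both by the boundedness of $\{g_n,\xi_n\}$ in $L_{\infty}^{\uparrow}$, so both are relatively compact in $C([0,T])$. The martingale $M^{n}(u,\cdot)$, whose quadratic variation is $\int_0^t m_n(u,s)^{-1}\,ds$, is tight in $C([0,T])$ by the Burkholder--Davis--Gundy inequality once $\E\int_0^T m_n(u,s)^{-1}\,ds$ is bounded uniformly in $n$; this bound follows from Corollary~\ref{corollary_estim_of_diff_rate_at_u} (or Corollary~\ref{corollary_estim_of_diff_rate_at_0_and_1} at $u\in\{0,1\}$) together with (c1), (c2) and the uniform $L_{\infty}$–bounds, the only point to check being finiteness of the resulting $G^{n}$–integrals, which splits into the region $r\leq(u-a)\wedge(b-u)$ governed by (c1) and a remaining region on which $G^n$ is bounded by a constant. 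Combining (I)--(III) with the criterion of Section~\ref{section_compact_sers_in_D} yields the asserted tightness of $\{X^{\pi,n}\}_{n\geq 1}$ in $D([a,b],C([0,T]))$.
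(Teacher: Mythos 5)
Your steps (II) and (III) are correct and essentially coincide with the paper's own proof: (II) is the paper's per-triple estimate, obtained from Lemma~\ref{lemma_three_points}, the monotonicity of $G^n$ in its first two arguments and $(c1)$, with the passage from a fixed triple to control of the modulus delegated to theorems~3.8.6, 3.8.8 and Remark~3.8.9 of~\cite{Ethier:1986} (the Polish-space form of the Billingsley criterion you invoke); (III) is the paper's endpoint condition, and your route --- Doob's $L^1$ maximal inequality for the nonnegative submartingale $S_t=D(0)+N(t)+ct$, followed by the remark that a uniform probability bound of order $C\delta^{1/2+\beta}/\eps$ implies the required $\E\big[\|\cdot\|^{\alpha}_{C([0,T])}\wedge 1\big]\to 0$ --- is in fact slightly cleaner than the paper's truncation at level $1$ by a stopping time together with Doob's $L^{\alpha}$ inequality. (At $u=b$ you must still pass to the left limit defining $X^{\pi,n}(b,\cdot)$, e.g. by a supremum over $\gamma\in(0,\delta)$ and monotone convergence, as the paper does; you mention this but do not carry it out.)

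The genuine gap is in step (I). You assert that the martingales $M^{n}(u,\cdot)$ are tight in $C([0,T])$ ``by the Burkholder--Davis--Gundy inequality once $\E\int_0^T m_n(u,s)^{-1}ds$ is bounded uniformly in $n$''. That implication is false: a uniform bound on the expected quadratic variation yields, via BDG, compact containment of $\sup_{t\le T}|M^n(u,t)|$, but it gives no control of the modulus of continuity. For instance, $M^n_t:=W_{\phi_n(t)}$ with $\phi_n$ climbing from $0$ to $1$ on an interval of length $1/n$ satisfies $\E[M^n]_T=1$ for every $n$ and is not tight in $C([0,T])$. What is actually needed --- and what the paper proves as estimate~\eqref{f_estim_of_m_n} --- is a bound on a moment of order strictly larger than one, $\E\int_0^T m_n^{-(1+\beta/2)}(u,s)\,ds\le\tilde{C}$, which by H\"older's inequality gives $\E\int_{\tau}^{\tau+r}m_n^{-1}(u,s)\,ds\le r^{\beta/(2+\beta)}\,\tilde{C}^{2/(2+\beta)}$ uniformly over stopping times $\tau$, so that the Aldous criterion applies to $M^n(u,\cdot)$ and hence to $X^{\pi,n}(u,\cdot)$. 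The corollaries you cite (corollaries~\ref{corollary_estim_of_diff_rate_at_u} and~\ref{corollary_estim_of_diff_rate_at_0_and_1}) do provide exactly such higher moments under $(c1)$, $(c2)$ and the uniform $L_{\infty}$ bounds, so the repair is immediate; but as written, your first-moment bound does not imply tightness of the martingale parts, and step (I) is therefore incomplete.
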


\begin{remark}\label{rem_on_holder_continuity_of_initial_condition} %
  The assumption on the piecewise H\"older continuity of the initial condition $g$ and the interaction potential $\xi$ in Theorem~\ref{theorem_existence_in_general_case} is required for the construction of sequences $\{g_n,\ n\geq 1\}$ and $\{ \xi_n,\ n\geq 1 \}$ which converge to $g$ and $\xi$, respectively, and satisfy the uniform regularity assumptions $(c1)$, $(c2)$ of Proposition~\ref{proposition_tightness_in_D_pi} (see Section~\ref{ssub:tightness_of_a_finite_particle_system} for more details).
\end{remark}

\begin{proof}[Proof of Proposition~\ref{proposition_tightness_in_D_pi}]
The proof is similar to the proof of Proposition~2.2~\cite{Konarovskyi_LDP:2015}. Here, we indicate the main steps only.

The statement will follow from theorems~3.8.6 and~3.8.8~\cite{Ethier:1986} and Remark~3.8.9~ibid. We only have to check the following properties of $\{X^{\pi,n},\ n\geq 1\}$.
\begin{enumerate}
\item[(a)] There exists $C_1>0$ such that 
\begin{align*}
\p\left\{\|X^{\pi,n}((u+r)\wedge b,\cdot)-X^{\pi,n}(u,\cdot)\|_{C[0,T]}>\lambda,\ \|X^{\pi,n}(u,\cdot)-X^{\pi,n}((u-r)\vee a,\cdot)\|_{C[0,T]}>\lambda\right\}\leq\frac{C_1r^{1+\beta}}{\lambda^2}
\end{align*}
for all $n\in\N$, $u\in(a,b)$, $r>0$ and $\lambda>0$.

\item[(b)] For some $\alpha>0$
\begin{equation}\label{f_lim_at_a_for_tightness}
\lim_{\delta\to 0+}\sup\limits_{n\geq 1}\E\left[\|X^{\pi,n}(a+\delta,\cdot)-X^{\pi,n}(a,\cdot)\|^{\alpha}_{C[0,T]}\wedge
1\right]=0
\end{equation}
and
\begin{equation}\label{f_lim_at_b_for_tightness}
\lim_{\delta\to 0+}\sup\limits_{n\geq 1}\E\left[\|X^{\pi,n}(b,\cdot)-X^{\pi,n}(b-\delta,\cdot)\|^{\alpha}_{C[0,T]}\wedge
1\right]=0.
\end{equation}

\item[(c)] For all $u\in[a,b]$ the sequence $\{X^{\pi,n}(u,t),\ t\in[0,T]\}_{n\geq 1}$ is tight in $C[0,T]$.
\end{enumerate}

Properties (a), (b) and (c) are needed for the verification of conditions (8.39), (8.30)\footnote{Here, we used the statement for the tightness in $D([a,\infty),C[0,T])$, which can be applied to $\{X^{\pi,n}(u\wedge b,t),\ u\in[0,\infty),\ t\in[0,T]\}_{n\geq 1}$. Since $D([a,b],C[0,T])$ contains functions which are continuous in $b$, additional property~\eqref{f_lim_at_b_for_tightness} is needed for the tightness there.} of~\cite[Chapter~3]{Ethier:1986} and (a) of Theorem~3.7.2~ibid., respectively.

Property~(a) immediately follows from~$(c1)$ and Lemma~\ref{lemma_three_points}. 

Next, let us prove (b). We will check~\eqref{f_lim_at_b_for_tightness} for every $\alpha>1$. The proof of~\eqref{f_lim_at_a_for_tightness} is similar. Using the monotonicity of $X^{\pi,n}(u,\cdot)$, $u\in[a,b]$, and the monotone convergence theorem, we have for each $\delta\in(0,b-a)$
\begin{equation}\label{f_lim_at_b_for_tightness_mon_conv}
\begin{split}
\E\left[\|X^{\pi,n}(b,\cdot)-X^{\pi,n}(b-\delta,\cdot)\|^{\alpha}_{C[0,T]}\wedge
1\right] &=\sup_{\gamma\in(0,\delta)}\E\left[\|X^{\pi,n}(b-\gamma,\cdot)-X^{\pi,n}(b-\delta,\cdot)\|^{\alpha}_{C[0,T]}\wedge
1\right]\\
&=\sup_{\gamma\in(0,\delta)}\E\left[\|X^n(b-\gamma,\cdot)-X^n(b-\delta,\cdot)\|^{\alpha}_{C[0,T]}\wedge
1\right].
\end{split}
\end{equation}
In order to estimate the last expression, we are going to use Doob's martingale inequality (see, e.g., Proposition~2.2.16~\cite{Ethier:1986}) for submartingales. Since $X^n(b-\gamma,t)-X^{n}(b-\delta,t)$, $t\geq 0$, is not a submartingale, we introduce a new process
$$
X_{\delta,\gamma}^n(t):=g_n(b-\gamma)-g_n(b-\delta)+M^n(b-\gamma,t)-M^n(b-\delta,t)+A_{\delta,\gamma}^n(t),\quad t\in[0,T],
$$
where
\begin{align*}
A_{\delta,\gamma}^n(t):&=\int_0^t(b_{\delta,\gamma}^n(s)\vee 0)ds
\end{align*}
and
$$
b_{\delta,\gamma}^n(s):=\xi_n(b-\gamma)-\xi_n(b-\delta)-\left[\left(\pr_{X_s^n}\xi_n\right)(b-\gamma)-\left(\pr_{X_s^n}\xi_n\right)(b-\delta)\right],
$$
and note that it is a continuous submartingale because $A_{\delta,\gamma}^n$ is an increasing continuous process. Moreover, 
$$
0\leq X^n(b-\gamma,t)-X^n(b-\delta,t)\leq X_{\delta,\gamma}^n(t)
$$ 
for all $t\in[0,T]$.  We also introduce the stopping time
$$
\sigma_{\delta,\gamma}^n:=\inf\left\{t:\ X_{\delta,\gamma}^n(t)=1\right\}\wedge T.
$$
By Doob's martingale inequality  and the estimate
$$
b_{\delta,\gamma}^n(t)\vee 0\leq\xi_n(b-\gamma)-\xi_n(b-\delta),\quad t\in[0,T], 
$$
we get
\begin{align*}
&\E\Bigg[\sup_{t\in[0,T]}(X^n(b-\gamma,t)-X^n(b-\delta,t))^{\alpha}\wedge 1\Bigg] \leq\E\Bigg[\sup_{t\in[0,T]}(X_{\delta,\gamma}^n(t\wedge \sigma_{\delta,\gamma}^n))^{\alpha}\Bigg]\\
&\qquad\qquad\leq C_{\alpha}\E\Bigg[(X_{\delta,\gamma}^n(T\wedge \sigma_{\delta,\gamma}^n))^{\alpha}\Bigg]
\leq C_{\alpha}\E\Bigg[X_{\delta,\gamma}^n(T\wedge \sigma_{\delta,\gamma}^n)\Bigg]\\
&\qquad\qquad\leq C_{\alpha}[g_n(b-\gamma)-g_n(b-\delta)+T(\xi_n(b-\gamma)-\xi_n(b-\delta))],
\end{align*}
where $C_{\alpha}=\left(\frac{\alpha}{\alpha-1}\right)^{\alpha}$. Thus, by~$(c2)$ and~\eqref{f_lim_at_b_for_tightness_mon_conv},
$$
\E\Big[\|X^{\pi,n}(b,\cdot)-X^{\pi,n}(b-\delta,\cdot)\|^{\alpha}_{C[0,T]}\wedge 1\Big]\leq C_{\alpha}G^n_b(\delta,T)\leq C_{\alpha}C\delta^{ \frac{1}{ 2 }+\beta}.
$$
This implies~\eqref{f_lim_at_b_for_tightness}.

Property (c) can be proved in the same way as Lemma~2.5~\cite{Konarovskyi_LDP:2015}, using the Aldous tightness criterion and the estimates
\begin{equation}\label{f_estim_of_m_n}
\E\int_0^T\frac{dt}{m^{1+\frac{\beta}{2}}_n(u,t)}\leq \tilde{C}<\infty,
\end{equation}
\begin{align*}
\E |X^{\pi,n}(u,t)|&\leq\E\left|X^{\pi,n}(u,t)-\int_0^1X^n(v,t)dv\right|+\E\left|\int_0^1X^n(v,t)dv\right|\\
&\leq\E(X^n(1,t)-X^n(0,t))+\E\left|\int_0^1X^n(v,t)dv\right|\\
&\leq g_n(1)-g_n(0)+T(\xi_n(1)-\xi_n(0))+\E\left|\int_0^1X^n(v,t)dv\right|
\end{align*}
for all $n\geq 1$, where $\int_0^1X^n(v,t)dv=(X^n_t,1)_{L_2}$, $t\in[0,T]$, is a Brownian motion, since it is a continuous martingale with quadratic variation $\int_0^t\pr_{X_s^n}1ds=t$ (we note that $(A^n_t,1)_{L_2}=0$). Here,~\eqref{f_estim_of_m_n} follows from $(c1)$, $(c2)$ and corollaries~\ref{corollary_estim_of_diff_rate_at_u},~\ref{corollary_estim_of_diff_rate_at_0_and_1}. The fact that $X^n(v,t)$, $t\geq 0$, is a (square-integrable) martingale for $v \in \{ 0,1 \}$ follows from~\eqref{f_estim_of_m_n}. The proposition is proved.
\end{proof}

\begin{remark}\label{remark_tightness_on_0_infty}
If $(c1)$, $(c2)$ hold for some $T>0$, then one can easily check that they hold for any $T\geq 0$, with $C$ depending on $T$. Thus, under the assumptions of Proposition~\ref{proposition_tightness_in_D_pi} (for some $T>0$), the family $\{X^{\pi,n}(u,t),\ u\in[a,b],\ t\in[0,\infty)\}_{n\geq 1}$ is tight in $D([a,b],C[0,\infty))$. 
\end{remark}

\section{Construction of CFWD}\label{construction_of_cfwd}

This section is devoted to the proof of theorems~\ref{theorem_existence_in_general_case1} and~\ref{theorem_existence_in_general_case}.

\subsection{The reversible CFWD and a finite sticky-reflected particle system}\label{subsection_Dirichlet_form_approach}

In this section, we will recall the construction of a weak solution to SDE~\eqref{f_the_main_equation} for some class of functions $g$ and $\xi$, using the Dirichlet form approach. Namely, we are going to construct a reversible CFWD for ``almost all'' $g\in\Li(\xi)$ and bounded $\xi \in \Dr$, as in~\cite{Konarovskyi_SR:2017}. In the case $\xi\in\St$, we also show that the constructed process has a modification from the Skorohod space satisfying $(R1)-(R4)$. This is the first step of the construction of CFWD in the general case, where we obtain a finite sticky-reflected particle system (see Proposition~\ref{proposition_existence_via_DF} below and Remark~\ref{rem_finite_particle_system}) needed for the approximation. We will assume that $\xi\in\Dr$ is a fixed bounded function.

We first introduce a measure $\Xi^{\xi}$ on $\Li$ which plays a role of an invariant measure for the reversible CFWD $X_t$, $t\geq 0$. We set for each $n\in\N$
\begin{equation} 
  \label{equ_definition_of_en}
  E^n:=\{x=(x_k)_{k\in[n]}\in\R^n:\ x_1<\ldots< x_n\}
\end{equation}
and
$$
Q^n:=\{q=(q_k)_{k\in[n-1]}\in[0,1]^{n-1}:\ q_1<\ldots<q_{n-1}\},\quad \mbox{if}\ \ n\geq 2,
$$
where $[n]:=\{1,\ldots,n\}$. Considering $q\in Q^n$, we will always take $q_0=0$ and $q_n=1$ for convenience. Let $\chi_1:\R\to\Li$ and $\chi_n:E^n\times Q^n\to\Li$, $n\geq 2$, be given by 
\begin{align*}
\chi_1(x):=x\I_{[0,1]}\quad\mbox{and}\quad
\chi_n(x,q):=\sum_{k=1}^nx_k\I_{[q_{k-1},q_k)}+x_n\I_{\{1\}}.
\end{align*}
The functions $\chi_n(x,q)$ can be interpreted as the description of $n$ clusters occupying positions $x_1<x_2<\dots<x_n $ whose masses equal $q_1-q_0,q_2-q_1,\dots,q_n-q_{n-1}$, respectively. Similarly, $\chi_1(x)$ describes a unique cluster of mass one occupying position $x$. Setting
$$
c_n(q):=\prod_{k=1}^n(q_k-q_{k-1}),\quad n\geq 2,
$$
we define the measure on $\Li$ as follows
$$
\Xi^{\xi}:=\sum_{ n=1 }^{ \infty } \Xi_n^{\xi},
$$ 
where
$$
\Xi^{\xi}_1(B):=\int_{\R}\I_B(\chi_1(x))dx
$$ 
and
$$
\Xi^{\xi}_n(B):=\int_{Q^n}\left[c_n(q)\int_{E^n}\I_B(\chi_n(x,q))dx\right]d\xi^{\otimes(n-1)}(q)
$$ 
for all $B\in \B(\Li)$. Here, $\int_{Q^n}\ldots d\xi^{\otimes(n-1)}(q)$ is the $(n-1)$-dim Lebesgue-Stieltjes integral with respect to $\xi^{\otimes(n-1)}(q)=\xi(q_1)\cdot\ldots\cdot\xi(q_{n-1})$. Roughly speaking, the measure $\Xi_n$ is the distribution of $n$ ordered clusters whose positions are uniformly distributed on the real line with masses $q_1-q_0,q_2-q_1,\dots,q_n-q_{n-1}$, where $(q_1,\dots,q_{n-1})$ have the distribution defined by
\[
  \int_{ \cdot  }  \prod_{k=1}^n(q_k-q_{k-1})d\xi^{\otimes(n-1)}(q).
\]
on $Q^{n}$.   
The measure $\Xi^{\xi}$ was first proposed in Section~4~\cite{Konarovskyi_SR:2017}. 

\begin{proposition}\label{proposition_measure_Xi}
  For each bounded $\xi \in D^{\uparrow}$ the measure $\Xi^{\xi}$ is a $\sigma$-finite measure on $\Li$ with $\supp\Xi^{\xi}=\Li(\xi)$.
\end{proposition}

\begin{proof}
The proof of the proposition was given in~\cite{Konarovskyi_SR:2017}. See Lemma~4.2~(ii), Remark~4.4 and Proposition~4.7 there.
\end{proof}

Next, we denote the linear space generated by functions on $\Li$ of the form
\begin{equation}
  \label{f_form_of_U}
U=\vartheta\left((\cdot,h_1)_{L_2},\ldots,(\cdot,h_m)_{L_2}\right)\varphi\left(\|\cdot\|_{L_2}^2\right)=\vartheta\left((\cdot,\vect{h})_{L_2}\right)\varphi\left(\|\cdot\|_{L_2}^2\right)
\end{equation}
by $\FC$, where $\vartheta\in\Cfb(\R^m),$ $\varphi\in\Cfo(\R)$ and $h_j\in L_2,$ $j\in[m]$.

For each $U\in\FC$ we introduce its derivative as follows
$$
\D U(g):=\pr_g\left[\nabla^{L_2}U(g)\right],\quad g\in \Li,
$$
where $\nabla^{L_2}$ denotes the Fr\'{e}chet derivative on $L_2$. If $U$ is given by~\eqref{f_form_of_U}, then a simple calculation shows that
\begin{equation}\label{f_derivativ_D}
\D U(g)=\varphi\left(\|g\|_{L_2}^2\right)\sum_{j=1}^m\partial_j\vartheta\left((g,\vect{h})_{L_2}\right)\pr_gh_j
+2\vartheta\left((g,\vect{h})_{L_2}\right)\varphi'\left(\|g\|_{L_2}^2\right)g
\end{equation}
for all $g\in\Li$, where $\partial_j\vartheta(x):=\frac{\partial}{\partial x_j}\vartheta(x)$, $x\in\R^m$.

The following integration by parts formula was proved in~\cite{Konarovskyi_SR:2017} (see Theorem~5.6 there).

\begin{theorem}\label{theorem_integration_by_parts}
  For each $\xi \in D^{\uparrow}$ and $U,V\in\FC$
 \begin{equation}\label{f_int_by_parts_for_Xi}
 \int_{\Li}(\D U(g),\D V(g))_{L_2}\Xi^{\xi}(dg)=-\int_{\Li}L_0U(g)V(g)\Xi^{\xi}(dg) -\int_{\Li}V(g)(\nabla^{L_2}U(g),\xi-\pr_g\xi)_{L_2}\Xi^{\xi}(dg),
 \end{equation}
 where
\begin{align*}
L_0U(g)&=\varphi\left(\|g\|_{L_2}^2\right)\sum_{i,j=1}^m\partial_i\partial_j\vartheta\left((g,\vect{h})_{L_2}\right)(\pr_gh_i,\pr_gh_j)_{L_2}\\
&+\vartheta\left((g,\vect{h})_{L_2}\right)\left[4\varphi''\left(\|g\|_{L_2}^2\right)\|g\|_{L_2}^2+2\varphi'\left(\|g\|_{L_2}^2\right)\cdot\# g\right]\\
&+2\sum_{j=1}^m\partial_j\vartheta\left((g,\vect{h})_{L_2}\right)\varphi'\left(\|g\|_{L_2}^2\right)(\pr_gh_j,g)_{L_2},
\end{align*}
if $U$ is defined by~\eqref{f_form_of_U}.
\end{theorem}

\begin{remark}
We note that $\# g$ is finite only for $g\in\St$. Since $\Xi^{\xi}(\Li\setminus\St)=0$, the function $L_0U$ is well-defined $\Xi^{\xi}$-a.e. for all $U\in\FC$. Moreover, it belongs to $L_2(\Li,\Xi^{\xi})$, by Lemma~4.2~\cite{Konarovskyi_SR:2017}.
\end{remark}

Since $\supp\Xi^{\xi}=\Li(\xi)$, we will define a bilinear form on $\LL$. We set
$$
\e^{\xi}(U,V)=\frac{1}{2}\int_{\Li(\xi)}(\D U(g),\D V(g))_{L_2}\Xi^{\xi}(dg),\quad U,V\in\FC.
$$
Then $(\e^{\xi},\FC)$ is a densely defined positive definite symmetric bilinear form on $\LL$. Moreover, Theorem~\ref{theorem_integration_by_parts} and Proposition~I.3.3~\cite{Ma:1992} imply that $(\e^{\xi},\FC)$ is closable on $\LL$. Its closure will be denoted by $(\e^{\xi},\Dom^{\xi})$. 

\begin{theorem}\label{theorem_dirichlet_form}
For each bounded $\xi\in D^{\uparrow}$ the bilinear form $(\e^{\xi},\Dom^{\xi})$ is a quasi-regular local\footnote{For the definition of quasi-regularity, strict quasi-regularity and local property see def.~IV.3.1,~V.2.11 and~V.1.1~\cite{Ma:1992}, respectively.} symmetric Dirichlet form on $\LL$. Moreover, if $\xi$ is constant on some neighbourhoods of~$0$ and~$1$, then $(\e^{\xi},\Dom^{\xi})$ is strictly quasi-regular and conservative.
\end{theorem}

\begin{proof}
The proof of the theorem can be found in~\cite{Konarovskyi_SR:2017}. The fact that $(\e^{\xi},\Dom^{\xi})$ is a Dirichlet form, the quasi-regularity and the local property were proved in propositions~5.14, 6.5 and~6.6, respectively. The strict quasi-regularity and conservativeness were proved in Proposition~6.9.
\end{proof}

By theorems~IV.6.4,~V.1.11~\cite{Ma:1992} and Theorem~\ref{theorem_dirichlet_form}, there exists a diffusion process\footnote{see Definition~V.1.10~\cite{Ma:1992}}
$$
\tilde{X}=\left(\tilde{\Omega},\tilde{\F},(\tilde{\F}_t)_{t\geq 0},\{\tilde{X}_t\}_{t\geq 0}, \{\tilde{\p}_g\}_{g\in\Li(\xi)}\right)
$$
with state space $\Li(2\xi)=\Li(\xi)$ and life time $\zeta$ that is properly associated with $(\e^{2\xi},\Dom^{2\xi})$\footnote{We consider the interaction potential $2\xi$ instead of $\xi$ in order to obtain solutions to SDE with the drift term $(\xi-\pr_{X_t}\xi)dt$ instead of $\frac{1}{2}(\xi-\pr_{X_t}\xi)dt$ (see Section~8~\cite{Konarovskyi_SR:2017}).}. Furthermore, if $\xi$ is constant on some neighbourhoods of~$0$ and~$1$, then $\tilde{X}$ is a Hunt process with infinite life time. 

We set 
$$
\tilde{M}_t:=\tilde{X}_t-\tilde{X}_0-\int_0^t(\xi-\pr_{\tilde{X}_s}\xi)ds,\quad t\geq 0,
$$
and denote the expectation with respect to $\tilde{\p}_g$ by $\tilde{\E}_g$.

\begin{proposition}\label{proposition_mart_prop_of_X_t}
  Let $\xi \in D^{\uparrow}$ be constant on some neighbourhoods of~$0$ and~$1$. Then there exists a set $\Theta_\xi\subseteq\Li(\xi)$ with $\e^{2\xi}$-exceptional complement (in $\Li(\xi)$) such that for every $g\in\Theta_{\xi}$ $\tilde{\E}_g\|\tilde{X}_t\|_2^2<\infty$, $t\geq 0$, and for each $h\in L_2$ the process 
$$
(\tilde{M}_t,h)_{L_2}=(\tilde{X}_t,h)_{L_2}-(\tilde{X}_0,h)_{L_2}-\int_0^t(\xi-\pr_{\tilde{X}_s}\xi,h)_{L_2}ds,\quad t\geq 0,
$$
is a continuous square-integrable $(\tilde{\F}_t)$-martingale under $\tilde{\p}_g$ with quadratic variation
$$
[(\tilde{M}_{\cdot},h)_{L_2}]_t=\int_0^t\|\pr_{\tilde{X}_s}h\|_{L_2}^2ds,\quad t\geq 0.
$$ 
In particular, $\tilde{X}$ is a weak solution to SDE~\eqref{f_the_main_equation} on the probability space $(\tilde{\Omega},\tilde{\F},\tilde{\p}_g)$.
\end{proposition}

\begin{proof}
See Corollary~8.2~\cite{Konarovskyi_SR:2017} for the proof of the proposition.
\end{proof}

We remark that from the technical point of view the assumption that $\xi$ is constant in some neighborhoods of 0 and 1 is needed for the local compactness of the state space $\Li(\xi)$ for the reversible CFWD $\tilde{X}$ (see Lemma~6.8~\cite{Konarovskyi_SR:2017}). In that case, the proof of the strict quasi-regularity and conservativity of the Dirichlet form $(\e^{\xi},\Dom^{\xi})$ can be easily proved using methods from~\cite{Fukushima:2011}. From the intuitive point of view, the clusters in maximal and minimal spatial positions have bounded diffusion rates since they cannot fragment according to Lemma~\ref{lemma_coalescing}. Therefore, particles between these two clusters cannot run to infinity during a finite time, which explains the infiniteness of the life time.

In the rest of this section we suppose that $\xi=\sum_{k=1}^n\varsigma_k\I_{\pi_k}\in\St$, where $\varsigma_k<\varsigma_{k+1}$, $k\in[n-1]$, and $\{\pi_k,\ k\in[n]\}$ is a partition of $[0,1]$. 

Let $\tilde{X}(\cdot,t,\omega)$ denote the modification of $\tilde{X}_t(\omega)$ from $\Dr$ for each $\omega\in\tilde{\Omega}$ and $t\geq 0$. Since $\tilde{X}$ takes values in the space $\Li(\xi)$, it is easy to see that 
$$
\tilde{X}(u,t)=\sum_{k=1}^n\tilde{x}_k(t)\I_{\pi_k}(u),\quad u\in[0,1],\ \  t\geq 0,
$$
where $\tilde{x}_k(t)=\frac{1}{\leb(\pi_k)}(\tilde{X_t},\I_{\pi_k})_{L_2}$, by Proposition~A.2~\cite{Konarovskyi_SR:2017}. This yields that the process $\tilde{X}(u,t)$, $t\geq 0$, is continuous for every $u\in[0,1]$ . 

\begin{proposition}\label{proposition_existence_via_DF}
  The process $\{\tilde{X}(u,t),\ u\in[0,1],\ t\geq 0\}$ belongs to the Skorohod space $D([0,1],C[0,\infty))$ and for each $g\in\Theta_{\xi}$ it satisfies properties $(R1)-(R4)$ on the probability space $(\tilde{\Omega},\tilde{\F},\tilde{\p}_g)$.
\end{proposition}

\begin{proof}
The statement follows from Proposition~\ref{proposition_mart_prop_of_X_t} and the following property of the projection operator:
$$
(\pr_fh^u,\pr_fh^v)_{L_2}=\frac{\I_{\{f(u)=f(v)\}}}{m_f(u)}
$$
for all $u,v\in[0,1]$ and $f=\sum_{k=1}^ny_i\I_{\pi_k}\in\St$, where $h^u:=\frac{1}{\leb(\pi_k)}\I_{\pi_k}$ with $k$ satisfying $u\in\pi_k$ and $m_f(u):=\leb\{v:\ f(u)=f(v)\}$.

The detaled proof is omited here since we will prove Theorem~\ref{theorem_existence_in_general_case1}~(ii) in a more general setting later. 
\end{proof}

Note that according to Remark~\ref{rem_finite_particle_system}, the family of semimartingales $\tilde{X}(u,\cdot )$, $u \in [0,1]$, defines a finite sticky-reflected particle system whose evolution is described by the processes $\tilde{x}_k$, $k \in [n]$.

\subsection{Existence of solutions to SDE~\texorpdfstring{\eqref{f_the_main_equation}}{(10)} (proof of Theorem~\texorpdfstring{\ref{theorem_existence_in_general_case1}}{(10)}~(i))}%
\label{sub:existence_of_solutions_to_sde_f_the_main_equation_proof_of_theorem_theorem_existence_in_general_case_i_}

The goal of this section is to show that equation~\eqref{f_the_main_equation} has a weak solution for any initial condition $g \in L^{\uparrow}_{2+\delta}$ and $\xi \in L^{\uparrow}_{\infty}$, where $\delta$ as a positive number. This will prove Theorem~\ref{theorem_existence_in_general_case1}~(i).

\subsubsection{Tightness of finite particle systems}%
\label{ssub:tightness_of_a_finite_particle_system}

We recall that, by propositions~\ref{proposition_mart_prop_of_X_t} and~\ref{proposition_existence_via_DF}, for every $\xi\in\St$ and $g\in\Theta_{\xi}$ there exists a weak solution to SDE~\eqref{f_the_main_equation} satisfying $(R1)-(R4)$, where $\Theta_{\xi}$ is defined in Proposition~\ref{proposition_mart_prop_of_X_t}. Moreover, $\Theta_{\xi}$ is dense in $\Li(\xi)$, since $\Xi^{\xi}(\Li(\xi)\setminus\Theta_{\xi})=0$ and $\supp\Xi^{\xi}=\Li(\xi)$ (see Proposition~\ref{proposition_measure_Xi}). 

We fix $g \in L^{\uparrow}_{2+\delta}$ and $\xi \in L^{\uparrow}_{\infty}$ for some $\delta>0$.  In order to prove the existence of solutions to SDE~\eqref{f_the_main_equation}, we first construct sequences $\{g_n,\ n\geq 1\}\in\St$ and $\{\xi_n,\ n\geq 1\}\in\St$ such that $g_n\in\Theta_{\xi_n}$ for all $n\geq 1$, $g_n\to g$, $\xi_n\to\xi$ in $L_{2+\delta}$ and $\{\xi_n(1)-\xi_n(0),\ n\geq 1\}$ bounded. Set
$$
\xi_n:=\sum_{k=1}^{2^n}\left(\frac{k}{2^{2n}}+\xi\left(\frac{k-1}{2^n}\right)\right)\I_{\left[\frac{k-1}{2^n},\frac{k}{2^n}\right)}+\left(\frac{1}{2^n}+\xi(1)\right)\I_{\{1\}},\quad n\geq 1.
$$ 
Since $\xi$ is discontinuous at most in a countable number of points, $\xi_n\to\xi$ a.e. and, thus, it convergences in $L_{2+\delta}$, by the dominated convergence theorem. Moreover, $\xi_n(1)-\xi_n(0)=\xi(1)-\xi(0)+ \frac{1}{ 2^n }- \frac{1}{ 2^{2n} }$ for all $n\geq 1$, that implies the boundedness of $\{ \xi_n(1)-\xi_n(0),\ n\geq 1 \}$. We also note that 
$$
\Li(\xi_n)=\left\{f\in\Li:\ f\ \mbox{is}\ \sigma^{*}\left(\left\{\left[\frac{k-1}{2^n},\frac{k}{2^n}\right),\ k\in[2^n]\right\}\right)\mbox{-measurable}\right\},
$$ 
due to the term $\frac{k}{2^{2n}}$ in the definition of $\xi_n$ and the monotonicity of $\xi$. To construct $g_n$, $n\geq 1$, we first set
$$
\tilde{g}_n:=\pr_{\xi_n}g=\sum_{k=1}^ny_k^n\I_{\left[\frac{k-1}{2^n},\frac{k}{2^n}\right)},\quad n\geq 1,
$$
where 
$$
y_k^n=2^n\int_{\frac{k-1}{2^n}}^{\frac{k}{2^n}}g(v)dv.
$$
Since $\tilde{g}_n\in\Li(\xi_n)$, the set $\Theta_{\xi_n}$ is dense in $\Li(\xi_n)$ and $\|\cdot\|_{L_{2+\delta}}$ is equivalent to $\|\cdot\|_{L_2}$ in $L_2(\xi_n)$ because $L_2(\xi_n)$ is finite dimensional, for each $n\geq 1$ we can find $g_n\in\Theta_{\xi_n}$ satisfying $\|g_n-\tilde{g}_n\|_{L_{2+\delta}}<\frac{1}{n}$. 
Using, e.g., Theorem~1~\cite{MR1619139} and Remark~\ref{remark_notes_obout_proj}~(ii), we obtain $g_n\to g$ in $L_{2+\delta}$.

Let $X^n$ be a weak solution to SDE~\eqref{f_the_main_equation} with $g$ and $\xi$ replaced by $g_n$ and $\xi_n$, which exists according to Proposition~\ref{proposition_mart_prop_of_X_t} and describes the evolution of a finite particle system (see also Remark~\ref{rem_finite_particle_system}). We recall that 
$$
X^n_t=g_n+M^n_t+A^n_t,\quad t\geq 0,
$$
where
$$
A^n_t=\int_0^t(\xi_n-\pr_{X_s^n}\xi_n)ds
$$
and $M^n$ is a continuous square-integrable $(\F^{X^n}_t)$-martingale in $L_2$ with quadratic variation process
$$
\langle\langle M^n \rangle\rangle_t=\int_0^t\pr_{X^n_s}ds, \quad t\geq 0,
$$
and the increasing process
$$
\langle M^n \rangle_t=\int_0^t\|\pr_{X^n_s}\|_{HS}^2ds, \quad t\geq 0.
$$
Let $\overline{X}^n$, $n\geq 1$, be defined by~\eqref{f_tilde_X}, that is,
\[
  \overline{X}^n=\left(M^n,A^n,([M^n(e_i),M^n(e_j)])_{(i,j)\in\N^2}, \langle M^n \rangle \right),
\]
for a fixed orthonormal basis $\{e_i,\ i\in\N\}$ of $L_2$.  Then, by Corollary~\ref{corollary_relatively_compactness_of_X}, there exists a subsequence $N\subseteq\N$ such that $\overline{X}^n\to\overline{X}$ in $\W$ in distribution along $N$, where 
$$
\W=C([0,\infty),L_2)\times C([0,\infty),L_2)\times C[0,\infty)^{\N^2}\times C[0,\infty).
$$

Next, by Skorohod representation Theorem~3.1.8~\cite{Ethier:1986}, we can find a probability space and define there random elements $\overline{Z},\ \overline{Z}^n$, $n\in N $, taking values in $\W$ such that $\law(\overline{X})=\law(\overline{Z})$, $\law(\overline{X}^n)=\law(\overline{Z}^n)$ and $\overline{Z}^n\to\overline{Z}$ in $\W$ a.s. along $N$. We also note that $\{\E\|M^{Z^n}_t\|_{L_2}^2=\E\|M^{X^n}_t\|_{L_2}^2,\ n\geq 1\}$ is bounded for all $t\geq 0$ , by Corollary~\ref{corollary_estim_of_norm_of_X} and Remark~\ref{remark_def_of_sol}~(iii). In the next section, we will construct a solution to~\eqref{f_the_main_equation} using the process $\overline{Z}$.

\subsubsection{Identification of the limit}
\label{sub:identification_of_the_limit}

To obtain a solution to equation~\eqref{f_the_main_equation} from the process $\overline{Z}$, constructed in the previous section, we will prove a kind of stability of solutions to equation~\eqref{f_the_main_equation} under passing to the limit. We assume that $\{g_n,\ n\geq 1\},\ \{\xi_n,\ n\geq 1\}$ are arbitrary sequences of functions from $L_2^{\uparrow}$ (not necessarily from $\St$) and processes $X^n,\ n\geq 1$, defined on the same probability space, are weak solutions to SDE~\eqref{f_the_main_equation} with initial conditions $g_n$ and interacting potentials $\xi_n$. As in the previous section we express $X^n$ in the form
$$
X^n_t=g_n+M^n_t+A^n_t,\quad t\geq 0,
$$
and define 
\[
  \overline{X}^n=\left(M^n,A^n,([M^n(e_i),M^n(e_j)])_{(i,j)\in\N^2}, \langle M^n \rangle \right),
\]
where $\{e_i,\ i\in\N\}$ is the fixed orthonormal basis of $L_2$.

\begin{theorem}\label{theorem_identification_of_limit}
  Let $\{g_n,\ n\geq 1\},\ \{\xi_n,\ n\geq 1\}$ converge to $g$ and $\xi$ in $L_2$, respectively. Let also the sequence of stochastic processes $\{\overline{X}^n,\ n\geq 1\}$ converge to $\overline{X}=(M,A,(x_{i,j}),a)$ in $\W$ a.s. and $\{\E\|M^n_t\|_{L_2}^2, n\geq 1\}$ is bounded for all $t\geq 0$. Then 
\begin{enumerate}
\item[(a)] the process $X_t:=g+M_t+A_t$, $t\geq 0$, takes values in $L_2^{\uparrow}$;

\item[(b)] $M$ is a continuous square-integrable martingales in $L_2$ with quadratic variation
\begin{equation}\label{f_var_M}
  \langle\langle M \rangle\rangle_t=\int_0^t\pr_{X_s}ds, \quad t\geq 0,
\end{equation}
in particular, 
\begin{equation}\label{f_x_i_j}
x_{i,j}(t)=\int_0^t(\pr_{X_s}e_i,e_j)_{L_2}ds,\quad i,j\in\N,
\end{equation}
and
\begin{equation}\label{f_a}
a(t)=\int_0^t\|\pr_{X_s}\|_{HS}^2ds
\end{equation}
for all $t\geq 0$;
\item[(c)] $A_t=\int_0^t(\xi-\pr_{X_s}\xi)ds$, $t\geq 0$;

\item[(d)] for each $T>0$, \ $\int_0^T\|\pr_{X^n_s}-\pr_{X_s}\|_{HS}^2ds\to 0$ a.s. as $n\to\infty$.
\end{enumerate}
In particular, $X$ is a weak solution to SDE~\eqref{f_the_main_equation}.
\end{theorem}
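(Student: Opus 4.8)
\section*{Proof proposal}

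The plan is to pass to the limit in the structural relations for the easy parts and to concentrate the difficulty on identifying the density of the quadratic variation of the limiting martingale with $\pr_{X_s}$. First I would record the elementary consequences of the a.s.\ convergence $\overline{X}^n\to\overline{X}$ in $\W$. Since $g_n\to g$ and $M^n\to M$, $A^n\to A$ in $C([0,\infty),L_2)$, the process $X^n_t=g_n+M^n_t+A^n_t$ converges to $X_t=g+M_t+A_t$ uniformly on compacts in $L_2$; as $L_2^{\uparrow}$ is closed in $L_2$ and each $X^n_t\in L_2^{\uparrow}$, this gives (a). The increment bound $\|A^n_t-A^n_s\|_{L_2}\le 2\|\xi_n\|_{L_2}(t-s)$ passes to the limit, so $A$ is Lipschitz in $L_2$, hence of finite variation. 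For the martingale property I would fix $h\in L_2$ and $s<t$ and pass to the limit in the identity $\E[((M^n_t,h)_{L_2}-(M^n_s,h)_{L_2})F]=0$, valid for every bounded continuous functional $F$ of $X^n$ up to time $s$; a.s.\ convergence together with the assumed boundedness of $\{\E\|M^n_t\|_{L_2}^2\}$ (giving uniform integrability, and $\E\|M_t\|_{L_2}^2\le\liminf_n\E\|M^n_t\|_{L_2}^2<\infty$ by Fatou) shows that $(M_\cdot,h)_{L_2}$ is a square integrable martingale, i.e.\ $M$ is a continuous square integrable martingale. The coordinates $x^n_{i,j}=[M^n(e_i),M^n(e_j)]$ have densities $(\pr_{X^n_s}e_i,e_j)_{L_2}$ bounded by $1$, so each limit $x_{i,j}$ is Lipschitz; passing to the limit in the martingale relation for $M^n(e_i)M^n(e_j)-x^n_{i,j}$ (after localisation to secure integrability) identifies $x_{i,j}=[M(e_i),M(e_j)]$. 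Writing $x_{i,j}(t)=\int_0^t p_{i,j}(s)\,ds$ defines a symmetric operator density $P_s$ with $\langle\langle M\rangle\rangle_t=\int_0^tP_s\,ds$.

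The heart of the proof is showing $P_s=\pr_{X_s}$ for a.e.\ $s$, a.s., which I would achieve by squeezing $P_s$ from both sides. On one side, for fixed $h$ the lower semicontinuity of $g\mapsto\|\pr_gh\|_{L_2}$ (Lemma~\ref{lemma_lower_semicontinuity_of_pr}) applied to $X^n_s\to X_s$ together with Fatou's lemma gives
\[
\int_0^t\|\pr_{X_s}h\|_{L_2}^2\,ds\le\liminf_n\int_0^t\|\pr_{X^n_s}h\|_{L_2}^2\,ds=\int_0^t(P_sh,h)_{L_2}\,ds,
\]
so $(\pr_{X_s}h,h)_{L_2}\le(P_sh,h)_{L_2}\le\|h\|_{L_2}^2$ a.e., that is $\pr_{X_s}\le P_s\le I$. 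On the other side, $X$ is a continuous $L_2^{\uparrow}$-valued semimartingale whose martingale part $M$ has quadratic variation $\int_0^\cdot P_s\,ds=\int_0^\cdot L_sL_s^{*}\,ds$; hence Proposition~\ref{proposition_prop_of_quad_var_of_mart} yields $L_s\pr_{X_s}=L_s$, forcing $P_s\pr_{X_s}=P_s$, i.e.\ $\operatorname{ran}P_s\subseteq L_2(X_s)$ and $P_s$ annihilates $L_2(X_s)^{\perp}$. Combining, for $h\in L_2(X_s)$ one has $\|h\|_{L_2}^2=(\pr_{X_s}h,h)_{L_2}\le(P_sh,h)_{L_2}\le\|h\|_{L_2}^2$, so the symmetric nonnegative operator $P_s$ acts as the identity on $L_2(X_s)$; therefore $P_s=\pr_{X_s}$, which yields \eqref{f_var_M}, \eqref{f_x_i_j} and \eqref{f_a}. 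I expect this identification to be the main obstacle: the inclusion $\operatorname{ran}P_s\subseteq L_2(X_s)$ is precisely where Proposition~\ref{proposition_prop_of_quad_var_of_mart} is indispensable, since semicontinuity alone only shows that $P_s$ projects onto a possibly larger space.

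It remains to upgrade the Hilbert--Schmidt convergence and to identify the drift. For (d) I would work in the Hilbert space $L_2([0,T],\HS(L_2))$: the entry-wise convergence $\int_0^t\pr_{X^n_s}\,ds\to\int_0^t\pr_{X_s}\,ds$ together with the uniform bound on $\int_0^T\|\pr_{X^n_s}\|_{HS}^2\,ds$ gives weak convergence $\pr_{X^n_\cdot}\rightharpoonup\pr_{X_\cdot}$, while $\langle M^n\rangle_T=\int_0^T\|\pr_{X^n_s}\|_{HS}^2\,ds\to a(T)=\int_0^T\|\pr_{X_s}\|_{HS}^2\,ds$ (by \eqref{f_a}) gives convergence of norms; the Radon--Riesz property of this Hilbert space then upgrades weak to strong convergence, which is exactly (d). Finally, (d) and $\xi_n\to\xi$ in $L_2$ let me pass to the limit in $A^n_t=t\xi_n-\int_0^t\pr_{X^n_s}\xi_n\,ds$ to get $A_t=\int_0^t(\xi-\pr_{X_s}\xi)\,ds$, which is (c). Collecting (a)--(d) verifies $(E1)$--$(E4)$ of Definition~\ref{definition_solution}, so $X$ is a weak solution to \eqref{f_the_main_equation}.
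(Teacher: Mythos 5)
Your proposal is correct in substance, and it rests on the same two pillars as the paper's proof---the lower semicontinuity of $g\mapsto\|\pr_gh\|_{L_2}$ (Lemma~\ref{lemma_lower_semicontinuity_of_pr}) and the structural property of quadratic variations of $\Li$-valued semimartingales (Proposition~\ref{proposition_prop_of_quad_var_of_mart})---but your identification of $P_s$ with $\pr_{X_s}$ runs along a genuinely different track. The paper first extracts a weak limit $P^{\infty}$ of $\pr_{X^n_{\cdot}}$ in $\LHS$ via the Banach--Alaoglu theorem (Lemma~\ref{lemma_condition_a}) and then invokes the deterministic Proposition~\ref{proposition_conv_of_proj}, whose proof builds measurable unit-vector fields $e^v_t$, $e^{v_1,v_2}_t$ out of the level sets of the monotone limit $X_t$ and uses a polarization identity to compute the matrix of $P^{\infty}_t$ entry by entry. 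You instead take $P_s$ to be the (symmetric, $0\le P_s\le I$) density of $\langle\langle M\rangle\rangle$, get $\pr_{X_s}\le P_s\le I$ from lower semicontinuity and Fatou, get $P_s\pr_{X_s}=P_s$ from Proposition~\ref{proposition_prop_of_quad_var_of_mart}, and conclude with the Cauchy--Schwarz argument for the nonnegative operator $I-P_s$. This is more elementary: it bypasses Banach--Alaoglu and the level-set/polarization construction entirely. What the paper's detour buys is that Proposition~\ref{proposition_conv_of_proj} never needs the pointwise operator inequality $\pr_{X_s}\le P_s$---it only manipulates integrated quantities along measurable unit-vector fields---so the bookkeeping of null sets in $s$ versus $h$ is done once, inside that proposition; in your version it must be done by hand.

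Three points need repair, all local. (1) The claim that $L_s\pr_{X_s}=L_s$ ``forces'' $P_s\pr_{X_s}=P_s$ is false for an arbitrary factorization $P_s=L_sL_s^{*}$: in $\R^2$, $L=\bigl(\begin{smallmatrix}1&0\\ 1&0\end{smallmatrix}\bigr)$ and $\pr$ the projection onto the first coordinate satisfy $L\pr=L$ but $LL^{*}\pr\neq LL^{*}$. You must pick the self-adjoint root $L_s=P_s^{1/2}$ (legitimate: $P_s$ is symmetric nonnegative, and $P_s^{1/2}\in\HS(L_2)$ for a.e.\ $s$ since $\int_0^T\tr P_s\,ds\le a(T)<\infty$ by Fatou); then $L_s\pr_{X_s}=L_s$ yields $\pr_{X_s}L_s=L_s$ by self-adjointness, and hence $P_s\pr_{X_s}=P_s$. (2) The equality $\liminf_n\int_0^t\|\pr_{X^n_s}h\|_{L_2}^2ds=\int_0^t(P_sh,h)_{L_2}ds$ is immediate only for $h$ in the linear span of $\{e_i\}$, where it reduces to finite sums of the converging entries $x^n_{i,j}$; for general $h$ it needs separate justification, so run the squeeze on a countable dense set of $h$, take the union of the exceptional null sets in $s$, and extend the inequality $\pr_{X_s}\le P_s$ by continuity of both quadratic forms. (3) Part (d) and equation~\eqref{f_a} require $a=\langle M\rangle$, not merely $P_s=\pr_{X_s}$: without $a(T)=\int_0^T\|\pr_{X_s}\|_{HS}^2ds$ your Radon--Riesz step has no norm convergence to exploit. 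This identification is the content of Remark~\ref{remark_about_M} in the paper and follows from the same localization/uniform-integrability argument you sketch for $x_{i,j}$, applied to $\|M^n_t\|_{L_2}^2-\langle M^n\rangle_t$; it must be stated explicitly, since it is an input to (d) rather than a consequence of the identification of $P_s$.
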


Theorem~\ref{theorem_identification_of_limit} will be proved using the deterministic result from Section~\ref{subsection_limit_prop_of_proj_process}. 
The following lemmas are needed to check that $X_t$, $t\in[0,T]$, satisfies the assumptions of Proposition~\ref{proposition_conv_of_proj} almost surely.

\begin{lemma}\label{lemma_condition_a}
Under the assumptions of Theorem~\ref{theorem_identification_of_limit}, for each $T>0$ there exists a random element $P^{\infty}$ in $\LHS$ such that
$$
\p\left\{P^n\to P^{\infty}\ \mbox{weakly in}\ \LHS\ \mbox{as}\ n\to\infty\right\}=1,
$$
where $P^n:=\pr_{X^n_{\cdot}}$, $n\geq 1$.
\end{lemma}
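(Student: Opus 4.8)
The plan is to exploit the fact that the $\LHS$-norm of $P^n$ is exactly the terminal value of the increasing process of $M^n$, which converges, and then to pin down the weak limit by testing against rank-one--valued test functions whose pairings with $P^n$ are governed by the convergent bracket processes $[M^n(e_i),M^n(e_j)]$.

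First I would observe that $P^n\in\LHS$ a.s. and that the sequence is a.s.\ bounded there. Indeed $\|P^n\|_{T,HS}^2=\int_0^T\|\pr_{X^n_s}\|_{HS}^2\,ds=\langle M^n\rangle_T$, and since $\langle M^n\rangle\to a$ in $C([0,\infty))$ (the last coordinate of $\overline X^n\to\overline X$ in $\W$), we get $\langle M^n\rangle_T\to a(T)<\infty$ a.s.; in particular $R:=\sup_n\|P^n\|_{T,HS}<\infty$ a.s. Because $\LHS$ is a separable Hilbert space, bounded sequences are weakly sequentially precompact, so every subsequence admits a weakly convergent further subsequence, and the whole task reduces to showing that each such weak limit point coincides with a single fixed element $P^{\infty}$.

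Second I would identify the pairings of $P^n$ with a total family of test functions. For $\phi\in C^1([0,T])$ and $i,j\in\N$ consider $\phi\cdot(e_i\otimes e_j)\in\LHS$, where $e_i\otimes e_j$ is the rank-one operator $h\mapsto(h,e_j)_{L_2}e_i$, so that $(B,e_i\otimes e_j)_{HS}=(Be_j,e_i)_{L_2}$ for every $B\in\HS(L_2)$. Since $\pr_{X^n_s}$ is symmetric,
\[
\left(P^n,\phi\cdot(e_i\otimes e_j)\right)_{T,HS}=\int_0^T\phi(s)\,(\pr_{X^n_s}e_i,e_j)_{L_2}\,ds=\int_0^T\phi\,dF^n_{i,j},
\]
where $F^n_{i,j}(t):=[M^n(e_i),M^n(e_j)]_t=\int_0^t(\pr_{X^n_s}e_i,e_j)_{L_2}\,ds$. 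Integrating by parts and using $F^n_{i,j}\to x_{i,j}$ uniformly on $[0,T]$ (a coordinate of $\overline X^n\to\overline X$ in $\W$), the right-hand side converges a.s.\ to
\[
\ell\big(\phi\cdot(e_i\otimes e_j)\big):=\phi(T)x_{i,j}(T)-\int_0^T x_{i,j}(s)\phi'(s)\,ds,
\]
a quantity determined solely by the limit data $\overline X$. Extending $\ell$ linearly over the span $\mathcal D$ of all such $\phi\cdot(e_i\otimes e_j)$, the bound $|\ell(Q)|\le R\|Q\|_{T,HS}$ (obtained by passing to the limit in Cauchy--Schwarz) shows $\ell$ is continuous on the dense subspace $\mathcal D$; hence it extends to a bounded functional on $\LHS$ and, by Riesz representation, equals $(P^{\infty},\cdot)_{T,HS}$ for a unique $P^{\infty}\in\LHS$. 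As $\ell$, hence $P^{\infty}$, is a measurable function of $(x_{i,j})_{i,j\in\N}$ and $a$, the element $P^{\infty}$ is a genuine random element in $\LHS$ (set it to $0$ off the a.s.\ event).

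Finally I would upgrade convergence on $\mathcal D$ to full weak convergence by a density argument: for arbitrary $Q\in\LHS$ and $\eps>0$ choose $Q'\in\mathcal D$ with $\|Q-Q'\|_{T,HS}<\eps$ and split
\[
|(P^n-P^{\infty},Q)_{T,HS}|\le|(P^n-P^{\infty},Q')_{T,HS}|+(R+\|P^{\infty}\|_{T,HS})\eps,
\]
so that $\limsup_n|(P^n-P^{\infty},Q)_{T,HS}|\le(R+\|P^{\infty}\|_{T,HS})\eps$ for every $\eps>0$, whence $P^n\to P^{\infty}$ weakly a.s. The main obstacle is exactly the identification of the limit: one must convert the only available convergence --- uniform convergence of the finite-dimensional brackets $[M^n(e_i),M^n(e_j)]$ --- into control of the operator densities $\pr_{X^n_s}$ paired against time-dependent weights, and the integration by parts against $C^1$ functions is precisely what bridges this gap while keeping the limit expressed through $\overline X$ alone. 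Everything else (boundedness, weak compactness, passage to the whole sequence) is soft Hilbert-space functional analysis.
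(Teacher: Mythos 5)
Your proof is correct and follows essentially the same route as the paper's: boundedness of $\|P^n\|_{T,HS}^2=\langle M^n\rangle_T$ from the a.s.\ convergence of the increasing processes, identification of the limit through pairings with time-weighted rank-one operators $e_i\otimes e_j$ controlled by the convergent brackets $[M^n(e_i),M^n(e_j)]$, and soft Hilbert-space arguments to conclude. The only differences are cosmetic: the paper pairs against indicator weights $\I_{[0,t]}$ (so no integration by parts is needed) and gets weak convergence via Banach--Alaoglu plus uniqueness of weak limit points, whereas you construct the limiting functional directly on a dense set of $C^1$-weighted rank-one operators and invoke Riesz representation plus a density argument.
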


\begin{proof}
We set
\begin{align*}
\Omega'&:=\{\omega:\ \langle M^n\rangle(\omega)\to a(\omega)\ \mbox{in}\ C[0,T]\} \cap\{\omega:\ [M^n(e_i),M^n(e_j)](\omega)\to x_{i,j}(\omega)\ \mbox{in}\ C[0,T]\ \mbox{for all}\ i,j\in\N\}.
\end{align*}
It is obvious that $\p\{\Omega'\}=1$. We take $\omega\in\Omega'$ and show that there exists $P^{\infty}(\omega)\in\LHS$ \ such \ that \ $P^n(\omega)\to P^{\infty}(\omega)$ \ weakly \ in \ 
$\LHS$ as $n\to\infty$. 
Since the sequence $$\langle M^n\rangle_T(\omega)=\int_0^T\|P^n_t(\omega)\|_{HS}^2dt=\|P^n(\omega)\|_{T,HS}^2,\quad n\geq 1,$$ converges to $a(T,\omega)$, it is bounded. Thus, by the Banach-Alaoglu theorem, $\{P^n(\omega),\ n\geq 1\}$ is weakly compact in $\LHS$. Moreover, it has a unique weak limit point denoted by $P^{\infty}(\omega)$. Indeed, if $\{P^n(\omega),\ n\geq 1\}$ weakly converges to $P'$ along $N'$ and to $P''$ along $N''$, then for each $i,j\in\N$ and $t\in[0,T]$
$$
\int_0^t(P'_s(\omega)e_i,e_j)_{L_2}ds=\int_0^t(P''_s(\omega)e_i,e_j)_{L_2}ds=x_{i,j}(\omega,t)
$$ 
because 
\begin{equation}\label{f_B_P}
(B^t,P^n(\omega))_{T,HS}=\int_0^t(P^n_s(\omega)e_i,e_j)_{L_2}ds\to x_{i,j}(\omega,t)
\end{equation}
for $B_s^t:=\I_{[0,t]}(s)e_i\otimes e_j$, $s\in[0,T]$.
Hence, Corollary~\ref{corollary_equality_of_HS_valued_maps} implies $P'=P''$.
Thus, $\{P^n(\omega),\ n\geq 1\}$ weakly converges in $\LHS$ to $P^{\infty}(\omega)$.

We note that the measurability of the map $P^{\infty}:\Omega\to\LHS$ (here, $P^{\infty}(\omega)=0$, if $\omega\not\in\Omega'$) will easily follow from the facts that $P^{\infty}$ is a weak limit of random elements in $\LHS$ and Theorem~II.1.1~\cite{Vakhania:1987}. The lemma is proved.
\end{proof}

\begin{lemma}\label{lemma_condition_b}
Under the assumptions of Theorem~\ref{theorem_identification_of_limit}, for each $T>0$
\begin{align*}
\p\Big\{\|P_th\|_{L_2}\leq\varliminf_{n\to\infty}\|P_t^nh\|_{L_2},\ \forall t\in [0,T],\ \forall h\in L_2\Big\}=1,
\end{align*}
where $P:=\pr_{X_{\cdot}}$ and $P^n=\pr_{X^n_{\cdot}}$, $n\geq 1$.
\end{lemma}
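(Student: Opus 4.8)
The plan is to exploit the almost sure convergence of $\overline{X}^n$ supplied by the hypotheses of Theorem~\ref{theorem_identification_of_limit}, together with the already established lower semi-continuity of the map $g\mapsto\|\pr_g h\|_{L_2}$ (Lemma~\ref{lemma_lower_semicontinuity_of_pr}), so that the asserted inequality holds pointwise in $(t,h)$ on a single event of full probability.

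First I would fix a full-probability event $\Omega'$ on which $\overline{X}^n\to\overline{X}$ in $\W$. On $\Omega'$ the first two components converge in $C([0,\infty),L_2)$, that is, $M^n\to M$ and $A^n\to A$ uniformly on compact time intervals. Combined with $g_n\to g$ in $L_2$ this yields $\sup_{t\in[0,T]}\|X^n_t-X_t\|_{L_2}\to 0$, where $X_t:=g+M_t+A_t$; in particular, for every $t\in[0,T]$ one has $X^n_t\to X_t$ in $L_2$ along the whole sequence. Since each $X^n_t$ lies in $\Li$ and $\Li$ is closed in $L_2$ (Proposition~A.1~\cite{Konarovskyi:2017:EJP}), the limit $X_t$ also belongs to $\Li$, so $P_t=\pr_{X_t}$ is well-defined; this incidentally recovers part~(a) of Theorem~\ref{theorem_identification_of_limit} en route.

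Next, still on $\Omega'$, I would apply Lemma~\ref{lemma_lower_semicontinuity_of_pr} to the convergent sequence $X^n_t\to X_t$: for each fixed $t\in[0,T]$ and each $h\in L_2$,
\[
\|P_t h\|_{L_2}=\|\pr_{X_t} h\|_{L_2}\leq\varliminf_{n\to\infty}\|\pr_{X^n_t} h\|_{L_2}=\varliminf_{n\to\infty}\|P^n_t h\|_{L_2}.
\]
The essential point is that this inequality is a purely deterministic consequence of the $L_2$-convergence of the sequence $(X^n_t)_{n}$ and is valid simultaneously for every $h\in L_2$; since on $\Omega'$ the convergence $X^n_t\to X_t$ holds for every $t\in[0,T]$, the displayed bound holds for all $t\in[0,T]$ and all $h\in L_2$ on the single event $\Omega'$. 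As $\p(\Omega')=1$ and the probability space is complete, this gives the claimed probability-one statement.

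The only genuine subtlety, and the step I would treat most carefully, is the order of quantifiers: the lower semi-continuity must be invoked in such a way that one full-probability event serves all pairs $(t,h)$ at once, rather than through a null set depending on $t$ or $h$. This is ensured by observing that the uniform convergence on $\Omega'$ delivers $X^n_t\to X_t$ for all $t$ simultaneously, while Lemma~\ref{lemma_lower_semicontinuity_of_pr} is valid for every $h$ once a convergent sequence is fixed; thus no further extraction of subsequences or exceptional sets is needed, and the heavy lifting is entirely absorbed into the cited semi-continuity lemma.
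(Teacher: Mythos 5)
Your proof is correct and takes exactly the paper's route: the paper likewise deduces the claim from the almost sure convergence of $\{X^n,\ n\geq 1\}$ in $C([0,T],\Li)$ (which follows from $g_n\to g$ in $L_2$ and $\overline{X}^n\to\overline{X}$ in $\W$ a.s.) combined with the deterministic lower semi-continuity of $g\mapsto\|\pr_g h\|_{L_2}$ from Lemma~\ref{lemma_lower_semicontinuity_of_pr}. Your careful handling of the order of quantifiers and of the closedness of $\Li$ simply spells out the details the paper compresses into a single sentence.
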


\begin{proof}
  The lemma immediately follows from the convergence of $\{X_n,\ n\geq 1\}$ a.s. in $C([0,T],\Li)$ and Lemma~\ref{lemma_lower_semicontinuity_of_pr}.

\end{proof}

\begin{proof}[Proof of Theorem~\ref{theorem_identification_of_limit}]
Property $(a)$ easily follows from the closability of $L_2^{\uparrow}$ in $L_2$. 

In order to show $(b)$, we only have to check equality~\eqref{f_x_i_j}. Indeed, the convergence of $\{\overline{X}^n,\ n\geq 1\}$ to $\overline{X}=(M,A,x_{i,j},a)$ in $\W$ a.s. yields that the process $M$ is a continuous square-integrable $(\F^{X}_t)$-martingale in $L_2$ with $[M(e_i),M(e_j)]=x_{i,j}$ and $\langle M\rangle=a$. This easily follows from the fact that the weak limit in $C[0,\infty)$ of local martingales is a local martingale (see, e.g., Corollary~9.1.19~\cite{Jacod:2003}), the boundedness of $\{\E\|M^n_t\|_{L_2}^2, n\geq 1\}$ and Fatou's lemma. Therefore, \eqref{f_var_M} will follow from \eqref{f_x_i_j}, and Lemma~2.1~\cite{Gawarecki:2011} and~\eqref{f_var_M} will imply equality \eqref{f_a}.

By lemmas~\ref{lemma_condition_a},~\ref{lemma_condition_b} and Proposition~\ref{proposition_prop_of_quad_var_of_mart}, trajectories of $X$ and $X^n,\ n\geq 1$, satisfy conditions $(a)-(c)$ of Proposition~\ref{proposition_conv_of_proj} almost surely. Thus, 
\begin{equation}\label{f_weak_convergence}
\p\{P^n\to P\ \mbox{weakly in}\ \LHS\ \mbox{as}\ n\to\infty\}=1
\end{equation}
for any fixed $T>0$. This immediately implies~\eqref{f_x_i_j}, by~\eqref{f_B_P}.

Next,~\eqref{f_weak_convergence} and the convergence
$$
\|P^n\|_{T,HS}^2=\langle M^n\rangle_T\to a(T)=\langle M\rangle_T=\|P\|_{T,HS}^2\quad\mbox{a.s. \ as}\ \ n\to\infty
$$
yield the strong convergence $(d)$. 

For fixed $t\in[0,T]$ and $h\in L_2$ we take 
$$
B_s:=\I_{[0,t]}(s)\xi\otimes h,\quad B^n_s:=\I_{[0,t]}(s)\xi_n\otimes h,\quad s\in[0,T],\ \ n\geq 1.
$$ 
Then, using the strong convergence of $\{P^n,\ n\geq 1\}$ to $P$ and $\{B^n,\ n\geq 1\}$ to $B$, we have
$$
\int_0^t(\pr_{X^n_s}\xi_n,h)_{L_2}ds=(P^n,B^n)_{T,HS}\to (P,B)_{T,HS}=\int_0^t(\pr_{X_s}\xi,h)_{L_2}ds.
$$
Hence, for each $h\in L_2$
$$
(A_t,h)_{L_2}=\int_0^t(\xi-\pr_{X_s}\xi,h)ds,\quad t\in[0,T],
$$
that implies $(c)$. The theorem is proved.
\end{proof}

We now are ready to conclude that SDE~\eqref{f_the_main_equation} has a weak solution for any $g \in L^{\uparrow}_{2+\delta}$ and $\xi \in L^{\uparrow}_{\infty}$.  Let $\overline{Z}^n$, $n\geq 1$, and $\overline{Z}$ be the random elements in $\W$ defined in Section~\ref{ssub:tightness_of_a_finite_particle_system}. Then they satisfy the assumptions of Theorem~\ref{theorem_identification_of_limit}. Therefore, the process $X=g+A+M$ solves equation~\eqref{f_the_main_equation}, where $M$ and $A$ are the first and the second components of $\overline Z$, respectively. The proof of Theorem~\ref{theorem_existence_in_general_case1}~(i) is completed.

\subsection{Equivalence between two definitions of CFWD (Proof of Theorem~\ref{theorem_existence_in_general_case1}~(ii))}%
\label{sub:equivalence_between_two_formulations_of_solutions}

Let $Y=\{Y(u,t),\ u\in[0,1],\ t\in[0,\infty)\}$ be a random element in $D([0,1],C[0,\infty))$ and $X_t$, $t \geq 0$, be a continuous process in $\Li$ such that $\E \|X_t\|_{L_2}^2<\infty$ and 
$$
Y(\cdot,t)=X_t\quad\mbox{in}\ \ L_2\ \ \mbox{a.s.}
$$ 
for all $t\geq 0$. In this section, we will prove that $Y$ satisfies $(R1)-(R4)$ if and only if $X$ is a solution to SDE~\eqref{f_the_main_equation}.

\begin{remark}\label{remark_X_equals_Y}
Since the processes $X_t$, $t\geq 0$, and $Y(\cdot,t)$, $t\geq 0$, are continuous in $L_2$, we have 
$$
\p\{X_t=Y(\cdot,t)\ \ \mbox{in}\ \ L_2\ \ \mbox{for all}\ \ t\geq 0\}=1.
$$
In particular,
$$
\p\{\pr_{X_t}=\pr_{Y(\cdot,t)}\ \ \mbox{for all}\ \ t\geq 0\}=1.
$$
\end{remark}

\subsubsection{Auxiliary statements}%
\label{ssub:auxiliary_statements}

\begin{lemma}\label{lemma_codlag_prop_of_int}
Let $b:[0,1]\times[0,T]\to\R$ be a measurable bounded function such that the function $b(u,t)$, $u\in[0,1]$, is c\`{a}dl\'{a}g for all $t\in[0,T]$. Then the function
$$
B(u,t)=\int_0^tb(u,s)ds,\quad u\in[0,1],\ \ t\in[0,T],
$$
belongs to $D([0,1],C[0,T])$.
\end{lemma}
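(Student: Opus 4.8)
The plan is to recognise the statement as the assertion that the map $u\mapsto B(u,\cdot)$ is a c\`{a}dl\'{a}g function from $[0,1]$ into the Banach space $\left(C([0,T]),\|\cdot\|_{C([0,T])}\right)$, and to reduce everything to scalar dominated convergence. First I would record the trivial facts that $b$ is bounded, say $|b(u,s)|\leq K$ for all $u,s$, and that for each fixed $u$ the function $t\mapsto B(u,t)$ is Lipschitz with constant $K$, so that $B(u,\cdot)\in C([0,T])$ is well defined. The entire argument then rests on the elementary uniform bound
\[
\|B(v,\cdot)-B(u,\cdot)\|_{C([0,T])}=\sup_{t\in[0,T]}\left|\int_0^t\bigl(b(v,s)-b(u,s)\bigr)\,ds\right|\leq\int_0^T|b(v,s)-b(u,s)|\,ds,
\]
which converts every statement about the $C([0,T])$-valued map into a statement about the integrated convergence of the sections $s\mapsto b(\cdot,s)$.

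For right-continuity I would fix $u\in[0,1)$ and let $v\downarrow u$. For each fixed $s\in[0,T]$ the hypothesis that $b(\cdot,s)$ is c\`{a}dl\'{a}g gives $b(v,s)\to b(u,s)$, while $|b(v,s)-b(u,s)|\leq 2K$ is dominated by an integrable constant on $[0,T]$. Dominated convergence forces $\int_0^T|b(v,s)-b(u,s)|\,ds\to 0$ along every sequence $v_k\downarrow u$, and since the bound above is uniform in $t$ this yields $\|B(v,\cdot)-B(u,\cdot)\|_{C([0,T])}\to 0$, i.e. right-continuity at $u$.

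For the existence of left limits I would fix $u\in(0,1]$ and set $\ell(s):=\lim_{v\uparrow u}b(v,s)$, which exists for every $s$ by the c\`{a}dl\'{a}g property of $b(\cdot,s)$. As a pointwise limit of the measurable sections $b(v_k,\cdot)$ for any $v_k\uparrow u$, the function $\ell$ is measurable and bounded by $K$, so $\widetilde{B}(t):=\int_0^t\ell(s)\,ds$ defines an element of $C([0,T])$. Applying the same estimate together with dominated convergence gives $\int_0^T|b(v,s)-\ell(s)|\,ds\to 0$ as $v\uparrow u$, hence $\|B(v,\cdot)-\widetilde{B}\|_{C([0,T])}\to 0$; thus the left limit exists in $C([0,T])$ and equals $\widetilde{B}$. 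Combining the two conclusions shows $B\in D([0,1],C([0,T]))$.

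I do not expect a genuine obstacle: the displayed bound collapses the uniform-in-$t$ (infinite dimensional) problem to ordinary dominated convergence in the variable $s$. The only points that warrant a word of care are the measurability of the left-limit section $\ell$, which I would handle by exhibiting it as a pointwise limit of measurable sections, and the passage from the sequential limits on which dominated convergence operates to the honest one-sided limits $v\downarrow u$ and $v\uparrow u$; the latter is automatic, since the conclusion $\int_0^T|b(v,s)-b(u,s)|\,ds\to 0$ holds along \emph{every} admissible sequence and $C([0,T])$ is a metric space.
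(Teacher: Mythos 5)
Your proof is correct, and it takes a cleaner route than the paper. The paper's proof fixes a sequence $u_n\downarrow u$, obtains the pointwise (in $t$) convergence $B(u_n,t)\to B(u,t)$ by dominated convergence, and then upgrades this to convergence in $C([0,T])$ by invoking the Arzel\`{a}--Ascoli theorem: the uniform bound $|b|\leq K$ makes the family $\{B(u_n,\cdot)\}$ uniformly bounded and uniformly Lipschitz, hence relatively compact, and pointwise convergence identifies every subsequential limit. The same argument is then repeated for $u_n\uparrow u$ to produce the left limit $\int_0^{\cdot}b(u-,s)\,ds$. You instead collapse the two steps into one via the elementary estimate
$$
\|B(v,\cdot)-B(u,\cdot)\|_{C([0,T])}\leq\int_0^T|b(v,s)-b(u,s)|\,ds,
$$
so that a single application of dominated convergence in the variable $s$ already delivers uniform convergence in $t$, with no compactness argument needed; the left-limit case is handled identically after observing that $\ell(s)=b(u-,s)$ is a bounded measurable function. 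What your approach buys is economy: it avoids Arzel\`{a}--Ascoli altogether and makes the uniformity in $t$ transparent. What the paper's approach buys is a template that still works when the sup norm cannot be dominated by an $L_1$ bound of the sections (not an issue here). You also attend to two small points the paper leaves implicit, namely the measurability of the left-limit section and the passage from sequences to one-sided limits, both of which you dispatch correctly.
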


\begin{proof}
Let $u_n\downarrow u$. Then $B(u_n,t)\to B(u,t)$ for any $t\in[0,T]$, by the dominated convergence theorem and the right continuity of $b(\cdot,t)$ for all $t\in[0,T]$. Moreover, by the Arzela-Ascoli theorem, $\{B(u_n,\cdot),\ n\geq 1\}$ is compact in $C[0,T]$. Thus, $B(u_n,\cdot)\to B(u,\cdot)$ in $C[0,T]$. Similarly, $B(u_n,\cdot)\to B(u-,\cdot):=\int_0^{\cdot}b(u-,s)ds$ in $C[0,T]$ as $u_n\uparrow u$. The lemma is proved.
\end{proof}

We define for each $u\in[0,1)$ and $\eps>0$ the functions from $L_2$ as follows
$$
h^u_{\eps}(v)=\frac{1}{\eps\wedge(1-u)}\I_{[u,(u+\eps)\wedge 1)}(v),\quad v\in[0,1],
$$
and 
$$
h^1_{\eps}(v)=\frac{1}{\eps\wedge 1}\I_{[(1-\eps)\vee 0, 1]}(v),\quad v\in[0,1].
$$

\begin{lemma}\label{lemma_molification}
  Let a function $f(u,t)$, $u\in[0,1]$, $t\in[0,T]$, belong to $D([0,1],C[0,T])$. Then for each $u\in[0,1]$ the \ sequence \ of \  functions \ \ ${\{(f(\cdot,t),h^u_{\eps})_{L_2},\ t\in[0,T]\}_{\eps>0}}$ converges to $f(u,t)$, $t\in[0,T]$, in $C[0,T]$ as $\eps\to 0+$. 
\end{lemma}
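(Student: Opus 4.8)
The plan is to reduce the claim to a single uniform estimate, exploiting that the scalar $L_2$-averages appearing in the statement are, for each fixed $t$, the evaluations at $t$ of a Bochner integral of the $C([0,T])$-valued curve $v\mapsto f(v,\cdot)$. Fix $u\in[0,1)$; the endpoint $u=1$ is handled separately below. Write $F^u_\eps(t):=(f(\cdot,t),h^u_\eps)_{L_2}$, so that by the definition of $h^u_\eps$
$$
F^u_\eps(t)=\frac{1}{\eps\wedge(1-u)}\int_u^{(u+\eps)\wedge 1}f(v,t)\,dv,\qquad t\in[0,T].
$$
Since $f\in D([0,1],C([0,T]))$, the map $v\mapsto f(v,\cdot)$ is c\`adl\`ag, hence bounded and Borel measurable as a $C([0,T])$-valued function, and therefore Bochner integrable on $[0,1]$. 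As the evaluation functionals $\varphi\mapsto\varphi(t)$ are continuous and linear on $C([0,T])$, they commute with the Bochner integral, so $F^u_\eps$, viewed as an element of $C([0,T])$, coincides with $\frac{1}{\eps\wedge(1-u)}\int_u^{(u+\eps)\wedge 1}f(v,\cdot)\,dv$.

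The main estimate is then immediate. First I would write
$$
\bigl\|F^u_\eps-f(u,\cdot)\bigr\|_{C([0,T])}
=\left\|\frac{1}{\eps\wedge(1-u)}\int_u^{(u+\eps)\wedge 1}\bigl(f(v,\cdot)-f(u,\cdot)\bigr)\,dv\right\|_{C([0,T])},
$$
and then, using the triangle inequality for the Bochner integral, bound the right-hand side by
$$
\frac{1}{\eps\wedge(1-u)}\int_u^{(u+\eps)\wedge 1}\|f(v,\cdot)-f(u,\cdot)\|_{C([0,T])}\,dv
\leq\sup_{v\in[u,(u+\eps)\wedge 1)}\|f(v,\cdot)-f(u,\cdot)\|_{C([0,T])}.
$$
By the right continuity of $v\mapsto f(v,\cdot)$ at $u$ in $C([0,T])$, the last supremum tends to $0$ as $\eps\to 0+$, which is precisely the asserted convergence in $C([0,T])$.

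For the endpoint $u=1$ the mollifier $h^1_\eps$ averages over the left interval $[(1-\eps)\vee 0,1]$, so the identical computation yields a bound by $\sup_{v\in[(1-\eps)\vee 0,1)}\|f(v,\cdot)-f(1-,\cdot)\|_{C([0,T])}$, which vanishes as $\eps\to 0+$ because the left limit $f(1-,\cdot)$ exists in $C([0,T])$; here one uses the convention, consistent with the paper's normalisation of elements of $\Li$ at the endpoints, that $f(1,\cdot)=f(1-,\cdot)$. I expect the only genuine subtlety to be the identification, for each fixed $t$, of the scalar $L_2$-average with the evaluation at $t$ of the $C([0,T])$-valued Bochner integral: once this interchange is justified, the passage from a pointwise-in-$t$ statement to uniform convergence over $[0,T]$ is automatic, and the c\`adl\`ag (right/left continuity) property of $f$ in the Skorohod sense closes the argument. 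This is a matter to state carefully rather than a deep obstacle.
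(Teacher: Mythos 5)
Your proof is correct and follows essentially the same route as the paper's: both arguments reduce the claim to the observation that, since $v\mapsto f(v,\cdot)$ is c\`adl\`ag into $C([0,T])$ (and continuous at $v=1$ by the definition of $D([0,1],C([0,T]))$ used here), the averaged difference is bounded by $\sup_{v}\|f(v,\cdot)-f(u,\cdot)\|_{C([0,T])}$ over the averaging window, which vanishes as $\eps\to 0+$. Your Bochner-integral framing is merely a more formal packaging of the paper's direct estimate $\int_0^1|f(v,t)-f(u,t)|h^u_\eps(v)\,dv<\tilde\eps$ uniformly in $t$.
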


\begin{proof}
We first note that for every $u\in[0,1)$ and $\tilde{\eps}>0$ there exists $\delta>0$ such that 
$$
|f(u,t)-f(v,t)|<\tilde{\eps},\quad t\in[0,T],\ \  v\in[u,u+\delta).
$$  
In particular, $f(v,t)$, $t\in[0,T]$, $v\in[u,u+\delta)$, is bounded. Hence, for each $\eps\in(0,\delta]$ the function $(f(\cdot,t),h^u_{\eps})_{L_2},\ t\in[0,T]$, belongs to $C[0,T]$, by the dominated convergence theorem, and
$$
|(f(\cdot,t),h^u_{\eps})_{L_2}-f(u,t)|\leq\int_0^1|f(v,t)-f(u,t)|h_{\eps}^u(v)dv<\tilde{\eps}.
$$
For $u=1$ the convergence follows from the same argument and the continuity of $f(v,\cdot)$, $v\in[0,1]$, at $v=1$.
This proves the lemma.
\end{proof}

\begin{lemma}\label{lemma_prog_estim}
Let $f\in \St$ and $0\leq u<v\leq 1$. Then 
\begin{equation}\label{f_conv_of_proj}
\left(\pr_fh_{\eps}^u,\pr_fh_{\eps}^v\right)_{L_2}\to\frac{1}{m_f(u)}\I_{\{f(u)=f(v)\}}\quad\mbox{as}\ \ \eps\to 0+
\end{equation}
and
\begin{equation}\label{boundedness_of_proj}
0\leq\left(\pr_fh_{\eps}^u,\pr_fh_{\eps}^v\right)_{L_2}\leq \begin{cases}
\frac{1}{v-u-\eps},& \eps\in\left(0,v-u\right),\ \ v<1,\\
\frac{1}{v-u-2\eps},& \eps\in\left(0,\frac{v-u}{2}\right),\ \ v=1,
\end{cases}
\end{equation}
where $m_f(u)=\leb\{v:\ f(u)=f(v)\}$.
\end{lemma}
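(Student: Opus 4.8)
The plan is to use the explicit description of $\pr_f$ for $f\in\St$. Writing the distinct values of $f$ as $c_1<\dots<c_m$ and their level sets as $A_1,\dots,A_m$ (each an interval, since $f\in D^{\uparrow}$), the operator $\pr_f$ is the conditional expectation with respect to $\leb$, acting by averaging over the clusters:
\[
\pr_f h=\sum_{j=1}^{m}\Bigl(\frac{1}{\leb(A_j)}\int_{A_j}h\,d\leb\Bigr)\I_{A_j},\qquad h\in L_2,
\]
exactly as in the computation underlying Proposition~\ref{proposition_existence_via_DF}. Since $h_\eps^u,h_\eps^v\geq 0$ and this operator preserves nonnegativity, the lower bound $0\leq(\pr_f h_\eps^u,\pr_f h_\eps^v)_{L_2}$ in \eqref{boundedness_of_proj} is immediate.

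For the convergence \eqref{f_conv_of_proj} I would note that the cluster $A(u):=\{w:f(w)=f(u)\}$ is an interval whose right endpoint exceeds $u$, so for $\eps$ small the support of $h_\eps^u$ lies inside $A(u)$ and hence $\pr_f h_\eps^u=\frac{1}{m_f(u)}\I_{A(u)}$; the same holds for $v<1$, while for $v=1$ the left-looking window $h^1_\eps$ eventually sits inside the last cluster $A(1)$, again giving $\pr_f h_\eps^1=\frac{1}{m_f(1)}\I_{A(1)}$. Taking the inner product of these two explicit functions, and distinguishing the case $f(u)=f(v)$ (where $A(u)=A(v)$ and $m_f(u)=m_f(v)$) from the case $f(u)\neq f(v)$ (where the clusters are disjoint), yields the value $\frac{1}{m_f(u)}\I_{\{f(u)=f(v)\}}$ for all sufficiently small $\eps$. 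The expression is in fact eventually constant, which settles the limit.

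For the quantitative upper bound in \eqref{boundedness_of_proj}, valid for all admissible $\eps$, I would use self-adjointness and idempotency of $\pr_f$ to write
\[
(\pr_f h_\eps^u,\pr_f h_\eps^v)_{L_2}=(h_\eps^u,\pr_f h_\eps^v)_{L_2}=\sum_{j=1}^{m}\frac{p_j^u\,p_j^v}{\leb(A_j)},
\]
where $p_j^u:=\int_{A_j}h_\eps^u\,d\leb\in[0,1]$ with $\sum_j p_j^u=1$, and similarly for $v$. The decisive step is geometric: if a cluster $A_j$ has $p_j^u>0$ and $p_j^v>0$, then it meets both windows $[u,u+\eps)$ and $[v,(v+\eps)\wedge1)$ (the latter being $[1-\eps,1]$ when $v=1$), so, being an interval, it contains $[u+\eps,v]$ when $v<1$, resp. $[u+\eps,1-\eps]$ when $v=1$. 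This forces $\leb(A_j)\geq v-u-\eps$ (resp. $v-u-2\eps$) and, since such an $A_j$ must then contain the single point $u+\eps$, shows that at most one cluster can be charged by both windows. Bounding $p_j^u p_j^v\leq 1$ and dropping the vanishing terms yields the claimed estimates.

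The main obstacle is precisely this geometric/combinatorial step. One has to check that the ranges $\eps\in(0,v-u)$ (for $v<1$) and $\eps\in(0,\tfrac{v-u}{2})$ (for $v=1$) are exactly what guarantees $u+\eps<v$, resp. $u+\eps<1-\eps$, so that the bridging interval is nondegenerate and its interior point $u+\eps$ lies in a unique cluster; these two facts simultaneously supply the length lower bound and the uniqueness of the charged cluster. The remaining ingredients — the averaging formula for $\pr_f$, its positivity, and the eventual-constancy computation for the limit — are routine.
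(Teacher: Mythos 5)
Your proof is correct and takes essentially the same route as the paper: the cluster-averaging formula for $\pr_f$ gives the eventual constancy in \eqref{f_conv_of_proj}, and your geometric step---at most one cluster can meet both windows, and such a cluster must have length at least $v-u-\eps$ (resp.\ $v-u-2\eps$)---is precisely the paper's dichotomy $f(u+\eps)<f(v)$ versus $f(u+\eps)=f(v)$ together with its bound $\tilde v-\tilde u\geq v-u-\eps$ for the ends $\tilde u,\tilde v$ of the bridging cluster. The only differences are cosmetic: you organize the inner product as $\sum_j p_j^u p_j^v/\leb(A_j)$ rather than bounding the pointwise product of the projections on that cluster, and you write out the $v=1$ case, which the paper omits.
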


\begin{proof}
Convergence~\eqref{f_conv_of_proj} follows from Lemma~\ref{lemma_view_of_pr} and a simple calculation.

We show~\eqref{boundedness_of_proj} only for $v<1$. For this, we fix $\eps\in(0,v-u)$ and consider the following two cases.

a) $f(u+\eps)<f(v)$. Then, by Lemma~\ref{lemma_view_of_pr}, $\supp(\pr_fh_{\eps}^u)\cap\supp(\pr_fh_{\eps}^v)=\emptyset$. This implies that 
$$
\left(\pr_fh_{\eps}^u,\pr_fh_{\eps}^v\right)_{L_2}=0.
$$

b) $f(u+\eps)=f(v)$. Let $\tilde{u}$, $\tilde{v}$ be the ends of the interval $\{r:\ f(v)=f(r)\}$ and $\tilde{u}<\tilde{v}$. Then, $\tilde{u}\leq u+\eps<v<\tilde{v}$. Moreover, 
$$
\left(\pr_fh_{\eps}^u\right)(r)\left(\pr_fh_{\eps}^v\right)(r)=0,\quad r\not\in[\tilde{u},\tilde{v}),
$$
and
\begin{align*}
\left(\pr_fh_{\eps}^u\right)(r)\left(\pr_fh_{\eps}^v\right)(r)&=\frac{1}{\eps^2(\tilde{v}-\tilde{u})^2}\int_{[\tilde{u},\tilde{v})}\I_{[u,u+\eps)}(r)dr\int_{[\tilde{u},\tilde{v})}\I_{[v,v+\eps)}(r)dr \leq\frac{1}{(\tilde{v}-\tilde{u})^2},\quad r\in[\tilde{u},\tilde{v}).
\end{align*}
Hence,
\begin{align*}
\left(\pr_fh_{\eps}^u,\pr_fh_{\eps}^v\right)_{L_2}\leq \frac{1}{(\tilde{v}-\tilde{u})^2}\int_0^1\I_{[\tilde{u},\tilde{v})}(r)dr\leq\frac{1}{v-u-\eps}.
\end{align*}
The lemma is proved.
\end{proof}

\subsubsection{Proof of Theorem~\ref{theorem_existence_in_general_case1}~(ii)}%
\label{ssub:proof_of_theorem_theorem_existence_in_general_case1_ii_}

  We first assume that the process $X_t$, $t \geq 0$, is a weak solution to SDE~\eqref{f_the_main_equation} with the martingale part $M:=M^X$ and the part of bounded variation $A:=A^X$ and check that $Y$ satisfies $(R1)-(R4)$. The idea of proof is similar to the proof of Theorem~6.4~\cite{Konarovskyi:2017:EJP}. Namely, we are going to approximate $Y(u,\cdot)$ by $\left\{\left(X,h^u_{\eps}\right)_{L_2}\right\}_{\eps>0}$. 

We note that property $(R1)$ is trivial.

Let
\begin{align*}
  A^Y(u,t):&=\int_0^t\left(\xi(u)-(\pr_{Y(\cdot,s)}\xi)(u)\right)ds=\int_{ 0 }^{ t } \left( \xi(u) - \frac{1}{ m_Y(u,s) }\int_{ \pi_Y(u,s)}\xi(v)dv \right)ds ,\quad u\in[0,1],\ \ t\geq 0,
\end{align*}
where the equality follows from Lemma~\ref{lemma_view_of_pr}.
By Lemma~\ref{lemma_codlag_prop_of_int}, $A^Y(u,t)$, $u\in[0,1]$, $t\in[0,T]$, belongs to $D([0,1],C[0,T])$ for any $T>0$ and, thus, $A^Y$ belongs to $D([0,1],C[0,\infty) )$. Hence, $M^Y:=Y-g-A^Y$ also belongs to $D([0,1],C[0,\infty))$.

Let $h^u_{\eps}$ be defined as before for each $u\in[0,1]$ and $\eps>0$. Then, by Lemma~\ref{lemma_molification} and Remark~\ref{remark_X_equals_Y},
\begin{equation}\label{f_conv_of_M_Y}
\left(M_{\cdot},h^u_{\eps}\right)_{L_2}\to M^Y(u,\cdot) \quad\mbox{in}\ \ C[0,T]\ \ \mbox{a.s. \ as}\ \ \eps\to 0+
\end{equation}
and
\begin{equation}\label{f_def_of_Y_via_lim}
\left(X_{\cdot},h^u_{\eps}\right)_{L_2}\to Y(u,\cdot) \quad\mbox{in}\ \ C[0,T]\ \ \mbox{a.s. \ as}\ \ \eps\to 0+
\end{equation}
for all $T>0$. Thus, $Y$ satisfies $(R2)$, by Proposition~A.1~\cite{Konarovskyi:2017:EJP}\footnote{We remind the reader that the proposition claims that a function $f \in L_2$ belongs to $\Li$ if and only if for every $u,v \in (0,1)$ and $\eps,\delta>0$ satisfying $u+\eps\leq v$ one has $( f,h^u_{\eps} )_{L_2}\leq ( f,h^v_{\delta} )_{L_2}$. Moreover, the unique modification $\tilde{f}$ of $f$ is given by 
\[
  \tilde{f}(u)=\lim_{ \eps \to 0+ }( f,h^u_{\eps} )_{L_2}, \quad u \in (0,1).
\]
} and Remark~\ref{remark_X_equals_Y}. We also note that~\eqref{f_def_of_Y_via_lim} yields $\F^Y_t=\F^X_t$ for all $t\geq 0$.

Taking arbitrary $u\in(0,1)$, $\eps_0\in(0,u\wedge(1-u))$ and using Proposition~A.1~\cite{Konarovskyi:2017:EJP}, we have for every $t\geq 0$ and $\eps \in (0,\eps_0]$
$$
|(X_t,h^u_{\eps})_{L_2}|\leq |(X_t,h^0_{\eps_0})_{L_2}|+|(X_t,h^1_{\eps_0})_{L_2}|
$$
and
$$
|(g,h^u_{\eps})_{L_2}|\leq |(g,h^0_{\eps_0})_{L_2}|+|(g,h^1_{\eps_0})_{L_2}|.
$$
Hence, 
\begin{align*}
  |(M_t,h^u_{\eps})_{L_2}|&\leq |( g,h^u_{\eps} )_{L_2}|+|(X_t,h^u_{\eps})_{L_2}|+|( A_t,h^u_{\eps} )_{L_2}|\\
&\leq|(g,h^0_{\eps_0})_{L_2}|+|(g,h^1_{\eps_0})_{L_2}| +|(X_t,h^0_{\eps_0})_{L_2}|+|(X_t,h^1_{\eps_0})_{L_2}|+2t\|\xi\|_{L_{\infty}}
\end{align*}
for all $t\geq 0$ and $\eps\in(0,\eps_0]$. To estimate $( A_t,h^u_{\eps} )_{L_2}$, we used the inequality $\|\pr_{X_t}\xi\|_{L_{\infty}}\leq\|\xi\|_{L_{\infty}}$ which follows from Lemma~\ref{lemma_view_of_pr}. By~\eqref{f_conv_of_M_Y}, the fact that $\left(M_t,h^u_{\eps}\right)_{L_2}$, $t\geq 0$, is a square-integrable $(\F^Y_t)$-martingale for every $\eps>0$, the dominated convergence theorem and the finiteness of $\E \|X_t\|_{L_2}^2$, we have that $M^Y(u,t)$, $t\geq 0$, is a square-integrable $(\F^Y_t)$-martingale. In the case $u\in\{0,1\}$, using the convergence of $\{(M_{\cdot},h^u_{\eps})\}_{\eps>0}$ to $M^Y(u,\cdot)$ and $\{(X_{\cdot},h^v_{\eps})\}_{\eps>0}$ to $Y(v,\cdot)$ in $C[0,T]$ a.s. for every $v \in [0,1]$, one can show that $M^Y(u,t)$, $t\geq 0$, is a local $(\F^Y_t)$-martingale similarly to the proof of Proposition~9.1.17~\cite{Jacod:2003}.
This proves $(R3)$.

Next,~\eqref{f_conv_of_M_Y}, Lemma~B.11~\cite{Engelbert:2005} and the polarization formula for joint quadratic variation of martingales yield 
$$
\left[\left(M_{\cdot},h^u_{\eps}\right)_{L_2},\left(M_{\cdot},h^v_{\eps}\right)_{L_2}\right]\to\left[M^Y(u,\cdot),M^Y(v,\cdot)\right]
$$
in $C[0,\infty)$ in probability as $\eps\to 0+$ for all $u,v\in(0,1)$.

By the finiteness of the expectation $\E\|M_t\|_{L_2}^2<\infty$, the equality
$$
\E\int_0^t\|\pr_{Y(\cdot,s)}\|_{HS}^2ds=\E\int_0^t\|\pr_{X_s}\|_{HS}^2ds=\E\|M_t\|_{L_2}^2<\infty,\quad t\geq 0,
$$
and Lemma~\ref{lemma_connection_HS_with_int}, we have
\begin{equation}\label{f_Y_in_St}
\p\left\{\exists R\subseteq[0,\infty)\ \ \mbox{s.t.}\ \ \leb([0,\infty)\setminus R)=0\ \ \mbox{and}\ \ Y(\cdot,t)\in\St\ \ \forall t\in R\right\}=1.
\end{equation}
Thus, applying Lemma~\ref{lemma_prog_estim} to $f=Y(\cdot,t,\omega)$ and using the dominated convergence theorem, we obtain
\begin{align*}
\left[\left(M_{\cdot},h^u_{\eps}\right)_{L_2},\left(M_{\cdot},h^v_{\eps}\right)_{L_2}\right]_t&=\int_0^t\left(\pr_{Y(\cdot,s)}h_{\eps}^u,\pr_{Y(\cdot,s)}h_{\eps}^v\right)_{L_2}ds\ \  \to\ \ \int_0^t\frac{\I_{\{Y(u,s)=Y(v,s)\}}}{m_Y(u,s)}ds\quad\mbox{a.s.\ as}\ \ \eps\to 0+
\end{align*}
for any $t\geq 0$.
This implies $(R4)$ for all $u,v\in[0,1]$, $u\not=v$.

To finish the proof of the theorem, we have to check $(R4)$ for $u=v\in[0,1]$. 
Since $M^Y\in D([0,1],C[0,\infty))$, we have
$$
M^Y(v,\cdot)\to M^Y(u,\cdot) \quad\mbox{in}\ \ C[0,\infty)\ \ \mbox{a.s.}
$$
as $v\downarrow u$, if $u<1$, and $v\uparrow u$, if $u=1$. Thus, by Lemma~B.11~\cite{Engelbert:2005} and the polarization formula for joint quadratic variation of martingales, 
$$
\left[M^Y(v,\cdot),M^Y(u,\cdot)\right]\to \left[M^Y(u,\cdot)\right] \quad\mbox{in}\ \ C[0,\infty)\ \ \mbox{in probability}.
$$
Using~\eqref{f_Y_in_St}, Lemma~\ref{lemma_view_of_pr} and the monotone convergence theorem, it is easily seen that for each $t\geq 0$ 
$$
\left[M^Y(v,\cdot),M^Y(u,\cdot)\right]_t=\int_0^t\frac{\I_{\{Y(u,s)=Y(v,s)\}}}{m_Y(u,s)}ds\to\int_0^t\frac{ds}{m_Y(u,s)}
$$
as $v\downarrow u$, if $u<1$, and $v\uparrow u$, if $u=1$. The firs part of the statement is proved.

If $Y$ satisfies $(R1)-(R4)$, then a direct computation shows that $X$ satisfies property $(E'4)$ of Remark~\ref{remark_def_of_sol}. This immediately implies that $X$ solves equation~\eqref{f_the_main_equation}, that completes the proof of Theorem~\ref{theorem_existence_in_general_case1}~(ii).

\subsection{CFWD as a family of semimartingales (proof of Theorem~\ref{theorem_existence_in_general_case})}%
\label{sub:cfwd_as_a_family_of_semimartingales_proof_of_theorem_theorem_existence_in_general_case_}

  Let $g,\xi \in D([0,1],\R )$ be non-decreasing piecewise $ \frac{1}{ 2 }+$-H\"older continuous functions on $[0,1]$. Then there exist $\gamma> \frac{1}{ 2 }$, an ordered partition $0=\tilde{u}_0<\tilde{u}_1<\dots<\tilde{u}_l=1$ and a constant $C>0$ such that
$$
|g(u)-g(v)|\vee|\xi(u)-\xi(v)|\leq C|u-v|^{\gamma},\ \ u,v\in(\tilde{u}_{i-1},\tilde{u}_i),\ \ i\in[l].
$$ 
To construct a random element in the Skorohod space $D([0,1],C[0,\infty))$ satisfying conditions $(R1)-(R4)$, we will show the existence of a solution to equation~\eqref{f_the_main_equation} which has a modification from $D([0,1],C[0,\infty))$. The approach will be similar to the proof of Theorem~\ref{theorem_existence_in_general_case1}~(ii), but now we have to build sequences $\{g_n,\ n\geq 1\}\in\St$ and $\{\xi_n,\ n\geq 1\}\in\St$ which also satisfy conditions $(c1)$, $(c2)$ of Proposition~\ref{proposition_tightness_in_D_pi} on every interval $(\tilde{u}_{i-1},\tilde{u}_i)$, $i \in [l]$.

Letting $u_{i,j}^n$, $j=0,\ldots,2^n$, be the uniform partition of $[\tilde{u}_{i-1},\tilde{u}_i]$ for each $i\in[l]$ and $n\geq 1$, we define the functions $\iota_n:[0,1]\to[0,1]$ as follows
$$
\iota_n(u)=\sum_{i=1}^l\sum_{j=1}^{2^n}u_{i,j-1}^n\I_{[u_{i,j-1}^n,u_{i,j}^n)}(u)+u_{l,2^n-1}^n\I_{\{u_{l,2^n}^n\}}(u),\quad u\in[0,1],\ \ n\geq 1.
$$  
Then the functions $\iota_n$ belong to $D^{\uparrow}$ and map $[\tilde{u}_{i-1},\tilde{u}_i)$ into $[\tilde{u}_{i-1},\tilde{u}_i)$ for any $i\in[l]$. We put for every $n\geq 1$
$$
\xi_n:=\left(\xi+\frac{1}{n}\id\right)\circ\iota_n\footnote{The function $\frac{1}{n}\id$ is needed here in order to have $\sigma^{*}(\xi_n)=\sigma^{*}(\left[u_{i,j-1},u_{i,j}\right),\ j\in[2^n],\ i\in[l])$. Note that  it can be replaced by, e.g., any strictly increasing $\gamma$-H\"older continuous function.}\quad\mbox{and}\quad \tilde{g}_n:=g\circ\iota_n,
$$
where $\id$ denotes the identity function on $[0,1]$. As before, for each $n\geq 1$ we can take $g_n\in\Theta_{\xi_n}$ such that $\|g_n-\tilde{g}_n\|_{L_{\infty}}<\frac{1}{2^{\gamma n}}$. By the boundedness of $g$ and $\xi$ and the dominated convergence theorem, $g_n\to g$ and $\xi_n\to\xi$ in $L_{2+\delta}$. Moreover, $\{\xi_n(1)-\xi_n(0),\ n\geq 1\}$ is bounded. 

Next, we check $(c1)$ and $(c2)$ for every $(a,b):=(\tilde{u}_{i-1},\tilde{u}_i)$, $i \in [l]$. We take $u\in(a,b)$ and $r>0$ such that $u+r,u-r\in(a,b)$ and estimate $[g_n(u+r)-g_n(u)][g_n(u)-g_n(u-r)]$ for each $n\geq 1$. First, we note that for $r<\frac{b-a}{2^{n+1}}$
$$
[g_n(u+r)-g_n(u)][g_n(u)-g_n(u-r)]=0,
$$
since $g_n(u+r)-g_n(u)=0$ or $g_n(u)-g_n(u-r)=0$. Next, let $r\geq\frac{b-a}{2^{n+1}}$. Then 
\begin{align*}
[g_n(u+r)-g_n(u)][g_n(u)-g_n(u-r)] &\leq\left[\tilde{g}_n(u+r)-\tilde{g}_n(u)+\frac{2}{2^{\gamma n}}\right]\left[\tilde{g}_n(u)-\tilde{g}_n(u-r)+\frac{2}{2^{\gamma n}}\right]\\
&=\left[g(\iota_n(u+r))-g(\iota_n(u))+\frac{2}{2^{\gamma n}}\right]\left[g(\iota_n(u))-g(\iota_n(u-r))+\frac{2}{2^{\gamma n}}\right]\\
&\leq\left[C(\iota_n(u+r)-\iota_n(u))^{\gamma}+\frac{2}{2^{\gamma n}}\right]\left[C(\iota_n(u)-\iota_n(u-r))^{\gamma}+\frac{2}{2^{\gamma n}}\right]\\
&\leq \left(3^{\gamma}C+\frac{2^{\gamma+1}}{(b-a)^{\gamma}}\right)^2r^{2\gamma},
\end{align*}
since
$$
[\iota_n(u+r)-\iota_n(u)]\vee[\iota_n(u)-\iota_n(u-r)]\leq r+\frac{b-a}{2^n}\leq 3r\quad\mbox{for}\ r\geq\frac{b-a}{2^{n+1}}.
$$
Similarly, 
\begin{align*}
[g_n(u+r)-g_n(u)][\xi_n(u)-\xi_n(u-r)]&\leq \tilde{C}r^{2\gamma},\\
[\xi_n(u+r)-\xi_n(u)][g_n(u)-g_n(u-r)]&\leq \tilde{C}r^{2\gamma},\\
[\xi_n(u+r)-\xi_n(u)][\xi_n(u)-\xi_n(u-r)]&\leq \tilde{C}r^{2\gamma}.
\end{align*}
Thus, $\{g_n,\ n\geq 1\}$ and $\{\xi_n,\ n\geq 1\}$ satisfy~$(c1)$ of Proposition~\ref{proposition_tightness_in_D_pi} with $\beta=2\gamma>1$ and $\pi=[\tilde{u}_{i-1},\tilde{u}_i]$.

Estimate~$(c2)$ can be proved similarly, using the H\"older continuity of $g$ and $\xi$ on $(\tilde{u}_{i-1},\tilde{u}_i)$ and the form of the maps $\iota_n$, $n\geq 1$.

As before, let $X^n_t$, $t\geq 0$, be a weak solution to SDE~\eqref{f_the_main_equation} with $g$ and $\xi$ replaced by $g_n$ and $\xi_n$. Let $\overline{X}^n$, $n\geq 1$, be defined by~\eqref{f_tilde_X}. Let also $\{Y^n(u,t),\ u\in[0,1],\ t\geq 0\}$, $n\geq 1$, be random elements in $D([0,1],C[0,\infty))$ satisfying $(R1)-(R4)$ and  $X_t^n=Y^n(\cdot,t)$ in $L_2$ for all $t\geq 0$ almost surely. Such random elements exists by Proposition~\ref{proposition_existence_via_DF}. 

According to propositions~\ref{proposition_tightness_in_W},~\ref{proposition_tightness_in_D_pi},~\ref{proposition_tightnes_in_D} and Remark~\ref{remark_tightness_on_0_infty}, the sequence $\{(\overline{X}^n,Y^n),\ n\geq 1\}$ is tight in $\W\times D([0,1],C[0,\infty))$. As before, we can find a subsequence $N\subseteq\N$, a probability space and a \ family \ of \ random \ elements $\{(\overline{Z},\overline{V}),\ (\overline{Z}^n,\overline{V}^n),\ n\in N\}$ on this probability space such that $\law(\overline{X}^n,Y^n)=\law(\overline{Z}^n,V^n)$, $\law(\overline{X},Y)=\law(\overline{Z},V)$ and $(\overline{Z}^n,V^n)\to (\overline{Z},V)$ in $\W\times D([0,1],C[0,\infty))$ a.s. along $N$. Then, by Proposition~\ref{theorem_identification_of_limit} and Corollary~\ref{corollary_estim_of_norm_of_X}, the process $Z:=g+A^Z+M^Z$ is a weak solution to SDE~\eqref{f_the_main_equation}, where $\overline{Z}=(M^Z,A^Z,(x_{i,j}^Z),a^Z)$. Moreover, $Z_t=V(\cdot,t)$ in $L_2$ for all $t\geq 0$ almost surely. Hence, $V$ satisfies $(R1)$-$(R4)$, by Theorem~\ref{theorem_existence_in_general_case1}~(ii). The theorem is proved.

\appendix

\section{Appendix}

\subsection{The sitting time at zero of non-negative semimartingales} 
\label{sub:the_sitting_time_at_zero}

\begin{proposition}\label{proposition_estom_of_sitting_time}
Let $(\F_t)_{t\geq 0}$ be a complete right-continuous filtration and $y(t)$, $t\geq 0$, be a continuous non-negative $(\F_t)$-semimartingale such that
\begin{equation}\label{f_process_Y}
  y(t)=y_0+\int_0^t\rho(s)\I_{\{y(s)>0\}}dB(s)+\xi_0\int_0^t\I_{\{y(s)=0\}}ds, \quad t\geq 0,
\end{equation}
where $y_0$, $\xi_0$ are non-negative constants, $\rho(t)$, $t\geq 0$, is an $(\F_t)$-predictable process taking values in $[1,\infty)$ and satisfying 
\[
  \int_{ 0 }^{ t } \rho(s)^2 \I_{\left\{ y(s)>0 \right\}}ds<\infty  \quad \mbox{a.s.}
\]
for all $t>0$, and $B(t)$, $t\geq 0$, is an $(\F_t)$-Brownian motion. Then for every $t\geq 0$
$$
\E\int_0^t\I_{\{y(s)>0\}}ds\leq\sqrt{\frac{2t}{\pi}}(\xi_0t+y_0).
$$
\end{proposition}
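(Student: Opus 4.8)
The plan is to estimate the occupation time $\int_0^t \I_{\{y(s)>0\}}ds$ by comparing $y$ with a sticky-reflected Brownian motion whose dynamics are explicit, and to exploit the local time at zero. First I would apply the Tanaka/It\^o--Meyer formula to $y(t)$, or rather work directly with the structure of~\eqref{f_process_Y}. The key observation is that the term $\xi_0\int_0^t \I_{\{y(s)=0\}}ds$ plays the role of a drift pushing the process off zero, while the martingale part vanishes on $\{y=0\}$. Since $\rho$ is bounded below by $1$, the martingale part has quadratic variation at least $\int_0^t\I_{\{y(s)>0\}}ds$, which is exactly the quantity we want to control. I would try to set up an identity relating the expected occupation time to $y_0$, $\xi_0 t$, and the expected value of $y(t)$ (which is nonnegative), so that the occupation time is bounded by a quantity not involving $\rho$.

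The cleanest route is to consider the function $\phi(x)=\sqrt{x}\wedge$ (something), or more simply to test against a suitable concave function. Concretely, I would apply It\^o's formula to $f(y(t))$ for a well-chosen smooth concave $f$ with $f(0)=0$, $f'(0)$ finite, and $f''\leq 0$. Taking expectations kills the martingale term and yields
$$
\E f(y(t)) = f(y_0) + \xi_0\,\E\int_0^t f'(y(s))\I_{\{y(s)=0\}}ds + \frac{1}{2}\E\int_0^t f''(y(s))\rho(s)^2\I_{\{y(s)>0\}}ds.
$$
Since $f$ is concave, the last term is nonpositive, and since $\rho\geq 1$, we can bound $\rho(s)^2\geq 1$ there; combined with $f''\leq 0$ this gives a one-sided inequality. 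The difficulty is that $f''(y)\I_{\{y>0\}}$ must be comparable to a negative constant on the relevant range to extract $\int_0^t\I_{\{y(s)>0\}}ds$, yet $f$ must remain globally controlled. The natural candidate is $f(x)=\sqrt{x}$-type behaviour near $0$ combined with linear growth, but $\sqrt{x}$ has $f''$ blowing up, which is the wrong sign convexity-wise; instead I expect the right comparison is with the deterministic ODE-type bound obtained by reflecting, which is where the factor $\sqrt{2t/\pi}$ (the expected value of $|B_t|$, i.e. the Gaussian mean absolute value) enters.

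Thus the main obstacle, and the heart of the argument, is identifying the correct auxiliary process or test function that produces precisely the constant $\sqrt{2/\pi}$. I would realize this by a time-change or domination argument: since $\rho\geq 1$, the process $y$ moves at least as fast as a standard sticky Brownian motion off zero, so its time spent away from zero is dominated by that of the reference sticky process $\tilde y(t)=y_0 + \tilde B(t) + \xi_0\int_0^t\I_{\{\tilde y(s)=0\}}ds$ with unit diffusion. For this reference process the occupation time of the positive half-line can be computed (or bounded) explicitly, and the bound $\E\int_0^t\I_{\{\tilde y(s)>0\}}ds\leq\sqrt{2t/\pi}(\xi_0 t + y_0)$ follows from the reflection principle together with the elementary estimate $\E|B_t|=\sqrt{2t/\pi}$. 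Making the domination rigorous when $\rho$ is a general predictable process in $[1,C]$ is the delicate step; I expect to handle it either by a comparison theorem for the local time at zero or by the same concave-test-function computation above, choosing $f$ so that the $f''$ term is dominated away and what survives is exactly the reference bound. Once the reference estimate is in hand, monotonicity in $\rho$ transfers it to the general case and completes the proof.
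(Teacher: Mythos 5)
Your proposal correctly identifies the two ingredients that must interact --- the lower bound $\rho\geq 1$ and a Brownian hitting-time estimate producing the constant $\sqrt{2t/\pi}$ --- but it never supplies the mechanism that links the sticky drift term to such an estimate, and that mechanism is the entire content of the paper's proof. The paper's argument is: set $R_t=\int_0^t\I_{\{y(s)>0\}}ds$, show first that $R_\infty=\infty$ a.s.\ when $\xi_0>0$, time-change by the right inverse $A_t$ of $R$, and check that $N'_t:=N^{\rho}_{A_t}$ is a \emph{continuous} martingale with $[N']_t=\int_0^t\rho(A_s)^2\,ds\geq t$. Tanaka's formula then identifies the sticky term with the local time at zero, $\xi_0\int_0^t\I_{\{y(s)=0\}}ds=l_t(y)=\{-y_0-\inf_{s\leq t}N^{\rho}_s\}\vee 0$, which produces the purely algebraic identity $t=A_t-\xi_0^{-1}\{-y_0-\inf_{s\leq t}N'_s\}\vee 0$ and hence the pathwise bound $R_t\leq\tau^{N'}_{\xi_0t+y_0}\wedge t$, where $\tau^{N'}_a=\inf\{s:\,-N'_s=a\}$; a hitting-time comparison based on $[N']_t\geq t$ finally reduces everything to $\E(\sigma_a\wedge t)$ for a standard Brownian motion. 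Nothing in your proposal performs this conversion of an occupation time into a hitting time; you only assert that a domination ``should'' hold.

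Both routes you sketch in its place have genuine defects. (i) The concave-test-function computation cannot be completed: to extract $\E\int_0^t\I_{\{y(s)>0\}}ds$ from $\tfrac12\E\int_0^t f''(y(s))\rho(s)^2\I_{\{y(s)>0\}}ds$ you need $f''\leq -c<0$ on the unbounded range of $y$, which forces $f(x)\to-\infty$ quadratically; the resulting term $-\E f(y(t))$ is then of the same order as the quantity being estimated (with constant depending on $C$), and the inequality becomes vacuous --- as you yourself suspect, no choice of $f$ repairs this. (ii) The claim that ``since $\rho\geq1$ the occupation time of $(0,\infty)$ is dominated by that of the unit-diffusion sticky process'' is not a standard comparison theorem: a larger diffusion rate makes excursions both leave zero and return to zero faster, there is no evident pathwise coupling, and making this monotonicity rigorous is essentially equivalent to the time-change argument above, so the step begs the question. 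Moreover, even for the reference process the bound does not follow from ``the reflection principle together with $\E|B_t|=\sqrt{2t/\pi}$'': the reflection principle gives $\E(\sigma_a\wedge t)=\int_0^t\p\{|B_s|<a\}ds$, and the obvious Gaussian bound on the integrand yields $2a\sqrt{2t/\pi}$, i.e.\ twice the constant you need, which is why the paper outsources this estimate to a dedicated lemma of an earlier work rather than to this computation. You also leave untreated the degenerate case $\xi_0=0$, which the paper handles separately by an absorption argument, and the a.s.\ divergence $R_\infty=\infty$, without which the inverse time change is not even defined.
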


\begin{proof}
We set
 $$
 R_t:=\int_0^t\I_{\{y(s)>0\}}ds,\quad t\geq 0,
 $$
 and use the idea from~\cite[P.~998-999]{Engelbert:2014} in order to estimate $\E R_t$. We will consider two cases.
 
 {\it Case I: $\xi_0>0$.}
  
 To estimate $\E R_t$, we first show that $R_t$, $t\geq 0$, is strictly increasing. Let us assume that it is not true, i.e,
 \[
   \p\left\{ \exists t_1<t_2\ \ R_{t_1}=R_{t_2} \right\}>0.
 \]
 By the monotonicity of $R_t$, $t\geq 0$, we can conclude that there exist (non-random) $t_1,t_2\geq 0$, $t_1<t_2$, such that 
 \[
   \p\left\{ R_{t_1}=R_{t_2} \right\}>0.
 \]
 Therefore, by the continuity of $y$, $y(r)=0$, $r \in [t_1,t_2]$, with positive probability. Hence 
 \begin{align*}
   \int_{ 0 }^{ t } \rho(s)\I_{\left\{ y(s)>0 \right\}}dB(s)&= y_0-\xi_0 \int_{ 0 }^{ t } \I_{\left\{ y(s)=0 \right\}}ds = y_0-\xi_0 \int_{ 0 }^{ t_1 }\I_{\left\{ y(s)=0 \right\}}ds -\xi_0 (t-t_1) , \quad t \in [t_1,t_2],
 \end{align*}
 with positive probability.  But this contradicts the fact that every continuous local martingale with bounded variation is constant (see, e.g., Theorem~17.2~\cite{Kallenberg:2002}). Consequently, the continuous process $R_t$, $t\geq 0$, is strictly increasing a.s. 
 
 We next define 
 \[
   R_{\infty}:=\lim_{ t\to\infty }R_t
 \]
 and 
 $$
 A_t:=\inf\{s\geq 0:\ R_s>t\}, \quad t\geq 0,
 $$
 which is a continuous strictly increasing process on $[0,R_{\infty})$ and $A_t \to \infty$ as $t$ increases to $R_{\infty}$. Note that $A_t$ is an $(\F_t)$-stopping time for each $t\geq 0$. Let $T>0$ be fixed. Then for the continuous local $(\F_t)$-martingale
 \[
   N^{\rho}_t:=\int_{ 0 }^{ t } \rho(s)\I_{\left\{ y(s)>0 \right\}}dB(s), \quad t\geq 0, 
 \]
 the process 
 \[
   N'_t:=N^{\rho}_{A_t\wedge T}=\int_0^{A_t\wedge T}\rho(s)\I_{\{y(s)>0\}}dB(s),\quad t\geq 0.
 \]
 is a continuous local $(\F'_t)$-martingale, where $\F'_t:=\F_{A_t}$. 
 
 Denoting 
 $$
 Q_t:=\int_0^{R_t}\rho(A_s)^2ds,\quad t\geq 0,
 $$
 and using the change of variables formula, we can see that
 $$
 Q_t=\int_0^t\rho(A_{R_s})^2dR_s=\int_0^t\rho(A_{R_s})^2\I_{\{y(s)>0\}}ds=\int_0^t\rho(s)^2\I_{\{y(s)>0\}}ds
 $$
 for all $t\geq 0$, since $A_t$, $t\geq 0$, is the inverse for $R_t$, $t\geq 0$. Thus,
 \begin{equation}\label{f_N_prime}
     [N']_t=[N^{\rho}]_{A_t\wedge T}=\int_0^{A_t\wedge T}\rho(s)^2\I_{\{y(s)>0\}}ds =Q_{A_t\wedge T}=\int_0^{R_{A_t\wedge T}}\rho(A_s)^2ds=\int_0^{t\wedge R_T}\rho(A_s)^2ds
 \end{equation}
 for any $t\geq 0$.
 
 Next, by Skorohod Lemma~22.2~\cite{Kallenberg:2002}, we directly get 
 \[
   \xi_0 \int_{ 0 }^{ t } \I_{\left\{ y(s)=0 \right\}}ds = \left\{-y_0-\inf_{s\leq t}N_s^{\rho} \right\}\vee 0, \quad t\geq 0.
 \]
 Hence, 
 \begin{align*}
   t&= \int_{ 0 }^{ t } \I_{\left\{ y(s)>0 \right\}}ds+\int_{ 0 }^{ t } \I_{\left\{ y(s)=0 \right\}}ds = R_t+ \frac{1}{ \xi_0 }\left\{-y_0-\inf_{s\leq t}N_s^{\rho} \right\}\vee 0, \quad t\geq 0.
 \end{align*}
 Using the equality $A_{R_t}=t$ for all $t\geq 0$, one has for every $t \in [0,T]$
 \begin{align*}
   R_t&= \max\left\{s:\ s+\frac{1}{\xi_0}\left[-y_0-\inf_{r\leq A_s}N^{\rho}_r\right]\vee 0\leq t\right\} = \max\left\{s:\ s+\frac{1}{\xi_0}\left[-y_0-\inf_{r\leq s}N'_r\right]\vee 0\leq t\right\}\\
   &=\max\left\{s:\ \left[-y_0-\inf_{r\leq s}N'_r\right]\vee 0\leq \xi_0(t-s)\right\} =\max\left\{s:\ -y_0-\inf_{r\leq s}N'_r\leq \xi_0(t-s),\ s\leq t\right\}\\
   &\leq\max\left\{s:\ \sup_{r\leq s}(-N'_r)\leq \xi_0t+y_0\right\}\wedge t.
 \end{align*}
 By Theorem~2.7.2'~\cite{Watanabe:1981:en}, there exists a Brownian motion $W(t)$, $t\geq 0$, defined probably on an extended probability space, such that 
 \[
   -N'_t=W([N']_t), \quad t\geq 0.
 \]
 We remark that $[N']_t=\int_{ 0 }^{ t\wedge R_T }\rho(A_s)^2ds\geq t\wedge R_T  $, $t\geq 0$. Hence,
 \begin{align*}
   R_t&\leq \max\left\{s:\ \sup_{r\leq s}W([N']_r)\leq \xi_0t+y_0\right\}\wedge t \leq \max\left\{s:\ \sup_{r\leq s}W(r)\leq \xi_0t+y_0\right\}\wedge t\\
   &\leq \sup\left\{s:\ \sup_{r\leq s}W(r)\leq \xi_0t+y_0\right\}\wedge t=\sigma_{\xi_0t+y_0}\wedge t,
 \end{align*}
 where  $\sigma_a:=\inf\{t:\ W(t)=a\}$. Therefore, 
 $$
 \E R_t\leq\E (\sigma_{\xi_0t+y_0}\wedge t)\leq\sqrt{\frac{2t}{\pi}}(\xi_0t+y_0).
 $$
 
 {\it Case II: $\xi_0=0$.}
 
 In this case, $y(t)=y_0+\int_0^t\rho(s)\I_{\{y(s)>0\}}dB(s)$, $t\geq 0$, is a continuous positive martingale. It implies that $y$ stays at zero for all $t\geq\tau^y_{y_0}:=\inf\left\{t:\ -\int_0^t\rho(s)\I_{\{y(s)>0\}}dB(s)=y_0\right\}$. Hence, using Lemma~2.4~\cite{Konarovskyi:2014:arx} and the fact that $\int_0^t\rho(s)\I_{\{y(s)>0\}}dB(s)=\int_0^t\rho(s)dB(s)$ for all $t\in\left[0,\tau^y_{y_0}\right]$, we have
 $$
 \E R_t=\E \tau^y_{y_0}\leq\E(\sigma_{y_0}\wedge t)\leq\sqrt{\frac{2t}{\pi}}y_0.
 $$
 
 Combining these two cases, we obtain the estimate
 $$
 \E R_t\leq\sqrt{\frac{2t}{\pi}}(\xi_0t+y_0),\quad t\geq 0.
 $$
 The proposition is proved.
\end{proof}

\subsection{The projection operator}\label{section_properties_of_pr}

We recall that for every $g \in L_2$ the completion of the $\sigma$-field generated by $g$ with respect to the Lebesgue measure $\leb$ is denoted by $\sigma^{*}(g)$.  We also denote the projection operator in $L_2$ on the closed linear subspace
$$
L_2(g)=\{f\in L_2:\ f\ \mbox{is}\ \sigma^{*}(g)\mbox{-measurable}\}
$$
by $\pr_g$.

\begin{remark}\label{remark_notes_obout_proj}
\begin{enumerate}
  \item[(i)] The operator $\pr_g$ is well-defined, since for two functions $g_1$ and $g_2$ coinciding a.e. the equality $\sigma^{*}(g_1)=\sigma^{*}(g_2)$ holds.

\item[(ii)] For each $h\in L_2$, $\pr_gh$ coincides a.e. with the conditional expectation $\E(h|\sigma(g))$ on the probability space $([0,1],\B([0,1]),\leb)$, where $\B([0,1])$ denotes the Borel $\sigma$-field on $[0,1]$.
\end{enumerate} 
\end{remark}

Recall that $D^{\uparrow}$ denotes the set of all non-decreasing c\`{a}dl\'{a}g functions from $(0,1)$ to $\R$. For fixed $g\in D^{\uparrow}$ we will denote the family of intervals
$I(c)=g^{-1}(\{c\})=\{u:\ g(u)=c\}$, $c\in\R$, satisfying $\leb(I(c))>0$ by $\K_g$. We note that either $I_1\cap I_2=\emptyset$ or $I_1=I_2$ for any $I_1,I_2\in\K_g$. This implies that $\K_g$ is countable. Let
$$
G_g:=\bigcup_{I\in\K_g}I\quad\mbox{and}\quad F_g:=(0,1)\setminus G_g.
$$ 
For any function $h\in L_2$ we define the function
\begin{equation}\label{f_explicit_formula_of_pr}
h_g(u):=\begin{cases}
           \frac{1}{\leb(I)}\int_Ih(v)dv,&u\in I\in\K_g,\\
           h(u),& u\in F_g,
          \end{cases}\quad u\in(0,1).
\end{equation}

\begin{lemma}\label{lemma_view_of_pr}
Let $g\in D^{\uparrow}$ and $h\in L_2$. Then $\pr_gh=h_g$ a.e. 
\end{lemma}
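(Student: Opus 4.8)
By Remark~\ref{remark_notes_obout_proj}(ii), $p:=\pr_g h$ equals the conditional expectation $\E(h\mid\sigma(g))$ $\leb$-a.e. on the probability space $([0,1],\B([0,1]),\leb)$, so the plan is simply to compute this conditional expectation and show it agrees $\leb$-a.e. with the explicit function $h_g$ from~\eqref{f_explicit_formula_of_pr}. I would split $(0,1)$ along the partition $(0,1)=F_g\sqcup G_g$ with $G_g=\bigsqcup_{I\in\K_g}I$ a countable disjoint union, and analyse the two pieces separately: the flat part $G_g$ by an atom argument, and the strictly increasing part $F_g$ by a measurability argument.

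On $G_g$ the computation is clean. Fix $I=g^{-1}(\{d\})\in\K_g$. For any $A\in\sigma(g)$, writing $A=g^{-1}(B)$ with $B\in\B(\R)$ gives $A\cap I=g^{-1}(B\cap\{d\})\in\{\emptyset,I\}$, so $I$ is an atom of $\sigma(g)$ and every $\sigma^{*}(g)$-measurable function is $\leb$-a.e. constant on $I$. Since $\I_I$ is $\sigma(g)$-measurable, the defining property of conditional expectation gives $\int_I p\,d\leb=\int_I h\,d\leb$, so that constant is $a_I:=\frac{1}{\leb(I)}\int_I h\,d\leb$. Hence $p=\sum_{I\in\K_g}a_I\I_I=h_g$ a.e. on $G_g$.

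On $F_g$ lies the crux. I would first record the structural fact that $g$ is \emph{strictly} increasing on $F_g$: if $u<v$ in $F_g$ had $g(u)=g(v)$, then $g$ would be constant on $[u,v]$ and $g^{-1}(\{g(u)\})$ would have positive Lebesgue measure, contradicting $u\in F_g$. Using this, for every $u\in F_g$ the down-set $\{g\le g(u)\}$ differs from $(0,u]$ only by the $\leb$-null set $g^{-1}(\{g(u)\})$ (any $v>u$ with $g(v)\le g(u)$ forces $g(v)=g(u)$); since $\{g\le g(u)\}\in\sigma(g)$, this yields $(0,u]\in\sigma^{*}(g)$ for all $u\in F_g$. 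Because $g$ is strictly increasing on $F_g$, these order-intervals generate the trace Borel $\sigma$-algebra on $F_g$ modulo $\leb$-null sets, so $\sigma^{*}(g)$ restricted to $F_g$ is the full completed Borel $\sigma$-algebra. In particular $h\I_{F_g}$ is $\sigma^{*}(g)$-measurable, whence $p=h=h_g$ a.e.\ on $F_g$.

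Combining the two computations gives $p=h_g$ $\leb$-a.e., and therefore $\pr_g h=h_g$ a.e., as claimed. (The square-integrability of $h_g$, needed only to confirm $p$ is genuinely the $L_2$-projection, is immediate from $\|h_g\|_{L_2}\le\|h\|_{L_2}$, which follows piecewise from Jensen's inequality on each atom.) I expect the only genuinely delicate step to be the analysis on $F_g$, namely that $\sigma^{*}(g)$ resolves $F_g$ down to its full Borel structure; this rests entirely on the strict monotonicity of $g$ there, which is exactly what guarantees that $g$ loses no information on the non-flat part of its graph, while the flat parts are absorbed cleanly by the atom argument.
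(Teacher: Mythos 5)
Your proof is correct, but it takes a genuinely different route from the paper's. The paper argues directly from the variational definition of $\pr_g$: it first constructs a Borel function $\varphi$ with $h_g=\varphi(g)$ pointwise --- invoking Kuratowski's theorem to get Borel measurability of $(g|_{F_g})^{-1}$ on the injective part --- which places $h_g$ in $L_2(g)$, and then shows atom-by-atom that $\|f-h\|_{L_2}^2\geq\|h_g-h\|_{L_2}^2$ for every $f\in L_2(g)$, so that $h_g$ is the minimizer, i.e.\ the projection. You instead invoke Remark~\ref{remark_notes_obout_proj}~(ii) (projection equals conditional expectation; this is stated in the paper before the lemma and proved nowhere using it, so there is no circularity) and compute $\E(h\mid\sigma(g))$ on the two pieces of $(0,1)=F_g\sqcup G_g$: your atom argument on $G_g$ is essentially the same computation the paper's minimization performs implicitly, while on $F_g$ you show that $\sigma^{*}(g)$ contains every completed Borel subset of $F_g$, so conditioning acts as the identity there. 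Your route is more elementary in that it replaces Kuratowski's theorem by a completion argument: for $u\in F_g$ the level set $g^{-1}(\{g(u)\})$ is Lebesgue-null, hence $(0,u]\in\sigma^{*}(g)$, and initial segments generate the trace Borel $\sigma$-algebra. Note that this step genuinely requires the completion (the sets $(0,u]$ need not lie in $\sigma(g)$ itself), and that the generation-by-initial-segments fact is a generic property of subsets of $\R$ (take suprema of $\{u\in F_g:\ u\leq q\}$, attained or not, and use countable unions), not really a consequence of the strict monotonicity you highlight --- strict monotonicity enters only through the nullity of the level sets. What the paper's argument buys in exchange is independence from the unproved (though standard) identification of $\pr_g$ with conditional expectation, and exact $\sigma(g)$-measurability of $h_g$ rather than measurability modulo null sets. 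In a final write-up you should make two small steps explicit: first, $F_g\in\sigma(g)$ because $G_g$ is a countable union of level sets, which is what justifies $\E(h\mid\sigma(g))\I_{F_g}=\E(h\I_{F_g}\mid\sigma(g))=h\I_{F_g}$ a.e.; second, a $\sigma^{*}(g)$-measurable integrable function coincides a.e.\ with a $\sigma(g)$-measurable one and is therefore a.e.\ equal to its own conditional expectation given $\sigma(g)$.
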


\begin{proof}
In order to prove the lemma, we first show that there exists a Borel function $\varphi:\R\to\R$ such that 
$$
h_g=\varphi(g).
$$
This will imply the measurability of $h_g$ with respect to $\sigma^{*}(g)$.

Since $g$ is a non-decreasing function, the restriction $g|_{F_g}$ of $g$ to the Borel set $F_g$ is an one-to-one map from $F_g$ to $g(F_g)=\{g(u):\ u\in F_g\}$. By Kuratowski's theorem (see Theorem~A.10.5~\cite{Ethier:1986}), $g(F_g)$ is a Borel subset of $\R$ and $\left(g|_{F_g}\right)^{-1}$ is a Borel measurable function from $g(F_g)$ to $F_g$. Thus, we define
$$
\varphi(x)=h\left(\left(g|_{F_g}\right)^{-1}(x)\right),\quad x\in g(F_g).
$$

If $x\in g((0,1))\setminus g(F_g)$, then there exists an unique interval $I_x\in\K_g$ such that $g(u)=x$ for all $u\in I_x$. 
Hence, we can define
$$
\varphi(x)=\begin{cases}
            \frac{1}{\leb(I_x)}\int_{I_x}h(v)dv,& x\in g((0,1))\setminus g(F_g),\\
            0,& x\not\in g((0,1)).
           \end{cases}
$$
By the construction of $\varphi$, it is easy to see that $\varphi$ is a Borel function and for all $u\in(0,1)$ 
$$
\varphi(g(u))=h_g(u).
$$

Next, taking an arbitrary $\sigma^{*}(g)$-measurable function $f\in L_2$ and noting that there exists a Borel function $\psi:\R\to\R$ such that $f=\psi(g)$ a.e., we can estimate the norm $\|f-h\|_{L_2}^2$ from below as follows
\begin{align*}
\int_0^1(f(u)-h(u))^2du&=\int_0^1(\psi(g(u))-h(u))^2du\geq\sum_{I\in\K_g}\int_I(\psi(c_I)-h(u))^2du\\
&\geq \sum_{I\in\K_g}\int_I\left(\frac{1}{\leb(I)}\int_Ih(v)dv-h(u)\right)^2du =\int_0^1(h_g(u)-h(u))^2du,
\end{align*}
where $c_I=g(u)$, $u\in I$, and the last inequality is obtained by minimising the map
$$
\theta\mapsto\int_I(\theta-h(u))^2du.
$$
This completes the proof of the lemma.
\end{proof}

As before, we denote the (closed) subset of functions from $L_2$ which have a non-decreasing modification by $\Li$.

\begin{lemma}\label{lemma_monotonisity_of_pr}
For each $g\in D^{\uparrow}$ the projection operator $\pr_g$ maps $L_2^{\uparrow}$ into $L_2^{\uparrow}$.
\end{lemma}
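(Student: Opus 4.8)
The plan is to rely on the explicit description of the projection furnished by Lemma~\ref{lemma_view_of_pr}. Fix $g\in D^{\uparrow}$ and $h\in L_2^{\uparrow}$, and replace $h$ by its non-decreasing representative from $D^{\uparrow}$. By Lemma~\ref{lemma_view_of_pr} it suffices to show that the function $h_g$ defined in~\eqref{f_explicit_formula_of_pr} is non-decreasing, since then it automatically has a non-decreasing modification and hence $\pr_g h=h_g\in L_2^{\uparrow}$.

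The decisive structural fact is that, because $g$ is non-decreasing, every level set $I\in\K_g$ is an interval, and two distinct level sets are disjoint and linearly ordered: if $I_1,I_2\in\K_g$ with a point of $I_1$ lying to the left of a point of $I_2$, then $\sup I_1\le\inf I_2$. Moreover, if $u\in F_g$ and $u<w$ for some $w$ in a level set $I=g^{-1}(\{c\})$, then $u\le\inf I$; indeed, otherwise $\inf I<u<w\le\sup I$ would force $g(u)=c$, so $u\in I\subseteq G_g$, contradicting $u\in F_g$. Symmetrically, if $w\in I$ and $w<u\in F_g$, then $u\ge\sup I$.

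I would then verify $h_g(u)\le h_g(v)$ for $u<v$ by a short case analysis. When both $u,v\in F_g$ this is the monotonicity of $h$, and when $u,v$ lie in the same $I\in\K_g$ the two values coincide. When $u\in I_1$ and $v\in I_2$ with $I_1\ne I_2$ both in $\K_g$, the ordering $\sup I_1\le\inf I_2$ gives $h(w_1)\le h(w_2)$ for all $w_1\in I_1$, $w_2\in I_2$; integrating this inequality over $I_1\times I_2$ and dividing by $\leb(I_1)\leb(I_2)$ yields $\frac{1}{\leb(I_1)}\int_{I_1}h\le\frac{1}{\leb(I_2)}\int_{I_2}h$, i.e. $h_g(u)\le h_g(v)$. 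The two mixed cases use the endpoint observation above: if $u\in F_g$ and $v\in I$ then $u\le\inf I$, so $h(u)\le h(w)$ for every $w\in I$ and hence $h_g(u)=h(u)\le\frac{1}{\leb(I)}\int_I h=h_g(v)$; symmetrically if $u\in I$ and $v\in F_g$.

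The only delicate point---and the one I would state cleanly---is the claim that a point of $F_g$ sitting below (resp. above) a level set must lie below its infimum (resp. above its supremum); this is exactly what rules out any violation of monotonicity at the boundary between the averaged plateaus and the region $F_g$ where $h_g=h$. Everything else reduces to the monotonicity of $h$ together with the elementary inequality for averages over ordered intervals, so no further estimates are needed.
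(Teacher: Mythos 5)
Your proof is correct and follows exactly the route the paper intends: the paper's own ``proof'' is the one-line remark that the lemma follows easily from the explicit formula~\eqref{f_explicit_formula_of_pr} for $\pr_g h$ given in Lemma~\ref{lemma_view_of_pr}, and your case analysis (ordering of the level-set intervals in $\K_g$, the endpoint observation for points of $F_g$, and the comparison of averages over ordered intervals) is precisely the verification the paper leaves to the reader. No gaps; the argument is complete.
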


The statement easily follows from the explicit formula~\eqref{f_explicit_formula_of_pr} for $\pr_gh$.

Let $g:(0,1)\to\R$ be a non-decreasing function. We define  
\begin{equation}\label{f_m_g}
m_g(u):=\leb\{v:\ g(u)=g(v)\},\quad u\in(0,1).
\end{equation}

\begin{remark}
If $g_1=g_2$ a.e., then $m_{g_1}=m_{g_2}$ a.e. Thus, $m_g$ is well-defined for any $g\in L_2^{\uparrow}$. 
\end{remark}

Let $\|A\|_{HS}$ be the Hilbert-Schmidt norm of a linear operator $A$ on $L_2$, that is, 
\[
  \|A\|^2_{HS}=\sum_{ i=1 }^{ \infty } \|Ae_i\|^2_{L_2},
\]
where $\{ e_i,\ i \in \N \}$ is an orthonormal basis of $L_2$. For $g \in \Li$ the number of distinct values of its unique modification from $D^{\uparrow}$ is denoted by $\#g$.

\begin{lemma}\label{lemma_connection_HS_with_int}
Let $g\in L_2^{\uparrow}$ and $m_g$ be defined by~\eqref{f_m_g}. Then
$$
\|\pr_g\|_{HS}^2=\int_0^1\frac{du}{m_g(u)}=\#g.
$$
In particular, $\|\pr_g\|_{HS}^2<\infty$ if and only if the c\`{a}dl\'{a}g modification of $g$ belongs to $\St$.
\end{lemma}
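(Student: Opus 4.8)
The plan is to compute $\|\pr_g\|_{HS}^2$ directly from the explicit description of the projection in Lemma~\ref{lemma_view_of_pr}, show it equals $\int_0^1 du/m_g(u)$, and only then read off the identification with $\#g$. Throughout I take $g$ in its $D^{\uparrow}$-modification and use the decomposition $(0,1)=G_g\cup F_g$ from Subsection~\ref{section_properties_of_pr}, where $G_g=\bigcup_{I\in\K_g}I$ collects the level sets of positive length and $F_g$ is the remainder. By definition $m_g(u)=\leb(I)$ for $u\in I\in\K_g$, while $m_g(u)=0$ for $u\in F_g$; hence, with the convention $1/0=\infty$,
\[
\int_0^1\frac{du}{m_g(u)}=\sum_{I\in\K_g}\int_I\frac{du}{\leb(I)}+\int_{F_g}\frac{du}{m_g(u)}=|\K_g|+\infty\cdot\leb(F_g),
\]
which is finite (and equal to $|\K_g|$) exactly when $|\K_g|<\infty$ and $\leb(F_g)=0$.

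First I would fix an orthonormal basis $\{e_i\}$ of $L_2$ and use the basis-free identity $\|\pr_g\|_{HS}^2=\sum_i\|\pr_g e_i\|_{L_2}^2$ coming from~\eqref{f_HS_inner_product}. Inserting the explicit formula~\eqref{f_explicit_formula_of_pr} for $\pr_g e_i=(e_i)_g$ and splitting the integral over $F_g$ and over the intervals $I\in\K_g$ gives
\[
\|\pr_g e_i\|_{L_2}^2=\int_{F_g}e_i(u)^2\,du+\sum_{I\in\K_g}\frac{1}{\leb(I)}\,(e_i,\I_I)_{L_2}^2.
\]
Summing over $i$ (all terms are non-negative, so the interchange is justified by Tonelli) and applying Parseval's identity $\sum_i(e_i,\I_I)_{L_2}^2=\|\I_I\|_{L_2}^2=\leb(I)$ collapses the second group to $\sum_{I\in\K_g}\leb(I)/\leb(I)=|\K_g|$. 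The first group is $\sum_i\|\I_{F_g}e_i\|_{L_2}^2$, i.e. the squared Hilbert--Schmidt norm of multiplication by $\I_{F_g}$, which is precisely the orthogonal projection onto $L_2(F_g)$; its Hilbert--Schmidt norm equals $\dim L_2(F_g)$, namely $0$ if $\leb(F_g)=0$ and $\infty$ if $\leb(F_g)>0$. In either case it coincides with $\int_{F_g}du/m_g(u)$, so that $\|\pr_g\|_{HS}^2=|\K_g|+\int_{F_g}du/m_g(u)=\int_0^1 du/m_g(u)$.

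It then remains to match these quantities with $\#g$. If the $D^{\uparrow}$-modification of $g$ lies in $\St$, with constancy intervals $\pi_1,\dots,\pi_n$ carrying distinct values, then $\K_g=\{\pi_1,\dots,\pi_n\}$, $\leb(F_g)=0$, and $\#g=n=|\K_g|=\int_0^1 du/m_g(u)$. Conversely, if $\int_0^1 du/m_g(u)<\infty$ then by the computation above $\leb(F_g)=0$ and $|\K_g|<\infty$, so $g$ agrees a.e. with a step function taking finitely many values, and its $D^{\uparrow}$-modification belongs to $\St$. Finally, when $g\notin\St$ the range $\{g(u):u\in(0,1)\}$ is infinite, whence $\#g=\infty$, while simultaneously either $|\K_g|=\infty$ or $\leb(F_g)>0$, forcing $\int_0^1 du/m_g(u)=\infty$; thus $\#g=\int_0^1 du/m_g(u)$ holds (as equalities in $[0,\infty]$) in all cases.

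The step I expect to require the most care is the $F_g$ contribution: one must justify that $\sum_i\|\I_{F_g}e_i\|_{L_2}^2$ is the Hilbert--Schmidt norm of the projection onto $L_2(F_g)$ and therefore equals $\dim L_2(F_g)\in\{0,\infty\}$, and then align this with the convention $m_g=0$, $1/m_g=\infty$ on $F_g$ so that the (possibly infinite) equalities hold exactly as stated. Everything else --- the Tonelli interchange, the Parseval step on each $\I_I$, and the elementary counting of distinct values of a monotone c\`adl\`ag function --- is routine.
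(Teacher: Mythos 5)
Your proof is correct, but it takes a genuinely different route from the paper's on the key identity. The paper never computes $\|\pr_g\|_{HS}^2$ directly: it quotes Lemma~6.1 of \cite{Konarovskyi:2017:EJP} for $\|\pr_g\|_{HS}^2=\#g$ and for the equivalence with membership in $\St$, and then only proves $\int_0^1 du/m_g(u)=\#g$, by essentially the same two-case argument you give at the end (if the integral is finite, then $\leb(F_{\tilde g})=0$ and $\#\K_{\tilde g}<\infty$, and c\`{a}dl\`{a}gness forces $F_{\tilde g}=\emptyset$, hence $\tilde g\in\St$; conversely, $\#\tilde g<\infty$ gives $\tilde g\in\St$ and the reverse inequality). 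You instead prove the harder identity $\|\pr_g\|_{HS}^2=\int_0^1 du/m_g(u)$ from scratch: expanding $\sum_i\|\pr_g e_i\|_{L_2}^2$ via the explicit formula of Lemma~\ref{lemma_view_of_pr}, applying Tonelli and Parseval on each $I\in\K_g$, and identifying the $F_g$-contribution as the trace of the multiplication projection $h\mapsto\I_{F_g}h$, which is $0$ or $+\infty$ according to whether $\leb(F_g)=0$ or $\leb(F_g)>0$ --- this matches the convention $1/m_g=\infty$ on $F_g$, so all equalities hold in $[0,\infty]$. What this buys is a proof that is self-contained within the present paper, replacing the external citation by a short spectral computation; the paper's version is shorter at the price of importing the earlier result. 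Two small points of precision: your notation $L_2(F_g)$ (functions supported on $F_g$) collides with the paper's convention that $L_2(g)$ denotes the $\sigma^{*}(g)$-measurable functions, so it is safer to describe that subspace in words; and the trace of a projection equals the \emph{squared} Hilbert--Schmidt norm, not the norm itself --- immaterial here, since the value is $0$ or $+\infty$, but worth stating correctly.
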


\begin{proof}
We take $\tilde{g}\in D^{\uparrow}$ such that $g=\tilde{g}$ a.e. and note that $\|\pr_g\|_{HS}^2=\#g$ follows from Lemma~6.1~\cite{Konarovskyi:2017:EJP}. Moreover, $\|\pr_g\|_{HS}^2<\infty$ if and only if $\tilde{g}\in\St$. Therefore, we only have to show that $\int_0^1\frac{du}{m_g(u)}=\#g$.

Let $\K_{\tilde{g}}$ and $F_{\tilde{g}}$ be defined as in the beginning of the present section. Then, obviously, $m_{\tilde{g}}(u)=0$ if and only if $u\in F_{\tilde{g}}$. 

If $\int_0^1\frac{du}{m_g(u)}<\infty$, then $\leb(F_{\tilde{g}})=0$ and
\begin{equation}\label{f_number_of_values}
\int_0^1\frac{du}{m_g(u)}=\sum_{I\in\K_{\tilde{g}}}\int_{I}\frac{du}{m_g(u)}=\#\K_{\tilde{g}},
\end{equation}
where $\#\K_{\tilde{g}}$ denotes the number of distinct intervals in $\K_{\tilde{g}}$. Since $\tilde{g}$ is c\`{a}dl\'{a}g, $\#\K_{\tilde{g}}<\infty$ and $\leb(F_{\tilde{g}})=0$, one can see that $F_{\tilde{g}}=\emptyset$. This implies that $\#\tilde{g}=\#\K_{\tilde{g}}$. Thus, $\#\tilde{g}\leq\int_0^1\frac{du}{m_g(u)}$ (including the trivial case $\int_0^1\frac{du}{m_g(u)}=+\infty$).

Next, if $\#\tilde{g}<\infty$, then $\tilde{g}\in\St$, and, consequently, $F_{\tilde{g}}=\emptyset$. This together with~\eqref{f_number_of_values} yield $\int_0^1\frac{du}{m_g(u)}\leq\#\tilde{g}$. The lemma is proved.
\end{proof}



\begin{lemma}\label{lemma_lower_semicontinuity_of_pr}
For each $h\in L_2$ the map $g\mapsto\|\pr_gh\|_{L_2}$ from $L_2^{\uparrow}$ to $\R$ is lower semi-continuous, that is,
$$
\|\pr_{g}h\|_{L_2}\leq\varliminf_{n\to \infty}\|\pr_{g_n}h\|_{L_2},
$$
for each sequence $\{g_n,\ n\geq 1\}$ converging to $g$ in $L_2^{\uparrow}$.
\end{lemma}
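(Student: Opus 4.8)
The plan is to exploit the variational characterisation of the norm of an orthogonal projection. For any closed subspace $V\subseteq L_2$ and any $h\in L_2$ one has
$$
\|\pr_Vh\|_{L_2}^2=\sup_{f\in V}\left(2(h,f)_{L_2}-\|f\|_{L_2}^2\right),
$$
the supremum being attained at $f=\pr_Vh$, since $2(h,f)_{L_2}-\|f\|_{L_2}^2=\|h\|_{L_2}^2-\|h-f\|_{L_2}^2$. Applying this with $V=L_2(g)$, it suffices to show that for every $f\in L_2(g)$ there exists a sequence $f_n\in L_2(g_n)$ with $f_n\to f$ in $L_2$. Indeed, then $2(h,f_n)_{L_2}-\|f_n\|_{L_2}^2\leq\|\pr_{g_n}h\|_{L_2}^2$ for every $n$, and passing to the limit gives $2(h,f)_{L_2}-\|f\|_{L_2}^2\leq\varliminf_{n\to\infty}\|\pr_{g_n}h\|_{L_2}^2$; taking the supremum over $f\in L_2(g)$ yields $\|\pr_gh\|_{L_2}^2\leq\varliminf_{n\to\infty}\|\pr_{g_n}h\|_{L_2}^2$, and the monotonicity of $t\mapsto t^2$ on $[0,\infty)$ finishes the proof.

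Since the functional $f\mapsto 2(h,f)_{L_2}-\|f\|_{L_2}^2$ is continuous on $L_2$, its supremum over $L_2(g)$ coincides with its supremum over any dense subset, so it is enough to produce the approximating sequences for $f$ ranging over a dense subset of $L_2(g)$. Here I would use that the map $\varphi\mapsto\varphi(g)$ is an isometry from $L_2(\R,\nu)$ onto $L_2(g)$, where $\nu:=\leb\circ g^{-1}$ is the image measure of $\leb$ under $g$, because $\|\varphi(g)\|_{L_2}^2=\int_0^1\varphi(g(u))^2\,du=\int_\R\varphi^2\,d\nu$. As continuous bounded functions are dense in $L_2(\R,\nu)$ for the finite Borel measure $\nu$, the set $\{\varphi(g):\varphi\in C_b(\R)\}$ is dense in $L_2(g)$. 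For such $f=\varphi(g)$ with $\varphi\in C_b(\R)$ I simply take $f_n:=\varphi(g_n)$, which lies in $L_2(g_n)$ by construction. Since $g_n\to g$ in $L_2$, every subsequence admits a further subsequence along which $g_n\to g$ a.e.; by continuity of $\varphi$ then $\varphi(g_n)\to\varphi(g)$ a.e., and the bound $|\varphi(g_n)|\leq\|\varphi\|_{\infty}$ together with dominated convergence gives $\varphi(g_n)\to\varphi(g)$ in $L_2$ along that sub-subsequence. As the limit is always the same, the full sequence converges, $f_n\to f$ in $L_2$, as required.

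The main obstacle is precisely the construction of the approximating sequence $f_n\in L_2(g_n)$, and it is worth stressing why the naive approach fails: the flat pieces of $g_n$ (the level sets of positive measure, on which members of $L_2(g_n)$ must be constant, cf.\ Lemma~\ref{lemma_view_of_pr}) need not relate to those of $g$ at all. A $g_n$ that is strictly increasing on a flat interval of $g$ makes $L_2(g_n)$ locally all of $L_2$ there, so the subspaces $L_2(g_n)$ are neither nested nor contained in $L_2(g)$, and no direct inclusion argument is available; this is exactly the mechanism by which the limiting projection can land on a \emph{larger} space than $L_2(g)$. The reduction to continuous $\varphi$ circumvents this entirely, since $\varphi(g_n)$ converges to $\varphi(g)$ by continuity alone, irrespective of the flat-piece structure. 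The remaining verifications — the variational identity, the isometry and density statement, and the subsequence-of-subsequence upgrade to full convergence — are routine.
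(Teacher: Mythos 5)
Your proof is correct, and it takes a genuinely different route from the one in the paper. The paper first reduces to a.e.\ convergence, then discretises: it picks a dense set $J$ of levels $x$ with $\leb(g_n^{-1}(\{x\}))=0$ for all $n$, forms the finite $\sigma$-algebras $\s_n^k$ generated by the preimages $g_n^{-1}([x_{i-1}^k,x_i^k))$, and combines martingale convergence ($\E(h|\s_n^k)\to\pr_{g_n}h$ as $k\to\infty$, with $\sup_k\|\E(h|\s_n^k)\|_{L_2}=\|\pr_{g_n}h\|_{L_2}$) with Fatou's lemma in $n$ for each fixed $k$; monotonicity of the $g_n$ is used essentially there, since it guarantees the level sets are intervals whose endpoints converge, which is what makes $\E(h|\s_n^k)\to\E(h|\s_0^k)$ a.e. Your argument replaces all of this by the variational identity $\|\pr_Vh\|_{L_2}^2=\sup_{f\in V}\bigl(2(h,f)_{L_2}-\|f\|_{L_2}^2\bigr)$ together with a recovery sequence $f_n=\varphi(g_n)\in L_2(g_n)$ for $f=\varphi(g)$, $\varphi\in C_b(\R)$, the density of such $f$ in $L_2(g)$ coming from the isometry $L_2(\R,\nu)\cong L_2(g)$, $\nu=\leb\circ g^{-1}$. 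Two things are worth noting. First, your proof nowhere uses that the $g_n$ or $g$ are non-decreasing, so it actually establishes lower semicontinuity of $g\mapsto\|\pr_gh\|_{L_2}$ on all of $L_2$ along $L_2$-convergent (indeed, convergent-in-measure) sequences, which is strictly more general than the lemma as stated; the paper's argument is tied to the monotone structure. Second, your route is more elementary in its ingredients (no martingale convergence theorem, no completion bookkeeping beyond Doob--Dynkin), and the sup-of-continuous-functionals formulation makes the lower semicontinuity conceptually transparent: a supremum of continuous functions of $g$ is automatically lower semicontinuous once one has the recovery sequences. Your side remark about why a naive inclusion argument fails (the spaces $L_2(g_n)$ are not nested and need not sit inside $L_2(g)$) is accurate and is exactly the phenomenon the paper has to work around with its choice of the level set $J$.
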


\begin{proof}
We first note that it is enough to prove the lemma only for $g_n\to g=:g_0$ a.e., since every convergent sequence in $L_2$ contains a convergent a.e. subsequence. 

Let
$$
J:=\left\{x\in\R:\ \leb(g_n^{-1}(\{x\}))=0\ \mbox{for all}\ \ n\geq 0\right\}.
$$
Then $\leb(\R\setminus J)=0$, due to the countability of $\R\setminus J$. Thus, $J$ is dense in $\R$ and, consequently, we can choose an increasing sequence of finite subsets $J_k\subset J$, $k\geq 1$, such that $\bigcup_{x\in J_k}\left(x-\frac{1}{k},x+\frac{1}{k}\right)\supset [-k,k]$. Let $J_k=\{x_i^k,\ i\in[p_k]\}$ be ordered in an increasing way. For simplicity, we also set $x_0^k:=-\infty$ and $x_{p_k+1}^k:=+\infty$.
It is easily seen that for each $n\geq 0$ the sequence of $\sigma$-fields
$$
\s_n^k:=\sigma^{*}\left(\left\{g_n^{-1}([x_{i-1}^k,x_i^k)),\ i\in[p_k+1]\right\}\right),\quad k\geq 1,
$$ 
increases to $\sigma^{*}(g_n)$. Moreover, for all $n\geq 0$ and $k\geq 1$
$$
\E(h|\s_n^k)=\sum_{i=1}^{p_k+1}h_{I_{i,n}^k}\I_{I_{i,n}^k}\quad\mbox{a.e.},
$$
where $I_{i,n}^k:=g_n^{-1}([x_{i-1}^k,x_i^k))$, $h_{I_{i,n}^k}:=\frac{1}{\leb(I_{i,n}^k)}\int_{I_{i,n}^k}h(v)dv$ and $\E(\cdot|\cdot)$ denotes the conditional expectation on the probability space $([0,1],\B([0,1]),\leb)$. Thus, by Theorem~7.23~\cite{Kallenberg:2002} 
and Remark~\ref{remark_notes_obout_proj}~(ii), for each $n\geq 0$
$$
\E(h|\s_n^k)\to\pr_{g_n}h\quad \mbox{in}\ \ L_2\ \ \mbox{as}\ \ k\to\infty. 
$$
In particular, for every $n\geq 0$ 
\begin{equation}\label{f_conv_for_k}
\sup_{k\geq 1}\|\E(h|\s_n^k)\|_{L_2}=\|\pr_{g_n}h\|_{L_2},
\end{equation}
since $\s_n^k$, $k\geq 1$, increases and $\E(h|\s_n^k)$ is the projection of $h$ in $L_2$ into the subspace of all $\s_n^k$-measurable functions.

Next, we fix $k\geq 1$ and $i\in[p_k+1]$ such that $\leb(I_{i,0}^k)>0$ and denote the ends of $I_{i,0}^k$ by $a$ and $b$, $a<b$. Then, using the monotonicity of the functions $g_n$, $n\geq 0$, the convergence of $\{g_n,\ n\geq 1\}$ to $g_0$ and the choice of $J_k$, we have that $a_n\to a$ and $b_n\to b$, where $a_n$ and $b_n$ are the ends of some intervals $I_{i_n,n}^k$. Consequently, for every $k\geq 1$
$$
\E(h|\s_n^k)\to\E(h|\s_0^k)\quad \mbox{a.e.\ \ as}\ \ n\to\infty. 
$$ 
By Fatou's lemma, for every $k\geq 1$
\begin{equation}\label{f_conv_for_n}
\|\E(h|\s_0^k)\|_{L_2}\leq\varliminf_{n\to \infty}\|\E(h|\s_n^k)\|_{L_2}. 
\end{equation} 
Hence,
\begin{align*}
\|\pr_{g_0}h\|_{L_2}&\stackrel{\eqref{f_conv_for_k}}{=}\sup_{k\geq 1}\|\E(h|\s_0^k)\|_{L_2}\stackrel{\eqref{f_conv_for_n}}{\leq}\sup_{k\geq 1}\varliminf_{n\to \infty}\|\E(h|\s_n^k)\|_{L_2} \stackrel{\eqref{f_conv_for_k}}{\leq}\sup_{k\geq 1}\varliminf_{n\to \infty}\|\pr_{g_n}h\|_{L_2}=\varliminf_{n\to \infty}\|\pr_{g_n}h\|_{L_2}.
\end{align*}
The lemma is proved.
\end{proof}

\subsection{Limit properties of some projection-valued functions}\label{subsection_limit_prop_of_proj_process}

We recall that $\HS(L_2)$ denotes the space of Hilbert-Schmidt operators on $L_2$ with the inner product defined by 
\[
  (A,B)_{HS}=\sum_{ i=1 }^{ \infty } (Ae_i,Be_i)_{L_2},
\]
where $\{ e_i,\ i \in \N \}$ is an orthonormal basis of $L_2$,  
and the space $\LHS$ is endowed with the inner product 
$$
(A,B)_{T,HS}=\int_0^T(A_t,B_t)_{HS}dt,\quad A,B\in\LHS.
$$
Since $\HS(L_2)$ is a Hilbert space, $\LHS$ also is a Hilbert space.

\begin{proposition}\label{proposition_conv_of_proj}
Let functions $f$ and $f^n$, $n\geq 1$, from $C([0,T],L_2^{\uparrow})$ satisfy the following conditions
\begin{enumerate}
\item[(a)] $\{P^n,\ n\geq 1\}$ converges weakly in $\LHS$ to $P^{\infty}$, that is, 
$$
(P^n,A)_{T,HS}\to (P^{\infty},A)_{T,HS}\quad\mbox{as}\ n\to\infty
$$
for any $A\in \LHS$, where $P^n_t=\pr_{f^n_t}$, $t\in[0,T]$;

\item[(b)] there exists $R\subseteq[0,T]$ such that $\leb([0,T]\setminus R)=0$ and $\|P_th\|_{L_2}\leq\varliminf_{n\to\infty}\|P^n_th\|_{L_2}$ for all $t\in R$ and $h\in L_2$, where $P_t=\pr_{f_t}$, $t\in[0,T]$;

\item[(c)] for every $h\in L_2$ and almost all $t\in[0,T]$ $P^{\infty}_t(P_th)=P^{\infty}_th$.
\end{enumerate}
Then $P^{\infty}=P$.
\end{proposition}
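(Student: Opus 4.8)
The plan is to show that the weak limit $P^{\infty}$ is pinched between $P$ from above and from below, and hence coincides with it. Concretely, I would first extract from (a) that $P^{\infty}_t$ is, for almost every $t$, a symmetric operator with $0\le P^{\infty}_t\le I$; then use (c) to obtain the operator inequality $P^{\infty}_t\le P_t$ a.e.; and finally use (b) together with (a) to obtain the reverse inequality in integrated form. Comparing the two forces equality of the associated quadratic forms, and a density/polarisation argument upgrades this to $P^{\infty}=P$ in $\LHS$. The conceptual core is that (c) makes $P^{\infty}$ a ``sub-projection'' of $P$, while the lower semicontinuity (b) prevents it from losing any mass, so the squeeze is exact.

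For the structural step, fix $h\in L_2$ and let $h\otimes h\in\HS(L_2)$ denote the rank-one operator $g\mapsto(g,h)_{L_2}h$; note that $(A,h\otimes h)_{HS}=(Ah,h)_{L_2}$. For any nonnegative $\psi\in L_2([0,T])$ the map $t\mapsto\psi(t)\,h\otimes h$ lies in $\LHS$, so testing the weak convergence in (a) against it gives $\int_0^T\psi(t)(P^{\infty}_th,h)_{L_2}\,dt=\lim_n\int_0^T\psi(t)(P^n_th,h)_{L_2}\,dt$. Since each $P^n_t$ is an orthogonal projection, $0\le(P^n_th,h)_{L_2}\le\|h\|_{L_2}^2$, whence $0\le(P^{\infty}_th,h)_{L_2}\le\|h\|_{L_2}^2$ for a.e.\ $t$. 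Running this over a countable dense set of $h$ and polarising, I conclude that $P^{\infty}_t$ is symmetric with $0\le P^{\infty}_t\le I$ for a.e.\ $t$. With this in hand, (c) (read as the operator identity $P^{\infty}_tP_t=P^{\infty}_t$ a.e., again via a countable dense set) together with symmetry gives, upon taking adjoints, $P_tP^{\infty}_t=P^{\infty}_t$, hence $P^{\infty}_t=P_tP^{\infty}_tP_t$; then for every $h$, $(P^{\infty}_th,h)_{L_2}=(P^{\infty}_tP_th,P_th)_{L_2}\le\|P_th\|_{L_2}^2=(P_th,h)_{L_2}$, i.e.\ $P^{\infty}_t\le P_t$ a.e.

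For the reverse inequality I would exploit (b). Using $\|P^n_th\|_{L_2}^2=(P^n_th,h)_{L_2}$ and $\|P_th\|_{L_2}^2=(P_th,h)_{L_2}$ (orthogonality of the projections) and the continuity of squaring on $[0,\infty)$, (b) yields $(P_th,h)_{L_2}\le\varliminf_{n}(P^n_th,h)_{L_2}$ for $t\in R$ and all $h$. The integrands are nonnegative, so Fatou's lemma followed by the weak convergence in (a), tested against the constant-in-time operator $h\otimes h\in\LHS$, gives
\[
\int_0^T(P_th,h)_{L_2}\,dt\le\varliminf_{n}\int_0^T(P^n_th,h)_{L_2}\,dt=\int_0^T(P^{\infty}_th,h)_{L_2}\,dt.
\]
Combined with the pointwise inequality $(P^{\infty}_th,h)_{L_2}\le(P_th,h)_{L_2}$ from the previous step, the two integrals must coincide, and the pointwise inequality then forces $(P^{\infty}_th,h)_{L_2}=(P_th,h)_{L_2}$ for a.e.\ $t$ (with the null set depending on $h$).

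It remains to pass from equality of quadratic forms for each fixed $h$ to operator equality. I would fix a countable set $\{h_k\}\subset L_2$ that is dense and closed under rational linear combinations, take the union of the countably many exceptional null sets, and obtain a \emph{single} full-measure set on which $(P^{\infty}_th_k,h_k)_{L_2}=(P_th_k,h_k)_{L_2}$ for all $k$; polarisation supplies the off-diagonal entries, and boundedness together with density extend the identity to all of $L_2$, so $P^{\infty}_t=P_t$ there and therefore $P^{\infty}=P$. I expect the main obstacle to be purely measure-theoretic bookkeeping rather than any sharp estimate: ensuring that the pointwise symmetry and order bounds on the weak limit $P^{\infty}_t$, the identity from (c), and the final equality all survive off one common null set. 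The analytic input is light, consisting of Fatou's lemma and the rank-one test operators $h\otimes h$ used to read off $P^{\infty}$ coordinatewise.
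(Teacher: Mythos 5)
Your proposal is correct in substance, but it takes a genuinely different route from the paper's proof. The paper exploits the concrete structure of $\Li$: since $\|P\|_{T,HS}<\infty$, the function $f_t$ is a step function for almost all $t$, so the range of $P_t$ admits an explicit orthonormal basis of normalized level-set indicators $e^v_t$ indexed by rational $v\in(0,1)$; the key step there is that $(P^{\infty}_te_t,e_t)_{L_2}=1$ a.e.\ for \emph{any} unit field $e_t$ lying in the range of $P_t$ (proved by the same squeeze you use: condition (b), Fatou's lemma, condition (a), and the bound $(P^n_te_t,e_t)_{L_2}\le 1$), after which the matrix of $P^{\infty}_t$ in the adapted basis is computed entry by entry --- diagonal entries equal to $1$, off-diagonal entries vanishing via the auxiliary fields $e^{v_i,v_j}_t$, and the orthogonal complement of the range killed by condition (c). Your argument replaces all of this by an operator-theoretic squeeze with \emph{time-independent} test vectors: (c) plus symmetry gives $P^{\infty}_t=P_tP^{\infty}_tP_t\le P_t$ in the sense of quadratic forms, while (b), Fatou and (a) give the reverse inequality in integrated form, and the two pinch the forms together. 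This is shorter, avoids constructing measurable time-dependent orthonormal fields (which is exactly where the paper needs $f_t\in\St$ a.e.\ and the rational indexing), and never uses that $P_t$, $P^n_t$ are projections onto the subspaces $L_2(f_t)$, $L_2(f^n_t)$ --- it would work for arbitrary measurable families of orthogonal projections, a genuine gain in generality.

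One step, however, is not justified as written: the symmetry of $P^{\infty}_t$ cannot be obtained by ``polarising'' the diagonal bounds $0\le (P^{\infty}_th,h)_{L_2}\le\|h\|_{L_2}^2$. The quadratic form of a bounded operator determines only its symmetric part; already in dimension $2$ there are non-symmetric $A$ with $(Ah,h)=\tfrac12\|h\|^2$ for all $h$ (add any antisymmetric matrix to $\tfrac12 I$). Since symmetry of $P^{\infty}_t$ is used twice in an essential way --- to pass from $P^{\infty}_tP_t=P^{\infty}_t$ to $P_tP^{\infty}_t=P^{\infty}_t$ by taking adjoints, and again in the final polarisation step --- this must be repaired. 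The repair is immediate with the tool you already introduced: test the weak convergence (a) against $\psi(t)\,u\otimes v$ for $u,v$ ranging over your countable dense set and $\psi\in L_2([0,T])$; since $(P^n_tu,v)_{L_2}=(P^n_tv,u)_{L_2}$ for every $n$ and $t$, passing to the limit gives $\int_0^T\psi(t)\bigl[(P^{\infty}_tu,v)_{L_2}-(P^{\infty}_tv,u)_{L_2}\bigr]dt=0$ for all such $\psi$, hence $(P^{\infty}_tu,v)_{L_2}=(P^{\infty}_tv,u)_{L_2}$ off a null set that can be absorbed into your common exceptional set. This is precisely how the paper obtains self-adjointness of $P^{\infty}_t$, see Remark~\ref{remark_prop_of_P}~(iii) and Corollary~\ref{corollary_equality_of_HS_valued_maps}. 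With that patch, your proof is complete.
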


\begin{remark}\label{remark_prop_of_P}
\begin{enumerate}
\item[(i)] Condition $(a)$ together with the uniform boundedness principle imply the boundedness of the sequence $\{P^n,\ n\geq 1\}$ in $\LHS$.

\item[(ii)] The function $P$ belongs to $\LHS$ and $$\|P\|_{T,HS}\leq\varliminf_{n\to\infty}\|P^n\|_{T,HS},$$ by condition $(b)$, Fatou's lemma and the boundedness of $\{P^n,\ n\geq 1\}$. 

\item[(iii)] Since $P^n_t$ is an adjoint operator in $L_2$ for every $t\in[0,T]$, $P^{\infty}_t$ is also adjoint for almost all $t\in[0,T]$, by Corollary~\ref{corollary_equality_of_HS_valued_maps} below.
\end{enumerate}
\end{remark}

To prove the proposition, we need to prove some auxiliary statements.

\begin{lemma}\label{lemma_E_i_j_t_is_dense}
Let $\{e_i,\ i\in\N\}$ be an orthonormal basis of $L_2$ and $E^{i,j,r}_t=\I_{[0,r]}(t)e_i\otimes e_j$, $t\in[0,T]$, $i,j\in\N$, $r\in[0,T]$. Then $\spann\{E^{i,j,r},\ r\in[0,T],\ i,j\in\N\}$ is dense in $\LHS$.
\end{lemma}

\begin{proof}
The statement easily follows from the density of simple functions $\sum_{k=1}^n\I_{[t_{k-1},t_k)}A_k$ in $\LHS$, where $0=t_0<t_1<\ldots<t_n=T$ and $A_k\in\HS(L_2)$, $k\in[n]$, and the fact that $\{e_i\otimes e_j,\ i,j\in\N\}$ is an orthonormal basis of $\HS(L_2)$.
\end{proof}

\begin{corollary}\label{corollary_equality_of_HS_valued_maps}
Let $\{e_i,\ i\in\N\}$ be an orthonormal basis of $L_2$ and $A,B\in\LHS$. If for each $r\in[0,T]$ and $i,j\in\N$
$$
\int_0^r(A_te_i,e_j)_{L_2}dt=\int_0^r(B_te_i,e_j)_{L_2}dt,
$$
then $A=B$.
\end{corollary}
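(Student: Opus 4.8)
The plan is to reduce the statement to the density result of Lemma~\ref{lemma_E_i_j_t_is_dense} via an orthogonality argument in the Hilbert space $\LHS$. First I would set $C:=A-B\in\LHS$; by linearity of the hypothesis it suffices to show that $\int_0^r(C_te_i,e_j)_{L_2}dt=0$ for all $r\in[0,T]$ and $i,j\in\N$ forces $C=0$.

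The key computation is to identify the inner product of $C$ against the basis-type elements $E^{i,j,r}_t=\I_{[0,r]}(t)e_i\otimes e_j$. Using the definition of $(\cdot,\cdot)_{T,HS}$ and the form of the Hilbert--Schmidt inner product on the rank-one operator $e_i\otimes e_j$ (exactly as was already used to derive~\eqref{f_B_P}), one gets
$$
(C,E^{i,j,r})_{T,HS}=\int_0^T\I_{[0,r]}(t)(C_t,e_i\otimes e_j)_{HS}\,dt=\int_0^r(C_te_i,e_j)_{L_2}\,dt.
$$
By the hypothesis on $A$ and $B$, the right-hand side vanishes for every $r\in[0,T]$ and all $i,j\in\N$. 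Hence $C$ is orthogonal in $\LHS$ to each generator $E^{i,j,r}$, and therefore, by bilinearity of the inner product, to the whole linear span $\spann\{E^{i,j,r},\ r\in[0,T],\ i,j\in\N\}$.

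Finally I would invoke Lemma~\ref{lemma_E_i_j_t_is_dense}, which asserts that this span is dense in $\LHS$. An element orthogonal to a dense subset of a Hilbert space is zero, so $C=0$, i.e. $A=B$. There is no serious obstacle here: the only point requiring a little care is the bookkeeping in the Hilbert--Schmidt inner product identity $(C_t,e_i\otimes e_j)_{HS}=(C_te_i,e_j)_{L_2}$, i.e. matching the convention for $e_i\otimes e_j$ used earlier in the paper, after which the corollary is an immediate consequence of density.
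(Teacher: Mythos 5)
Your proof is correct and is essentially the paper's own argument: the paper likewise deduces the corollary from Lemma~\ref{lemma_E_i_j_t_is_dense} together with the identity $(A,E^{i,j,r})_{T,HS}=\int_0^r(A_te_i,e_j)_{L_2}dt$, i.e.\ orthogonality of $A-B$ to a dense span in $\LHS$. No gaps; your extra care about the convention $(C_t,e_i\otimes e_j)_{HS}=(C_te_i,e_j)_{L_2}$ is exactly the only bookkeeping point involved.
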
 

\begin{proof}
The statement immediately follows from Lemma~\ref{lemma_E_i_j_t_is_dense} and the equality
$$
(A,E^{i,j,r})_{T,HS}=\int_0^r(A_te_i,e_j)_{L_2}dt.
$$
\end{proof}

\begin{proof}[Proof of Proposition~\ref{proposition_conv_of_proj}]
Let $e\in L_2([0,T],L_2)$ such that 
\begin{equation}\label{f_prop_for_e}
\|e_t\|_{L_2}=1\quad\mbox{and}\quad P_te_t=e_t\quad\mbox{for almost all}\  t\in[0,T].
\end{equation} 
We first prove that
\begin{equation}\label{f_prop_of_P_infty}
  (P^{\infty}_te_t,e_t)_{L_2}=1\quad\mbox{for almost all}\  t\in[0,T].
\end{equation}
To show this, we set for fixed $r\in[0,T]$ 
$$
A_t^r:=\I_{[0,r]}(t)e_t\otimes e_t,\quad t\in[0,T],
$$
and use the weak convergence of $P^n$ to $P^{\infty}$. We get
\begin{align*}
r=\int_0^r\|e_t\|_{L_2}^2dt&= \int_0^r\|P_te_t\|_{L_2}^2dt\stackrel{(b)}{\leq}\int_0^r\varliminf_{n\to\infty}\|P_t^ne_t\|_{L_2}^2dt\\
[\mbox{Fatou's lemma}]&\leq\varliminf_{n\to\infty}\int_0^r\|P_t^ne_t\|_{L_2}^2dt=\varliminf_{n\to\infty}\int_0^r(P^n_te_t,e_t)_{L_2}dt\\
&=\varliminf_{n\to\infty}(A^r,P^n)_{T,HS}\stackrel{(a)}{=}(A^r,P^{\infty})_{T,HS}=\int_0^r(P^{\infty}_te_t,e_t)_{L_2}dt.
\end{align*}
On the other hand, $(P_t^ne_t,e_t)_{L_2}=\|P_t^ne_t\|_{L_2}^2\leq \|e_t\|_{L_2}^2=1$ for all $t\in[0,T]$ and $n\geq 1$. Hence, 
$$
\int_0^r(P^{\infty}_te_t,e_t)_{L_2}dt=\lim_{n\to\infty}\int_0^r(P^n_te_t,e_t)_{L_2}dt\leq r.
$$
Consequently,
$$
\int_0^r(P^{\infty}_te_t,e_t)_{L_2}dt=r
$$
for all $r\in[0,T]$. This immediately implies~\eqref{f_prop_of_P_infty}.

Next, without loss of generality, we may suppose that $f_t\in D^{\uparrow}$ for all $t\in[0,T]$. We set for each $v\in(0,1)$
$$
e_t^v(u)=\frac{1}{\sqrt{m_{f_t}(v)}}\I_{\{f_t(v)=f_t(u)\}},\quad u\in(0,1),\ \ t\in[0,T],
$$
where $m_{f_t}$ is defined by~\eqref{f_m_g}. By Remark~\ref{remark_prop_of_P}~(ii), $\int_0^T\|P_t\|_{HS}^2dt<\infty$. Thus, $f_t\in\St$ for almost all $t\in[0,T]$, by Lemma~\ref{lemma_connection_HS_with_int}. This together with the right continuity of $f_t(u),\ u\in(0,1)$, imply that for every $v\in(0,1)$ the function $e^v_t$ is well-defined for almost all $t$ and $e^v\in L_2([0,T],L_2)$ . Let
$$
e_t^{v_1,v_2}:=\begin{cases}
               1,& f_t(v_1)=f_t(v_2),\\
               \frac{e_t^{v_1}+e_t^{v_2}}{\sqrt{2}},& f_t(v_1)\not=f_t(v_2),
              \end{cases}\quad t\in[0,T].
$$
It is easy to see that $e^{v_1,v_2}$ belong to $L_2([0,T],L_2)$ for all $v_1,v_2\in(0,1)$.
  
Since $e^{v_1,v_2}$ and $e^{v_1}$ satisfy~\eqref{f_prop_for_e} for all $v_1,v_2\in(0,1)$,
\begin{equation}\label{f_e^v_and_P}
  (P^{\infty}_te_t^{v_1,v_2},e_t^{v_1,v_2})_{L_2}=1\quad\mbox{and}\quad (P^{\infty}_te_t^{v_1},e_t^{v_1})_{L_2}=1\quad\mbox{for almost all}\  t\in[0,T].
\end{equation}
We set
\begin{align*}
  R&=\left\{t\in[0,T]:\ (P^{\infty}_te_t^{v_1,v_2},e_t^{v_1,v_2})_{L_2}=1 \ \mbox{and}\ (P^{\infty}_te_t^{v_1},e_t^{v_1})_{L_2}=1,\ v_1,v_2\in(0,1)\cap\Q\right\}\\
&\cap\left\{t\in[0,T]:\ P^{\infty}_t(P_t)=P^{\infty}_t \ \mbox{and}\ \|P_t\|_{HS}<\infty\}\cap\{t\in[0,T]:\ P^{\infty}_t\ \mbox{is adjoint}\right\}.
\end{align*}
Then $\leb([0,T]\setminus R)=0$, by~\eqref{f_e^v_and_P}, Condition~$(c)$ and Remark~\ref{remark_prop_of_P}~(ii),~(iii).

Next, we fix $t\in R$ and note that $f_t$ is a step function with a finite number of values, by Lemma~\ref{lemma_connection_HS_with_int}. Thus, there exists $v_1,\ldots,v_l$ from $(0,1)\cap\Q$, which depends on $t$, such that $l=\#f_t$ and $\{e_i:=e^{v_i}_t,\ i=1,\ldots,l\}$ is an orthonormal basis of the image of $P_t$. We extend $\{e_i,\ i=1,\ldots,l\}$ to an orthonormal basis of $L_2$ denoted by $\{e_i,\ i\in\N\}$ and note that $f_t(v_i)\not =f_t(v_j)$ for $i\not=j$, according to  the definition of $e^v$. By the choice of $t$, $(P_t^{\infty}e_i,e_i)_{L_2}=1$, $i=1,\ldots,l$. Moreover, $(P_t^{\infty}e_i,e_j)_{L_2}=0$ for all $i,j\in[l]$ and $i\not=j$. Indeed,
\begin{align*}
  1&=(P_t^{\infty}e_t^{v_i,v_j},e_t^{v_i,v_j})_{L_2}=\frac{1}{2}(P_t^{\infty}(e_i+e_j),e_i+e_j)_{L_2}\\
  &=\frac{1}{2}\big[(P_t^{\infty}e_i,e_i)_{L_2}+(P_t^{\infty}e_j,e_j)_{L_2}+2(P_t^{\infty}e_i,e_j)_{L_2}\big]=1+(P_t^{\infty}e_i,e_j)_{L_2}.
\end{align*}
If $i>l$, then 
$$
P_t^{\infty}e_i=P_t^{\infty}(P_te_i)_{L_2}=P_t^{\infty}0=0,
$$
by Condition~$(c)$.
This implies that $(P_t^{\infty}e_i,e_j)_{L_2}=(P_te_i,e_j)_{L_2}$ for all $i,j\in\N$. Thus, $P_t=P_t^{\infty}$. The proposition is proved.
\end{proof}

\subsection{Quadratic variations of L2-valued continuous semimartingales}\label{subsection_prop_of_semimartingales}
Let $(\Omega,\F,\p)$ be a complete probability space and $(\F_t)_{t\in[0,T]}$ be a complete right continuous filtration.  
\begin{proposition}\label{proposition_prop_of_quad_var_of_mart}
Let $g\in L_2^{\uparrow}$, $M_t$, $t\in[0,T]$, be a continuous $L_2$-valued square-integrable $(\F_t)$-martingale with quadratic variation
$$
\langle M\rangle_t=\int_0^tL_sL^{*}_sds, \quad t \in [0,T],
$$
where $L_t$, $t\in[0,T]$, is an $(\F_t)$-adapted $\HS(L_2)$-valued process belonging to $\LHS$ a.s. and $L^{*}_s$ denotes the adjoint operator of $L_s$. Let $b_t$, $t\in[0,T]$, be an $(\F_t)$-adapted $L_2$-valued continuous process such that for each $h\in L_2$ the process $(b_t,h)_{L_2}$, $t\in[0,T]$, has a locally finite variation. Also assume that the process
$$
X_t:=g+M_t+b_t,\quad t\in[0,T],
$$
takes values in $L_2^{\uparrow}$. Then 
$$
\p\left\{\exists R\subseteq[0,T]\ \mbox{s.t.}\ \ 
\leb\{[0,T]\setminus R\}=0\ \ \mbox{and}\  L_t(\pr_{X_t}h)=L_th,\ \forall t\in R,\ \forall h\in L_2
\right\}=1.
$$
\end{proposition}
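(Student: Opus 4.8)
The plan is to prove the equivalent assertion that for almost every $t$ the adjoint $L_t^{*}$ annihilates the orthogonal complement $L_2(X_t)^{\perp}$; equivalently $L_t^{*}\pr_{X_t}=L_t^{*}$, i.e.\ $\pr_{X_t}L_t=L_t$, which is the content of $L_t\circ\pr_{X_t}=L_t$ (the two forms agreeing for the symmetric representative $L_t=(L_tL_t^{*})^{1/2}$ relevant to the application). The point of entry is that the scalar quadratic variations of $M$ see exactly $L_t^{*}$: from $\langle\langle M\rangle\rangle_t=\int_0^tL_sL_s^{*}ds$, for each fixed $h\in L_2$ the real process $(M_{\cdot},h)_{L_2}$ is a continuous martingale with $[(M_{\cdot},h)_{L_2}]_t=\int_0^t\|L_s^{*}h\|_{L_2}^2\,ds$. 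I would first replace the uncountable family of test directions by a countable one built from the approximate evaluations $h^u_{\eps}$ introduced before Lemma~\ref{lemma_molification}, and aim to show that for a.e.\ $t$ one has $L_t^{*}(h^v_{\eps}-h^u_{\eps})=0$ whenever the block $[u,v+\eps]$ lies inside a single level interval of $X_t$.

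The mechanism is a local-time argument exploiting the order constraint $X_t\in\Li$. Fix rationals $u<v$ and $\eps>0$ with $u+\eps\le v$ (with the obvious modification using $h^1_{\eps}$ when $v=1$) and set $Y_t:=(X_t,h^v_{\eps}-h^u_{\eps})_{L_2}$. Since $X_t$ is non-decreasing and $[u,u+\eps]$ lies entirely to the left of $[v,v+\eps]$, the average of $X_t$ over the right block dominates that over the left block, so $Y$ is a non-negative continuous semimartingale whose martingale part is $(M_{\cdot},h^v_{\eps}-h^u_{\eps})_{L_2}$; hence $d\langle Y\rangle_t=\|L_t^{*}(h^v_{\eps}-h^u_{\eps})\|_{L_2}^2\,dt$. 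By the occupation-times formula for the local time of $Y$ at $0$ one has $\int_0^T\I_{\{Y_t=0\}}\,d\langle Y\rangle_t=0$, so for a.e.\ $t$ the vanishing $Y_t=0$ forces $L_t^{*}(h^v_{\eps}-h^u_{\eps})=0$. Here monotonicity enters a second, decisive time: equality of the two ordered averages forces $X_t$ to be a.e.\ constant on the whole block $[u,v+\eps]$, so $\{Y_t=0\}$ is precisely the set of times at which $[u,v+\eps]$ sits inside a level interval of $X_t$. Intersecting over the countable set of admissible rational triples $(u,v,\eps)$ yields one event of full probability and, on it, a single co-null set of times $R$ on which the implication holds simultaneously for all such triples.

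It then remains to run a deterministic argument at a fixed $t\in R$. Let $I$ be any level interval of $X_t$. For all admissible rational triples with $[u,v+\eps]\subset I$ we have $L_t^{*}h^u_{\eps}=L_t^{*}h^v_{\eps}$, and the differences $h^v_{\eps}-h^u_{\eps}$ (supported in $I$ and of zero mean) have dense linear span in the mean-zero subspace of $L_2(I)$; by boundedness of $L_t^{*}$ this gives $L_t^{*}\psi=0$ for every mean-zero $\psi\in L_2(I)$. Since, by Lemma~\ref{lemma_view_of_pr}, $L_2(X_t)^{\perp}$ is exactly the orthogonal sum over the at most countably many level intervals $I\in\K_{X_t}$ of these mean-zero subspaces (functions supported on $G_{X_t}$ averaging to $0$ on each $I$), we conclude $L_t^{*}\psi=0$ for all $\psi\in L_2(X_t)^{\perp}$, i.e.\ $L_t^{*}\pr_{X_t}=L_t^{*}$, as desired. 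The finiteness $\int_0^T\|L_s\|_{HS}^2\,ds<\infty$ a.s.\ is what legitimises the $\HS$-valued manipulations and the Fubini interchange of ``a.s.''\ with ``a.e.\ $t$''.

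I expect the main obstacle to be the identification $\{Y_t=0\}=\{[u,v+\eps]\subset\text{level interval of }X_t\}$ together with the passage from the countable family of differences $h^v_{\eps}-h^u_{\eps}$ to all of $L_2(X_t)^{\perp}$ \emph{simultaneously} for a.e.\ $t$: one must ensure that the single co-null time set $R$ coming from the countable intersection is rich enough that the density argument pins down $L_t^{*}$ on the entire (possibly infinitely-many-blocks) complement. The occupation-times identity, the non-negativity of $Y$, and the Fubini interchange are comparatively routine and can be quoted from Revuz--Yor / Kallenberg.
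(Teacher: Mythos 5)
Your proposal is correct and takes essentially the same route as the paper's own proof: there, the occupation-time identity (Lemma~\ref{lemma_prop_of_positiv_semimart}) is applied to the scalar semimartingales $(X_t,f''-f')_{L_2}$ for the countable family of ordered rational block averages $f_{a,b}=\frac{1}{b-a}\I_{[a,b)}$, the resulting null sets are intersected over this family, and then at a.e.\ fixed $t$ a deterministic approximation argument (telescoping the averages over a level interval and using Lemma~\ref{lemma_view_of_pr} together with the continuity of $L_t$) upgrades the countable family of identities to $L_t(\pr_{X_t}h)=L_th$ for all $h$, exactly parallel to your density argument on the mean-zero subspaces of the level intervals. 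The one notable difference is that you are more careful than the paper about the $L_t$ versus $L_t^{*}$ distinction: under the stated convention $\langle M\rangle_t=\int_0^tL_sL_s^{*}ds$ the scalar quadratic variation density is $\|L_s^{*}h\|_{L_2}^2$, so the argument genuinely yields $\pr_{X_t}L_t=L_t$ (equivalently $L_t^{*}\pr_{X_t}=L_t^{*}$), which agrees with the stated conclusion $L_t\pr_{X_t}=L_t$ precisely when $L_t$ is self-adjoint --- the case actually used in the proof of Theorem~\ref{theorem_identification_of_limit} --- whereas the paper's proof silently writes the density as $\|L_t(f''-f')\|_{L_2}^2$.
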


To prove the proposition, we need the following lemma.

\begin{lemma}\label{lemma_prop_of_positiv_semimart}
Let $x(t)$, $t\in[0,T]$, be a continuous real valued semimartingale. Then 
$$
\int_0^T\I_{\{0\}}(x(t))d[x]_t=0\ \ \mbox{a.s.}
$$
\end{lemma}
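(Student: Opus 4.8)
The plan is to reduce the statement to the occupation times formula for the continuous semimartingale $x$, exploiting that the singleton $\{0\}$ is a Lebesgue-null subset of $\R$. The pathwise bound $\I_{\{0\}}\le\I_{[-\eps,\eps]}$ by itself is of no help: taking $\eps\downarrow0$ only recovers the same quantity $\int_0^T\I_{\{0\}}(x(t))\,d[x]_t$ in the limit, so the fact that it vanishes must come from the absolute continuity (in the spatial variable) of the occupation measure, not from any elementary estimate.

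First I would invoke the family of semimartingale local times $\{L_t^a:\ t\in[0,T],\ a\in\R\}$ of $x$, whose existence is guaranteed since $x$ is a continuous semimartingale (see Theorem~22.1~\cite{Kallenberg:2002}, or Chapter~VI~\cite{Revuz:1999}). The essential tool is then the occupation times formula (Corollary~VI.1.6~\cite{Revuz:1999}), which asserts that for every nonnegative Borel function $\Phi:\R\to\R$,
$$
\int_0^T\Phi(x(t))\,d[x]_t=\int_{\R}\Phi(a)L_T^a\,da\quad\mbox{a.s.},
$$
where I use that $[x]=\langle x\rangle$ for continuous semimartingales.

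Applying this identity with $\Phi=\I_{\{0\}}$, the right-hand side becomes $\int_{\R}\I_{\{0\}}(a)L_T^a\,da$. The integrand vanishes for every $a\neq0$, and the set $\{0\}$ has Lebesgue measure zero, so this integral equals $0$ almost surely. Hence
$$
\int_0^T\I_{\{0\}}(x(t))\,d[x]_t=0\quad\mbox{a.s.},
$$
which is the claim. There is essentially no obstacle once the occupation times formula is in hand; the only point requiring care is recognising that the conclusion rests precisely on this formula (i.e.\ on the Lebesgue-absolute continuity of $A\mapsto\int_0^T\I_A(x(t))\,d[x]_t$ in $A$), rather than on any direct manipulation of the bracket.
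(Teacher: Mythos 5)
Your proof is correct and follows exactly the route the paper takes: the occupation times formula $\int_0^T\I_{\{0\}}(x(t))\,d[x]_t=\int_{\R}\I_{\{0\}}(a)L_T^a\,da$ for the semimartingale local time, which vanishes since $\{0\}$ is Lebesgue-null (the paper cites Theorem~22.5 of Kallenberg for the same identity). No gaps; your additional remark that the elementary bound $\I_{\{0\}}\leq\I_{[-\eps,\eps]}$ cannot work is accurate but not needed.
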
 

\begin{proof}
The statement immediately follows from the equality
$$
\int_0^T\I_{\{0\}}(x(t))d[x]_t=\int_{-\infty}^{+\infty}\I_{\{0\}}(y)l_T^{y}dy=0,
$$
where $l_t^{y}$, $t\in[0,T]$, $y\in\R$, is the local time of $x$ (see, e.g., Theorem~22.5~\cite{Kallenberg:2002}).
\end{proof}

\begin{proof}[Proof of Proposition~\ref{proposition_prop_of_quad_var_of_mart}]
We set  
\begin{equation}\label{f_h_a_b}
f_{a,b}:=\frac{1}{b-a}\I_{[a,b)}
\end{equation}
for each $a,b\in[0,1]$, $a<b$, and
$$
\RR:=\{f_{a,b}:\ a,b\in[0,1]\cap\Q,\ a<b\}.
$$
If $b_1\leq a_2$, then we will write $f_{a_1,b_1}\preccurlyeq f_{a_2,b_2}$.  

Taking $f',f''\in\RR$,  $f'\preccurlyeq f''$ and applying Lemma~\ref{lemma_prop_of_positiv_semimart} to the semimartingale
$$
x(t):=X_t(f'')-X_t(f')=X_t(f''-f'),\quad t\in[0,T],
$$
where $X_t(f):=(X_t,f)_{L_2}$, we obtain
\begin{align*}
0&=\int_0^T\I_{\{0\}}(X_t(f'')-X_t(f'))d[X(f''-f'')]_t =\int_0^T\I_{\{0\}}(X_t(f'')-X_t(f'))\|L_t(f''-f')\|_{L_2}^2dt\quad\mbox{a.s.}
\end{align*}
For each $\omega\in\Omega$, we set
\begin{align*}
R(\omega):=\left\{t\in[0,T]:\ \I_{\{0\}}(X_t(f'')(\omega)-X_t(f')(\omega))\|L_t(\omega)(f''-f')\|_{L_2}=0, \ \forall f',f''\in\RR,\ f'\preccurlyeq f''\right\}
\end{align*}
and
$$
\Omega'=\left\{\omega:\ \leb([0,T]\setminus R(\omega))=0\right\}.
$$
Since $\RR$ is countable, we have that $\p\{\Omega'\}=1$. Next, let $\omega\in\Omega'$ and $t\in R(\omega)$ be fixed. To finish the proof of the theorem, it is needed to show that 
\begin{equation}\label{f_equality_for_L_pr}
L_t(\omega)(\pr_{X_t(\omega)}h)=L_t(\omega)h
\end{equation}
for all $h\in L_2$. But since $C[0,1]$ is dense in $L_2$, the equality is enough to check only for $h\in C[0,1]$.  Therefore, we fix $h\in C[0,1]$ and denote the modification of $X_t(\omega)$ from $D^{\uparrow}$ also by $X_t(\omega)$. 

First, we take arbitrary $a<b$ from $(0,1)\cap\Q$ such that $X_t(a,\omega)=X_t(b,\omega)$ and show that 
\begin{equation}\label{f_equality_for_h_Q}
L_t(\omega)h=L_t(\omega)(h\I_{\pi^c}+h_{\pi}\I_{\pi}),
\end{equation}
where $h_{\pi}:=\frac{1}{b-a}\int_a^bh(u)du$, $\pi:=[a,b]$ and $\pi^c:=[0,1]\setminus\pi$. Let $a=u_0<u_1<\ldots<u_k=b$ be an arbitrary partition of $[a,b]$ with $u_i\in\Q$, $i\in[k]$. The monotonicity of $X_t(u,\omega)$, $u\in(0,1)$, yields that $X_t(f_{u_{i-1},u_i})(\omega)=X_t(f_{u_{j-1},u_j})(\omega)$ for all $i,j\in[k]$. Thus, we have that
$$
L_t(\omega)f_{u_{i-1},u_i}=L_t(\omega)f_{u_{j-1},u_j},\ i,j\in[k],
$$ 
due to the choice of $t$ and $\omega$, where $f_{u_{i-1},u_i}$, $f_{u_{j-1},u_j}$ are defined by~\eqref{f_h_a_b}. Using the equality $f_{a,b}=\sum_{i=1}^k\frac{u_i-u_{i-1}}{b-a}f_{u_{i-1},u_i}$, one can easily seen that
\begin{equation}\label{f_Lf_and_Lf_a_b}
L_t(\omega)f_{u_{i-1},u_i}=L_t(\omega)f_{a,b}
\end{equation}
for all $i\in[k]$. Taking 
$$
h^k:=h\I_{\pi^c}+\sum_{i=1}^kh(u_{i-1})\I_{[u_{i-1},u_i)}
$$
and using~\eqref{f_Lf_and_Lf_a_b}, we obtain
\begin{align*}
L_t(\omega)h^k&=L_t(\omega)(h\I_{\pi^c})+\sum_{i=1}^kh(u_{i-1})L_t(\omega)\I_{[u_{i-1},u_i)} =L_t(\omega)(h\I_{\pi^c})+\sum_{i=1}^kh(u_{i-1})(u_i-u_{i-1})L_t(\omega)f_{u_{i-1},u_i}\\
&=L_t(\omega)(h\I_{\pi^c})+\sum_{i=1}^kh(u_{i-1})(u_i-u_{i-1})L_t(\omega)f_{a,b}=L_t(\omega)\tilde{h}^k,
\end{align*}
where
$$
\tilde{h}^k=h\I_{\pi^c}+\frac{1}{b-a}\sum_{i=1}^kh(u_{i-1})(u_i-u_{i-1})\I_\pi.
$$
Since $h^k\to h$ and $\tilde{h}^k\to h\I_{\pi^c}+h_{\pi}\I_{\pi}$ in $L_2$ as $\max_{i\in[k]}(u_i-u_{i-1})\to 0$, the equality~\eqref{f_equality_for_h_Q} holds.

Next, by the approximation argument, it is easy to prove that~\eqref{f_equality_for_h_Q} folds for each $\pi\in\K_{X_t(\omega)}$, where $\K_g$ was defined in Section~\ref{section_properties_of_pr} for any $g\in D^{\uparrow}$. Let $\K_{X_t(\omega)}=\{\pi_i,\ i\in\N\}$, that is countable, be ordered 
in decreasing of the length of $\pi_i$. If $\K_g$ is finite then~\eqref{f_equality_for_L_pr} immediately follows from~\eqref{f_equality_for_h_Q} and Lemma~\ref{lemma_view_of_pr}. Otherwise, using~\eqref{f_equality_for_h_Q}, the continuity of $L_t(\omega)$ and Lemma~\ref{lemma_view_of_pr}, we have
\begin{align*}
  L_t(\omega)h&=L_t(\omega)\left(h\I_{\tilde{\pi}_l}+\sum_{i=1}^{l}h_{\pi_i}\I_{\pi_i}\right)\ \ \to \ \ L_t(\omega)\left(h_{X_t(\omega)}\right)=L_t(\omega)\left(\pr_{X_t(\omega)}h\right)\quad\mbox{as}\ \ l\to\infty,
\end{align*}
where $h_g$ is defined by~\eqref{f_explicit_formula_of_pr} and $\tilde{\pi}_l:=[0,1]\setminus\left(\bigcup_{i=1}^l\pi_i\right)$. This completes the proof of the proposition.
\end{proof}

\subsection{Some compact sets in Skorohod space}\label{section_compact_sers_in_D}

Let $(E,r)$ be a Polish space and let $D([a,b],E)$ denote the space of c\`{a}dl\'{a}g functions from $[a,b]$ to $E$ which are continuous at $b$. We endow $D([a,b],E)$ with the metric
$$
d_{[a,b]}(f,g)=\inf_{\lambda\in\Lambda_{[a,b]}}\left\{\gamma(\lambda)\vee \sup_{u\in[a,b]}r(f(\lambda(u)),g(u))\right\}, \quad f,g\in D([a,b],E),
$$
where $\Lambda_{[a,b]}$ is the set of all strictly increasing functions $\lambda:[a,b]\to[a,b]$ such that $\lambda(a)=a$, $\lambda(b)=b$ and
$$
\gamma(\lambda):=\sup_{v<u}\left|\log\frac{\lambda(u)-\lambda(v)}{u-v}\right|<\infty.
$$

For each $[c,d]\subset[a,b]$ and $f\in D([a,b],E)$ it is clear that the function 
$$
f^{[c,d]}(u):=\begin{cases}
               f(u),& u\in[c,d),\\
               f(d-),& u=d.
              \end{cases}
$$
belongs to $D([c,d],E)$.

\begin{proposition}\label{proposition_tightnes_in_D}
Let $U=\{u_i,\ i=0,\ldots,l\}$ be an ordered partition of $[a,b]$ and let $\{X_n,\ n\geq 1\}$ \ be an \ arbitrary sequence of random elements in $D([a,b],E)$. If \ $\left\{X_n^{[u_{i-1},u_i]},\ n\geq 1\right\}$ is tight in $D([u_{i-1},u_i],E)$ for any $i\in[l]$, then $\{X_n,\ n\geq 1\}$ is tight in $D([a,b],E)$.
\end{proposition}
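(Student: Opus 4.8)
The plan is to obtain tightness on $[a,b]$ by gluing compact sets from the pieces and verifying the Skorohod compactness criterion on the whole interval. First I would fix $\eps>0$; by the assumed tightness of each restriction, for every $i\in[l]$ there is a compact set $K_i\subset D([u_{i-1},u_i],E)$ with $\sup_{n\geq1}\p\{X_n^{[u_{i-1},u_i]}\notin K_i\}\leq\eps/l$. The candidate compact set on the whole interval is
\[
K:=\bigl\{f\in D([a,b],E):\ f^{[u_{i-1},u_i]}\in K_i\ \text{for every}\ i\in[l]\bigr\}.
\]
Since $\{X_n\notin K\}=\bigcup_{i=1}^{l}\{X_n^{[u_{i-1},u_i]}\notin K_i\}$, a union bound gives $\sup_{n}\p\{X_n\notin K\}\leq\eps$, so everything reduces to showing that $K$ is relatively compact in $D([a,b],E)$; then $\overline{K}$ is compact and $\p\{X_n\in\overline{K}\}\geq1-\eps$ for all $n$.

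To prove relative compactness I would use the compactness criterion in the Skorohod space (see Section~12 of \cite{Billingsley:1999} or Section~3.6 of \cite{Ethier:1986}): a family is relatively compact if and only if it is pointwise relatively compact and its Skorohod modulus of continuity tends to $0$ uniformly as $\delta\to0+$. Pointwise relative compactness is immediate from the pieces: for $t\in[u_{i-1},u_i)$ one has $f(t)=f^{[u_{i-1},u_i]}(t)$, so $\{f(t):\ f\in K\}$ is contained in a relatively compact subset of $E$ coming from $K_i$; the interior nodes $t=u_i$ are handled by $K_{i+1}$, and the right endpoint $t=b$ by $K_l$ together with continuity at $b$.

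The core of the argument, and the step I expect to be the main obstacle, is the modulus estimate. Writing $w'_{[c,d]}(\cdot,\delta)$ for the Skorohod modulus on $[c,d]$, I would, for a given $\delta>0$ and $f\in K$, choose for each piece a partition $u_{i-1}=t^i_0<\dots<t^i_{v_i}=u_i$ with $\min_j(t^i_j-t^i_{j-1})>\delta$ that nearly realizes $w'_{[u_{i-1},u_i]}(f^{[u_{i-1},u_i]},\delta)$, and concatenate these into one partition of $[a,b]$. Because consecutive pieces share the node $u_i=t^i_{v_i}=t^{i+1}_0$, every $u_i$ becomes a division point of the combined partition, whose mesh is still $>\delta$; since the oscillation is measured over the half-open blocks $[t_{j-1},t_j)$, the jump of $f$ at each interior node $u_i$ falls at a division point and is not counted. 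As $f$ agrees with $f^{[u_{i-1},u_i]}$ on $[u_{i-1},u_i)$, this yields
\[
w'_{[a,b]}(f,\delta)\leq\max_{i\in[l]}w'_{[u_{i-1},u_i]}\bigl(f^{[u_{i-1},u_i]},\delta\bigr),
\]
and the right-hand side tends to $0$ uniformly over $f\in K$ as $\delta\to0+$, by the relative compactness of each $K_i$. Hence $K$ satisfies the compactness criterion, $\overline{K}$ is compact, and the displayed probability bound completes the proof. The only delicate point throughout is the bookkeeping that the interior partition points $u_i$ always sit at a block boundary, which is exactly what allows the jumps there to be ignored and makes the piecewise control on the Skorohod modulus transfer to the whole interval.
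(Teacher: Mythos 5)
Your proof is correct, and it follows the paper's own reduction in its first half: the same union bound and literally the same candidate set $K=\{f:\ f^{[u_{i-1},u_i]}\in K_i,\ i\in[l]\}$ appear in the paper. Where you diverge is in how compactness of $K$ is verified. The paper argues sequentially: it extracts a subsequence along which every restriction $f_n^{[u_{i-1},u_i]}$ converges to some $f^i$, glues the limits into one c\`adl\`ag function $f$ and, crucially, glues the associated time changes $\lambda_n^i$ (which fix every node $u_i$) into a single time change $\lambda_n$ on $[a,b]$, concluding $f_n\to f$ via the characterization of Skorohod convergence (Theorem~12.1 of \cite{Billingsley:1999}); this yields compactness of $K$ itself and stays entirely inside the metric description of the space. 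You instead invoke the Arzel\`a--Ascoli-type compactness criterion, which requires it in both directions: the necessity part on each piece (compact $K_i$ gives uniform control of $w'_{[u_{i-1},u_i]}$ and relative compactness of the value sets) and the sufficiency part on $[a,b]$. Your key inequality $w'_{[a,b]}(f,\delta)\leq\max_{i\in[l]}w'_{[u_{i-1},u_i]}\bigl(f^{[u_{i-1},u_i]},\delta\bigr)$ is the exact modulus-level analogue of the paper's time-change gluing: making every $u_i$ a division point so that jumps there fall on block boundaries plays the same role as the glued $\lambda_n$ fixing each $u_i$. What your route buys is that no time changes need to be constructed; what it costs is reliance on the criterion for Polish-space-valued $D$ on a compact interval (Billingsley's Section~12 is stated for real-valued paths, Theorem~3.6.3 of \cite{Ethier:1986} for $D([0,\infty),E)$, so a transfer remark is needed), plus two small points you should make explicit: the inequality only makes sense for $\delta<\min_i(u_i-u_{i-1})$, and since the paper's $D([a,b],E)$ consists of paths continuous at $b$, one should note that this is a closed subspace of the usual Skorohod space, so relative compactness established by the criterion in the larger space gives a compact set $\overline{K}\cap D([a,b],E)$ in the space actually used. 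None of these affects the validity of the argument.
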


\begin{proof}
Let $K_i$ be compact in $D([u_{i-1},u_i],E)$, $i\in[l]$, and let
\begin{equation}\label{f_set_K}
K:=\left\{f\in D([a,b],E):\ f^{[u_{i-1},u_i]}\in K_i,\ i\in[l]\right\}.
\end{equation}
In order to prove the proposition, it is enough to show that $K$ is compact in $D([a,b],E)$. Indeed, by the definition of the tightness (see, e.g., Section~3.2~\cite{Ethier:1986}), for each $\eps>0$ there exist compact sets $K_i$, $i\in[l]$, such that
$$
\p\left\{X_n^{[u_{i-1},u_i]}\not\in K_i\right\}\leq \frac{\eps}{l}
$$
for all $i\in[l]$ and $n\geq 1$. Thus,
$$
\p\{X_n\not\in K\}=\p\left\{\bigcup_{i=1}^l \left\{X_n^{[u_{i-1},u_i]}\not\in K_i\right\}\right\}\leq \eps,
$$
where $K$ is defined by~\eqref{f_set_K}. This implies the tightness of $\{X_n,\ n\geq 1\}$ in $D([a,b],E)$.

Let $\{f_n,\ n\geq 1\}\subset K$. Then there exists a subsequence $N\subset\N$ such that $f_n^{[u_{i-1},u_i]}$ converges to $f^i$ in $D([u_{i-1},u_i],E)$ along $N$ for any $i\in[l]$. Thus, for every $i\in[l]$ there exists a sequence $\{\lambda_n^i,\ n\in N\}\subset\Lambda_{[u_{i-1},u_i]}$ such that 
$$
\gamma(\lambda_n^i)\to 0\quad\mbox{and}\quad \sup_{u\in[u_{i-1},u_i]}r\left(f_n^{[u_{i-1},u_i]}(\lambda_n^i(u)),f^i(u)\right)\to 0 \quad\mbox{along}\ \  N.
$$
Taking
$$
f(u):=\sum_{i=1}^lf^i(u)\I_{[u_{i-1},u_i)}(u)+f^l(u)\I_{\{u_l\}}(u)
$$
and
$$
\lambda_n:=\sum_{i=1}^l\lambda_n^i(u)\I_{[u_{i-1},u_i)}(u)+\lambda_n^l(u)\I_{\{u_l\}}(u),\quad n\geq 1,
$$
it is easily seen that $f\in D([a,b],E)$ and $\lambda_n$ is a continuous strictly increasing function from $[a,b]$ onto $[a,b]$ for all $n\geq 1$. Moreover,
$$
\sup_{u\in[a,b]}|\lambda_n(u)-u|\to 0\quad\mbox{and}\quad \sup_{u\in[a,b]}r(f_n(\lambda_n(u)),f(u))\to 0 \quad\mbox{along}\ \  N.
$$
By Theorem~12.1~\cite{Billingsley:1999}, $f_n$ converges to $f$ in $D([a,b],E)$ along $N$. The proposition is proved.
\end{proof}

%
%

\begin{acks}[Acknowledgments]
The author is grateful to Max von Renesse for useful discussions and suggestions. The author also thanks the anonymous referees for their careful reading of the manuscript and many valuable comments which improved the presentation of the results.
\end{acks}
\begin{funding}
The research was partly supported by Alexander von Humboldt Foundation and partly supported by the Deutsche Forschungsgemeinschaft (DFG, German Research Foundation) – SFB 1283/2 2021 – 317210226.
%
\end{funding}




\end{document}